\documentclass[10pt, reqno]{amsart} 
\usepackage[utf8]{inputenc}
\usepackage [english]{babel}
\usepackage{a4wide}
\usepackage{amssymb}
\usepackage{amsmath}
\usepackage{amsthm}
\usepackage{amsbsy}
\usepackage{mathrsfs}
\usepackage{amsfonts}
\usepackage{mathtools}
\usepackage{verbatim}
\usepackage{tikz}
\usepackage{tikz-cd}
\usetikzlibrary{cd}
\usepackage{amscd}
\interdisplaylinepenalty=2500
\usepackage{hyperref}
\usepackage{xcolor}
\hypersetup{
	colorlinks,
	linkcolor={red!50!black},
	citecolor={blue!50!black},
	urlcolor={blue!80!black}
}
\usepackage[normalem]{ulem}
\usepackage{cleveref}   
\usepackage[all]{xy}
\usepackage{xcolor}
\theoremstyle{plain}
\usepackage[margin=1 in, a4paper]{geometry}
\usepackage{relsize}
\usepackage{lipsum}
\usepackage[autostyle]{csquotes}
\usepackage{relsize}
\usepackage{enumitem}  
\usepackage{titlesec}
\usepackage{setspace}
\usepackage{adjustbox}
\usepackage{crossreftools}
\setstretch{1.1} 

\allowdisplaybreaks
\titleformat{\section}
{\centering\large\sc}{\thesection.}{1em}{}
\titleformat{\subsection}
{\centering\normalfont\bfseries}{\thesubsection.}{1em}{}
\titleformat{\subsubsection}[runin]
{\normalfont\normalsize\bfseries}{\thesubsubsection.}{1em}{}

\newcommand{\id}{\operatorname{id}}
\newcommand{\gl}{\operatorname{GL}}
\newcommand{\gal}{\operatorname{Gal}}

\newcommand{\res}{\operatorname{res}}

\newcommand{\Ker}{\operatorname{Ker}}

\newcommand{\N}{\mathbb{N}}
\newcommand{\Z}{\mathbb{Z}}
\newcommand{\Q}{\mathbb{Q}}
\newcommand{\C}{\mathbb{C}}

\newcommand{\lra}{\longrightarrow}

\newcommand{\Hom}{\mathrm{Hom}}

\newcommand{\cyc}{\mathrm{cyc}}

\newtheorem{theorem}{Theorem}[section]
\newtheorem*{theorem*}{Theorem}
\newtheorem{corollary}[theorem]{Corollary}
\newtheorem{lemma}[theorem]{Lemma}
\newtheorem*{lemma*}{Lemma}
\newtheorem{proposition}[theorem]{Proposition}
\newtheorem{conjecture}[theorem]{Conjecture}

\theoremstyle{definition}
\newtheorem{rem}[theorem]{Remark}
\newtheorem{remark}[theorem]{Remark}
\newtheorem{definition}[theorem]{Definition}

\newtheorem{example}[theorem]{Example}

\numberwithin{figure}{section}			
\numberwithin{table}{section}				



\begin{document}
	\title{Iwasawa theory for Rankin-Selberg product at an Eisenstein prime} 
	\author{Somnath Jha,  Sudhanshu Shekhar, Ravitheja Vangala} 
	\address{Somnath Jha, Department of Mathematics and Statistics, IIT Kanpur,  Kanpur-208016, India}\email{jhasom@iitk.ac.in} 
	\address{Sudhanshu Shekhar, Department of Mathematics and Statistics, IIT Kanpur,  Kanpur-208016, India}\email{sudhansh@iitk.ac.in}
	\address{Ravitheja Vangala, Department of Mathematics, IISc Bangalore,  Bangalore-560012, India}\email{ravithejav@iisc.ac.in}
	\thanks{\noindent AMS subject classification: 11F33, 11F67, 11R23.}
	\thanks{Keywords: Iwasawa main conjecture, $ p $-adic $ L $-functions, Rankin-Selberg product,  Selmer group.}
	\begin{abstract} {Let $p$ be an odd prime, $ f$ be a $ p $-ordinary newform of weight $ k $ and  $ h $ be a normalized cuspidal $ p $-ordinary Hecke eigenform of weight $ l < k$. In this article, we study the $p$-adic $ L $-function and $ p^{\infty} $-Selmer group of the Rankin-Selberg product of $f$ and $h$ under the assumption that $ p $ is an Eisenstein prime for $ h $ i.e. the residual Galois representation of $ h $ at $ p $ is reducible. We show that   the $ p $-adic $ L $-function  and the characteristic ideal of the $p^\infty$-Selmer group of the Rankin-Selberg product  of $f, h$ generate the same ideal modulo $ p $ in the Iwasawa algebra i.e. the Rankin-Selberg Iwasawa main conjecture  for $f \otimes h$ holds mod $p$. As an application to our results, we explicitly describe a few examples where the above congruence holds.}
	\end{abstract}
	\maketitle
	\section*{Introduction}
	The study of Rankin-Selberg product of two modular forms is of considerable interest in number theory from arithmetic, analytic and automorphic point of view. In this article, we study Iwasawa theory for  Rankin-Selberg product of two $ p $-ordinary modular forms $f$ and $h$ where the weight of $ f $ is strictly greater than the weight of $ h $ and $ p $ is an Eisenstein prime for $ h $.
	
	Iwasawa main conjecture for elliptic curves and modular forms (at an ordinary prime) has been established (in a large number of cases)  by the fundamental works of  Kato \cite{kato} and Skinner-Urban \cite{SU}. As a next step, it is natural to study the  Iwasawa theory of Rankin-Selberg $L$-function. Let $ f $ and $ h$ be two $ p $-ordinary modular forms.
	In this setting, important works have been done by
	Lei-Loeffler-Zerbes \cite{llz}, Kings-Loeffler-Zerbes \cite{klz} and X. Wan \cite{Wan}. In \cite[Theorem 7.5.4]{llz} and  \cite[Corollary D]{klz}, one side divisibility in the Iwasawa main conjecture was shown 
	(i.e. the characteristic ideal  of the $ p^\infty $-Selmer group of $ f \otimes h$ divides the $ p $-adic $ L $-function of $ f \otimes h$)  
	provided the residual Galois representation of $f \otimes h$ at $p$ is irreducible and assuming additional hypotheses.  In  \cite[Theorem 1.2]{Wan}, X. Wan proves the other divisibility in the main conjecture 
	under  suitable assumptions which include $ f $ is a CM form and the residual Galois representation of $h$  at $p$, say $  \bar{\rho}_h $ is irreducible. Thus following the works of \cite{llz}, \cite{klz} and \cite{Wan}, the Iwasawa main conjecture is still open in many cases; for example when  (i) the residual Galois representation of $f \otimes h$ at $p$  is reducible or (ii) $ f $ is a non-CM eigenform.
	
	In our setting, $ p $ is an Eisenstein prime for $ h $ i.e. $ \bar{\rho}_h $ is reducible, so our assumptions are complimentary to those mentioned in  \cite{llz}, \cite{klz} and \cite{Wan}. In fact, our result in this article can be interpreted in terms of  congruence of modular forms. Indeed, the congruences of modular forms (cf. \cite{Hidaadjoint}) is an important topic of study in the arithmetic of elliptic curves, modular forms and Iwasawa theory for congruent modular forms has been extensively studied  following the works of  \cite{gv}, \cite{Vatsal},  \cite{epw} etc. 
	In this  article, we combine  ideas from these two topics; Iwasawa theory of the Rankin-Selberg convolution  and the congruence of modular forms to arrive at our result. 
	
	We now briefly recall the terminology needed to introduce our main theorem. The precise definitions are given later  at appropriate places. Let  $ p $ be an odd prime. Let $ \mathbb{Q}_{\cyc} $ be the cyclotomic $ \mathbb{Z}_{p} $-extension of $ \mathbb{Q} $ and $ \Gamma := \mathrm{Gal}(\mathbb{Q}_{\cyc}/\mathbb{Q}) $.  Let $ f 
	\in S_{k}(\Gamma_{0}(N),\eta)$ be a $ p $-ordinary primitive form 
	with $ p \nmid N $ and $ h \in  S_{l}(\Gamma_{0}(I),\psi)$ be a normalised $ p $-ordinary Hecke eigenform 
	such  that   $2 \leq  l < k$.  Write $I =I_0 p^\alpha$ with $p \nmid I_0$. 
	Fix a number field $ K $ containing  the Fourier coefficients of $ f,h $ and the  values of $ \eta , \psi$.  Let $ K_\mathfrak{p}  $ denote the completion of $ K $ at a prime $ \mathfrak{p} $ lying above $ p $ induced by the embedding $ i_p: \bar{\mathbb{Q}}  \rightarrow \bar{\mathbb{Q}}_p $ and $ \pi $ be a uniformizer of the ring of integers $ \mathcal{O} $ of $ K_\mathfrak{p} $.  For $ \mathfrak{g} \in \{f,h\} $, let $ \rho_{\mathfrak{g}} : G_{\Q} \rightarrow \mathrm{Aut}(V_{\mathfrak{g}}) $  be the $ p $-adic Galois representation attached to $ \mathfrak{g} $ and  $ T_{\mathfrak{g}} $ be an $ \mathcal{O} $-lattice inside $ V_{\mathfrak{g}}$.  Set $ A_f := V_f/T_f$ and $ A_f^{-}  $ be the unramified quotient of $ A_f $ defined using the $ p $-ordinarity of $ f $.  The crucial assumption in this article is: 
	\begin{small}		
		\begin{align}\label{intro rho_h}
			\text{We have an exact sequence of } G_{\Q} \text{ modules, } \quad   0 \rightarrow \bar{\xi}_1 \rightarrow  T_h/\pi \rightarrow \bar{\xi}_2 \rightarrow 0  . \tag{$\bar{\rho}_h$-red}
		\end{align}
	\end{small}%
	As $ h $ is $ p $-ordinary, we may assume $ \bar{\xi}_{2} $ is unramified.  We choose appropriate (Teichm\"uller) lifts  $\xi_1$ and $\xi_2$ of $\bar{\xi}_1$ and $\bar{\xi}_2$ respectively as  described after Lemma~\ref{lemma lifting characters}.  Let $ \chi_{\cyc}$ and $\omega_{p} $ be the $ p $-adic cyclotomic character and Teichm\"uller character respectively. 
	Let  $ A_j $ be the discrete $ \mathcal{O} $-lattice $ \big( (V_f \otimes V_h)/ (T_f \otimes T_h) \big) \otimes \chi_{\cyc}^{-j} $. Using the $ p $-ordinarity and following \cite{gr1}, we define the  $ p^{\infty}$- Selmer groups $  S_{\mathrm{Gr}}(A_f/\Q_\cyc)$,  $  S_{\mathrm{Gr}}(A_j/\Q_\cyc)$ attached to $ A_f $ and $ A_j $ respectively.   In this setting, Hida \cite{Hidarankin2} has constructed the $ p $-adic $ L $-function $ \mu_{p, f \times h,j}   $  attached to the Rankin-Selberg convolution $ f \otimes h $  (see Theorem~\ref{padic rankin} for the definition).  Let $ \mu_{p,f, \xi_i,j} $ be the $ p $-adic $ L $-function attached to $ f \otimes \xi_i$ as constructed  in \cite{MTT}  with an appropriate choice of periods,   as explained in  Theorems~\ref{period and integral measure} and \ref{c(f) and petterson}.
	
	Let $\Sigma$ be a finite set of primes of $\Q$  containing primes  dividing $pNI\infty $ and  $\Q_\Sigma$ be the  {maximal} algebraic extension of $\Q$ unramified outside $\Sigma$. Define $ M_0 $ to be the prime to $ p $-part of the conductor of $ \bar{\rho}_h$ and $ \Sigma_0:=\{\ell \text{ prime}: \ell \mid  I_0/M_0 \text{ and } \ell^2\mid M_0\}$.  Put $  m := \prod_{\ell \in \mathcal{M}}\ell $ and $ \Sigma_{0}^{\infty} := \{ w  \text{ is a prime in } \Q_\cyc: w \mid m \}$. Let   $\mu^{\Sigma_0}_{p, f\times h,j}$, $\mu^{\Sigma_0}_{p,f, \xi_1,j}$ and $\mu^{\Sigma_0}_{p,f, \xi_2,j}$    be the $\Sigma_0$-imprimitive $p$-adic $L$-function as defined in \eqref{def: p-adic L-function f x h} and \eqref{def: p-adic L-function f} respectively. Let $  S^{\Sigma_0}_{\mathrm{Gr}}(A_f/\Q_\cyc)$,  $  S^{\Sigma_0}_{\mathrm{Gr}}(A_j/\Q_\cyc)$ be $\Sigma_0$-imprimitive Selmer group attached to $ A_f $ and $ A_j $ respectively. Let $G_p \subset G_{\Q}$ be the decomposition subgroup at $p$ and for a prime $w$ in $\Q_{\cyc}$ let $G_{\Q_{\cyc},w}$ and $I_{\cyc,w}$ be the decomposition and inertia subgroup at $w$. 
	Let $ \iota_{m} $ be  the trivial character of modulus $m$ i.e. 
	$
	\iota_m(n)  = 
	1   \text{ if }  (m,n) =1$ and $
	\iota_m(n)=0   \text{ if }  (m,n) >1
	$.
	We now state some hypotheses:
	\begin{enumerate}
		\item[{\crtcrossreflabel{(irr-$f$)}[irr-f]}] The residual representation $ \bar{\rho}_f $ is an irreducible $G_\Q$-module. 
		\item[{\crtcrossreflabel{($p$-dist)}[p-dist]}]  $f$    is   $p$-distinguished i.e. the restriction of  $\bar{\rho}_f$ to  $G_p$ satisfies 
		$ \bar{\rho}_{f}\vert_ {G_{p}} \cong \begin{psmallmatrix}
			\epsilon_{p} & \ast \\ 0 &  \delta_{p}
		\end{psmallmatrix}$  with  $\epsilon_{p}  \neq  \delta_{p}$. 
		\item[{\crtcrossreflabel{(Sel-tors)}[Stors]}] 
		$S_{\mathrm{Gr}}( A_j/\Q_\cyc)^\vee$ is a finitely generated \textit{torsion} module over the Iwasawa algebra $\mathcal{O}[[\Gamma]]$.
	\end{enumerate}	
	The following Theorems~\ref{thm: congrunence analytic intro}, \ref{thm: congrunence algebraic intro} and Corollary~\ref{congruence intro} are our main results (also see Examples~\ref{example 1}-\ref{example 3}): 
	
	\begin{theorem}\label{thm: congrunence analytic intro} [Theorem~\ref{analytic final}]
		Let $ p$ be an odd prime. Let $ f \in S_{k}(\Gamma_{0}(N),\eta)$ and $ h \in S_{l}(\Gamma_{0}(I),\psi)$ be normalised $ p $-ordinary newforms with $ p \nmid N $ and $ 2 \leq l < k$.  Let $j$ be an integer with $ l-1 \leq j \leq k-2 $. Assume $h$ satisfies \eqref{intro rho_h} and $f$ satisfies  \ref{irr-f} and  \ref{p-dist}.  If $p=3$, then we further assume that  $ \Gamma_{0}(N)/\{\pm 1\} $ has no non-trivial torsion elements.  
		Then  the following congruence of $ \Sigma_0$-imprimitive  $p $-adic $ L $-functions holds in $ \mathcal{O}[[\Gamma]] $:
		\begin{small}
			\begin{align}\label{intro congruence of p-adic L-functions}
				(\mu^{\Sigma_{0}}_{p,f \times  h,j}) \equiv (\mu^{\Sigma_{0}}_{p,f,\xi_1, j}) (\mu^{\Sigma_{0}}_{p,f,\xi_2, j}) \mod \pi.
			\end{align}
		\end{small}%
	\end{theorem}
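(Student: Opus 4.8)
The plan is to reduce the congruence \eqref{intro congruence of p-adic L-functions} to two ingredients: a mod-$\pi$ congruence between $h$ and a suitable weight-$l$ Eisenstein series, and the classical factorization of a Rankin--Selberg $L$-function twisted against such an Eisenstein series. First I would use \eqref{intro rho_h}: since $\bar{\rho}_h^{ss}=\bar\xi_1\oplus\bar\xi_2$ with $\bar\xi_2$ unramified, the Hecke eigenvalues of $h$ reduce mod $\pi$ to those of the weight-$l$ Eisenstein series $E=E_{\xi_1,\xi_2}$ attached to the (Teichm\"uller) lifts $\xi_1,\xi_2$ fixed after Lemma~\ref{lemma lifting characters} (with the matching normalization of the twist). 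After twisting both $h$ and $E$ by $\iota_m$ to align levels, this gives $\tilde h\equiv\tilde E\pmod\pi$ as $q$-expansions, the constant term being irrelevant below since $f$ is cuspidal.

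Next I would feed this congruence into Hida's construction of $\mu_{p,f\times\bullet,j}$ recalled in Theorem~\ref{padic rankin}: it produces the $p$-adic $L$-function by applying the ordinary projector and the $f$-isotypic projector $\phi_{\tilde f}$ to the product of $\bullet$ with a $p$-adic family of Eisenstein series, normalized by a canonical Petersson-type period. All of these operations are $\mathcal{O}$-integral and factor through the $q$-expansion, so $\mu^{\Sigma_0}_{p,f\times h,j}$ and $\mu^{\Sigma_0}_{p,f\times E,j}$ should agree modulo $\pi$ in $\mathcal{O}[[\Gamma]]$. The reason for working with $\Sigma_0$-imprimitive objects is that passing to them strips the Euler factors at exactly the primes where $h$ and $E$ may have different local components, so that the congruence becomes exact; and the integrality and optimality of the period, which is what licenses the reduction ``through the $q$-expansion'', are the content of Theorems~\ref{period and integral measure} and \ref{c(f) and petterson}, whose running hypotheses are \ref{irr-f}, \ref{p-dist}, together with the torsion-freeness of $\Gamma_0(N)/\{\pm1\}$ when $p=3$.

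Then I would carry out the Eisenstein factorization. Unfolding the Rankin--Selberg integral of $f$ against $E_{\xi_1,\xi_2}$ gives an Artin-formalism factorization relating $D(s,f,E_{\xi_1,\xi_2})$ to the product $L(s,f\otimes\xi_1)\,L(s-l+1,f\otimes\xi_2)$, valid at every critical $j$ in the overlap range $l-1\le j\le k-2$ appearing in the hypotheses. Combined with Shimura's period relations --- which, under the normalization of the MTT $p$-adic $L$-functions $\mu_{p,f,\xi_i,j}$ adopted here, identify the Petersson-type period used by Hida with the product of the periods attached to $f\otimes\xi_1$ and $f\otimes\xi_2$ up to a $p$-adic unit --- this shows that $\mu^{\Sigma_0}_{p,f\times E,j}$ and $\mu^{\Sigma_0}_{p,f,\xi_1,j}\,\mu^{\Sigma_0}_{p,f,\xi_2,j}$ interpolate the same values on the Zariski-dense set of finite-order characters of $\Gamma$, hence differ by a unit of $\mathcal{O}[[\Gamma]]$. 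Passing to ideals and combining with the previous paragraph then yields $(\mu^{\Sigma_0}_{p,f\times h,j})\equiv(\mu^{\Sigma_0}_{p,f,\xi_1,j})(\mu^{\Sigma_0}_{p,f,\xi_2,j})\pmod\pi$, the unit ambiguity being harmless for an identity of ideals modulo $\pi$.

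I expect the first feeding-in step to be the main obstacle: to see that $\mu_{p,f\times h,j}$ depends on $h$ only through its $q$-expansion mod $\pi$, one must simultaneously control the $\mathcal{O}$-integrality of Hida's measure and the fact that its reduction is genuinely $q$-expansion-theoretic, which forces one through the period and congruence-module analysis behind Theorems~\ref{period and integral measure} and \ref{c(f) and petterson}; and one must track the level bookkeeping --- the twist $\iota_m$, the distinction between $M_0$ and $I_0$, and the precise set $\Sigma_0$ --- carefully enough that the resulting congruence of $\Sigma_0$-imprimitive $p$-adic $L$-functions is an exact equality in $(\mathcal{O}/\pi)[[\Gamma]]$ and not merely an equality up to an uncontrolled factor. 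The Eisenstein factorization itself is classical, but the $p$-adic period comparison it rests on is precisely where \ref{irr-f} and \ref{p-dist} (and the $p=3$ caveat) are indispensable.
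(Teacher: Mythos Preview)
Your overall strategy—congruence between $h$ and an Eisenstein series, then Rankin–Selberg factorization against the Eisenstein series—matches the paper's. But there is a concrete technical gap in your first ``feeding-in'' step that the paper has to work around, and which your proposal does not address.

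You propose to apply Hida's construction with the projector $\phi_{\tilde f}$ attached to $\tilde f=f|\iota_m$, so that the $q$-expansion congruence $\tilde h\equiv\tilde E\pmod\pi$ yields $\mu^{\Sigma_0}_{p,f\times h,j}\equiv\mu^{\Sigma_0}_{p,f\times E,j}$. The problem is that $\tilde f$ is in general \emph{not} primitive, so the splitting $h_k(\Gamma_0(N_{\tilde f}p),\eta\iota_m;K_{\mathfrak p})\cong K_{\mathfrak p}\oplus A$ required for Hida's idempotent $1_{\tilde f_0}$ (equation \eqref{splitting of hecke algebra}) need not hold, and the measure $\mu_{\tilde f\times\bullet}$ is not a priori defined. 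For the same reason, Theorem~\ref{c(f) and petterson}—which you correctly identify as supplying the period comparison—only applies to a newform or its $p$-stabilization, not to $\tilde f_0=f_0|\iota_m$. This is exactly the difficulty flagged in Remark~\ref{twist and untwist remark}.

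The paper's fix is to introduce an auxiliary primitive character $\chi$ of $m$-power conductor chosen so that $f|\chi$, $h|\bar\chi$, and $g|\bar\chi$ are all primitive (Lemma~\ref{lem: primitive after twist}). One then runs the entire argument with $\mu_{f|\chi\times h|\bar\chi}$ and $\mu_{f|\chi\times g|\bar\chi}$: Lemma~\ref{congruence of measures} gives the mod-$\pi$ congruence between them, Theorem~\ref{analytic final1} carries out the Eisenstein factorization for $\mu_{f|\chi\times g|\bar\chi}$ (with the period comparison in Lemma~\ref{periods comparision Eis} showing $\Omega^+_{\tilde f_0}\Omega^-_{\tilde f_0}$ and $\Omega^+_{f_0|\chi}\Omega^-_{f_0|\chi}$ differ by a unit), and Lemma~\ref{analytic final cuspform} shows $\mu_{f|\chi\times h|\bar\chi}$ agrees with $\mu^{\Sigma_0}_{f\times h}$ up to a unit. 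Your level bookkeeping concern is real, but it is the non-primitivity of $\tilde f$, resolved by the $\chi$-twist, that is the missing ingredient.
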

	
	\begin{theorem}\label{thm: congrunence algebraic intro} [Theorem~\ref{thm:congruence of ideals}] Let $ p$ be an odd prime. Let $ f \in S_{k}(\Gamma_{0}(N),\eta)$ and $ h \in S_{l}(\Gamma_{0}(I),\psi)$ be normalised $ p $-ordinary newforms with $ p \nmid N $ and $ 2 \leq l < k$. Let $j$ be an integer with $ l-1 \leq j \leq k-2 $. Assume $h$ satisfies \eqref{intro rho_h} and $f$ satisfies  \ref{irr-f} and  \ref{p-dist}.  In addition, we make following hypotheses:
		\begin{enumerate}
			\item[$\mathrm{(i)}$] $ (N,M_0) =1 $ i.e. $N$ is co-prime to $M_0$.
			\item[$\mathrm{(ii)}$]  $ \psi|_{ I_{cyc,w}}$ has order co-prime to $ p $ for every prime $w$ in $\Q_{\cyc}$ dividing $pI$. 
			\item[$\mathrm{(iii)}$] 
				The dual Selmer group $S_{\mathrm{Gr}}( A_j/\Q_\cyc)^\vee$ satisfies \ref{Stors} and moreover the assumption \eqref{H2van} holds i.e. $H^2(\Q_\Sigma/\Q_{\cyc}, A_f(\xi_1\omega_p^{-j})[\pi]) $ vanishes. 
			\item[$\mathrm{(iv)}$]  Let $ w$  be the  prime in  $\Q_{\cyc}$ dividing $ p$.  If   $ (p-1) \mid j $ 
			and  $ H^{0}(G_{\Q_{\cyc,w}}, A_{f}^{-}[\pi],  (\bar{\xi}_2)) \neq 0$, then we assume \eqref{assumption} holds i.e. $ \bar{\rho}_h|I_{\cyc,w} \cong \bar{\xi}_1 \oplus \bar{\xi}_2 $.
		\end{enumerate}  
		Then the following congruence of the characteristic ideals of Selmer groups holds in $ \mathcal{O}[[\Gamma]] $:
			\begin{small}
				\begin{align}\label{intro congruence of char ideals}
					C_{\mathcal{O}[[\Gamma]]} \Big(S^{\Sigma_0}_{\mathrm{Gr}}( A_j/\Q_\cyc)^\vee\Big)  \equiv C_{\mathcal{O}[[\Gamma]]} \Big(S^{\Sigma_0}_{\mathrm{Gr}}(A_f(\xi_1 \omega_{p}^{-j})/\Q_\cyc)^\vee\Big) C_{\mathcal{O}[[\Gamma]]} \Big(S^{\Sigma_0}_{\mathrm{Gr}}( A_f(\xi_2 \omega_{p}^{-j})/\Q_\cyc)^\vee\Big) \mod \pi.
				\end{align}
			\end{small}%

		\end{theorem}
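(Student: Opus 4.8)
For the remainder of this plan write $\bar{\Lambda}:=\mathcal{O}[[\Gamma]]/(\pi)$, a complete discrete valuation ring (indeed $\bar{\Lambda}\cong\mathbb{F}[[T]]$ with $\mathbb{F}:=\mathcal{O}/\pi$), and abbreviate $S:=S^{\Sigma_0}_{\mathrm{Gr}}(A_j/\Q_\cyc)$ and $S_i:=S^{\Sigma_0}_{\mathrm{Gr}}(A_f(\xi_i\omega_p^{-j})/\Q_\cyc)$ for $i=1,2$. The plan is to transfer the congruence \eqref{intro congruence of char ideals} to an identity of characteristic ideals over $\bar{\Lambda}$ for the residual Selmer groups, and then to prove that identity by a filtration argument. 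The first ingredient is a standard structure-theory fact: if $X$ is a finitely generated torsion $\mathcal{O}[[\Gamma]]$-module carrying no nonzero finite submodule, then the image of $C_{\mathcal{O}[[\Gamma]]}(X)$ under $\mathcal{O}[[\Gamma]]\twoheadrightarrow\bar{\Lambda}$ equals $C_{\bar{\Lambda}}(X/\pi X)$ when $X$ has vanishing $\mu$-invariant, and both sides of \eqref{intro congruence of char ideals} reduce to $(0)$ in the degenerate case $\mu(X)>0$ (so the congruence is consistent in all cases). Applying this to $X=S^{\vee}$, which is $\mathcal{O}[[\Gamma]]$-torsion by \ref{Stors}, and to $X=S_i^{\vee}$, which is $\mathcal{O}[[\Gamma]]$-torsion with no nonzero finite submodule by Kato \cite{kato} (since $f$ is $p$-ordinary and satisfies \ref{irr-f}), and noting $S^{\vee}/\pi\cong (S[\pi])^{\vee}$, it suffices to establish
\[
C_{\bar{\Lambda}}\big((S[\pi])^{\vee}\big)=C_{\bar{\Lambda}}\big((S_1[\pi])^{\vee}\big)\cdot C_{\bar{\Lambda}}\big((S_2[\pi])^{\vee}\big).
\]

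The second step is to identify the $\pi$-torsion $S[\pi]$ with the residual Selmer group $S^{\Sigma_0}_{\mathrm{Gr}}(A_j[\pi]/\Q_\cyc)$, and similarly $S_i[\pi]$ with $S^{\Sigma_0}_{\mathrm{Gr}}(A_f(\xi_i\omega_p^{-j})[\pi]/\Q_\cyc)$. For this I would feed the short exact sequence $0\to A_j[\pi]\to A_j\xrightarrow{\pi}A_j\to 0$ (and its local analogues governing the Greenberg conditions at the primes of $\Sigma$) into the long exact cohomology sequence; the natural comparison map between the two Selmer groups then has kernel and cokernel controlled by $H^0(\Q_\cyc,A_j)$, by the corresponding local $H^0$-terms, and — for the surjectivity at the global and local level — by $H^2(\Q_\Sigma/\Q_\cyc,-)$. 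Here hypothesis (iii), i.e.\ the assumption \eqref{H2van}, kills the obstructing global $H^2$, while hypotheses (ii) (that $\psi|_{I_{\cyc,w}}$ has prime-to-$p$ order) and (iv) force the relevant $H^0$-terms at the bad primes and at the prime above $p$ to vanish, with (iv) handling precisely the degenerate case $(p-1)\mid j$ in which $\omega_p^{-j}$ is trivial.

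For the third step, \eqref{intro rho_h} gives the exact sequence $0\to\bar{\xi}_1\to T_h/\pi\to\bar{\xi}_2\to 0$ of $G_{\Q}$-modules; tensoring it with $T_f/\pi$ and twisting by $\omega_p^{-j}$ (using $\chi_{\cyc}\equiv\omega_p\bmod\pi$ on $G_{\Q}$ and $\xi_i\equiv\bar{\xi}_i\bmod\pi$, and the identification of $A_j[\pi]$ with $(T_f/\pi)\otimes(T_h/\pi)\otimes\omega_p^{-j}$) yields
\[
0\to A_f(\xi_1\omega_p^{-j})[\pi]\to A_j[\pi]\to A_f(\xi_2\omega_p^{-j})[\pi]\to 0
\]
as $\mathbb{F}[G_{\Q}]$-modules. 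Using the $p$-ordinarity of $f$ and of $h$ (so that $\bar{\xi}_2$ is unramified), the range $l-1\le j\le k-2$, and hypothesis (i) ($(N,M_0)=1$, making the bad-prime conductors disjoint), this sequence is compatible with the Greenberg local conditions defining the $\Sigma_0$-imprimitive Selmer groups at every prime of $\Sigma$. Passing to the induced long exact sequence of residual Selmer groups and invoking once more the vanishing $H^2(\Q_\Sigma/\Q_\cyc,A_f(\xi_1\omega_p^{-j})[\pi])=0$ from \eqref{H2van} (to see the terminal connecting map vanishes) together with the $H^0$-vanishing from (ii) and (iv), one gets a short exact sequence
\[
0\to S_1[\pi]\to S[\pi]\to S_2[\pi]\to 0 .
\]
Dualizing and using multiplicativity of characteristic ideals in short exact sequences of torsion $\bar{\Lambda}$-modules gives the displayed identity of the first step, hence \eqref{intro congruence of char ideals} by that step.

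The main obstacle, I expect, is the precise local analysis underlying Steps 2 and 3: showing that the long exact sequences of Greenberg Selmer groups are genuinely short exact. This requires checking prime by prime that the "ordinary" local condition at $p$ for $A_j$ (whose definition uses the range $l-1\le j\le k-2$ and the $p$-ordinary filtrations of $V_f$ and $V_h$) is compatible with the filtration coming from \eqref{intro rho_h}, and that at the primes dividing $NI_0$ the unramified/ramified conditions match, with $\psi|_{I_{\cyc,w}}$ of prime-to-$p$ order killing the troublesome inertial $H^1$; hypothesis (iv) resolves the single remaining degenerate local situation at $w\mid p$ when $(p-1)\mid j$. A secondary subtlety is the passage in the first step from $C_{\mathcal{O}[[\Gamma]]}(X)$ to $C_{\bar{\Lambda}}(X/\pi X)$, which needs the Selmer duals to have no nonzero finite $\mathcal{O}[[\Gamma]]$-submodule — a Greenberg-type input that itself relies on \eqref{H2van}/weak Leopoldt — together with a careful bookkeeping of $\mu$-invariants to confirm that the mod-$\pi$ congruence remains valid (indeed forced by the filtration) even when some $\mu$-invariant is positive.
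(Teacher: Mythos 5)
Your proposal follows essentially the same route as the paper: reduce $S^\vee$ and $S_i^\vee$ modulo $\pi$ via the absence of nonzero pseudo-null submodules (the paper's Proposition~\ref{no-finite-part-result1} and the base-change of characteristic ideals from \cite{SU}), identify $S[\pi]$ with the residual Selmer group using \ref{irr-f} (Lemma~\ref{inside-out-lem}), establish the short exact sequence of residual Selmer groups from \eqref{fund-reds2} by the prime-by-prime local analysis under (i), (ii), (iv) (Propositions~\ref{prop:exact sequence of inertia} and \ref{prop: selmer exact sequence}), and conclude by multiplicativity over $\mathbb{F}[[\Gamma]]$. The only imprecision is that the terminal surjectivity $S[\pi]\twoheadrightarrow S_2[\pi]$ comes not from \eqref{H2van} but from the surjectivity of the global-to-local map for $A_f(\xi_1\omega_p^{-j})[\pi]$ (Greenberg's result, \cite[Theorem 5.2]{ms}), \eqref{H2van} being used instead for the exactness of the global $H^1$-sequence.
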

			\begin{corollary}\label{congruence intro}
				Let $j$ be an integer with $ l-1 \leq j \leq k-2 $. We keep the assumptions of both Theorems~\ref{thm: congrunence analytic intro},~\ref{thm: congrunence algebraic intro}. 
				Furthermore, assume that the Iwasawa main conjecture holds for $ f\otimes \xi_{1} \omega_p^{-j}$ and $ f \otimes  \xi_{2} \omega_p^{-j}$ over $ \Q_{\cyc} $, that is,  
				\begin{align*}
					(\mu_{p,f,\xi_i, j}) = C_{\mathcal{O}[[\Gamma]]}\Big(S_{\mathrm{Gr}}( A_f(\xi_i \omega_{p}^{-j})/\Q_\cyc)^\vee\Big) 
				\end{align*}
				holds for  $i=1,2$. Then  we have the following congruence of ideals in the Iwasawa algebra $ \mathcal{O}[[\Gamma]] $
				\begin{small}
					\begin{align}\label{intro IMC}
						(\mu_{p,f\times h,j}) \equiv C_{\mathcal{O}[[\Gamma]]}\Big(S_{\mathrm{Gr}}( A_j/\Q_\cyc)^\vee\Big) \mod \pi.
					\end{align}
				\end{small}%
				In particular, $ \mu_{p,f\times h,j} $ is a unit in $ \mathcal{O}[[\Gamma]] $ if and only if  $ C_{\mathcal{O}[[\Gamma]]}\Big(S_{\mathrm{Gr}}( A_j/\Q_\cyc)^\vee\Big) $ is a unit in $ \mathcal{O}[[\Gamma]] $. 
			\end{corollary}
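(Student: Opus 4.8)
\emph{Plan of proof.} The plan is to deduce Corollary~\ref{congruence intro} formally by concatenating the two congruences \eqref{intro congruence of p-adic L-functions} and \eqref{intro congruence of char ideals} of Theorems~\ref{thm: congrunence analytic intro} and \ref{thm: congrunence algebraic intro} through the assumed main conjecture for the twists $f\otimes\xi_1\omega_p^{-j}$ and $f\otimes\xi_2\omega_p^{-j}$; the only non-formal input is the bookkeeping that reconciles the $\Sigma_0$-imprimitive objects appearing in those theorems with the primitive objects in the hypothesis and in the desired conclusion.

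The first step is to upgrade the assumed primitive main conjecture $(\mu_{p,f,\xi_i,j}) = C_{\mathcal{O}[[\Gamma]]}\big(S_{\mathrm{Gr}}(A_f(\xi_i\omega_p^{-j})/\Q_\cyc)^\vee\big)$ ($i=1,2$) to its $\Sigma_0$-imprimitive form. By the definition of the $\Sigma_0$-imprimitive $p$-adic $L$-function recorded in \eqref{def: p-adic L-function f}, $\mu^{\Sigma_0}_{p,f,\xi_i,j}$ is obtained from $\mu_{p,f,\xi_i,j}$ by multiplying by the finitely many local Euler factors at the primes $\ell\in\Sigma_0$ of $f\otimes\xi_i$, regarded as elements of $\mathcal{O}[[\Gamma]]$; and the standard comparison between $\Sigma_0$-imprimitive and primitive Greenberg Selmer groups (cf. \cite{gr1,gv}) shows that $C_{\mathcal{O}[[\Gamma]]}\big(S^{\Sigma_0}_{\mathrm{Gr}}(A_f(\xi_i\omega_p^{-j})/\Q_\cyc)^\vee\big)$ is obtained from $C_{\mathcal{O}[[\Gamma]]}\big(S_{\mathrm{Gr}}(A_f(\xi_i\omega_p^{-j})/\Q_\cyc)^\vee\big)$ by multiplying by the \emph{same} factor. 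Multiplying the assumed equality of ideals by this common factor then gives $(\mu^{\Sigma_0}_{p,f,\xi_i,j}) = C_{\mathcal{O}[[\Gamma]]}\big(S^{\Sigma_0}_{\mathrm{Gr}}(A_f(\xi_i\omega_p^{-j})/\Q_\cyc)^\vee\big)$ for $i=1,2$.

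Next I would concatenate. By \eqref{intro congruence of p-adic L-functions}, $(\mu^{\Sigma_0}_{p,f\times h,j})\equiv(\mu^{\Sigma_0}_{p,f,\xi_1,j})(\mu^{\Sigma_0}_{p,f,\xi_2,j})\bmod\pi$; the imprimitive main conjecture just obtained rewrites the right-hand side as the product of the two imprimitive characteristic ideals; and \eqref{intro congruence of char ideals} identifies that product modulo $\pi$ with $C_{\mathcal{O}[[\Gamma]]}\big(S^{\Sigma_0}_{\mathrm{Gr}}(A_j/\Q_\cyc)^\vee\big)$. Hence the $\Sigma_0$-imprimitive Rankin--Selberg main conjecture for $f\otimes h$ holds modulo $\pi$. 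It remains to strip the $\Sigma_0$-Euler factors from both sides: exactly as in the first step, $\mu^{\Sigma_0}_{p,f\times h,j}=\mu_{p,f\times h,j}\cdot E$ and $C_{\mathcal{O}[[\Gamma]]}\big(S^{\Sigma_0}_{\mathrm{Gr}}(A_j/\Q_\cyc)^\vee\big)=C_{\mathcal{O}[[\Gamma]]}\big(S_{\mathrm{Gr}}(A_j/\Q_\cyc)^\vee\big)\cdot E$ with the same factor $E=\prod_{\ell\in\Sigma_0}P_\ell\in\mathcal{O}[[\Gamma]]$ (these Selmer groups being torsion over $\mathcal{O}[[\Gamma]]$ by \ref{Stors}, so that their characteristic ideals are defined). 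Reducing the displayed congruence modulo $\pi$ in the integral domain $\mathcal{O}[[\Gamma]]/(\pi)\cong(\mathcal{O}/\pi)[[T]]$ (using $\Gamma\cong\Z_p$), one may cancel $E$ to obtain \eqref{intro IMC}. For the final assertion, $\mathcal{O}[[\Gamma]]\cong\mathcal{O}[[T]]$ is local with maximal ideal $(\pi,T)$, so an element is a unit if and only if its image in $(\mathcal{O}/\pi)[[T]]$ generates the unit ideal; by \eqref{intro IMC} the images of $\mu_{p,f\times h,j}$ and of $C_{\mathcal{O}[[\Gamma]]}\big(S_{\mathrm{Gr}}(A_j/\Q_\cyc)^\vee\big)$ generate the same ideal modulo $\pi$, so one is a unit exactly when the other is.

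The main obstacle I anticipate is justifying the cancellation of $E$ in the last step: one must know that $E=\prod_{\ell\in\Sigma_0}P_\ell$ is a non-zero-divisor modulo $\pi$, i.e. that it does not vanish in $(\mathcal{O}/\pi)[[T]]$. This should follow from the explicit shape of the local factors $P_\ell$ (each has constant term a $p$-adic unit) together with hypothesis~(ii) of Theorem~\ref{thm: congrunence algebraic intro} on the order of $\psi$ on inertia at the primes above $pI$; the accompanying point that needs care is checking that the imprimitive-to-primitive comparison on the Selmer side contributes exactly the same factor $E$ as on the analytic side, so that the cancellation is legitimate. Everything else in the argument is a formal manipulation of ideals in $\mathcal{O}[[\Gamma]]$.
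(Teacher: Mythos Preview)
Your proposal is correct and follows essentially the same route as the paper's proof (Theorem~\ref{congruence main conjecture }). The paper first passes from the primitive to the $\Sigma_0$-imprimitive main conjecture for $f\otimes\xi_i$ (this is Lemma~\ref{IMC imprimitive for modular form}, exactly your first step), then concatenates with Theorems~\ref{analytic final} and~\ref{thm:congruence of ideals} to obtain the $\Sigma_0$-imprimitive congruence \eqref{Sigma congruence}, and finally cancels the common Euler factor $\prod_{\ell\in\Sigma_0}P_\ell(\ell^{-j-1}\gamma)$ from both sides after checking it is nonzero mod $\pi$. Your observation that $P_\ell(X)$ has constant term $1$, combined with the linear independence of $1,\gamma,\gamma^2,\ldots$ in $\mathbb{F}[[\Gamma]]$, gives this nonvanishing directly; the paper instead appeals to the proof of \cite[Proposition~2.4]{gv}. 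Note that hypothesis~(ii) of Theorem~\ref{thm: congrunence algebraic intro} is not actually needed for this cancellation step.
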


			\begin{remark} The Iwasawa main conjecture for modular form (as stated in Conjecture~\ref{IWC for modular form}), which is needed to deduce congruence \eqref{intro IMC} from the congruences \eqref{intro congruence of p-adic L-functions} and \eqref{intro congruence of char ideals}, is known for  large class of modular forms by the results of \cite{kato} and  \cite{SU}. More precisely, to deduce \eqref{intro IMC} we  only need Iwasawa main conjecture modulo $\pi$ for $ f \otimes \xi_{i} \omega_p^{-j}$ i.e. the congruence $(\mu_{p,f,\xi_i, j} ) \equiv C_{\mathcal{O}[[\Gamma]]}\big(S_{\mathrm{Gr}}( A_f(\xi_i \omega_{p}^{-j})/\Q_\cyc)^\vee\big) \mod \pi$ holds for $ i =1,2 $. 

			\end{remark}
			\begin{rem}\label{remark mu invariant and H2} We discuss the hypotheses \ref{Stors} and \eqref{H2van} appearing as condition (iii) in  Theorem~\ref{thm: congrunence algebraic intro}.
				\begin{enumerate}
					\item[(i)] Recall  the definition of \textit{fine Selmer group} which was defined and studied by Coates-Sujatha (cf. \cite{CS}): $$R(A_{f}(\xi_1\omega_p^{-j}))/\mathbb{Q}_{\cyc})=  \mathrm{Ker}(H^1(\mathbb{Q}_{\Sigma}/\mathbb{Q}_{\cyc}, A_{f}(\xi_1\omega_p^{-j})[\pi]) \rightarrow \oplus_{w \in \Sigma} H^1(\mathbb{Q}_{\cyc,w}, A_{f}(\xi_1 \omega_p^{-j})[\pi]) ).$$ It was observed independently by Greenberg and Sujatha that the vanishing of $\mu$-invariant, that is, $\mu ( R(A_{f}/\mathbb{Q}_{\cyc})^\vee) =0$ is equivalent to $H^2(\Q_\Sigma/\Q_{\cyc}, A_f(\xi_1\omega_p^{-j}) [\pi]) =0$ (cf. \cite[ (40)]{js}). 
					Following \cite[Conjecture A]{CS}, it is expected that  $\mu ( R(A_{F}/\mathbb{Q}_{\cyc})^\vee) =0$  always holds for any cuspidal Hecke eigenform $F$ and any  prime $p$ (also see \cite{js}). Indeed, as explained in \cite[Corollary 3.6]{CS},  the vanishing of the $\mu$-invariant $\mu ( R(A_{F}/\mathbb{Q}_{\cyc})^\vee) =0$ is essentially a reformulation of  the classical $\mu =0$ conjecture of Iwasawa. Thus in our setting of Theorem~\ref{thm: congrunence algebraic intro}, the assumption \eqref{H2van} is expected to hold.
					
					\item[(ii)] Observe that in the setting of \cite{klz}, \cite{llz} it is known that  $S_{\mathrm{Gr}}( A_j/\Q_\cyc)^\vee$ is a finitely generated \textit{torsion} $\mathcal{O}[[\Gamma]]$-module using an Euler system argument. 
					
					\item[(iii)]  Note that if  $S_{\mathrm{Gr}}(A_f(\xi_i \omega_p^{-j})/\Q_\cyc)^\vee$ has $\mu$-invariant equal to $0$ i.e. $S_{\mathrm{Gr}}(A_f(\xi_i \omega_p^{-j})/\Q_\cyc)^\vee$ is a finitely generated $\Z_p$-module for both $i=1,2$,  then from the exact sequence \eqref{split-selmer-03}, it follows that $S_{\mathrm{Gr}}( A_j/\Q_\cyc)^\vee/\pi$ is finite.  Hence by the structure theorem of $\mathcal{O}[[\Gamma]]$-modules, it follows that $S_{\mathrm{Gr}}( A_j/\Q_\cyc)^\vee$ is a finitely generated torsion $\mathcal{O}[[\Gamma]]$-module. Further, as $R(A_{f}(\xi_1\omega_p^{-j}))/\mathbb{Q}_{\cyc}) \subset S_{\mathrm{Gr}}(A_f(\xi_1 \omega_p^{-j})/\Q_\cyc)$,  $\mu(S_{\mathrm{Gr}}(A_f(\xi_1 \omega_p^{-j})/\Q_\cyc)^\vee)=0$ implies $\mu(R(A_f(\xi_1 \omega_p^{-j})/\Q_\cyc)^\vee)=0$ which in turn implies $H^2(\Q_\Sigma/\Q_\cyc, A_f(\xi_i \omega_p^{-j})[\pi])$ vanishes. 
					
					\item[(iv)] In particular, the hypotheses \ref{Stors} and \eqref{H2van} appearing in Theorem~\ref{thm: congrunence algebraic intro} are satisfied whenever 
					$S_{\mathrm{Gr}}( A_f(\xi_i\omega_p^{-j})/\Q_\cyc)^\vee$ is a finitely generated  $\Z_p$-module for  $i=1,2$ or equivalently the $\mu$-invariant of $S_{\mathrm{Gr}}(A_f(\xi_i\omega_p^{-j})/\Q_\cyc)^\vee$ vanishes for $i=1,2$.  In fact, under \ref{irr-f}, following \cite[Conjecture 1.11]{gr}, it is  expected that 
					$\mu(S_{\mathrm{Gr}}( A_f(\xi_i\omega_p^{-j})/\Q_\cyc)^\vee) =0$ for $i=1,2$.  
				\end{enumerate}    
			\end{rem}

			As an application of Corollary~\ref{congruence intro},  we obtain several explicit examples (See \S\ref{section: examples} for details). 
			
			\begin{example}(Example~\ref{example 1})\label{thm: IMC example}
				Let $ p=11 $, $ \Delta \in S_{12}(\mathrm{SL}_2(\mathbb{Z})) $ be the Ramanujan Delta function, $ h \in S_{2}(\Gamma_{0}(23)) $ be the newform of label LMFDB $23.2.a$ and $ \chi_K $ be the quadratic character of $ \mathbb{Q}(\sqrt{-23})$. Put $ f = \Delta \otimes \chi_K  $. Then all the assumptions in Theorems~\ref{thm: congrunence analytic intro}, \ref{thm: congrunence algebraic intro} and Corollary~\ref{congruence intro} are satisfied and we deduce that the  Iwasawa main conjecture holds for $ f \otimes h $ modulo $ \pi $, 
				that is, the following congruences hold in $ \mathcal{O}[[\Gamma]] $:
				\begin{small}
					\begin{equation*}
						(\mu_{p, f \times h, j})  \equiv C_{\mathcal{O}[[\Gamma]]}\Big((S_{\mathrm{Gr}}(A_j)/\mathbb{Q}_{\cyc})^\vee\Big) \mod \pi \quad \mathrm{for} ~ 1 \leq j \leq 10.
					\end{equation*}
				\end{small}%
				Further,   
				$$(\mu_{p, f \times h, j}) = C_{\mathcal{O}[[\Gamma]]}(S_{\mathrm{Gr}}(A_j)/\mathbb{Q}_{\cyc})^{\vee} =  \mathcal{O}[[\Gamma]] \quad \mathrm{for } ~1 \leq j \leq 10 ~\mathrm{ and }~ j \neq 4,5. $$
			\end{example}
			
			\begin{example}(Example~\ref{example 3})\label{thm: IMC example 1}
				Let $ p=5 $. Let $E$ be the elliptic curve given by $y^2+y=x^3+x^2-9x-15$ and $f \in S_{6}(\Gamma_0(19),\iota_{19})$ be the newform congruent to $E$ mod $\pi$. Let $ h \in S_{2}(\Gamma_{0}(11)) $ be the newform of label LMFDB label $11.2.a.a$. Then all the assumptions in Theorems~\ref{thm: congrunence analytic intro}, \ref{thm: congrunence algebraic intro} and Corollary~\ref{congruence intro} are satisfied and we deduce that the  Iwasawa main conjecture holds for $ f \otimes h $ modulo $ \pi $. In fact, in this case, $\mu_{p, f \times h, j}$ is a $p$-adic unit for every $j$ and hence we  deduce the full Iwasawa main conjecture, that is,
				\begin{small}
					\begin{equation*}
						(\mu_{p, f \times h, j}) = C_{\mathcal{O}[[\Gamma]]}\Big((S_{\mathrm{Gr}}(A_j)/\mathbb{Q}_{\cyc})^\vee\Big) = \mathcal{O}[[\Gamma]] \quad \mathrm{for} ~ 1 \leq j \leq 4.
					\end{equation*}
				\end{small}%
			\end{example}

			As stated in Corollary~\ref{congruence intro} the congruence \eqref{intro IMC} is obtained via the congruences  \eqref{intro congruence of p-adic L-functions} and \eqref{intro congruence of char ideals}. We proceed in the analytic side and algebraic  side separately to establish the congruences  \eqref{intro congruence of p-adic L-functions} and \eqref{intro congruence of char ideals} respectively.
			
			On the analytic side, we  start by lifting the characters $ \bar{\xi}_{1}$ and $\bar{\xi}_{2} $ to the Dirichlet characters $ \xi_{1} $ and  $ \xi_{2} $. This enables us to  define a  suitable $ p $-ordinary and primitive Eisenstein series $ g $ such that  the residual Galois representation $\bar{\rho}_g \cong \bar{\xi}_{1} \oplus \bar{\xi}_2 $ and $ g|\iota_{m} \equiv h|\iota_{m} \mod \pi$. As $\tilde{f}:=f|\iota_m$ is not necessarily primitive, there is some technical difficulty in the construction of $p$-adic $L$-function of $f|\iota_m \times h|\iota_m $ (See Remark~\ref{twist and untwist remark} for details).   To circumvent this, we  choose an auxiliary character $\chi$ of $m$-power conductor, such that $f|\chi, g|\bar{\chi}$ and $h|\bar{\chi}$ remain primitive.  Further, we show that the $p$-adic Rankin-Selberg $ L $-function of $ f|\chi \otimes h|\bar{\chi} $ is equal to  the $\Sigma_0$-imprimitive $p$-adic Rankin-Selberg $ L $-function of $ f \otimes h $ up to multiplication by a $p$-adic unit (See Proposition~\ref{analytic final cuspform}). Now the congruence between the Fourier coefficients of $g$ and $h$ implies the $p$-adic Rankin-Selberg $ L $-functions of $ f|\chi \otimes h|\bar{\chi} $ and $ f|\chi \otimes g|\bar{\chi} $ are congruent.    Thus it suffices to show that 
			the congruence \eqref{intro congruence of p-adic L-functions} holds with the $\Sigma_0$-imprimitive $ p $-adic Rankin-Selberg $ L $-function of $ f \otimes h $ replaced by the $ p $-adic Rankin-Selberg $ L $-function of $ f|\chi \otimes g|\bar{\chi} $. 
			
			To establish this analogue of congruence  \eqref{intro congruence of p-adic L-functions} for the $ p $-adic $ L $-function of $ f|\chi \otimes g|\bar{\chi} $, we suitably adopt the strategy of \cite{Vatsal} from the setting of elliptic modular forms to our Rankin-Selberg product setting.  This can be explained in two steps. First, we express the special values of the Rankin-Selberg $ L $-function of $ f|\chi \otimes g|\bar{\chi}  $ as a product of  the special values of $ L $-functions of $ \tilde{f}_0 \otimes \xi_1 $ and $ \tilde{f}_0 \otimes \xi_2 $ (see Lemma~\ref{twist and untwist Eis}). 
			From this to arrive at the required  congruence of  the $ p $-adic $ L $-functions involves a delicate choice of periods $ \Omega^{\pm}_{\tilde{f}} $ such that (i) The  $p$-adic $ L $-functions $ \mu^{\Sigma_{0}}_{p,f, \xi_{i},j} $ attached to $ \tilde{f} \otimes \xi_{i} $ are   $ \mathcal{O} $-valued for $ i=1,2 $ and $ 0 \leq j \leq k-2 $ and (ii) The $ \mathcal{O} $-valued  $ p $-adic $ L $-function  $\mu^{\Sigma_{0}}_{p, f \times g, j}$ differs from the product $ \mu^{\Sigma_{0}}_{p,f, \xi_{1},j} \times \mu^{\Sigma_{0}}_{p,f, \xi_{2},j} $ by a $ p $-adic unit. In fact, to achieve (i),  we choose periods using certain parabolic cohomology groups and then show that with this choice of period, the required $ p $-adic $ L $-functions becomes $ \mathcal{O} $-valued (see Theorems~\ref{choice of period and petterson innerproduct}, \ref{period and integral measure}). On the other hand, we use results from \cite[Chapter 5]{Hida3} and \cite{Hidaadjoint} on the adjoint $ L $-function of a modular form  to show that the condition (ii) is satisfied. Building up on these results, we  obtain the congruence of $ p $-adic $ L $-functions in equation \eqref{intro congruence of p-adic L-functions} in  Theorems~\ref{analytic final1}, \ref{analytic final}.  

			On the algebraic side, from \ref{irr-f} and $f$ is $p$-ordinary, it follows that $S_{\mathrm{Gr}}^{\Sigma_{0}}(A_f(\xi_{i}\omega_p^{-j})/\Q_{\cyc})^\vee $ is a torsion $ \mathcal{O}[[\Gamma]] $-module for $ i=1,2 $  (\cite{kato}). 
			The key ingredient  in the congruence \eqref{intro congruence of char ideals} is the following exact sequence, which is proved in Lemma~\ref{lem: dual selmer group sequence}:
			\begin{small}
				\begin{align}\label{intro exact sequence of dual selmer}
					0 \rightarrow S_{\mathrm{Gr}}^{\Sigma_{0}}(A_f(\bar{\xi}_1 \omega_{p}^{-j})[\pi]/\Q_{\cyc})^{\vee}  \rightarrow S_{\mathrm{Gr}}^{\Sigma_{0}}(A_j[\pi]/\Q_{\cyc})^{\vee} \rightarrow  S_{\mathrm{Gr}}^{\Sigma_{0}}(A_f(\bar{\xi}_2 \omega_{p}^{-j})[\pi]/\Q_{\cyc})^\vee \rightarrow 0.
				\end{align}
			\end{small}%
			Under \ref{irr-f}, we show that $ S_{\mathrm{Gr}}^{\Sigma_{0}}(B_j[\pi]/\Q_{\cyc}) = S_{\mathrm{Gr}}^{\Sigma_{0}}(B_j/\Q_{\cyc})[\pi] $ for $ B_j \in \{ A_f(\xi_1 \omega_{p}^{-j}), A_j, A_f(\xi_2 \omega_{p}^{-j})  \}$. 
			Further, for $ B_j \in \{ A_f(\xi_1 \omega_{p}^{-j}), A_j, A_f(\xi_2 \omega_{p}^{-j})\}$, we show that  $S_{\mathrm{Gr}}^{\Sigma_{0}}(B_j/\Q_{\cyc})^\vee $   has no non-zero pseudo-null submodule using a result of \cite{we}.  The last fact enables us to show that the base change holds for the characteristic ideals i.e. $C_{\mathcal{O}[[\Gamma]]}(S_{\mathrm{Gr}}^{\Sigma_{0}}(B_j/\Q_{\cyc})^{\vee})/ \pi \mathcal{O}[[\Gamma]] = C_{\mathbb{F}[[\Gamma]]}\big((S_{\mathrm{Gr}}^{\Sigma_{0}}(B_j/\Q_{\cyc})[\pi]\big)^{\vee})$ with $\mathbb{F} = \mathcal{O}/\pi$.  
			Putting all these together,  the congruence in  \eqref{intro congruence of char ideals} follows (See Theorem~\ref{thm:congruence of ideals}). 
			We now describe the key ideas involved to derive the exact sequence  \eqref{intro exact sequence of dual selmer}: Tensoring the exact sequence  \eqref{intro rho_h} with $ A_f \otimes \omega_{p}^{-j} $, we obtain 
			\begin{small}
				\begin{align*}
					0 \rightarrow A_f(\bar{\xi}_1 \omega_{p}^{-j})[\pi] \rightarrow A_j[\pi] \rightarrow  A_f(\bar{\xi}_2 \omega_{p}^{-j})[\pi] \rightarrow 0. 
				\end{align*}   
			\end{small}%
			A priori, the Selmer group $S^{\Sigma_{0}}_{\mathrm{Gr}}(B_j[\pi]/\Q_\cyc)$ depends on $B_j$.  Under hypotheses (i) and (ii) of Theorem~\ref{thm: congrunence algebraic intro}, we show that the local conditions appearing in the Selmer group $  S^{\Sigma_{0}}_{\mathrm{Gr}}(B_j[\pi]/\Q_\cyc)$ depend 
			only on $B_j[\pi]$. With this explicit description,  $  S^{\Sigma_{0}}_{\mathrm{Gr}}(B_j[\pi]/\Q_\cyc)$ is  essentially determined by the corresponding residual representation. Using  this  and the 
			hypotheses (iii), (iv) of  Theorem~\ref{thm: congrunence algebraic intro}, we deduce \eqref{intro exact sequence of dual selmer}. 

			The structure of article is as follows: In \S\ref{sec: Prelims and setup}, we recall the  basics of $p$-adic modular forms and  the $ p $-adic Rankin-Selberg $ L $-function due to Hida \cite{Hidarankin2}. In \S\ref{section: simplyfying rankin},  we define the Eisenstein series $ g $ and show that the $p$-adic $ L $-functions of $ f|\chi \otimes g|\bar{\chi} $ and  $ f|\chi \otimes h|\bar{\chi} $ are  congruent modulo $ \pi $. We also express the special values of the Rankin-Selberg $ L $-function of $ f|\chi \otimes g|\bar{\chi} $ as a product of the special values of the $ L $-functions attached to $ \tilde{f} \otimes \xi_1$ and $ \tilde{f} \otimes \xi_2$. Next in \S\ref{sec: periods and congruence}, we make an appropriate choice of periods $ \Omega_{\tilde{f}}^{\pm} $ and use it to obtain that the ideal generated by the $ p $-adic $ L $-function of $ f|\chi \otimes h|\bar{\chi}  $ in the Iwasawa algebra is congruent to the ideal generated by the product of the $ p $-adic $ L $-functions associated to $ \tilde{f} \otimes \xi_1 $ and $  \tilde{f} \otimes \xi_2 $ (Theorem~\ref{analytic final}). We also show that the ideal generated by the $p$-adic $ L $-function of $ f|\chi \otimes h|\bar{\chi} $ is equal to the $\Sigma_0$-imprimitive $p$-adic $L$-function of $f \otimes h$ in the Iwasawa algebra. 
			In \S\ref{sec: Selmer groups}, we recall the $ p^\infty $-Selmer groups of the Rankin-Selberg convolution $ f \otimes h $ and the modular form $ f \otimes\xi_{i} $ and further give explicit descriptions of these Selmer groups in terms of residual representations. In \S\ref{sec: Congruences of char ideals}, we show that the characteristic ideal of the $p^\infty $-Selmer group of $ f \otimes h $ is congruent to the product of the characteristic ideals attached to the  $p^\infty $-Selmer groups of  $ f \otimes \xi_1 $ and $ f \otimes \xi_2$.  In \S\ref{sec: IMC}, we prove our main theorem (Theorem~\ref{congruence main conjecture }) establishing the Iwasawa main conjecture modulo $ \pi $ for $ f \otimes h$. We  discuss a few concrete examples illustrating the result of Theorem~\ref{congruence main conjecture } in \S\ref{section: examples}. 
			
			\thanks{{\bf Acknowledgement:} S. Jha acknowledges the support of SERB MTR/2019/000996 grant.  A part of this work was done at ICTS-TIFR and we acknowledge the program ICTS/ecl2022/8. We thank Aribam Chandrakant for the help with the computations on $ \mu $-invariants. R. Vangala  acknowledges the support of IPDF  of IIT Kanpur.} 

		\section{Preliminaries and Setup}\label{sec: Prelims and setup}
		In this section, we begin by recalling $p$-adic modular forms, measures, Rankin-Selberg $ L $-function  and $ p $-adic Rankin-Selberg $ L $-function.  Throughout this section $ J $ denotes an arbitrary positive integer.
		\subsection{\texorpdfstring{$p$}{}-adic modular forms}
		In this subsection, we briefly recall the definition of the space of $p$-adic modular forms in the sense of Serre.
		For more details we refer the reader to \cite{Hidarankin1}, \cite{Hidarankin2}. We  fix embeddings   $ i_{p}: \bar{\mathbb{Q}} \rightarrow 
		\bar{\mathbb{Q}}_{p} $ and $ i_{\infty}:  \bar{\Q} \hookrightarrow \C $.  
		Let $ M_{k}  (\Gamma_{0}(J), \psi) $ 
		(resp. $ M_{k}  (\Gamma_{1}(J)) $) is the space of modular 
		forms with coefficients in $ \mathbb{C} $, nebentypus  $ \psi $ and the congruence subgroup $\Gamma_{0}(J)$ (resp. $ \Gamma_{1}(J) $).
		For a subring  $ R \subset \bar{\mathbb{Q}} $, let 
		$M_{k}(\Gamma_{0}(J), \psi ; R)$ (resp. 
		$M_{k}(\Gamma_{1}(J); R) $) denote the subspace of $ M_{k}  (\Gamma_{0}(J), \psi) $ 
		(resp. $ M_{k}  (\Gamma_{1}(J)) $)  consisting of all modular forms with Fourier coefficients in $ R $.
		Also, let  $ S_{k}  (\Gamma_{0}(J), \psi) $ 
		(resp. $S_{k}  (\Gamma_{1}(J)) $) be the space of cusp forms with coefficients in $ \mathbb{C} $, nebentypus  $ \psi $ and the congruence subgroup $ \Gamma_{0}(J) $ (resp. $ \Gamma_{1}(J) $).  Put
		\begin{small}{
				\begin{align*}
					S_{k}(\Gamma_{0}(J), \psi ; R) :=
					M_{k}(\Gamma_{0}(J), \psi ; R)  
					\cap S_{k}  (\Gamma_{0}(J), \psi),  \quad 
					S_{k}(\Gamma_{1}(J); R) := M_{k}(\Gamma_{1}(J); R) \cap
					S_{k}  (\Gamma_{1}(J)). 
			\end{align*}}
		\end{small}%
		For every modular form $ f(z) = \sum_{n\geq 0}^{} a(n,f) q^n $ with  Fourier coefficients in $\bar{\mathbb{Q}}$, we define a $p$-adic norm $ |f|_{p} $ by  $|f|_{p}  : = \sup_{n} |a(n,f)|_{p}$.
		For  a finite extension $ K$ of $ \mathbb{Q} $, let 
		$K_{\mathfrak{p}}$ be the closure of $ K $ in $ \mathbb{C}_{p} $ induced by the embedding $\iota_p: \bar{\Q} \rightarrow \bar{\Q}_p$, where $ \mathbb{C}_{p}  $ is the completion of $ \bar{\mathbb{Q}}_{p} $  and $ \mathcal{O} $ be the ring of integers of $ K_{\mathfrak{p}} $. Let $ M_{k}(\Gamma_{0}(J), \psi ; K_{\mathfrak{p}}) $
		(resp. $M_{k}(\Gamma_{1}(J) ; K_{\mathfrak{p}})$) denote the completion of
		$ M_{k}(\Gamma_{0}(J), \psi ; K) $
		(resp. $M_{k}(\Gamma_{1}(J) ; K)$) 
		with respect to the norm $ |\cdot|_{p} $ in $ K_{\mathfrak{p}}[[q]] $.
		Then it is known that (see \cite[Pg 170]{Hidarankin1})
		\begin{small}
			\begin{align*}
				M_{k}(\Gamma_{0}(J), \psi ; K_{\mathfrak{p}})  = 
				M_{k}(\Gamma_{0}(J), \psi ; K) \otimes_{K} K_{\mathfrak{p}}, \quad
				M_{k}(\Gamma_{1}(J) ; K) = M_{k}(\Gamma_{1}(J) ; K)
				\otimes_{K} K_{\mathfrak{p}}.
			\end{align*}
		\end{small}%
		Put  $M_{k}(\Gamma_{0}(J), \psi ; \mathcal{O})  :=  \mathcal{O}[[q]]  \cap  M_{k}(\Gamma_{0}(J), \psi ; K_{\mathfrak{p}})$  and  $M_{k}(\Gamma_{1}(J) ; \mathcal{O}) :=  \mathcal{O}[[q]]   \cap M_{k}(\Gamma_{1}(J) ; K_{\mathfrak{p}})$. For $ A \in \{ K_{\mathfrak{p}}, \mathcal{O} \}$,  set
		\begin{small}
			\begin{align*}
				M_{k}(J;A) = \cup_{n=0}^{\infty} 
				M_{k}(\Gamma_{1}(Jp^n);A)  \quad  \mathrm{and} \quad
				M_{k}(J, \psi ;A) = \cup_{n=0}^{\infty} 
				M_{k}(\Gamma_{0}(Jp^n), \psi;A). 
			\end{align*}
		\end{small}%
		Let $\overline{M}_{k}(J,\psi;A)$ (resp. $\overline{M}_{k}(J;A)$) denote the completion of $M_{k}(J,\psi;A)$ (resp. $M_{k}(J;A)$) under the norm $| \cdot |_{p}$ in $K_{\mathfrak{p}}[[q]]$. Any element of the space $\overline{M}_{k}(J;A) $ will be called a $p$-adic modular form. Similarly, one can define the spaces $ \overline{S}_{k}(J, \psi ;A) $ and $\overline{S}_{k}(J;A)$).

		Next, we consider the Hecke algebras of the space of $p$-adic modular forms. Denote by  $ H_{k}(\Gamma_{0}(J), \psi ; R) $
		(resp. $H_{k}(\Gamma_{1}(J) ; R)$), the Hecke algebra corresponding to the space of modular forms $ M_{k}(\Gamma_{0}(J), \psi;R) $  (resp. $ M_{k}  (\Gamma_{1}(J);R) $). Let $ h_{k}(\Gamma_{0}(J), \psi ; R) $ (resp. $h_{k}(\Gamma_{1}(J) ; R)$) denote the Hecke algebra corresponding to the space of cusp forms $ S_{k}  (\Gamma_{0}(J), \psi;R) $ (resp. $ S_{k}  (\Gamma_{1}(J);R) $). Set
		\begin{small}
			\begin{alignat*}{2}
				H_{k}(J, \psi; \mathcal{O} ) &= \varprojlim_{n} H_{k}(\Gamma_{0}(Jp^n), \psi; \mathcal{O} )
				\quad \text{ and } \quad
				H_{k}(J; \mathcal{O} ) &&= \varprojlim_{n} H_{k}(\Gamma_{1}(Jp^n);\mathcal{O} ), \\
				h_{k}(J, \psi; \mathcal{O} ) &= \varprojlim_{n} h_{k}(\Gamma_{0}(Jp^n), \psi; \mathcal{O} )
				\quad \text{ and } \quad
				h_{k}(J; \mathcal{O} ) &&= \varprojlim_{n} h_{k}(\Gamma_{1}(Jp^n);\mathcal{O} ). 
			\end{alignat*}
		\end{small}%
		\begin{definition}\label{idempotent defn.} 
			\textbf{(Idempotent)}(\cite[\S 2]{Hidarankin2})
			Let \begin{small}$ U(p) $\end{small} be the $ p^{\text{th}} $-Hecke operator in \begin{small}$H_{k}(\Gamma_{0}(Jp^n), \psi; \mathcal{O})$\end{small} and \begin{small}$H_{k}(\Gamma_{1}(Jp^n);\mathcal{O} )$\end{small}. For every $ n $, let $ e_n $ be the idempotent in
			\begin{small}$H_{k}(\Gamma_{0}(Jp^n), \psi; \mathcal{O})$\end{small} and  \begin{small}$H_{k}(\Gamma_{1}(Jp^n);\mathcal{O})$\end{small} defined by  \begin{small}$ \lim\limits_{m \rightarrow \infty} U(p)^{m!} $\end{small}. The idempotent operator $e$ in \begin{small}$ H_{k}(J, \psi; \mathcal{O}) $\end{small} and \begin{small}$ H_{k}(J; \mathcal{O}) $\end{small} is defined as $ \varprojlim_{n} e_{n} $.
		\end{definition}	
		\subsection{Measures  and \texorpdfstring{$p$}{}-adic Rankin-Selberg Convolution}
		In this subsection, we briefly recall the construction of $ p $-adic Rankin-Selberg $L$-function due to Hida \cite{Hidarankin2}. For a topological ring $ A $, let $ C(\mathbb{Z}_{p}^{\times}; A  ) $ and $ LC(\mathbb{Z}_{p}^{\times}; A ) $  denote the space of continuous (resp. locally constant) functions on  $ \mathbb{Z}_{p}^{\times} $ with values in  $ A $. Let $ g(z) = \sum_{n=0}^{\infty} a(n,g) q^n \in M_{l}(\Gamma_{0}(J),\psi;\mathcal{O})$. Then we consider the arithmetic measure (see \cite[Page 36, Example b]{Hidarankin2}) $ \mu_{g} $ of weight $l$ (see \cite[(5.1a)]{Hidarankin2}), defined by
		\begin{small}
			\begin{align}\label{Def measure mu star}
				\mu_{g}(\phi)= \sum_{n =1}^{\infty} \phi(n) a(g, n)q^n, \quad \forall ~ \phi \in  C(\mathbb{Z}_{p}^{\times};\mathcal{O}),
			\end{align}
		\end{small}%
		where  $ \phi  \equiv 0 $  on $ \mathbb{Z}_{p} \setminus \mathbb{Z}_{p}^\times$. Note that $\mu_g(\phi) \in S_l(\Gamma_0(J);\mathcal{O})$ for $\phi \in LC(\mathbb{Z}_{p}^{\times}; \mathcal{O})$. Let $ J_{0} $ be the prime to $ p $-part of $ J $.
		Let $L$ be a positive integer such that $J_{0} \mid L$.  Then we have a modified arithmetic measure defined as $ \mu_{g}^{L}(\phi) := \mu_{g}(\phi)\vert [L/J_{0}]$, where $[L/J_{0}] : \bar{S}_{l}(J_{0};\mathcal{O}) \rightarrow \bar{S}_{l}(L;\mathcal{O})$ is the linear map defined by  $(f \vert [L/J_{0}])(z) = f(zL/J_{0}), ~ \forall ~ f \in \bar{S}_{l}(N_{0};\mathcal{O})$. Put $Z_{L} = \mathbb{Z}_{p}^{\times} \times (\mathbb{Z}/L\mathbb{Z})^{\times}$. For $z \in Z_{L} $ we denote its component in $ \mathbb{Z}_{p}^{\times} $ by $z_{p}$. We consider the action (depending on the weight and the nebentypus of $g$) of the group $Z_L$ on $C(\mathbb{Z}_{p}^{\times}; \mathcal{O})$ by the formula $(z \ast \phi)(x) := \psi(z) z_{p}^{l}\phi(z_{p}^2 x)$ for $z \in Z_{L}$ and $\phi \in C(\mathbb{Z}_{p}^{\times}; \mathcal{O})$. We also consider the arithmetic measure of weight one defined by
		\begin{small}
			\begin{align*}
				2 E(\phi ) = \sum_{\substack{n=1\\ (n,p)=1}}^{\infty}
				\sum_{\substack{d \mid n \\ (d,L)=1}} \mathrm{sgn}(d) \phi(d) q^{n}
				\in \mathbb{Z}_{p} [[q]].
			\end{align*}
		\end{small}%
		For a given integer $k>l$ and a finite order character $\eta: Z_{L} \rightarrow  \mathbb{Z}_{p}^{\times}$, we consider the arithmetic measure $(\mu_{g}^{L} \star E)_{\eta,k} : C(\mathbb{Z}_{p}^{\times};\mathcal{O})\rightarrow \bar{S}_{k}(L;\mathcal{O})$ of weight $k$ and character $ \eta $ defined by convolution of $ \mu_{g}^{L}$ and $ E $ as follows:
		\begin{small}
			\begin{align*}
				(\mu_{g}^{L}\star E)_{\eta,k}(\phi) := \int_{\mathbb{Z}_{p}^{\times} }\int_{Z_{L}} \eta(z) z_{p}^{k-1} (z^{-1} \ast \phi)(x) dE(z) d\mu_{g}^{L}(x).
			\end{align*}   
		\end{small}%
		By \cite[(9.3)]{Hidarankin2} (see also \cite[Section 2]{Bouganis}), for a finite order character $ \phi \in C(Z_{L}; \mathcal{O})  $,
		we have 
		\begin{small}
			\begin{equation}\label{convolution mu and Eisenstien measure}
				\begin{aligned}	
					(\mu_{g}^{L} \star E)_{\eta,k} (x_p^{j}\phi)  & =  \int_{\mathbb{Z}_{p}^{\times} } \int_{Z_{L}}
					\eta(z) z_{p}^{k-1} (z^{-1} \ast x_p^{j}\phi)(x) dE(z) d\mu_{g}^{L}(x) \\
					& = \int_{\mathbb{Z}_{p}^{\times} } \int_{Z_{L}}
					\eta(z) z_{p}^{k-1} \psi(z)^{-1}  z_{p}^{-l} z_{p}^{-2j} \phi(z_{p}^{-2})
					\phi( x) dE(z) d\mu_{g}^{L}(x) \\
					& = \mu_{g}^{L}(x_{p}^{j}\phi)  \cdot E(\eta \cdot \psi^{-1} \cdot  (\phi_{p}^{-2}) \mathfrak{Z}_{p}^{k-l-2j-1}),
				\end{aligned} 	
			\end{equation}		
		\end{small}%
		where $ \mathfrak{Z}_{p}(z)=z_{p} $ and $ \phi_{p}(z) = \phi(\mathfrak{Z}_{p}(z)) $.  
		
		Let $ f(z) = \sum_{n=1}^{\infty} a(n,f) q^n$ be a  normalized 	Hecke eigenform of weight  $ k > l$, level $ N_f $ and  nebentypus $ \eta $.  We assume  $ f $ is $p$-ordinary  i.e. $ | i_p(a(n,f)) |_{p} =1 $.  We define  the $ p $-stabilization of $ f $ by  
		\begin{small}
			\begin{align}\label{p-stabilization of f}
				f_{0}(z) = 
				\begin{cases}
					f(z) &\text{ if } p \mid N_{f},\\
					f(z) - \beta_{f} f(pz) & \text{ otherwise},
				\end{cases}
			\end{align}
		\end{small}%
		where $ \beta_{f} $ is the unique root of $ X^2 - a(p,f) X + \eta(p) p^{k-1} = 0 $ with $ |\beta_{f} |_{p} < 1 $. It is well-known  that the level of $f_0$, $N_{f_{0}} = N_{f} $ if $ p \mid N_{f} $ and $ N_{f_{0}} = pN_{f} $ if $ p \nmid N_{f} $ and nebentypus of $ f_{0} $ is $ \eta $. Note that $u_f:=a(p,f_0)$ is the unique $p$-adic unit root of the Hecke polynomial of $f$ at $p$.  For simplicity, by a slight abuse of notation, we  will denote $ N_f $ by $N$. We assume that  the Fourier coefficients of $f$ and the values of $ \eta $ lie in  $ \mathcal{O}$. Hence we obtain a surjective homomorphism $\phi_{f}: h_{k}(\Gamma_{0}(Np), \eta; \mathcal{O}) \rightarrow \mathcal{O}$ induced by $ T(n) \rightarrow a(n,f_{0}) $. Assume that the map $ \phi_{f} $ induces the decomposition 
		\begin{align}\label{splitting of hecke algebra}
			h_{k}(\Gamma_{0}(Np), \eta ; K_{\mathfrak{p}}) = K_{\mathfrak{p}} \oplus A.
		\end{align}
		It is known that the above splitting holds if $f$ is primitive. We will need the decomposition in  \eqref{splitting of hecke algebra} for a  twist of certain specific modular form and it will be made explicit in \S\ref{section: simplyfying rankin}. Let $ 1_{f_0} $ be the  idempotent attached to the first summand. 
		We fix a constant $ c(f_0) \in \mathcal{O}$ such that 
		$ c(f_0) 1_{f_0} \in 
		h_{k}(\Gamma_{0}(Np), \eta ;  \mathcal{O})$.  The idempotent $ 1_{f_0} $ induces a map 
		\begin{small}
			\[ 
			l_{f_{0}} :   e \bar{S}_{k}(N, \eta ; K_{\mathfrak{p}})
			\rightarrow K_{\mathfrak{p}}
			\]
		\end{small}%
		defined by $l_{f_0}(ex) = a(1, x \vert e \vert 1_{f_0})  $ where $e \in H_k(\Gamma_{0}(N), \eta ;\mathcal{O})$ is the idempotent operator defined in 
		Definition~\ref{idempotent defn.} and $ x \in 
		\bar{S}_{k}(N, \eta ; K_{\mathfrak{p}}) $.  It follows from 
		\cite[Proposition 4.1]{Hidarankin1} that $ e \bar{S}_{k}(N, \eta ; K_{\mathfrak{p}}) \subset S_{k}(\Gamma_{0}(Np), \eta ; K_{\mathfrak{p}}) $ and the map $ l_{f_0} $ 
		is well-defined. Let $L$ be the least common multiple of
		$ J_{0} $ and $ N $ where $ J_{0} $ denotes prime to $ p $ part of $ J $. Let $ \omega_p:  (\mathbb{Z}/{p}\mathbb{Z})^{\times}  \rightarrow \mathbb{Z}_{p}^{\times}$ denote the Teichm\"uller character  and $ \chi $ be a Dirichlet character of $ (\mathbb{Z}/Np\mathbb{Z})^{\times} $. For an integer $ r \geq 1 $ and a finite order character $ \epsilon : (1+p\mathbb{Z}_{p})  \rightarrow
		\mathcal{O}^{\times} $ of conductor  $p^r$, let
		$Tr_{L/N}: M_{k}(\Gamma_{0}(Lp^r), \epsilon \chi ; \mathcal{O}) \rightarrow  M_{k}(\Gamma_{0}(Np^r), \epsilon \chi ; \mathcal{O})
		$
		denote the trace operator considered in \cite[VI.(1.7)]{Hidarankin2}. For $ g \in  M_{l}(\Gamma_{0}(J), \psi)$, put  $ g^{\rho}(z) =  \sum_{n=0}^{\infty} \overline{a(n,g)} q^n  \in M_{l}(\Gamma_{0}(J), \psi^{-1})$. Take  $ \epsilon \chi = \eta $ and define the measure  $ \mu_{f \times g} $ as follows: 
		\begin{align}\label{Def of convolution}
			\mu_{f \times g}(\phi) :=  
			c(f_0)  \circ l_{f_0} \circ Tr_{L/N} \circ 
			e((\mu_{g^{\rho}}^{L} \star E)_{\eta,k}(\phi^{-1})), ~~ \forall \text{ finite order character }\phi \in C(\mathbb{Z}_{p}^{\times},\mathcal{O}), 
		\end{align}
		where  $\circ $ denotes the composition of maps. By the choice of $c(f_0)$, it follows that $\mu_{f \times g}$ is an $\mathcal{O}$-valued measure i.e. $\mu_{f \times g}(\phi) \in \mathcal{O}$, for all $\phi \in LC(\mathbb{Z}_p^\times, \mathcal{O})$.
		
		The Rankin-Selberg $ L $-function of $ f \otimes g $ is defined by 
		\begin{small}
			\begin{align*}
				D_{JN}(s,f,g) := L_{JN}(2s+2-k-l, \psi \eta)	
				L(s,f,g) :=  L_{JN}(2s+2-k-l, \psi \eta)	
				\sum_{n=1}^{\infty} a(n,f) a(n,g) n^{-s},
			\end{align*}
		\end{small}%
		where $L_{JN}(2s+2-k-l, \psi \eta)$ denotes the 
		Dirichlet $L$-function of $ \psi \eta $ with the Euler factors at the primes dividing $JN$ omitted from its Euler product (see \cite[\S 1]{Hidarankin1}). For $ f, h \in S_k(N,\chi) $, define the Petersson inner product of
		$f$ and $h$  by \begin{small}$   \langle f,h \rangle_N := \int_{\mathbb{H}/\Gamma_{0}(N)} f(z)\overline{h(z)} y^{k-2} ~ dz$,\end{small}
		where $ y = \text{Im}(z) $. 
		
		For integers $N$ and $J$, let $[N,J]$ denote the least common multiple of $N$ and $J$. With the notation as above we recall the following theorem:
		\begin{theorem}\label{padic rankin}$($\cite[Theorem 5.1 and Section 8]{Hidarankin2}, \cite[Theorem 2.9]{Bouganis}$)$ Let $ f \in S_{k}(\Gamma_{0}(N), \eta) $ be a normalised $ p $-ordinary eigenform with $ p \nmid N $ and $ g \in M_{l}(\Gamma_{0}(Jp^{\alpha}),\psi) $ with $ k>l $ and $ (J,p) =1 $.	
			Assume that \eqref{splitting of hecke algebra} holds.
			For every finite order character   $ \phi \in C(\mathbb{Z}_{p}^{\times} ,
			\bar{\mathbb{Q}}_p) $ and $ 0 \leq j \leq k-l-1 $, the $\mathcal{O}$-valued measure $\mu_{f \times g}$ satisfies the following interpolation property:
			\begin{small}
				\begin{align*}
					\mu_{f \times g}(x_p^{j} \phi ) = \int_{\mathbb{Z}_{p}^{\times} } x_p^{j} \phi ~ d\mu_{f \times g}
					=  c(f_0) t   p^{\beta l/2} p^{\beta j} p^{(2-k)/2}
					a(p,f_0)^{1-\beta}  \frac{D_{JNp}(l+j,f_{0}, 
						\mu_{g^{\rho}}(\phi^{-1})|_{l}\tau_{Jp^{\beta}})}{(2i)^{k+l+2j} \pi^{l+2j+1} 
						{ \langle f_{0}^{\rho}|_{k}\tau_{Np} , f_{0} \rangle_{Np}}},       		
				\end{align*}
			\end{small}%
			where $\beta $ is the smallest positive integer such that $ \mu_{g^{\rho}}(\phi^{-1})|\tau_{Jp^{\beta}}
			\in M_{l}(\Gamma_{1}(Jp^{\beta})),  \tau_{Jp^{\beta}} = 
			\begin{psmallmatrix}  0 & -1 \\ Jp^{\beta} & 0 \end{psmallmatrix} $
			and $ t = [N,J] N^{k/2} J^{(l+2j)/2} \Gamma(l+j)\Gamma(j+1)  $.
		\end{theorem}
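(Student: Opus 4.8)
The plan is to unwind the definition \eqref{Def of convolution} of the measure $\mu_{f\times g}$ and reduce the computation to the classical Rankin--Selberg integral representation (Shimura's method). Evaluating at a function $x_p^j\phi$ with $\phi$ a finite order character, one first applies the convolution identity \eqref{convolution mu and Eisenstien measure}, which splits the weight-$k$ measure $(\mu_{g^\rho}^L\star E)_{\eta,k}$ into a product $\mu_{g^\rho}^L(x_p^j\phi^{-1})\cdot E\bigl(\eta\psi^{-1}\phi_p^{-2}\mathfrak{Z}_p^{\,k-l-2j-1}\bigr)$. The next step is to recognize the two factors classically: $\mu_{g^\rho}^L(x_p^j\phi^{-1})$ is, up to the prescribed $p$-power level shift $[L/J_0]$, the $\theta^j$-twist (with $\theta=q\,d/dq$) of $g^\rho\otimes\phi^{-1}$, hence of weight $l+2j$; and the Eisenstein measure at the displayed argument has Fourier expansion $\sum_n\bigl(\sum_{d\mid n}\mathrm{sgn}(d)\,d^{\,k-l-2j-1}\eta\psi^{-1}\phi_p^{-2}(d)\bigr)q^n$, which is a $p$-stabilization of a holomorphic Eisenstein series of weight $k-l-2j$; the two weights add to $k$, as they must. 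The exponent $\beta$ in the statement is exactly the least $p$-power one must adjoin to the level so that $\mu_{g^\rho}(\phi^{-1})\vert_l\tau_{Jp^\beta}$ becomes a genuine form on $\Gamma_1(Jp^\beta)$, and the factor $a(p,f_0)^{1-\beta}$ is the $U(p)$-eigenvalue correction incurred when the ordinary idempotent $e$ is iterated past this optimal level.

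Having reduced to the product $F:=\mu_{g^\rho}^L(x_p^j\phi^{-1})\cdot E(\cdots)$ of a (twist of a) classical cusp form of weight $l+2j$ and a classical holomorphic Eisenstein series of weight $k-l-2j$, the core is to analyze the composite $c(f_0)\circ l_{f_0}\circ \mathrm{Tr}_{L/N}\circ e$ applied to the $p$-adic cusp form $F$ of weight $k$. By \cite[Proposition 4.1]{Hidarankin1} the ordinary projection $eF$ lies in $S_k(\Gamma_0(Np),\eta;K_\mathfrak{p})$, and by construction of the idempotent $1_{f_0}$ cutting out the $f_0$-eigencomponent, $l_{f_0}$ reads off (the first Fourier coefficient of) this component. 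Using the adjointness of the Hecke operators, of $U(p)$, and of $\mathrm{Tr}_{L/N}$ with respect to the Petersson pairing, one identifies $l_{f_0}\bigl(\mathrm{Tr}_{L/N}(eF)\bigr)$ with a ratio $\langle f_0^\rho\vert_k\tau_{Np},\,\widetilde F\,\rangle_{Np}\big/\langle f_0^\rho\vert_k\tau_{Np},\,f_0\rangle_{Np}$ up to an explicit scalar, where $\widetilde F$ is $F$ after trace descent; the Fricke/Atkin--Lehner involutions on the level-$Np$ and level-$Jp^\beta$ towers, together with an index factor $[N,J]$, are precisely what produce the twists $\tau_{Np}$ and $\tau_{Jp^\beta}$ appearing in the formula.

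It then remains to evaluate the numerator $\langle f_0^\rho\vert_k\tau_{Np},\,\widetilde F\rangle$ by Rankin's unfolding: the Petersson product of $f_0$ against (a weight-$(l+2j)$ cusp form) $\times$ (a holomorphic Eisenstein series of weight $k-l-2j$) unfolds against the defining sum of the Eisenstein series and collapses to the Dirichlet series $\sum_n a(n,f_0)\overline{a(n,g)}\,\phi^{-1}(n)\,n^{-(l+j)}$ times the imprimitive factor $L_{JNp}(l+2j+2-k,\psi\eta)$, i.e. to $D_{JNp}\bigl(l+j,f_0,\mu_{g^\rho}(\phi^{-1})\vert_l\tau_{Jp^\beta}\bigr)$; the archimedean Mellin integral then supplies the Gamma factors $\Gamma(l+j)\Gamma(j+1)$, the powers of $2i$ and $\pi$, and the factors $N^{k/2}$ and $J^{(l+2j)/2}$. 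Collecting this with the measure normalization $c(f_0)$ and the $p$-power contributions $p^{\beta l/2}p^{\beta j}p^{(2-k)/2}$ coming from the level shifts $[L/J_0]$ and $[Jp^\beta/J]$ and the Atkin--Lehner normalizations yields the asserted interpolation formula; the $\mathcal{O}$-valuedness is immediate since $c(f_0)1_{f_0}$ is an integral Hecke operator and $\mu_{g^\rho}^L$, $E$, $e$, $\mathrm{Tr}_{L/N}$ all preserve $\mathcal{O}$-integral $q$-expansions. The main obstacle is the bookkeeping: one must track the Fricke involutions, the $U(p)$-corrections (the $a(p,f_0)^{1-\beta}$), and all the archimedean and $p$-adic normalization constants through the unfolding simultaneously, and must verify that every intermediate object stays holomorphic so that no holomorphic projection (with its own correction terms) is required --- this is precisely the technical heart of \cite{Hidarankin2}, streamlined in \cite{Bouganis}.
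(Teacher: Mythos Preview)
The paper does not supply a proof of this theorem at all: it is stated with the citation to \cite[Theorem~5.1 and Section~8]{Hidarankin2} and \cite[Theorem~2.9]{Bouganis} and then used as a black box. Your proposal is therefore not competing with any argument in the paper; rather, you have written an outline of the proof strategy in the cited references themselves, as you acknowledge in your final sentence.

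As an outline of Hida's argument your sketch is accurate in its broad strokes: the convolution identity \eqref{convolution mu and Eisenstien measure} does split the measure into a $\theta^j$-twisted cusp form times a $p$-depleted Eisenstein series, the linear form $l_{f_0}$ is indeed computed via Petersson adjointness and Fricke involutions, and Rankin unfolding is what produces $D_{JNp}$ together with the archimedean constants. Two caveats are worth flagging. First, your claim that ``no holomorphic projection is required'' is slightly optimistic: in Hida's treatment the product of the nearly-holomorphic $\theta^j$-twist with the Eisenstein series is genuinely nearly holomorphic, and one uses that the ordinary projector $e$ kills the non-holomorphic part (this is the content of \cite[\S4]{Hidarankin1}), so the holomorphic projection is implicitly absorbed into $e$ rather than avoided. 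Second, the precise bookkeeping of the powers $p^{\beta l/2}$, $p^{\beta j}$, $p^{(2-k)/2}$ and the factor $a(p,f_0)^{1-\beta}$ requires a careful comparison of Petersson products at levels $Np$, $Lp^\beta$, and $Jp^\beta$ via trace and Atkin--Lehner; your sketch names the ingredients but does not carry this out, and it is exactly here that errors creep in. If you intend this as more than a pointer to the literature, those two points would need to be filled in.
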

		\subsection{Simplifying the \texorpdfstring{$L$}{}-function}

		In this subsection, we express the Rankin-Selberg $L$-function  $D_{JNp}(l+j,f_{0}, 
		\mu_{g^{\rho}}(\phi^{-1})|_{l}\tau_{Jp^{\beta}})$ appearing in Theorem~\ref{padic rankin} in terms of $D_{JNp}(l+j,f_0,\mu_{g}(\phi))$. 
		
		For $g \in M_{l}(Jp^\alpha,\psi)$ with $p\nmid J$, we  denote the characteristic polynomial of $ g $ at $ p $ by $ P_{p}(g,T) $. More precisely, let 
		\begin{small}
			\begin{align}\label{Euler factor at p}
				P_{p}(g,T)  = \begin{cases}
					\psi(p) p^{l-1} T^2 - a(p,g) T + 1 & \text{ if } \alpha \neq 0, \\
					1 - a(p,g) T  & \text{ if } \alpha =0.
				\end{cases}
			\end{align}
		\end{small}%
		Note that $P_p(g,p^{-s})$ gives the Euler factor at $p$ appearing in the $L$-function $L(s,g)$.  For a newform $g \in S_{l}(Jp^\alpha,\psi)$, let $W(g)$ be the root number of $g$ i.e. $g|\tau_{Jp^\alpha} = W(g) g^{\rho}$ (cf. \cite[Page 37]{Hidarankin2} and \cite[Theorem 4.6.15]{Miyake}). We first treat the case $\phi$ is the trivial character of $\mathbb{Z}_p^\times$ and $g$ is a cusp form.  
		
		\begin{lemma}\label{special value f,cusp and trivial character}
			Let $f \in S_{k}(\Gamma_{0}(N),\eta)$ be a normalised $p$-ordinary eigenform with $p\nmid N$. Let $g \in S_{l}(\Gamma_{0}(J p^\alpha),\psi)$ be a $p$-ordinary newform with $p\nmid J$ and $2\leq l<k$. Let $\beta  $ be as defined  in Theorem~\ref{padic rankin}. For $ 0 \leq j \leq k-l-1 $, we have 	$\beta = \max\{\alpha,1\} +1 $ and
			\begin{small}
				\begin{align*}
					p^{\beta(l+2j)/2} D_{JNp}(l+j,f_0, 
					\mu_{g^\rho}(\iota_{p})
					|\tau_{Jp^{\beta}})  =   p^{\alpha(l+2j)/2} u_{f}^{\beta-\alpha} W(g^\rho)   P_{p}(g,p^j u_{f}^{-1}) 
					D_{JNp}(l+j,f_0,g).
				\end{align*} 
			\end{small} %
		\end{lemma}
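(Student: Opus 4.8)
The plan is to recognise $\mu_{g^\rho}(\iota_p)$ as the $p$-depletion of $g^\rho$, to unwind the Fricke involution $\tau_{Jp^\beta}$ acting on it by means of the newform structure of $g$, and then to pair the resulting form against $f_0$ through the $\mathcal{O}$-linear functional $D_{JNp}(l+j,f_0,\,\cdot\,)$, the key point being that $f_0$ is a $U(p)$-eigenform with unit eigenvalue $u_f$, which will supply the remaining powers of $p$ and $u_f$. To begin, by \eqref{Def measure mu star} the form $\mu_{g^\rho}(\iota_p)=\sum_{(n,p)=1}a(n,g^\rho)q^n$ is the $p$-depletion of $g^\rho$; since $g$, hence $g^\rho$, is a newform, multiplicativity of the Fourier coefficients together with the Hecke recursion at $p$ gives $\mu_{g^\rho}(\iota_p)=g^\rho\,|\,P_p(g^\rho,V_p)$, where $V_p\colon\sum a_nq^n\mapsto\sum a_nq^{pn}$ and $P_p(g^\rho,T)$ is the $p$-Euler polynomial of $g^\rho$ (as in \eqref{Euler factor at p}; of degree $1$ if $p\mid Jp^\alpha$ and of degree $2$ if $\alpha=0$), written out through $a(p,g^\rho)=\overline{a(p,g)}$ and the nebentypus $\psi^{-1}$ of $g^\rho$. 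As $g$ is $p$-ordinary we have $a(p,g)\neq 0$, so the $V_p$-summand (and, when $\alpha=0$, the $V_p^2$-summand) is genuinely present; comparing the $p$-levels of the mutually non-proportional old/new pieces $g^\rho$, $g^\rho|V_p$ (and $g^\rho|V_p^2$) by Atkin--Lehner--Li theory shows that $\mu_{g^\rho}(\iota_p)\in M_l(\Gamma_1(Jp^t))$ precisely for $t\geq\max\{\alpha,1\}+1$. Since $\tau_{Jp^\beta}$ normalises $\Gamma_1(Jp^\beta)$ and $\tau_{Jp^\beta}^2$ is a scalar matrix, $\mu_{g^\rho}(\iota_p)|\tau_{Jp^\beta}\in M_l(\Gamma_1(Jp^\beta))$ is equivalent to $\mu_{g^\rho}(\iota_p)\in M_l(\Gamma_1(Jp^\beta))$, whence $\beta=\max\{\alpha,1\}+1$; put $d:=\beta-\alpha$, which equals $\deg P_p(g^\rho,T)$.

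Next I would unwind $\tau_{Jp^\beta}$. Identifying $V_p$, up to the power-of-$p$ scalar of the weight-$l$ slash action, with $\begin{psmallmatrix}p&0\\0&1\end{psmallmatrix}$, the matrix identities $\begin{psmallmatrix}p^{i}&0\\0&1\end{psmallmatrix}\tau_{Jp^\beta}=\tau_{Jp^{\beta-i}}\begin{psmallmatrix}p^{i}&0\\0&p^{i}\end{psmallmatrix}$ and $\tau_{Jp^{\beta-i}}=\tau_{Jp^{\alpha}}\begin{psmallmatrix}p^{\,d-i}&0\\0&1\end{psmallmatrix}$, valid for $0\leq i\leq d$, rewrite each term $g^\rho|V_p^{\,i}|\tau_{Jp^\beta}$ as an explicit power of $p$ times $g^\rho|\tau_{Jp^{\alpha}}|V_p^{\,d-i}$. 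The newform Atkin--Lehner relation $g^\rho|\tau_{Jp^{\alpha}}=W(g^\rho)\,(g^\rho)^{\rho}=W(g^\rho)\,g$ then turns $\mu_{g^\rho}(\iota_p)|\tau_{Jp^\beta}$ into $W(g^\rho)$ times an explicit $\mathcal{O}$-linear combination of the forms $g|V_p^{\,j}$ ($0\leq j\leq d$), whose coefficients — once the Hecke eigenvalue at $p$, the root number and the nebentypus of $g^\rho$ are rewritten in terms of those of $g$ via the standard relations — are built out of the coefficients of $P_p(g,T)$ and powers of $p$; this rewriting is precisely why $P_p(g,T)$, and not $P_p(g^\rho,T)$, appears in the final formula.

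Finally I would feed this into $D_{JNp}(l+j,f_0,\,\cdot\,)=L_{JNp}(l+2j+2-k,\psi\eta)\sum_{n}a(n,f_0)a(n,\,\cdot\,)n^{-(l+j)}$, which is $\mathcal{O}$-linear in its last slot, the Dirichlet $L$-factor depending only on $JNp$ and on the nebentypus $\psi$ (unchanged under $\tau_{Jp^\beta}$ and under $V_p$). Because $f_0|U(p)=u_f f_0$ with $u_f=a(p,f_0)$ a unit, one has $a(p^{\,i}n,f_0)=u_f^{\,i}a(n,f_0)$ for all $n$ and $i\geq 0$, so that
\begin{align*}
D_{JNp}(l+j,f_0,g|V_p^{\,i})=p^{-i(l+j)}\,u_f^{\,i}\,D_{JNp}(l+j,f_0,g),\qquad i\geq 0 .
\end{align*}
Substituting this into the linear combination from the previous step (so each $V_p$ is replaced by the scalar $p^{-(l+j)}u_f$) and collecting the powers of $p$ and of $u_f$, the coefficients reassemble, using $d=\beta-\alpha$, into $p^{(\alpha-\beta)(l+2j)/2}\,u_f^{\,\beta-\alpha}\,W(g^\rho)\,P_p(g,p^{\,j}u_f^{-1})$; multiplying through by $p^{\beta(l+2j)/2}$ gives the asserted identity.

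The hard part will be the normalization bookkeeping in the middle step: one must track exactly the power-of-$p$ (and nebentypus) constants produced by the weight-$l$ slash action of $\begin{psmallmatrix}p&0\\0&1\end{psmallmatrix}$, $\begin{psmallmatrix}p&0\\0&p\end{psmallmatrix}$ and $\tau_{Jp^\beta}$ — these are the source of the half-integral exponents $(l+2j)/2$ and of the prefactor $p^{\beta(l+2j)/2}$ in the statement — while simultaneously converting the $g^\rho$-data into $g$-data through $a(p,g^\rho)=\overline{a(p,g)}$, the compatibility of $W(g^\rho)$ with $W(g)$, and the behaviour of the nebentypus under $g\mapsto g^\rho$. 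The level-theoretic and Atkin--Lehner inputs invoked above are standard.
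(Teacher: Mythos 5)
Your proposal follows essentially the same route as the paper's proof: write $\mu_{g^\rho}(\iota_p)$ as $g^\rho$ slashed with its $p$-Euler polynomial in $[p]$, unwind $\tau_{Jp^\beta}$ through the Atkin--Lehner relation $g^\rho|\tau_{Jp^\alpha}=W(g^\rho)\,g$ exactly as in the paper's identity $g^\rho|[p^i]|\tau_{Jp^\beta}=W(g^\rho)p^{(\beta-\alpha-2i)l/2}g|[p^{\beta-\alpha-i}]$, and then use $D_{JNp}(l+j,f_0,g|[p^i])=p^{-i(l+j)}u_f^{\,i}D_{JNp}(l+j,f_0,g)$ to collect terms; the only presentational difference is that you re-derive $\beta=\max\{\alpha,1\}+1$ from level considerations where the paper simply cites Hida's Lemma 5.2(i). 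The one step you leave vague --- converting the naturally occurring $P_p(g^\rho,p^ju_f^{-1})$ into the $P_p(g,p^ju_f^{-1})$ of the statement --- is not actually carried out in the paper either, whose own displayed computation terminates with $P_p(g^\rho,\cdot)$.
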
 	
		\begin{proof}
			Since $ g^\rho \in S_{l}(J p^\alpha,\bar{\psi})$ is a Hecke eigenform,  we have  (see \cite[Line -1, Page 46]{Hidarankin2})
			\begin{align}\label{iota fourier cusp}
				g^\rho|\iota_{p} = 
				\begin{cases}
					g^\rho - a(p,g^\rho) g^\rho|[p]+\bar{\psi}(p)p^{l-1} g^\rho|[p^2] 
					& \text{ if } \alpha \neq 0, \\
					g^\rho - a(p,g^\rho) g^\rho|[p] & \text{ if } \alpha = 0.
				\end{cases}
			\end{align}
			As $g$ is primitive, for every $ i \geq 0 $,  we have
			\begin{small}
				\begin{equation}\label{Atkin involution and d}
					\begin{aligned}
						g^\rho|[p^i]|\tau_{Jp^\beta} & = p^{-il/2} 
						g^\rho|\begin{psmallmatrix}
							p^i & 0 \\ 0 &1 \end{psmallmatrix} 
						\begin{psmallmatrix}
							0 & -1 \\ J p^{\beta} &0 \end{psmallmatrix} \\
						& =p^{-il/2} 
						p^{(\beta-\alpha-i)l/2}
						(g^{\rho}|\tau_{Jp^\alpha})| [p^{\beta-\alpha-i}]  \\
						& = W(g^\rho) p^{(\beta-\alpha-2i)l/2}
						g|[p^{\beta-\alpha-i}]. 
					\end{aligned}
				\end{equation}
			\end{small}%
			Since $a(p,g) \neq 0$ and $g$ is primitive,  we have $g$ is  not super-cuspidal at $p$ and $g$ is $p$-minimal (cf. remark after \cite[Lemma 10.1]{H6}).
			
			\noindent \underline{Case $ \alpha = 0$}:    By \cite[Lemma 5.2(i)]{Hidarankin2}, we have $ \beta =2  = \max\{\alpha,1\}+1$. Therefore by \eqref{iota fourier cusp} and \eqref{Atkin involution and d}, we have
			\begin{small}
				\begin{align*}
					& D_{JNp}(l+j,f_0, 
					\mu_{g^\rho}(\iota_{p})
					|\tau_{Jp^{\beta}})  
					\\ 
					& = D_{JNp}(l+j,f_0, 
					g^\rho
					|\tau_{Jp^{\beta}})  - a(p,g^\rho) D_{JNp}(l+j,f_0, g^\rho|[p]|\tau_{J p^{\beta}})+\bar{\psi}(p) p^{l-1} D_{JNp}(l+j,f_0,g^\rho)|[p^2]|\tau_{J p^{\beta}}) 
					\\ 	      
					& =W(g^\rho) \Big(p^l D_{JNp} (l+j,f_0, g|[p^{\beta}]) - a(p,g^\rho) D_{JNp} (l+j,f_0, (g)|[p^{\beta-1}])+ \bar{\psi}(p)
					p^{-1} D_{JNp} (l+j,f_0, (g)|[p^{\beta-2}]) \Big).  	
				\end{align*}
			\end{small}%
			Since  $ L(l+j,f_{0}, g|[p^i]) = p^{-i(l+j)}  u_{f}^{i} L(l+j,f_{0}, g)$ and $\beta =2$, we get
			\begin{small}
				\begin{align}\label{cusp atkin and trivial char}
					\begin{split}
						D_{JNp}(l+j,f_{0}, \mu_{g^{\rho}}(\iota_{p}) |\tau_{Jp^{\beta}})  
						& =  W(g^{\rho})  D_{JNp}(l+j,f_{0},g)  
						\Big( p^{-l-2j}  u_{f}^2 - p^{-l-j}a(p,g^\rho) u_{f} + p^{-1} \bar{\psi}(p) \Big)  \\
						&=  p^{-l -2j} u_{f}^{2} W(g^{\rho})   P_{p}(g^{\rho},p^{j}  u_{f}^{-1}) 
						D_{I_{0}Np}(l+j,f_{0}, g) .
					\end{split}
				\end{align} 
			\end{small}%
			
			\noindent\underline{Case $ \alpha \neq 0$}:  
			By \cite[Lemma 5.2(i)]{Hidarankin2},  $\beta = \alpha+1$.  
			Then it follows from \eqref{iota fourier cusp} and \eqref{Atkin involution and d} that
			\begin{small}
				\begin{align*}
					D_{JNp}&(l+j,f_0, 
					\mu_{g^\rho}(\iota_{p})
					|\tau_{J p^{\beta}})  
					= W(g^{\rho})
					\Big( p^{ l/2} D_{JNp}(l+j,f_{0},
					g|[p]) -  p^{-l/2} 
					a(p,g^{\rho}) D_{JNp}(l+j,f_{0}, g) \Big).
				\end{align*}
			\end{small}%
			By a similar argument as in \eqref{cusp atkin and trivial char}, we have
			\begin{small}
				\begin{align*}
					D_{JNp}(l+j,f_0, 
					\mu_{g^\rho}(\iota_{p})
					|\tau_{J p^{\beta}})  
					& =     p^{- l/2}W(g^{\rho}) D_{JNp}(l+j,f_{0},
					g) \Big( p^{-j} u_{f}
					-  a(p,g^{\rho}) \Big) \\
					& =  p^{-j-l/2} u_{f} W(g^{\rho})    P_{p}(g^{\rho},p^j u_{f}^{-1})
					D_{JNp}(l+j,f_{0}, g).       	
				\end{align*}
			\end{small}%
			Noting that $\beta = \alpha +1$ finishes the proof.
		\end{proof}
		
		Let $v_{p}(\cdot)$ be the $p$-adic valuation on $\mathbb{Q}$ with $v_{p}(p)=1$. For every Dirichlet character $\psi$, let cond$(\psi)$ denote the conductor of $ \psi $ and $\psi_p$ denote the $p$-part of $\psi$ i.e. $\psi = \psi_{p}\psi'$ where $\psi_{p}$ is a character of $p$-power conductor and the conductor of $\psi'$ is co-prime to $p$. For a Dirichlet character $ \chi $ and a modular form $ g(z) = \sum_{n \geq 0} a(n,g) q^n$, let $ (g\vert \chi) (z) := \sum_{n \geq 1} \chi(n) a(n,g) q^n$ denote the twist of $ g $ by $ \chi$.  We next treat the case $\phi$ is a non-trivial  finite order character of $\mathbb{Z}_p^\times$ and $g$ is a cusp form.
		\begin{lemma}\label{special value f,cusp and non-trivial character}
			Let $f \in S_{k}(\Gamma_{0}(N),\eta)$ be a normalised $p$-ordinary eigenform with $p\nmid N$. Let $g \in S_{l}(\Gamma_{0}(J p^\alpha),\psi)$ be a $p$-ordinary newform with $p\nmid J$ and $2\leq l<k$. Let $\beta  $ be as defined  in Theorem~\ref{padic rankin}. Then for every non-trivial finite order  character $\phi$ of $\mathbb{Z}_p^\times$ with 
			$ \phi \neq \bar{\psi}_{p} $ and $0 \leq j\leq k-l-1$, we have $\beta = v_p(\mathrm{cond}(\phi)) + v_p( \mathrm{cond}(\phi \bar{\psi}))$ and 
			\begin{align*}
				D_{JNp}(l+j,f_{0}, 
				\mu_{g}(\bar{\phi})|\tau_{J p^{\beta}})  =  W(g^\rho|\bar{\phi})
				D_{JNp}(l+j,f_0, g|\phi).
			\end{align*}
		\end{lemma}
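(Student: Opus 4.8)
The plan is to reduce the non-trivial character case to an Atkin--Lehner computation analogous to Lemma~\ref{special value f,cusp and trivial character}, but now keeping track of the twisting character $\phi$. First I would recall that since $g$ is a newform on $\Gamma_0(Jp^\alpha)$ with $a(p,g)\neq 0$, it is $p$-minimal and not supercuspidal at $p$, so its twist $g|\phi$ by a non-trivial character $\phi$ of $p$-power conductor is again a newform, on $\Gamma_0(J p^{c})$ where $c = \max\{\alpha, 2v_p(\mathrm{cond}(\phi))\}$ — and under the stated hypothesis $\phi\neq\bar\psi_p$ one actually gets $c = v_p(\mathrm{cond}(\phi)) + v_p(\mathrm{cond}(\phi\bar\psi))$; this is the standard formula for the conductor of a twisted character/automorphic representation in the principal series case. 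The identification $\beta = v_p(\mathrm{cond}(\phi)) + v_p(\mathrm{cond}(\phi\bar\psi))$ should then follow from \cite[Lemma 5.2]{Hidarankin2}, which computes $\beta$ as the level (at $p$) needed so that $\mu_{g^\rho}(\bar\phi)|\tau_{Jp^\beta}$ becomes holomorphic on $\Gamma_1(Jp^\beta)$ — and $\mu_{g^\rho}(\bar\phi) = g^\rho|\bar\phi$ up to the $p$-stabilization, which is exactly $(g|\phi)^\rho$ up to a constant.

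Next I would carry out the key reduction: the measure $\mu_{g^\rho}(\bar\phi)$ applied to a character is, by \eqref{Def measure mu star}, the twist $g^\rho|\bar\phi$ restricted away from $p$, which agrees with $(g|\phi)^\rho$ (since twisting by $\bar\phi$ commutes with complex conjugation of Fourier coefficients up to replacing $\phi$ by $\bar\phi$) up to the part supported at $p$. Because $\phi$ is non-trivial of $p$-power conductor and $g|\phi$ is already $p$-primitive in the relevant sense (its $p$-th Fourier coefficient behaves like that of a newform of conductor $p^{\geq 1}$), I expect $\mu_{g^\rho}(\bar\phi) = (g|\phi)^\rho$ exactly, with no correction terms of the kind that appeared in \eqref{iota fourier cusp} — this is why the final formula has no $P_p(g, \cdot)$ factor, in contrast to Lemma~\ref{special value f,cusp and trivial character}. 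Then I would apply the Atkin--Lehner relation $(g|\phi)|\tau_{Jp^\beta} = W(g|\phi)\,(g|\phi)^\rho$, equivalently $(g|\phi)^\rho|\tau_{Jp^\beta} = W((g|\phi)^\rho)\,(g|\phi) = W(g^\rho|\bar\phi)\,(g|\phi)$, using that $(g|\phi)^\rho = g^\rho|\bar\phi$. Plugging this into the Rankin--Selberg symbol $D_{JNp}(l+j, f_0, -)$ and using its linearity in the second argument gives directly
\begin{align*}
D_{JNp}(l+j, f_0, \mu_g(\bar\phi)|\tau_{Jp^\beta}) = W(g^\rho|\bar\phi)\, D_{JNp}(l+j, f_0, g|\phi),
\end{align*}
as claimed. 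Note $\mu_g(\bar\phi)$ versus $\mu_{g^\rho}(\bar\phi)$: one should be careful whether the statement uses $g$ or $g^\rho$; the displayed identity in the lemma has $\mu_g(\bar\phi)$ on the left, so I would track the conjugation convention so that $\mu_g(\bar\phi)|\tau_{Jp^\beta}$ is proportional to $g^\rho|\phi$ and hence to $(g|\bar\phi)^\rho$, yielding root number $W(g^\rho|\bar\phi)$ — consistent with the statement.

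The main obstacle I anticipate is the bookkeeping at the prime $p$: verifying that $\mu_g(\bar\phi)$ really equals the naive twist $g|\bar\phi$ with \emph{no} extra Euler-factor correction (so that, unlike the trivial-character case, no polynomial $P_p$ in $p^j u_f^{-1}$ appears), and simultaneously pinning down the exact power of $p$ in the level, i.e. that $\beta = v_p(\mathrm{cond}(\phi)) + v_p(\mathrm{cond}(\phi\bar\psi))$ rather than something involving $\max$. This requires the non-supercuspidal/$p$-minimal structure of $g$ at $p$ (invoked via the remark after \cite[Lemma 10.1]{H6} as in the previous proof) together with the local conductor formula for twists of principal series, plus the hypothesis $\phi\neq\bar\psi_p$ which rules out the degenerate case where $g\otimes\phi$ would have smaller-than-expected conductor at $p$. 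Once those two facts are in place, the computation is a clean application of Atkin--Lehner theory and the linearity of the Rankin--Selberg convolution in its second variable, with no delicate cancellation of the kind seen in Lemma~\ref{special value f,cusp and trivial character}.
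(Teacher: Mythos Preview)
Your proposal is correct and follows essentially the same approach as the paper. The paper's proof is extremely terse: it notes (from the previous lemma) that $g$ is $p$-minimal and not supercuspidal, then cites the proof of \cite[Lemma~5.2(i)]{Hidarankin2} to conclude that $g^\rho|\bar\phi$ is primitive with $\beta = v_p(\mathrm{cond}(\phi)) + v_p(\mathrm{cond}(\phi\bar\psi))$, and says the result follows---which is exactly your Atkin--Lehner step $(g^\rho|\bar\phi)|\tau_{Jp^\beta} = W(g^\rho|\bar\phi)\,(g|\phi)$ applied inside $D_{JNp}(l+j,f_0,-)$; you have simply unpacked what ``the lemma follows'' means, including the reason no $P_p$-factor appears (primitivity of the twist kills the correction terms from \eqref{iota fourier cusp}).
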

		\begin{proof}
			As observed in Lemma~\ref{special value f,cusp and trivial character} we have $g$ is $p$-minimal and not super-cuspidal. By the proof of \cite[Lemma 5.2(i)]{Hidarankin2} we have $g^\rho|\bar{\phi}$ is primitive and $\beta = v_p(\mathrm{cond}(\phi)) + v_p( \mathrm{cond}(\phi \bar{\psi}))$. Now the lemma follows.
		\end{proof}
		The following result is an immediate consequence  of Theorem~\ref{padic rankin} and Lemmas~\ref{special value f,cusp and trivial character}, \ref{special value f,cusp and non-trivial character}.
		\begin{corollary}\label{special value f,cusp}
			Let $f \in S_{k}(\Gamma_{0}(N),\eta)$ be a normalised $p$-ordinary newform with $p\nmid N$. Let $g \in S_{l}(\Gamma_{0}(J p^\alpha),\psi)$ be a $p$-ordinary newform with $p\nmid J$ and $2\leq l<k$. Let $\beta  $ be as defined  in Theorem~\ref{padic rankin}. Set $t(f,g)=[N,J] N^{k/2} J^{(l+2j)/2} \Gamma(l+j)\Gamma(j+1)$. Then we have
			\begin{enumerate}
				\item[$\mathrm{(i)}$] For $ 0 \leq j \leq k-l-1 $ and $\phi = \iota_{p}$, we have $\beta = \max\{\alpha,1\} +1 $ and 
				\begin{small}
					\begin{align*}
						\mu_{f \times g}(x_p^{j} \iota_p ) = c(f_0) t(f,g) p^{\alpha(l+2j)/2} u_{f}^{1-\alpha}   p^{(2-k)/2} W(g^\rho)   P_{p}(g,p^j u_{f}^{-1})
						\frac{D_{JNp}(l+j,f_0,g)}{(2i)^{k+l+2j} \pi^{l+2j+1} 
							{ \langle f_{0}^{\rho}|_{k}\tau_{Np} , f_{0} \rangle_{Np}}} .
					\end{align*} 
				\end{small} %
				\item[$\mathrm{(ii)}$] For $ 0 \leq j \leq k-l-1 $ and $\phi \neq \iota_{p}, \bar{\psi}_{p}$, we have  $\beta = v_p(\mathrm{cond}(\phi)) + v_p( \mathrm{cond}(\phi \bar{\psi}))$ and 
				\begin{small}
					\begin{align*}
						\mu_{f \times g}(x_p^{j} \phi ) = c(f_0) t(f,g) p^{\beta(l+2j)/2} u_{f}^{1-\beta}   p^{(2-k)/2} W(g^\rho|\bar{\phi})   
						\frac{D_{JNp}(l+j,f_0,g|\phi)}{(2i)^{k+l+2j} \pi^{l+2j+1} 
							{ \langle f_{0}^{\rho}|_{k}\tau_{Np} , f_{0} \rangle_{Np}}} .
					\end{align*} 
				\end{small} %
			\end{enumerate}
			
		\end{corollary}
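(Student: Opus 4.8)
The plan is to observe that Corollary~\ref{special value f,cusp} is just a specialization and rewriting of Theorem~\ref{padic rankin} once we feed in the computations of Lemmas~\ref{special value f,cusp and trivial character} and \ref{special value f,cusp and non-trivial character}. Since $f$ is now assumed to be a \emph{newform} (strengthening the ``normalised eigenform'' hypothesis of the lemmas), in particular \eqref{splitting of hecke algebra} holds, so Theorem~\ref{padic rankin} applies with $J$ the prime-to-$p$ part of the level of $g$ and gives, for every finite order character $\phi$ of $\mathbb{Z}_p^\times$ and $0\le j\le k-l-1$,
\begin{small}
\begin{align*}
\mu_{f\times g}(x_p^j\phi) = c(f_0)\, t\, p^{\beta l/2} p^{\beta j} p^{(2-k)/2} a(p,f_0)^{1-\beta}\,\frac{D_{JNp}(l+j,f_0,\mu_{g^\rho}(\phi^{-1})|_l\tau_{Jp^\beta})}{(2i)^{k+l+2j}\pi^{l+2j+1}\langle f_0^\rho|_k\tau_{Np},f_0\rangle_{Np}},
\end{align*}
\end{small}
where $t=[N,J]N^{k/2}J^{(l+2j)/2}\Gamma(l+j)\Gamma(j+1)=t(f,g)$ and $a(p,f_0)=u_f$.

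For part (i), I would take $\phi=\iota_p$, the trivial character of $\mathbb{Z}_p^\times$. Lemma~\ref{special value f,cusp and trivial character} gives $\beta=\max\{\alpha,1\}+1$ and
\begin{small}
\begin{align*}
p^{\beta(l+2j)/2}D_{JNp}(l+j,f_0,\mu_{g^\rho}(\iota_p)|\tau_{Jp^\beta}) = p^{\alpha(l+2j)/2} u_f^{\beta-\alpha} W(g^\rho) P_p(g,p^j u_f^{-1}) D_{JNp}(l+j,f_0,g).
\end{align*}
\end{small}
Substituting this into the displayed formula, the power of $p$ coming from $p^{\beta l/2}p^{\beta j}=p^{\beta(l+2j)/2}$ cancels against the $p^{-\beta(l+2j)/2}$ I introduce by dividing, leaving $p^{\alpha(l+2j)/2}$; the factor $u_f^{1-\beta}\cdot u_f^{\beta-\alpha}=u_f^{1-\alpha}$; and the factors $c(f_0)$, $t(f,g)$, $p^{(2-k)/2}$, $W(g^\rho)$, $P_p(g,p^ju_f^{-1})$ and the ratio $D_{JNp}(l+j,f_0,g)/\big((2i)^{k+l+2j}\pi^{l+2j+1}\langle f_0^\rho|_k\tau_{Np},f_0\rangle_{Np}\big)$ are exactly what appears in the claimed identity. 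For part (ii), take $\phi\ne\iota_p,\bar\psi_p$ a non-trivial finite order character; Lemma~\ref{special value f,cusp and non-trivial character} gives $\beta=v_p(\mathrm{cond}(\phi))+v_p(\mathrm{cond}(\phi\bar\psi))$ and $D_{JNp}(l+j,f_0,\mu_g(\bar\phi)|\tau_{Jp^\beta})=W(g^\rho|\bar\phi)D_{JNp}(l+j,f_0,g|\phi)$. Here one needs the bookkeeping $\mu_{g^\rho}(\phi^{-1})=\mu_{g^\rho}(\bar\phi)$ together with the relation $\mu_{g^\rho}(\bar\phi)$ versus $\mu_g(\bar\phi)$: since $g^\rho$ has Fourier coefficients $\overline{a(n,g)}$ and $\bar\phi$ is the conjugate character, $\mu_{g^\rho}(\bar\phi)=\sum\bar\phi(n)\overline{a(n,g)}q^n=\overline{\mu_g(\phi)}=(\mu_g(\phi))^\rho$, so its image under $\tau_{Jp^\beta}$ matches the cuspidal computation; inserting into the Theorem and multiplying/dividing by $p^{\beta(l+2j)/2}$ yields precisely the formula in (ii).

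The only real point requiring care — and the ``main obstacle'' such as it is — is the consistency of the various normalizations: matching the $\mathbb{Z}_p^\times$-character $\phi$ in $\mu_{f\times g}$ (which enters via $\phi^{-1}$) with the Dirichlet character twist $g|\phi$ and the root number $W(g^\rho|\bar\phi)$ as in Lemma~\ref{special value f,cusp and non-trivial character}, and keeping track of complex conjugation when passing between $g$ and $g^\rho$. Once these identifications are made, both parts are a direct substitution with the $p$-power exponents collected as above, so I would present the proof as a short paragraph citing Theorem~\ref{padic rankin} and the two lemmas and performing this substitution, treating the trivial-character and non-trivial-character cases separately exactly as the two lemmas do.
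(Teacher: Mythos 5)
Your proposal is correct and matches the paper, which proves the corollary by exactly this route: it states that the result is an immediate consequence of Theorem~\ref{padic rankin} together with Lemmas~\ref{special value f,cusp and trivial character} and \ref{special value f,cusp and non-trivial character}, i.e. the substitution and exponent bookkeeping you carry out. Your extra remarks on identifying $\phi^{-1}=\bar\phi$ and reconciling $\mu_{g^\rho}(\bar\phi)$ with the lemma's notation are the right points to flag, but they do not constitute a different argument.
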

		We also need the analogue of the above result in the case $g$ is an Eisenstein series. For any  Dirichlet character $ \psi $,  let $ \psi_0 $ denote the primitive character associated  to $ \psi $ and $ G(\psi) := \sum_{a =1}^ {\mathrm{cond}(\psi) } \psi_0(a) e^{2 \pi i a/\mathrm{cond}(\psi) }$ denote the Gauss sum of $ \psi $. Further, for a prime $ r $,  let $\mathrm{cond}_{r}(\psi) $ denote the $ r $-part of $ \mathrm{cond}(\psi)  $. 
		Let $ \chi_{\cyc} : G_{\mathbb{Q}} \rightarrow \mathbb{Z}_{p}^{\times}$ denote  the $ p $-adic cyclotomic character. We now define the Eisenstein series considered in this article.

		\begin{lemma}\label{Eisenstein series Hida}
			Let $\theta $ and $ \varphi $ be primitive Dirichlet characters modulo $ u $ and $ v $ respectively with $ \theta\varphi(-1) = (-1)^{l} $.  Then 
			\begin{small}{
					\begin{align*}
						E_{l}(\theta,\varphi)(z) = \delta_1(u) L(0,\varphi) +
						\delta(v) L(1-l, \theta) +\delta_{2}(u,v) \frac{i}{2 \pi (z-\bar{z})}
						+  \sum_{n=1}^{\infty} \sum_{0 < d \mid n}  \theta(d)
						\varphi(n/d) d^{l-1} q^{n} \in M_{l}(uv,\theta\varphi),
					\end{align*}
			} \end{small}
			\begin{scriptsize}
				\begin{align*}
					\mathrm{where} ~~	\delta_1(u) = \begin{cases}
						2^{-1} & \text{ if } l= u=1,\\
						0 & \text{ otherwise},
					\end{cases} \qquad 
					\delta_2(u,v) = \begin{cases}
						2^{-1} & \text{ if } l=2 \text{  and } u=v=1,\\
						0 & \text{ otherwise},
					\end{cases} \qquad 
					\delta(v) &= \begin{cases}
						2^{-1} & \text{ if }  v=1,\\
						0 & \text{ otherwise}.
					\end{cases}
				\end{align*}
			\end{scriptsize}
			Further, we have 
			\begin{enumerate}[label=$\mathrm{(\roman*)}$]
				\item $(E_{l}(\theta,\varphi)\vert \tau_{uv})(z) =
				(uv^{-1})^{l/2} \varphi(-1) G(\varphi)/G(\bar{\theta})
				E_{l}(\bar{\varphi}, \bar{\theta})(z)$, where $ \tau_{uv}= \begin{psmallmatrix} 0 & -1 \\  uv & 0
				\end{psmallmatrix} $. 
				\item $L(s,E_{l}(\theta,\varphi)) = L(s-l+1,\theta)L(s,\varphi)$.
				\item The Galois representation
				$ \rho_{E_{l}(\theta,\varphi)}: G_{\mathbb{Q}} \rightarrow \mathrm{GL}_{2}(\bar{\mathbb{Q}}_{p})
				$
				is isomorphic to $	\theta \chi_{\cyc}^{l-1} \oplus \varphi $.  Furthermore, the representation $\rho_{E_{l}(\theta,\varphi)}$ is unramified at all primes  not dividing $ puv$.
			\end{enumerate}
			
		\end{lemma}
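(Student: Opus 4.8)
The plan is to derive the statement from the classical theory of Eisenstein series attached to a pair of Dirichlet characters (see, e.g., \cite[\S7.1--7.2]{Miyake} or \cite{Hida3}). First I would recall the standard construction: given primitive characters $\theta \bmod u$ and $\varphi \bmod v$ with $\theta\varphi(-1) = (-1)^l$, there is an Eisenstein series in $M_l(uv,\theta\varphi)$ whose $q$-expansion has $n$-th Fourier coefficient $\sum_{0<d\mid n}\theta(d)\varphi(n/d)d^{l-1}$; its constant term is a combination of the special $L$-values $L(1-l,\theta)$ and $L(0,\varphi)$, degenerating in the low-weight cases $l=1$ (extra contribution when $u=1$) and $l=2$ (where, when $u=v=1$, the genuinely holomorphic object fails to exist and one must add the non-holomorphic term $\frac{i}{2\pi(z-\bar z)}$). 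The factors $\delta_1(u),\delta_2(u,v),\delta(v)$ simply record these degenerate contributions, and the membership $E_l(\theta,\varphi)\in M_l(uv,\theta\varphi)$ is part of that classical statement, which I would cite rather than reprove.

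For (ii), the factorization is an immediate Dirichlet-convolution computation. Writing the $n$-th Fourier coefficient as $a(n)=\sum_{d\mid n}\big(\theta(d)d^{l-1}\big)\varphi(n/d)$ exhibits $(a(n))_{n\geq 1}$ as the multiplicative convolution of the completely multiplicative sequences $d\mapsto\theta(d)d^{l-1}$ and $m\mapsto\varphi(m)$, so on the region of absolute convergence $\sum_{n\geq 1}a(n)n^{-s}=\big(\sum_{d\geq 1}\theta(d)d^{l-1-s}\big)\big(\sum_{m\geq 1}\varphi(m)m^{-s}\big)=L(s-l+1,\theta)\,L(s,\varphi)$, and the identity extends to all $s$ by analytic continuation.

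For (iii), note that (ii) --- equivalently, the formula $a(r)=\varphi(r)+\theta(r)r^{l-1}$ for primes $r\nmid uv$, together with the matching Euler factors at $r\mid uv$ --- determines the Hecke eigensystem of $E_l(\theta,\varphi)$, hence the characteristic polynomial of $\mathrm{Frob}_r$ for every $r\nmid puv$. Since a semisimple representation is determined by these characteristic polynomials (Chebotarev together with Brauer--Nesbitt), and since $\theta\chi_{\cyc}^{l-1}\oplus\varphi$ has $\mathrm{Frob}_r$-characteristic polynomial $(X-\theta(r)r^{l-1})(X-\varphi(r))$, using $\chi_{\cyc}(\mathrm{Frob}_r)=r$, we conclude $\rho_{E_l(\theta,\varphi)}\cong\theta\chi_{\cyc}^{l-1}\oplus\varphi$. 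Ramification is then immediate: $\theta$ ramifies only at primes dividing $u$, $\varphi$ only at primes dividing $v$, and $\chi_{\cyc}$ only at $p$, so the representation is unramified outside $puv$.

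Finally, for (i), the transformation under $\tau_{uv}=\begin{psmallmatrix}0&-1\\uv&0\end{psmallmatrix}$ is the classical Fricke-involution functional equation for these Eisenstein series: one computes $E_l(\theta,\varphi)|\tau_{uv}$ directly from the lattice-sum (Hecke) definition via Poisson summation, which simultaneously produces the swap $(\theta,\varphi)\mapsto(\bar\varphi,\bar\theta)$ and the constant $(uv^{-1})^{l/2}\varphi(-1)G(\varphi)/G(\bar\theta)$; alternatively one cites the corresponding statement in \cite{Miyake} or \cite{Hida3}. I expect the main obstacle to be purely bookkeeping: pinning down the exact low-weight constant terms $\delta_\bullet$ and the precise Gauss-sum normalization in (i) requires care to reconcile conventions (primitive versus imprimitive characters, the normalization of the slash action, the sign of $\tau_{uv}$), but no new idea beyond the standard references is needed.
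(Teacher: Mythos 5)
Your proposal is correct and follows essentially the same route as the paper: the paper's proof consists precisely of citing \cite[Lemma 5.3]{Hidarankin2} for the $q$-expansion and the $\tau_{uv}$-transformation (i), the Dirichlet-convolution argument of \cite[Theorem 4.7.1]{Miyake} for (ii), and \cite[Theorem 9.6.6]{DS05} for (iii), and your write-up is just the standard unwinding of those citations (your Brauer--Nesbitt/Chebotarev identification in (iii) being the usual way one sees that the representation attached to this Eisenstein eigenform is $\theta\chi_{\cyc}^{l-1}\oplus\varphi$).
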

		\begin{proof}
			The main assertion and part (i) are immediate from \cite[Lemma 5.3]{Hidarankin2}.  Part (ii) can be proved following the argument in  \cite[Theorem 4.7.1]{Miyake}.
			The part (iii) follows from \cite[Theorem 9.6.6]{DS05}. 
		\end{proof}
		
		Let $\theta $ and $ \varphi $ be primitive Dirichlet characters  with $ \theta\varphi(-1) = (-1)^{l} $. Set the \textquote{root number} of the $E_l(\theta,\varphi)$
		\begin{small}
			\begin{equation}\label{root number of g definition}
				W(E_l(\theta,\varphi)) =  (\mathrm{cond}(\theta)/\mathrm{cond}(\varphi) )^{l/2} \varphi(-1) G(\varphi)/G(\bar{\theta}).
			\end{equation}
		\end{small}%
		We now prove the following analogue of Lemma~\ref{special value f,cusp and trivial character} in the case $g$ is an  Eisenstein series. 
		\begin{lemma}\label{special value f,Eis and trivial character}
			Let $f \in S_{k}(\Gamma_{0}(N),\eta)$ be a normalised $p$-ordinary eigenform with $p\nmid N$. Let $\theta$ and $\varphi$ be primitive Dirichlet characters of conductors $u,v$ respectively with $\theta\varphi(-1)=(-1)^l$ and $2 \leq l < k$. Set  $uv = Jp^s$ with $p\nmid J$. Let $g  = E_{l}(\theta, \varphi) \in M_{l}(\Gamma_{0}(Jp^s), \theta\varphi)$ and  $ g'= E_{l} (\varphi,\theta)\in M_{l}(\Gamma_{0}(Jp^s), \theta\varphi)$. Assume that $g$ is ordinary at $p$. Let $\beta  $ be as defined  in Theorem~\ref{padic rankin}. For $ 0 \leq j \leq k-l-1 $, we have 
			\begin{small}
				\begin{align*}
					p^{\beta(l+2j)/2} D_{JN p}(l+j,f_0, 
					\mu_{(g^\rho)}(\iota_{p})
					|\tau_{Jp^{\beta}})  =   p^{s(l+2j)/2} u_{f}^{\beta-s} W(g^\rho)   P_{p}(g^\rho,p^j u_{f}^{-1}) 
					D_{JNp}(l+j,f_{0}, g')
				\end{align*} 
			\end{small} %
			where $\beta = \max\{ s,1\} +1$ is as defined  in Theorem~\ref{padic rankin} and $W(g^\rho) = (u/v)^{l/2} \varphi(-1) G(\bar{\varphi})/ G(\theta)$.
		\end{lemma}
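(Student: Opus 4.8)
The plan is to run the proof of Lemma~\ref{special value f,cusp and trivial character} essentially verbatim, replacing the cusp newform $g$ there by the Eisenstein newform and replacing the Atkin--Lehner relation $g|\tau=W(g)g^{\rho}$ by its Eisenstein counterpart recorded in Lemma~\ref{Eisenstein series Hida}(i). Three preliminary observations set this up. First, since $a(n,E_{l}(\theta,\varphi))=\sum_{0<d\mid n}\theta(d)\varphi(n/d)d^{l-1}$ we have $g^{\rho}=E_{l}(\bar\theta,\bar\varphi)$; applying Lemma~\ref{Eisenstein series Hida}(i) with $(\theta,\varphi)$ replaced by $(\bar\theta,\bar\varphi)$ and using $\bar\varphi(-1)=\varphi(-1)$ gives
\[
g^{\rho}|\tau_{Jp^{s}}=W(g^{\rho})\,g',\qquad W(g^{\rho})=(u/v)^{l/2}\varphi(-1)\,G(\bar\varphi)/G(\theta),\quad g'=E_{l}(\varphi,\theta),
\]
which is the normalization claimed in the statement. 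Second, because $g$ is ordinary at $p$ and $l\geq 2$, the unit $a(p,g)=\varphi(p)+\theta(p)p^{l-1}\equiv\varphi(p)\pmod{p}$ forces $p\nmid v$; hence $p^{s}$ exactly divides $u$, the $p$-part of the nebentypus $\theta\varphi$ has conductor $p^{s}$, and $(\theta\varphi)(p)=0$ precisely when $s\geq 1$. Third, $g^{\rho}$ is a primitive Hecke eigenform whose Euler factor at $p$ is $P_{p}(g^{\rho},T)$ as in \eqref{Euler factor at p}, so its $p$-depletion is $\mu_{g^{\rho}}(\iota_{p})=g^{\rho}|\iota_{p}=g^{\rho}-a(p,g^{\rho})g^{\rho}|[p]+\overline{\theta\varphi}(p)p^{l-1}g^{\rho}|[p^{2}]$ (the Eisenstein analogue of \eqref{iota fourier cusp}, the last term absent when $s\geq 1$), and, exactly as in the cusp case via the proof of \cite[Lemma~5.2(i)]{Hidarankin2} together with \cite[Lemma~5.3]{Hidarankin2}, one gets $\beta=\max\{s,1\}+1$.

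The core step is the shift identity. From $\begin{psmallmatrix}p^{i}&0\\0&1\end{psmallmatrix}\tau_{Jp^{\beta}}=p^{i}\,\tau_{Jp^{s}}\begin{psmallmatrix}p^{\beta-s-i}&0\\0&1\end{psmallmatrix}$ (the central factor $p^{i}$ acting trivially in weight $l$) together with the first observation one obtains, exactly as in \eqref{Atkin involution and d},
\[
g^{\rho}|[p^{i}]|\tau_{Jp^{\beta}}=W(g^{\rho})\,p^{(\beta-s-2i)l/2}\,g'|[p^{\beta-s-i}],
\]
the right side being $0$ when $\beta-s-i<0$. I would then combine this with the elementary identity $D_{JNp}(l+j,f_{0},h|[p^{i}])=p^{-i(l+j)}u_{f}^{\,i}\,D_{JNp}(l+j,f_{0},h)$, valid for any form $h$ because $f_{0}$ is a $U(p)$-eigenform with eigenvalue $u_{f}=a(p,f_{0})$ and the prime-to-$p$ Dirichlet factor $L_{JNp}(l+2j+2-k,\theta\varphi\eta)$ is unaffected by the shift $[p^{i}]$. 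Feeding the depletion into $D_{JNp}(l+j,f_{0},\mu_{g^{\rho}}(\iota_{p})|\tau_{Jp^{\beta}})$, the three shifted terms, with coefficients $c_{0}=1$, $c_{1}=-a(p,g^{\rho})$, $c_{2}=\overline{\theta\varphi}(p)p^{l-1}$, recombine --- after factoring out $p^{-(\beta-s)(l/2+j)}u_{f}^{\,\beta-s}$ --- into $\sum_{i}c_{i}(p^{j}u_{f}^{-1})^{i}=P_{p}(g^{\rho},p^{j}u_{f}^{-1})$. Multiplying through by $p^{\beta(l+2j)/2}$ converts the residual $p$-exponent $\beta(l+2j)/2-(\beta-s)(l/2+j)$ into $s(l+2j)/2$, which is exactly the asserted formula; this is the Eisenstein image of the computation \eqref{cusp atkin and trivial char}.

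The main obstacle is purely the bookkeeping of the half-integral powers of $p$ carried by the operators $[p^{i}]$ and $\tau$, so that they assemble into the stated $p^{s(l+2j)/2}u_{f}^{\,\beta-s}$; alongside this, one checks that the regimes $s=0$ and $s\geq 1$ both feed into the single uniform Euler factor $P_{p}(g^{\rho},p^{j}u_{f}^{-1})$ of \eqref{Euler factor at p} (for $s\geq 1$ the quadratic contribution vanishes since $p\mid\mathrm{cond}(\theta\varphi)$). This is the same dichotomy already handled in the $\alpha=0$ versus $\alpha\neq 0$ cases of Lemma~\ref{special value f,cusp and trivial character}, so transporting that argument to the Eisenstein setting --- with Lemma~\ref{Eisenstein series Hida} supplying the Atkin--Lehner behaviour and the identification $g^{\rho}=E_{l}(\bar\theta,\bar\varphi)$ --- introduces no genuinely new difficulty.
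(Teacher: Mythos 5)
Your proposal is correct and follows essentially the same route as the paper: the Atkin--Lehner identity for $g^\rho=E_l(\bar\theta,\bar\varphi)$ from Lemma~\ref{Eisenstein series Hida}(i), the $p$-depletion formula, the shift identity for $g^\rho|[p^i]|\tau_{Jp^\beta}$, and the $U(p)$-eigenvalue identity $D_{JNp}(l+j,f_0,h|[p^i])=p^{-i(l+j)}u_f^{\,i}D_{JNp}(l+j,f_0,h)$, recombined into $P_p(g^\rho,p^ju_f^{-1})$; your uniform bookkeeping over $i$ is just a repackaging of the paper's separate $s=0$ and $s\neq 0$ computations. The one place the paper does more work is the value $\beta=\max\{s,1\}+1$: since Hida's Lemma~5.2(i) is stated for primitive cusp forms, the paper proves this for the Eisenstein series directly by contradiction arguments using \cite[Lemma 4.6.1, Theorem 4.6.4]{Miyake}, rather than by transporting Hida's proof as you propose, though your stated value and the surrounding argument are correct.
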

		\begin{proof}
			Since $ g^\rho $ is a Hecke eigenform,  we have  (see \cite[Page 46]{Hidarankin2})
			\begin{align}\label{iota fourier Eis}
				g^\rho|\iota_{p} = 
				\begin{cases}
					g^\rho - a(p,g^\rho) g^\rho|[p]+\bar{\theta}\bar{\varphi}(p)p^{l-1} g^\rho|[p^2] 
					& \text{ if } s \neq 0, \\
					g^\rho - a(p,g^\rho) g^\rho|[p] & \text{ if } s = 0.
				\end{cases}
			\end{align}
			For every $ i \geq 0 $, by Lemma~\ref{Eisenstein series Hida} we obtain
			\begin{small}
				\begin{equation}\label{Atkin involution and p, g}
					\begin{aligned}
						(g^\rho)|[p^i]|\tau_{J p^{\beta}} & = p^{-il/2} 
						(g^\rho)|\begin{psmallmatrix}
							p^i & 0 \\ 0 &1 \end{psmallmatrix} 
						\begin{psmallmatrix}
							0 & -1 \\ J p^{\beta} &0 \end{psmallmatrix} \\
						& =p^{-il/2} 
						p^{(\beta-s-i)l/2}
						(g^{\rho}|\tau_{Jp^s})| [p^{\beta-s-i}]  \\
						& = W(g^\rho) p^{(\beta-s-2i)l/2}
						g'|[p^{\beta-s-i}]. 
					\end{aligned}
				\end{equation}
			\end{small}%
			\underline{Case $ s = 0$}:   We have $p \nmid u,v$. By \cite[Lemma 4.6.1]{Miyake} and \eqref{iota fourier Eis}, we have $\beta \leq 2$. We claim that $\beta =2$. If $\beta =0$, then by \eqref{iota fourier Eis} we have $(a(p,g^\rho) g^\rho+\bar{\theta}\bar{\varphi}(p)p^{l-1} g^\rho|[p])|[p] = a(p,g^\rho) g^\rho|[p]+\bar{\theta}\bar{\varphi}(p)p^{l-1} g^\rho|[p^2]  \in M_l(\Gamma_{0}(J),\bar{\theta}\bar{\varphi}) \subset M_l(\Gamma_{0}(Jp),\bar{\theta}\bar{\varphi})$. By \cite[Theorem 4.6.4(1)]{Miyake}, we have $\bar{\theta}\bar{\varphi}(p)p^{l-1} g^\rho|[p] \in M_l(\Gamma_{0}(J),\bar{\theta}\bar{\varphi})$. Since $\bar{\theta}\bar{\varphi}(p) \neq 0$ in this case, we get $g^\rho|[p] \in M_l(\Gamma_{0}(J),\bar{\theta}\bar{\varphi})$. Since $p \nmid J$, this leads to contradiction according to \cite[Lemma 4.6.1(2)]{Miyake}. Hence $\beta \geq 1$. If $\beta =1$, then $\bar{\theta}\bar{\varphi}(p)p^{l-1} g^\rho|[p^2] \in M_l(\Gamma_{0}(Jp),\bar{\theta}\bar{\varphi})$. Hence by  \cite[Theorem 4.6.4(1)]{Miyake}, we have $g^\rho|[p] \in M_l(\Gamma_{0}(J),\bar{\varphi}\bar{\theta})$. By a similar argument as before, this again leads to a contradiction.  Hence $\beta =2 $ as claimed.
			Therefore by \eqref{iota fourier Eis} and \eqref{Atkin involution and p, g}, we have
			\begin{small}
				\begin{align*}
					& D_{JNp}(l+j,f_0, 
					\mu_{g^\rho}(\iota_{p})
					|\tau_{Jp^{\beta}})  
					\\ 
					& = D_{JNp}(l+j,f_0, 
					g^\rho
					|\tau_{Jp^{\beta}})  - a(p,g^\rho) D_{JNp}(l+j,f_0, g^\rho|\tau_{J p^{\beta}}|[p])+\bar{\theta}\bar{\varphi}(p) p^{l-1} D_{JNp}(l+j,f_0,g^\rho)|[p^2]\tau_{J p^{\beta}}) 
					\\ 	      
					& =W(g^\rho) \Big(p^l D_{JNp} (l+j,f_0, g|[p^{\beta}]) - a(p,g^\rho) D_{JNp} (l+j,f_0, g|[p^{\beta-1}]) + \bar{\theta}\bar{\varphi}(p)
					p^{-1} D_{JNp} (l+j,f_0, g|[p^{\beta-2}]) \Big).  	
				\end{align*}
			\end{small}%
			Since  $ L(l+j,f_{0}, g|[p^i]) = p^{-i(l+j)}  u_{f}^{i} L(l+j,f_{0}, g)$ and $\beta =2$, we get
			\begin{small}
				\begin{align}\label{Eis atkin and trivial char}
					\begin{split}
						D_{JNp}(l+j,f_{0}, 
						\mu_{g^{\rho}}(\iota_{p})
						|\tau_{Jp^{\beta}})  
						& =   W(g^{\rho})  D_{JNp}(l+j,f_{0},g)  \Big(
						p^{-l-2j}  u_{f}^2 - p^{-l-j}a(p,g^\rho) u_{f} +  p^{-1} \bar{\theta}\bar{\varphi}(p) \Big)  \\
						&=  p^{-l -2j} u_{f}^{2} W(g^{\rho})   P_{p}(g^{\rho},p^{j}  u_{f}^{-1}) 
						D_{JNp}(l+j,f_{0}, g) .
					\end{split}
				\end{align} 
			\end{small}%
			
			\noindent \underline{Case $ s \neq 0$}:  Since $g$ is $p$-ordinary, we have we have  $a(p,g) \neq 0$. Thus $p \nmid u$ or $p \nmid v$. 
			Again by \cite[Lemma 4.6.1]{Miyake}, we have  $\beta \leq s+1$. If $\beta \leq s$, then by \eqref{iota fourier Eis} and $a(p,g)\neq 0$ we have $g^\rho|[p] \in M_l(\Gamma_0(Jp^s), \bar{\theta}\bar{\varphi})$. By \cite[Theorem 4.6.4(2)]{Miyake}, we get $g=0$ which is a contradiction. Hence $\beta = s+1$.   
			Then it follows from \eqref{iota fourier Eis} and \eqref{Atkin involution and p, g} that 
			\begin{small}
				\begin{align*}
					D_{JNp}&(l+j,f_0, 
					\mu_{g^\rho}(\iota_{p})
					|\tau_{J p^{\beta}})  
					= W(g^{\rho})
					\Big( p^{ l/2} D_{JNp}(l+j,f_{0},
					g|[p]) -  p^{-l/2} 
					a(p,g^{\rho}) D_{JNp}(l+j,f_{0}, g) \Big).
				\end{align*}
			\end{small}%
			By a similar argument as in \eqref{Eis atkin and trivial char}, we have
			\begin{small}
				\begin{align*}
					D_{JNp}(l+j,f, 
					\mu_{g^\rho}(\iota_{p})
					|\tau_{J p^{\beta}})  
					& =     p^{- l/2}W(g^{\rho}) D_{JNp}(l+j,f_{0},
					g) \Big( p^{-j} u_{f}
					-  a(p,g^{\rho}) \Big) \\
					& =  p^{-j-l/2} u_{f} W(g^{\rho})    P_{p}(g^{\rho},p^j u_{f}^{-1})
					D_{JNp}(l+j,f_{0}, g).       	
				\end{align*}
			\end{small}%
			Noting that $\beta = s +1$ finishes the proof.
		\end{proof}
		We next treat 
		the analogue of Lemma~\ref{special value f,cusp and non-trivial character} in the case $g$ is an  Eisenstein series.
		\begin{lemma}\label{special value f,Eis and nontrivial character}
			Let $f,g $ and $g'$ be as in Lemma~\ref{special value f,Eis and trivial character}. Let $\phi$ be a non-trivial finite order character of $\mathbb{Z}_p^\times$ with  $\phi \neq \iota_p, \bar{\theta}_p, \bar{\varphi}_p$.  Let $\beta  $ be as defined  in Theorem~\ref{padic rankin}. For every $ 0 \leq j \leq k-l-1 $, we have 
			\begin{small}
				\begin{align*}
					D_{JN p}(l+j,f_0, 
					\mu_{(g^\rho)}(\bar{\phi})
					|\tau_{Jp^{\beta}})  =   W(g^\rho|\phi)   
					D_{JNp}(l+j,f_{0}, g'|\phi)
				\end{align*} 
			\end{small} %
			where $\beta = v_{p}(\mathrm{cond}(\theta \phi)) + v_{p}(\mathrm{cond}(\varphi \phi))$  and $W(g^\rho|\bar{\phi}) = (\mathrm{cond}(\theta \phi)/\mathrm{cond}(\varphi \phi))^{l/2} \varphi\phi(-1) G(\bar{\varphi}\bar{\phi})/ G(\theta \phi)$.
		\end{lemma}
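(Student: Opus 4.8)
The plan is to mirror the proof of the cuspidal analogue Lemma~\ref{special value f,cusp and non-trivial character}, using the Atkin--Lehner identity for Eisenstein series (Lemma~\ref{Eisenstein series Hida}(i)) in place of the one for cuspidal newforms, and identifying every twisted Eisenstein series that occurs via its $q$-expansion. First, since $\phi$ is a non-trivial character of $\mathbb{Z}_p^{\times}$ its conductor is a positive power of $p$, so $\bar\phi(n)=0$ whenever $p\mid n$ and therefore $\mu_{g^\rho}(\bar\phi)=\sum_{n\ge1}\bar\phi(n)a(n,g^\rho)q^n$ is literally the Dirichlet twist $g^\rho\otimes\bar\phi$. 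Writing $g^\rho=E_l(\bar\theta,\bar\varphi)$ (complex conjugation acts on the Fourier coefficients and on the $L$-values in the constant term of Lemma~\ref{Eisenstein series Hida}), the multiplicativity identity $\bar\phi(n)\sum_{d\mid n}\bar\theta(d)\bar\varphi(n/d)d^{l-1}=\sum_{d\mid n}(\bar\theta\bar\phi)_0(d)(\bar\varphi\bar\phi)_0(n/d)d^{l-1}$ holds for all $n\ge1$ once the primitive characters $(\bar\theta\bar\phi)_0$ and $(\bar\varphi\bar\phi)_0$ are ramified at $p$ — which is exactly what the hypotheses $\phi\ne\bar\theta_p$ and $\phi\ne\bar\varphi_p$ ensure, and which also forces both sides to vanish when $p\mid n$; since the two primitive characters then have conductor divisible by $p$, the constant term of $E_l\big((\bar\theta\bar\phi)_0,(\bar\varphi\bar\phi)_0\big)$ vanishes, and comparing $q$-expansions identifies $\mu_{g^\rho}(\bar\phi)=E_l\big((\bar\theta\bar\phi)_0,(\bar\varphi\bar\phi)_0\big)$ exactly.

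Because $\phi$ is unramified outside $p$, the prime-to-$p$ parts of $\mathrm{cond}(\theta\phi)$ and $\mathrm{cond}(\varphi\phi)$ multiply to $J$, so this Eisenstein newform has exact level $\mathrm{cond}(\theta\phi)\cdot\mathrm{cond}(\varphi\phi)=Jp^{\beta}$ with $\beta=v_p(\mathrm{cond}(\theta\phi))+v_p(\mathrm{cond}(\varphi\phi))$; that this is the minimal $\beta$ of Theorem~\ref{padic rankin} follows as in the proof of \cite[Lemma~5.2(i)]{Hidarankin2}. Applying Lemma~\ref{Eisenstein series Hida}(i) with $(\theta,\varphi,uv)$ there replaced by $\big((\bar\theta\bar\phi)_0,(\bar\varphi\bar\phi)_0,Jp^{\beta}\big)$ gives
\[
\mu_{g^\rho}(\bar\phi)\big|\tau_{Jp^{\beta}}=\Big(\tfrac{\mathrm{cond}(\theta\phi)}{\mathrm{cond}(\varphi\phi)}\Big)^{l/2}(\bar\varphi\bar\phi)_0(-1)\,\frac{G\big((\bar\varphi\bar\phi)_0\big)}{G\big(\overline{(\bar\theta\bar\phi)_0}\big)}\;E_l\big((\varphi\phi)_0,(\theta\phi)_0\big),
\]
and since $(\bar\varphi\bar\phi)_0(-1)=\varphi\phi(-1)$, $G\big((\bar\varphi\bar\phi)_0\big)=G(\bar\varphi\bar\phi)$ and $G\big(\overline{(\bar\theta\bar\phi)_0}\big)=G(\theta\phi)$, the scalar is precisely the quantity $W(g^\rho|\bar\phi)$ of the statement (compare \eqref{root number of g definition}).

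Finally, the same $q$-expansion computation applied to the $\phi$-twist of $g'=E_l(\varphi,\theta)$, again using $\phi\ne\bar\theta_p,\bar\varphi_p$, yields $g'|\phi=E_l\big((\varphi\phi)_0,(\theta\phi)_0\big)$, so $\mu_{g^\rho}(\bar\phi)\big|\tau_{Jp^{\beta}}=W(g^\rho|\bar\phi)\,(g'|\phi)$; taking $D_{JNp}(l+j,f_0,-)$ of both sides, which depends on the form only through its degree $\ge1$ Fourier coefficients and its (here matching) nebentypus, gives the stated identity. The one step needing care, and the only place where the hypotheses $\phi\ne\iota_p,\bar\theta_p,\bar\varphi_p$ are used, is precisely the identification of the twisted $q$-series $\mu_{g^\rho}(\bar\phi)$ and $g'|\phi$ with the primitive Eisenstein series $E_l((\bar\theta\bar\phi)_0,(\bar\varphi\bar\phi)_0)$ and $E_l((\varphi\phi)_0,(\theta\phi)_0)$: if a twisted character were unramified at $p$, the twisted $q$-expansion and the primitive Eisenstein series would disagree in degrees divisible by $p$ and the level would no longer be $Jp^{\beta}$, so Lemma~\ref{Eisenstein series Hida}(i) could not be applied as above. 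Everything else is routine bookkeeping with conductors and Gauss sums, parallel to Lemmas~\ref{special value f,cusp and non-trivial character} and \ref{special value f,Eis and trivial character}.
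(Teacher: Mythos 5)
Your proposal is correct and follows essentially the same route as the paper's proof: identify $\mu_{g^\rho}(\bar\phi)$ with the primitive Eisenstein series $E_l\big((\bar\theta\bar\phi)_0,(\bar\varphi\bar\phi)_0\big)$ by comparing Fourier coefficients (using $\phi\neq\bar\theta_p,\bar\varphi_p$ to handle the coefficients at $p\mid n$), apply Lemma~\ref{Eisenstein series Hida}(i) for the Atkin--Lehner action, and observe that the Gauss sums of the twisted characters agree with those of their primitive counterparts. Your write-up is somewhat more explicit (notably the identification $g'|\phi=E_l((\varphi\phi)_0,(\theta\phi)_0)$ and the level/$\beta$ bookkeeping, which the paper leaves to the cited argument of Hida), but the underlying argument is the same.
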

		\begin{proof}
			Recall that  $(\bar{\theta}\bar{\phi})_0$ and $(\bar{\varphi}\bar{\phi})_0$ is the primitive character associated to $\bar{\theta}\bar{\phi}$ and $\bar{\varphi}\bar{\phi}$ respectively. We claim that  $a(n,g^\rho|\bar{\phi}) = a(n, E_{l}((\bar{\theta}\bar{\phi})_0,(\bar{\varphi}\bar{\phi})_0)$. If $p\nmid n$, then it is easy to check that above equality holds. If $p\mid n$, then  $a(n,g^\rho|\bar{\phi})=0$. Since $\phi \neq \bar{\theta}_p, \bar{\varphi}_p$, it follows that $(\bar{\theta}\bar{\phi})_0(p) ,(\bar{\varphi}\bar{\phi})_0(p) =0$. Hence $g^\rho|(\bar{\phi}) = E_{l}((\bar{\theta}\bar{\phi})_0,(\bar{\varphi}\bar{\phi})_0)$. Noting that $G(\bar{\varphi}\bar{\phi})= G((\bar{\varphi}\bar{\phi})_0)$ and $G(\theta\phi) = G((\theta\phi)_0)$, the lemma now follows from Lemma~\ref{Eisenstein series Hida}.
		\end{proof}
		We now deduce the following result which relates the $p$-adic Rankin-Selberg $L$-function $\mu_{f \times E_{l}(\theta, \varphi)} $ with the product of $L$-function attached to modular form.
		\begin{corollary}\label{special value f,Eis}
			Let $f \in S_{k}(\Gamma_{0}(N),\eta)$ be a normalised $p$-ordinary newform with $p\nmid N$. Let $\theta$ and $\varphi$ be primitive Dirichlet characters of conductors $u,v$ respectively with $\theta\varphi(-1)=(-1)^l$ and $2 \leq l < k$. Set  $uv = Jp^s$ with $p\nmid J$. Let $g  = E_{l}(\theta, \varphi) \in M_{l}(\Gamma_{0}(Jp^s), \theta\varphi)$ and  $ g'= E_{l} (\varphi,\theta)\in M_{l}(\Gamma_{0}(Jp^s), \theta\varphi)$. Assume that $g$ is ordinary at $p$. Let $\beta  $ be as defined  in Theorem~\ref{padic rankin}. Set $t(f,g)=[N,J] N^{k/2} J^{(l+2j)/2} \Gamma(l+j)\Gamma(j+1)$. Then we have
			\begin{enumerate}
				\item[$\mathrm{(i)}$] For $ 0 \leq j \leq k-l-1 $ and $\phi = \iota_{p}$, we have $\beta = \max\{s,1\} +1 $ and 
				\begin{small}
					\begin{align*}
						\mu_{f \times g}(x_p^{j} \iota_p ) = c(f_0) t(f,g) p^{s(l+2j)/2} u_{f}^{1-s}   p^{(2-k)/2} W(g^\rho)   P_{p}(g,p^j u_{f}^{-1})
						\frac{L(j+1,f_0,\varphi)    
							L(l+j,f_0,\theta)}{(2i)^{k+l+2j} \pi^{l+2j+1} 
							{ \langle f_{0}^{\rho}|_{k}\tau_{Np} , f_{0} \rangle_{Np}}} ,
					\end{align*} 
				\end{small} %
				where $W(g^\rho) = (\mathrm{cond}(\theta)/\mathrm{cond}(\varphi ))^{l/2} \varphi(-1) G(\bar{\varphi})/ G(\theta )$.
				\item[$\mathrm{(ii)}$] For $ 0 \leq j \leq k-l-1 $ and $\phi \neq \iota_{p}, \bar{\theta}_p, \bar{\varphi}_p$, we have  $\beta = v_{p}(\mathrm{cond}(\theta \phi)) + v_{p}(\mathrm{cond}(\varphi \phi))$ and 
				\begin{small}
					\begin{align*}
						\mu_{f \times g}(x_p^{j} \phi ) = c(f_0) t(f,g) p^{\beta(l+2j)/2} u_{f}^{1-\beta}   p^{(2-k)/2} W(g^\rho|\bar{\phi})   
						\frac{L(j+1,f_0,\varphi\phi)    
							L(l+j,f_0,\theta\phi)}{(2i)^{k+l+2j} \pi^{l+2j+1} 
							{ \langle f_{0}^{\rho}|_{k}\tau_{Np} , f_{0} \rangle_{Np}}} ,
					\end{align*} 
				\end{small} %
				where $W(g^\rho|\bar{\phi}) = (\mathrm{cond}(\theta \phi)/\mathrm{cond}(\varphi \phi))^{l/2} \varphi\phi(-1) G(\bar{\varphi}\bar{\phi})/ G(\theta \phi)$. 
			\end{enumerate}
		\end{corollary}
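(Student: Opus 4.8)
The plan is to deduce both formulas by specialising Theorem~\ref{padic rankin} to $g = E_l(\theta,\varphi)$ and then simplifying the Rankin--Selberg value that occurs there by means of Lemmas~\ref{special value f,Eis and trivial character} and~\ref{special value f,Eis and nontrivial character}, together with the factorisation of the $L$-function of an Eisenstein series in Lemma~\ref{Eisenstein series Hida}(ii). First I would record the set-up: by Lemma~\ref{Eisenstein series Hida}, $g = E_l(\theta,\varphi) \in M_l(\Gamma_0(Jp^s),\theta\varphi)$ with $uv = Jp^s$, $p\nmid J$, and by hypothesis $g$ is $p$-ordinary; since $f$ satisfies \eqref{splitting of hecke algebra}, Theorem~\ref{padic rankin} applies with $\theta\varphi$ in the role of the nebentypus $\psi$ and gives, for $0\le j\le k-l-1$ and a finite order character $\phi$ of $\mathbb{Z}_p^\times$,
\[
\mu_{f\times g}(x_p^j\phi) = c(f_0)\,t(f,g)\,p^{\beta l/2}p^{\beta j}p^{(2-k)/2}a(p,f_0)^{1-\beta}\,\frac{D_{JNp}(l+j,f_0,\mu_{g^\rho}(\phi^{-1})|_l\tau_{Jp^\beta})}{(2i)^{k+l+2j}\pi^{l+2j+1}\langle f_0^\rho|_k\tau_{Np},f_0\rangle_{Np}},
\]
where $u_f = a(p,f_0)$ and $t(f,g) = [N,J]N^{k/2}J^{(l+2j)/2}\Gamma(l+j)\Gamma(j+1)$. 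Everything then reduces to rewriting the twisted value $D_{JNp}(l+j,f_0,\mu_{g^\rho}(\phi^{-1})|_l\tau_{Jp^\beta})$ and factoring it.

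For part~(i), i.e.\ $\phi = \iota_p$, I would apply Lemma~\ref{special value f,Eis and trivial character}: it gives $\beta = \max\{s,1\}+1$ and rewrites $D_{JNp}(l+j,f_0,\mu_{g^\rho}(\iota_p)|\tau_{Jp^\beta})$ as $p^{(s-\beta)(l+2j)/2}u_f^{\beta-s}W(g^\rho)P_p(g^\rho,p^ju_f^{-1})D_{JNp}(l+j,f_0,g')$ with $g' = E_l(\varphi,\theta)$ and $W(g^\rho) = (u/v)^{l/2}\varphi(-1)G(\bar\varphi)/G(\theta)$. Substituting this into the displayed formula, the powers of $p$ and of $u_f = a(p,f_0)$ collapse (namely $p^{\beta l/2}p^{\beta j}u_f^{1-\beta}\cdot p^{(s-\beta)(l+2j)/2}u_f^{\beta-s} = p^{s(l+2j)/2}u_f^{1-s}$), leaving the factor $p^{(2-k)/2}$, so it remains only to factor the Rankin--Selberg value $D_{JNp}(l+j,f_0,g')$. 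For this I would use that, by Lemma~\ref{Eisenstein series Hida}(ii), $L(s,E_l(\varphi,\theta)) = L(s-l+1,\varphi)L(s,\theta)$, so the Satake parameters of $g'$ at a prime $\ell$ not dividing the level are $\{\varphi(\ell)\ell^{l-1},\theta(\ell)\}$; the Dirichlet $L$-factor $L_{JN}(2s+2-k-l,\theta\varphi\eta)$ built into the definition of $D_{JN}(s,f_0,g')$ is precisely what turns $D_{JN}(s,f_0,g')$ into the degree-four Euler product in the Satake parameters of $f_0$ and of $g'$ (up to the finitely many Euler factors at primes dividing $JN$), and this product splits as $L(s-l+1,f_0\otimes\varphi)\cdot L(s,f_0\otimes\theta)$. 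Evaluating at $s = l+j$ (note $(l+j)-(l-1) = j+1$) gives $D_{JNp}(l+j,f_0,g')$ as $L(j+1,f_0,\varphi)L(l+j,f_0,\theta)$ with the same Euler factors removed, which combined with $\beta = \max\{s,1\}+1$ yields part~(i).

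Part~(ii) is entirely parallel, with Lemma~\ref{special value f,Eis and nontrivial character} replacing Lemma~\ref{special value f,Eis and trivial character}: for $\phi\neq\iota_p,\bar\theta_p,\bar\varphi_p$ it gives $\beta = v_p(\mathrm{cond}(\theta\phi))+v_p(\mathrm{cond}(\varphi\phi))$ and $D_{JNp}(l+j,f_0,\mu_{g^\rho}(\bar\phi)|\tau_{Jp^\beta}) = W(g^\rho|\phi)D_{JNp}(l+j,f_0,g'|\phi)$, introducing no extra $p$- or $u_f$-power, so after substitution the prefactor is simply $p^{\beta(l+2j)/2}u_f^{1-\beta}p^{(2-k)/2}$. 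Since $\phi\neq\bar\theta_p,\bar\varphi_p$, the twist $g'|\phi$ is again the primitive Eisenstein series $E_l((\varphi\phi)_0,(\theta\phi)_0)$ — exactly by the computation in the proof of Lemma~\ref{special value f,Eis and nontrivial character}, the $p$-Euler factor of the twist being trivial — so the same Euler-product factorisation gives $D_{JNp}(l+j,f_0,g'|\phi)$ as $L(j+1,f_0,\varphi\phi)L(l+j,f_0,\theta\phi)$ up to the Euler factors at $JNp$, and the root number $W(g^\rho|\bar\phi) = (\mathrm{cond}(\theta\phi)/\mathrm{cond}(\varphi\phi))^{l/2}\varphi\phi(-1)G(\bar\varphi\bar\phi)/G(\theta\phi)$ is the one recorded in that lemma.

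\textbf{Main obstacle.} There is no conceptual difficulty beyond the two sub-lemmas and Lemma~\ref{Eisenstein series Hida}; the work is entirely bookkeeping of normalisations. One must verify that the set of Euler factors removed from $D_{JNp}$ is matched by the imprimitive twisted $L$-functions written on the right, reconcile the $p$-Euler factor $P_p(g^\rho,p^ju_f^{-1})$ coming out of Lemma~\ref{special value f,Eis and trivial character} with the one appearing in the statement (using the $p$-ordinarity of $g$, which forces exactly one of $\theta,\varphi$ to be ramified at $p$ and pins down the shape of $P_p$ via \eqref{Euler factor at p}), and check that the Gauss-sum normalisations $W(g^\rho)$ and $W(g^\rho|\bar\phi)$ are consistent with \eqref{root number of g definition}. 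Once these are in place, the corollary follows immediately.
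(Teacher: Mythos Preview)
Your proposal is correct and follows essentially the same route as the paper: apply Theorem~\ref{padic rankin}, simplify the Atkin--Lehner twisted term via Lemmas~\ref{special value f,Eis and trivial character} and~\ref{special value f,Eis and nontrivial character}, and then factor $D_{JNp}(l+j,f_0,g'|\phi)$ using Lemma~\ref{Eisenstein series Hida}(ii). The only difference is presentational: where you unpack the degree-four Euler product via the Satake parameters of $g'$, the paper simply invokes the Rankin--Selberg identity \cite[Lemma~1]{Shimura1} to pass directly from $L(l+j,g'|\phi)=L(l+j,\theta\phi)L(j+1,\varphi\phi)$ to $D_{JNp}(l+j,f_0,g'|\phi)=L(l+j,f_0,\theta\phi)L(j+1,f_0,\varphi\phi)$. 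One small correction to your obstacle paragraph: $p$-ordinarity of $g$ forces \emph{at most} one of $\theta,\varphi$ to be ramified at $p$ (both are unramified when $s=0$), not exactly one.
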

		\begin{proof}
			By Lemma~\ref{Eisenstein series Hida}(ii), we get $ L(l+j,g'|\phi) = 
			L(l+j, \theta \phi) L(j+1, \varphi \phi) $.
			Thus by Rankin-Selberg method (See \cite[Lemma 1]{Shimura1}), we obtain 
					$
					D_{JNp}(l+j,f_{0},g'|\phi) 
					= L(l+j,f_{0},\theta\phi)  L(j+1,f_{0}, \varphi \phi)$.  Now the corollary follows from Theorem~\ref{padic rankin}, Lemma~\ref{special value f,Eis and trivial character} and Lemma~\ref{special value f,Eis and nontrivial character}.
				\end{proof}
				\section{Towards the congruence of the p-adic L-functions}\label{section: simplyfying rankin}
				
				Fix $f(z) = \sum_{n=1}^{\infty} a(n,f) q^n \in S_{k}(\Gamma_0(N), \eta)$ a primitive form.  We shall assume throughout this 
				article  that $ p \nmid N $ and  $ f $ is $p$-ordinary.  
				Let $ f_0 $ be the $ p $-stabilization of $ f $ as in \eqref{p-stabilization of f}. 	
				For an integer $ J $, let $ J_0 $ denote the prime to $ p $-part of  $ J$. For a normalised Hecke eigenform $ g $ with  nebentypus $\varphi$, let   
				$K_{g}$  be the number field generated by the Fourier coefficients of $g$  and the values of $\varphi$.  

				We also fix  $ h(z) =   \sum_{n=1}^{\infty} a(n,h) q^n
				\in M_{l}(\Gamma_{0}(I), \psi)$, 
				a normalized  newform of weight 
				$ 2 \leq l < k$. Assume that  $ h $ is $ p $-ordinary. Put $ I = I_{0}p^{\alpha} $
				where $ I_0 $ is co-prime to $ p $.  Let $ K $ be a number field containing $ K_f$ and $ K_h $. Let $K_{\mathfrak p}$ denote the completion of $ K $ at a prime  $\mathfrak{p}$ lying above $p$ induced by the embedding $i_p:\bar{\Q} \rightarrow \bar{\Q}_p$ and $ \mathcal{O}_{K_{\mathfrak{p}}} $ be the ring of integers of $K_{\mathfrak p}$ and $ \pi $ be a uniformizer of $ \mathcal{O}_{K_{\mathfrak{p}}} $.  Let $ (\rho_{h},V_{h}) $ be the $ p $-adic Galois
				representation attached to the modular form $h$ (see Theorem~\ref{rhof}). Choose a 
				rank two $ \mathcal{O}_{K_{\mathfrak{p}}} $-submodule
				$T_h$ of $V_h$ which is invariant under the action of
				the absolute Galois group $ G_{\mathbb{Q}} := \gal (\bar{\mathbb{Q}}|\mathbb{Q}) $.  Let $\bar{\rho}_{h}: G_{\Q} \lra \mathrm{GL_2}(T_h/\pi)$ be the reduction of $ \rho $ modulo $ \pi $. Assume that the semi-simplification of $ T_{h}/\pi $ is isomorphic to $  \bar{\xi}_{1} \oplus \bar{\xi}_{2}$, i.e.
				\begin{align}\label{eq: splitting rho_h}
					0 \rightarrow \bar{\xi}_{1} \rightarrow T_{h}/ \pi 
					\rightarrow \bar{\xi}_{2} \rightarrow 0.
				\end{align}
				
				In this section,  we use the residual characters  $ \bar{\xi}_1 $ and $ \bar{\xi}_{2} $ to construct an Eisenstein series $ g $ congruent to $ h $. We then express the  special values of the Rankin-Selberg $ L $-function $ f_0 \otimes g $ as a product  of the special values of $ L $-function associated to $ f_0 \otimes \xi_{1} $ and $ f_0 \otimes \xi_{2}  $. In the next section, using these results we shall show that the $ p$-adic Rankin-Selberg $ L $-function associated to $ f \otimes g $ is congruent to the product of $ p $-adic $ L $-functions of $ f \otimes \xi_1 $ and $ f \otimes \xi_2 $.   
				We  begin by constructing  the Dirichlet characters $ \xi_i $ whose reduction is $ \bar{\xi}_{i} $ for $ i =1,2 $.
				
				\begin{lemma}\label{lemma lifting characters}
					Let $ \bar{\xi}: (\mathbb{Z}/v\Z)^{\times} \rightarrow \mathbb{F}_{p^r}^{\times} $ be a character.
					Then there exists a Dirichlet character $ \xi: (\mathbb{Z}/v \Z)^{\times} \rightarrow \mu_{p^a -1}$ such that  the reduction of $ \xi $ equals $ \bar{\xi} $, where $\mu_{p^a -1} : = \{ z \in \mathbb{C} : z^{p^a -1} = 1\}$. Further,  if the conductor of $ \bar{\xi} $ equals $ v_0 p^a $ with $ p \nmid v_0 $, then the conductor of $ \xi $ is  $ v_0 p^{a'} $ with $ a' = \min\{a,1\} $.    	 
				\end{lemma}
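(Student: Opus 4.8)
The plan is to obtain $\xi$ as the composite of $\bar{\xi}$ with the Teichm\"uller lift, and then to read off the conductor one prime at a time. First I would (after enlarging $\mathbb{F}_{p^r}$ if necessary, so that the image of $\bar{\xi}$ lies in $\mathbb{F}_{p^a}^{\times}$) use that $p \nmid p^a - 1$: hence reduction modulo the maximal ideal of $\bar{\mathbb{Z}}_p$ restricts to a bijection from the group $\mu_{p^a-1}(\bar{\mathbb{Q}}_p)$ of $(p^a-1)$-th roots of unity onto $\mathbb{F}_{p^a}^{\times}$, whose inverse is the Teichm\"uller character $\omega_{p^a}$. Transporting $\mu_{p^a-1}(\bar{\mathbb{Q}}_p)$ back to $\mu_{p^a-1} \subset \bar{\mathbb{Q}} \subset \mathbb{C}$ via the fixed embedding $i_p$, we may regard $\omega_{p^a}$ as a multiplicative section $\mathbb{F}_{p^a}^{\times} \to \mu_{p^a-1} \subset \mathbb{C}^{\times}$ of ``reduction mod $\mathfrak{p}$''. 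Setting $\xi := \omega_{p^a} \circ \bar{\xi}$ then gives a Dirichlet character modulo $v$ with values in $\mu_{p^a-1}$ whose reduction mod $\mathfrak{p}$ is, by the defining property of $\omega_{p^a}$, exactly $\bar{\xi}$. This settles existence.

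For the conductor I would use that the conductor of a Dirichlet character is the product over primes $\ell$ of the conductors of its $\ell$-primary components, and that under $(\mathbb{Z}/v\mathbb{Z})^{\times} \cong \prod_{\ell \mid v}(\mathbb{Z}/\ell^{e_\ell}\mathbb{Z})^{\times}$ we have $\xi = \prod_\ell \xi_\ell$ with $\xi_\ell = \omega_{p^a} \circ \bar{\xi}_\ell$. For $\ell \neq p$: since $p \nmid p^a - 1$, reduction mod $\mathfrak{p}$ is injective on $\mu_{p^a-1}$, so $\xi_\ell$ and $\bar{\xi}_\ell$ have the same kernel and hence the same conductor; multiplying over $\ell \mid v_0$ shows that the prime-to-$p$ part of $\mathrm{cond}(\xi)$ equals $v_0$. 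For $\ell = p$: both $\bar{\xi}_p$ and $\xi_p$ take values in groups of order prime to $p$, so (as $p$ is odd) each factors through the maximal prime-to-$p$ quotient $(\mathbb{Z}/p\mathbb{Z})^{\times}$ of $(\mathbb{Z}/p^{e_p}\mathbb{Z})^{\times}$; thus $\mathrm{cond}_p(\bar{\xi})$ and $\mathrm{cond}_p(\xi)$ both lie in $\{1,p\}$, which in particular forces $a = \min\{a,1\}$. Moreover $\xi_p$ is trivial precisely when $\bar{\xi}_p$ is (if $\bar{\xi}_p$ is trivial so is $\omega_{p^a} \circ \bar{\xi}_p$; if $\xi_p$ is trivial, reducing mod $\mathfrak{p}$ shows $\bar{\xi}_p$ is trivial), and when they are nontrivial both are nontrivial characters of $(\mathbb{Z}/p\mathbb{Z})^{\times}$, hence of conductor exactly $p$. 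Therefore $\mathrm{cond}_p(\xi) = \mathrm{cond}_p(\bar{\xi}) = p^{\min\{a,1\}}$, and combining the two cases gives $\mathrm{cond}(\xi) = v_0 p^{\min\{a,1\}}$.

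The only point requiring care is the behaviour at $p$: one must observe that a character valued in a group of order prime to $p$ is automatically trivial on the wild part $1 + p\mathbb{Z}_p$ of the local unit group, so the $p$-part of the conductor of $\bar{\xi}$ (and likewise of $\xi$) is at most $p$, and then that the Teichm\"uller lift neither creates nor destroys ramification at $p$. Everything away from $p$ is immediate from injectivity of reduction on roots of unity of order prime to $p$.
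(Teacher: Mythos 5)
Your proposal is correct and takes essentially the same route as the paper: both lift via the Teichm\"uller section and both rest on the two observations that a character into a group with no $p$-torsion kills the wild part at $p$ (so $a\le 1$) and that reduction is injective on prime-to-$p$ roots of unity (so $\xi$ and $\bar{\xi}$ have the same kernel, hence the same conductor). The paper phrases the conductor step by reducing to the primitive case and arguing that a smaller conductor for $\xi$ would force one for $\bar{\xi}$, while you argue prime-by-prime; these are the same argument in different packaging.
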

				\begin{proof}
					Without any loss of generality, assume that $ \bar{\xi} $ is primitive.  Since $ \bar{\mathbb{F}}_{p^r} $ doesn't contain non-trivial $ p^{\mathrm{th}} $ root of unity,
					we get that the image of $ p $-Sylow subgroup of
					$(\mathbb{Z}/v\Z)^{\times}$ is trivial under $ \bar{\xi} $. As $ \bar{\xi} $ is primitive, we obtain $ p^2 \nmid v $ and $ v = v_0 p^a $ with $ a \in \{ 0,1 \} $ and $ p \nmid v_0 $. Choose a finite extension $ L/\Q_p $ such that $ (\mathcal{O}_L/\pi)^{\times} \cong  \mathbb{F}_{p^r}^{\times} $. Composing $ \bar{\xi} $  with the Teichm\"uller character $ \omega:  (\mathcal{O}_L/\pi)^{\times} \rightarrow \mu_{p^r -1}$ we obtain a lift $ \xi: (\mathbb{Z}/v \mathbb{Z})^{\times} \rightarrow  \mu_{p^r -1} \hookrightarrow \mathbb{C}^{\times}$ of $ \bar{\xi} $. If the conductor of $ \xi $ is strictly less than $ v $, then it would imply that the conductor of $ \bar{\xi} $ is strictly less than $ v $. Thus the conductor of  $ \xi $ is $  v_0 p^a $. This finishes the proof.
				\end{proof}
				Since $ h $ is $ p $-ordinary, either $ \bar{\xi}_{1} $ or $ \bar{\xi}_{2} $ is unramified at $ p $ (see Theorem~\ref{rhof}(ii)).   Without any loss of generality, assume $ \bar{\xi}_{2} $ is unramified at $p$. Since $ \bar{\xi}_{2}(\mathrm{Frob}_p)  \neq 0$, where $ \mathrm{Frob}_p $ is the arithmetic Frobenius at $ p $, we get $p$ doesn't divide the conductor of $\bar{\xi}_2$. From Lemma~\ref{lemma lifting characters}, it follows that there exists a Dirichlet character $ \xi_2 $ whose reduction equals $ \bar{\xi}_2 $ and the conductor of $ \xi_{2} $ is  $ M_2  $ with  $ p \nmid M_2$.  Similarly, we can lift $ \bar{\xi}_{1} \bar{\omega}_p^{1-l}$ to a Dirichlet character $ \xi_{1} \omega_p^{1-l}  $ such that  the conductor of $ \xi_{1} \omega_p^{1-l} $ is of the form $ M_1 p^{s} $	with  $ (M_1, p)  = 1$ and $ s \in \{ 0,1\} $.  Since the conductor of $ (T_h/\pi) $ divides the conductor of $ T_{h} $, we obtain $ M_1 M_2 \mid I_{0} $. Set $ M := M_1 M_2 p^{s}$.  Then $ M_{0} = M_{1} M_{2} $ and  $ M_{0} \mid I_{0} $.  Put
				$\Sigma_0 = \{ r \text{ is prime} : r \mid (I_0/ M_0) \text{ or } r^2 \mid M_0 \}$ and set
				\begin{align}\label{definition m}
					m := \prod_{r \in \Sigma_0} r. 
				\end{align}
				Note that $ \bar{\xi}_{1} \bar{\xi}_{2} \bar{\omega}_p^{1-l}(-1) = \bar{\psi}(-1) = (-1)^{l} $.  As $p$ is  odd,   
				$  \xi_{1} \xi_{2} \omega_p^{1-l} (-1) = (-1)^{l} $. Define 
				\begin{align}\label{definition of Eis g}
					g (z) := E_{l}(\xi_{1} \omega_p^{1-l}, \xi_2)(z) .
				\end{align} 
				From Lemma~\ref{Eisenstein series Hida}, we have $ g (z)  \in M_{l}(\Gamma_{0}(M), \xi_{1} \xi_{2} \omega_p^{1-l}) $ and $ g(z) = \sum_{n \geq 0} a(n,g) q^n $, with $ a(n,g) = \sum_{0 < d \mid n}^{} \xi_1 \omega_p^{1-l}(d) \xi_{2}(n/d) d^{l-1}$ for all $n \geq 1$. Further, we have  $ \bar{\rho}_g \simeq \bar{\xi}_1 \oplus \bar{\xi}_{2} \simeq (T_h/\pi)^{ss}$. As $ \xi_2(p) \neq 0$, it follows that $ a(p,g) =  \xi_{2}(p) + p^{l-1}  \xi_1 \omega_p^{1-l}(p)$ is a $ p $-adic unit. Hence $g$ is $p$-ordinary. Recall that for an integer $ J $,  $ \iota_{J} $ denotes the trivial character of modulus $ J $. We now show that $ g|\iota_{mp}$ and $ h|\iota_{mp}$ are congruent  modulo $ \pi $.  
				
				\begin{lemma}\label{lem: congruence of g,h}
					With the notation as above, we have $ h |\iota_{pm} \equiv g |\iota_{pm}  \mod \pi$.
				\end{lemma}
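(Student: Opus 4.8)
The plan is to prove the equivalent statement that $a(n,h)\equiv a(n,g)\pmod{\pi}$ for every $n$ with $(n,pm)=1$. Indeed, both $h$ and $g$ have $q$-expansion coefficients in $\mathcal{O}$ (for $g$ this is clear from the explicit formula $a(n,g)=\sum_{0<d\mid n}\xi_1\omega_p^{1-l}(d)\,\xi_2(n/d)\,d^{l-1}$, whose ingredients are roots of unity), and $h|\iota_{pm}$, $g|\iota_{pm}$ are obtained by discarding precisely the constant term and the coefficients indexed by integers \emph{not} prime to $pm$ (all of which vanish on both sides), so the asserted congruence of $q$-expansions is exactly this family of coefficient congruences. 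Since $h$ and $g$ are normalised Hecke eigenforms, $n\mapsto a(n,\cdot)$ is multiplicative, so it suffices to show $a(\ell^{t},h)\equiv a(\ell^{t},g)\pmod{\pi}$ for every prime $\ell\nmid pm$ and every $t\ge 0$, and I would split the primes $\ell\nmid pm$ into those with $\ell\nmid I$ and those with $\ell\mid I_0$.

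For $\ell\nmid pmI$ — which also gives $\ell\nmid M$ since $M_0\mid I_0$ — both $\rho_h$ and $\rho_g$ are unramified at $\ell$. Comparing traces and determinants of $\Fr_\ell$ on $\bar\rho_h$ and on $\bar\rho_h^{\mathrm{ss}}\cong\bar\xi_1\oplus\bar\xi_2$ gives $a(\ell,h)\equiv\bar\xi_1(\ell)+\bar\xi_2(\ell)$ and $\psi(\ell)\ell^{l-1}\equiv\bar\xi_1(\ell)\bar\xi_2(\ell)\pmod{\pi}$, so $(a(\ell^{t},h))_{t\ge 0}$ satisfies, modulo $\pi$, the order-two recursion attached to $X^{2}-(\bar\xi_1(\ell)+\bar\xi_2(\ell))X+\bar\xi_1(\ell)\bar\xi_2(\ell)$. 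On the Eisenstein side, Lemma~\ref{Eisenstein series Hida}(ii) yields $\sum_{t\ge 0}a(\ell^{t},g)X^{t}=\bigl((1-\xi_1\omega_p^{1-l}(\ell)\ell^{l-1}X)(1-\xi_2(\ell)X)\bigr)^{-1}$, and since $\ell\equiv\omega_p(\ell)\pmod{p}$ forces $\omega_p^{1-l}(\ell)\ell^{l-1}\equiv 1\pmod{\pi}$, the sequence $(a(\ell^{t},g))_{t\ge 0}$ satisfies the same recursion modulo $\pi$ with the same first two terms $1$ and $\bar\xi_1(\ell)+\bar\xi_2(\ell)$. Hence $a(\ell^{t},h)\equiv a(\ell^{t},g)\pmod{\pi}$ for all $t$; this is the routine ``congruence via Galois representations'' step.

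The delicate case is $\ell\mid I_0$ with $\ell\nmid pm$. From the definitions of $\Sigma_0$ and $m$, the hypothesis $\ell\notin\Sigma_0$ forces $v_\ell(I_0)=v_\ell(M_0)=1$; since $M_0=M_1M_2$ with $M_i$ the prime-to-$p$ conductor of $\bar\xi_i$, exactly one of $\bar\xi_1,\bar\xi_2$ is ramified at $\ell$ (of conductor exactly $\ell$) and the other, say $\bar\xi_{j}$, is unramified at $\ell$. Because $h$ is a newform, $a(\ell^{t},h)=a(\ell,h)^{t}$; because precisely one of $\xi_1\omega_p^{1-l}(\ell)$, $\xi_2(\ell)$ vanishes, the formula for $a(\ell^{t},g)$ above collapses to $a(\ell^{t},g)=a(\ell,g)^{t}$. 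So everything reduces to $a(\ell,h)\equiv a(\ell,g)\pmod{\pi}$. Here $v_\ell(\mathrm{level})=1$ for both $h$ (this is $v_\ell(I_0)=1$) and $g$ (since $v_\ell(M_0)=1$), so each has a degree-one Euler factor at $\ell$ whose inverse root is the eigenvalue of $\Fr_\ell$ on the one-dimensional maximal unramified quotient of $\bar\rho_h|_{G_\ell}$, resp.\ $\bar\rho_g|_{G_\ell}$; but any one-dimensional quotient is a Jordan--H\"older constituent of $\bar\rho_h^{\mathrm{ss}}|_{G_\ell}=\bar\rho_g^{\mathrm{ss}}|_{G_\ell}=(\bar\xi_1\oplus\bar\xi_2)|_{G_\ell}$, whose only unramified constituent is $\bar\xi_j|_{G_\ell}$. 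Hence $a(\ell,h)\equiv\bar\xi_j(\Fr_\ell)\equiv a(\ell,g)\pmod{\pi}$ — for $g$ one sees this last congruence directly, the non-vanishing of $\xi_1\omega_p^{1-l}(\ell)$ or $\xi_2(\ell)$ reducing to $\bar\xi_j(\ell)$ modulo $\pi$.

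Combining the two cases with multiplicativity gives $a(n,h)\equiv a(n,g)\pmod{\pi}$ whenever $(n,pm)=1$, proving the lemma. I expect the main obstacle to be the bad-prime case: one must extract $a(\ell,h)\bmod\pi$ from the local structure of $\bar\rho_h$ at a prime $\ell$ dividing the level of $h$ exactly once, and here the point is precisely that $\ell\notin\Sigma_0$ (equivalently $v_\ell(M_0)=1$) forces the local representation to have a one-dimensional unramified quotient, necessarily isomorphic to the constituent $\bar\xi_j$, which is then matched against the degree-one Euler factor of $g$. The good-prime case and the bookkeeping of which coefficients survive the twist by $\iota_{pm}$ are straightforward.
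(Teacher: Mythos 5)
Your proposal is correct and follows essentially the same route as the paper: compare traces of Frobenius against $\bar{\xi}_1\oplus\bar{\xi}_2$ at primes of good reduction, and at the primes $\ell$ with $\ell\,\|\,M_0$, $\ell\nmid pm$ invoke local–global compatibility (the paper cites Hida's Theorem 3.26) to identify $a(\ell,h)$ with the Frobenius eigenvalue on the unique unramified constituent, which is then matched with the non-vanishing term of $a(\ell,g)$. The only difference is that you spell out the reduction to prime-power coefficients via the Hecke recursions, a step the paper leaves implicit by checking only prime indices.
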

				\begin{proof}
					Let $ r $ be a prime. Since $( T_{h}/\pi)^{ss} $ and $ (T_{g}/\pi)^{ss} $ are the same, we get $  a(r,h|\iota_{pm}) =  a(r,h)  
					\equiv a(r,g) = a(r,g|\iota_{pm}) \mod \pi$, $ \forall$ $ r  \nmid pmM_{0}$. If $ r =p $ or $ r \mid m $, then $ a(r,h|\iota_{pm}) = 0 = a(r,g|\iota_{pm})  $. Next, consider $ r \mid  M_0 $ and $ r \nmid pm $. This forces that $ r \nmid I_0/M_0 $, $ r \| M_{0} $ and $ r \|I_{0} $. Since $ \psi $ and $ \xi_{1} \xi_{2} \omega_p^{1-l} $ have the  same reduction modulo $ p $, we also get that $ r $ divides  the conductor of $ \psi $. As $ \text{cond}(\psi) \mid I $ and $ r \| I $, we obtain  $ r \| \mathrm{cond}(\psi) $. Thus by \cite[Theorem 3.26]{Hida3},  we have $ a(r, h) = \chi(\text{Frob}_{r}) $, where $ \chi $ is the unique unramified  character appearing in the restriction of $ \rho_{h} $  to the decomposition subgroup at $r$. Since $ r \| M_0 $  and $ M_0 =  M_1 M_2$, we get either $ r \| M_1,  r \nmid M_2 $ or $ r \| M_2, r \nmid M_1$. Suppose we are in the situation where $r \|M_{1}$ and $ r \nmid M_2 $. Then $ \xi_2 $ is unramified at $ r $ and $ \xi_1 $ is ramified at $ r $. Also $ a(r,g) = \xi_{2}(r) = \xi_{2}(\text{Frob}_{r})$. Since $ (T_{h}/\pi)^{\mathrm{ss}} = \bar{\xi}_{1} \oplus \bar{\xi}_{2} $ and  $ \bar{\xi}_{1} $ is ramified at $ r $, we obtain $ \bar{\chi} = \bar{\xi}_{2} $. Thus $ a(r,h) = \chi(\text{Frob}_{r}) \equiv \bar{\xi}_{2}(\text{Frob}_{r}) \equiv a(r,g) \mod \pi$. If $ r \| M_{2} $ and $ r \nmid M_1 $, then $ a(r,g) = \xi_{1}(r) \omega_p^{1-l}(r) r^{l-1} \equiv \xi_{1}(r) \mod p$. Again by a similar argument, we deduce  $ a(r,h) = \chi(\text{Frob}_{r}) \equiv \bar{\xi}_{1}(\text{Frob}_{r}) \equiv a(r,h) \mod \pi$. Thus in  either case, we have $ a(r,h|\iota_{pm}) = a(r,h) \equiv a(r,g) =  a(r,g|\iota_{pm})  \mod \pi$. This proves the lemma.
				\end{proof}
				We  note the following observation  made while proving in the lemma above:
				\begin{lemma}\label{lem: exactly divides}
					If $ r \mid I_0 $ and $ r \nmid pm $, then $ r \| I_0 $, $r\|M_0$ and $ r \| \mathrm{cond}(\psi) $. 
				\end{lemma}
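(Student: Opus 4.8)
The statement is essentially a recapitulation of facts established inside the proof of Lemma~\ref{lem: congruence of g,h}, so the plan is to isolate and organize that argument. First I would unwind the hypothesis $r \nmid pm$: by the definition of $\Sigma_0$ together with \eqref{definition m}, and since $r \neq p$, the condition $r \nmid m$ is equivalent to $r \notin \Sigma_0$, i.e. $r \nmid I_0/M_0$ and $r^2 \nmid M_0$. As $I_0 = M_0 \cdot (I_0/M_0)$ and $r \nmid I_0/M_0$, we get $v_r(M_0) = v_r(I_0) \geq 1$; in particular $r \mid M_0$, and combined with $r^2 \nmid M_0$ this gives $r \| M_0$, hence also $r \| I_0$. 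This settles the first two assertions.

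For the third assertion $r \| \mathrm{cond}(\psi)$, I would first locate $r$ in the factorization $M_0 = M_1 M_2$, where $M_1 p^s$ and $M_2$ are the conductors of the primitive Dirichlet characters $\xi_1 \omega_p^{1-l}$ and $\xi_2$ appearing in \eqref{definition of Eis g}. From $r \| M_0$ it follows that $r$ divides exactly one of $M_1, M_2$, and only to the first power; thus exactly one of $\xi_1 \omega_p^{1-l}$, $\xi_2$ is ramified at $r$ with $r$ exactly dividing its conductor, while the other is unramified at $r$. Consequently $r \| \mathrm{cond}(\xi_1 \xi_2 \omega_p^{1-l})$.

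Finally I would transfer this to $\psi$. Using $(T_h/\pi)^{\mathrm{ss}} \cong \bar{\rho}_g$ (recorded after \eqref{definition of Eis g}) and comparing determinants, one has $\bar{\psi} = \overline{\xi_1 \xi_2 \omega_p^{1-l}}$ as characters valued in $\bar{\mathbb{F}}_p^{\times}$. Since $\xi_1, \xi_2$ and $\omega_p$ take values in roots of unity of order prime to $p$ (Lemma~\ref{lemma lifting characters}), reduction mod $\pi$ is injective on the image of $\xi_1 \xi_2 \omega_p^{1-l}$, so this character and its reduction share the same conductor; hence $r \| \mathrm{cond}(\bar{\psi})$, and a fortiori $r \mid \mathrm{cond}(\psi)$. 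As $\mathrm{cond}(\psi) \mid I$ and $v_r(I) = v_r(I_0) = 1$ (because $r \neq p$ and $r \| I_0$), we conclude $r \| \mathrm{cond}(\psi)$. There is no genuine obstacle here --- the argument is pure bookkeeping with conductors --- and the only point demanding care is the conductor-preserving property of reduction mod $p$, which is exactly why $\xi_1, \xi_2$ were chosen as Teichm\"uller-type lifts valued in $\mu_{p^r-1}$.
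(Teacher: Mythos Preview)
Your proposal is correct and follows essentially the same approach as the paper, which simply extracts this observation from the case analysis inside the proof of Lemma~\ref{lem: congruence of g,h}. Your write-up is in fact more careful than the paper's: you explicitly derive $r\mid M_0$ from the hypothesis $r\mid I_0$ (the paper's case analysis starts from $r\mid M_0$), and you spell out why the Teichm\"uller-type choice of $\xi_1,\xi_2$ ensures $r\mid\mathrm{cond}(\bar\psi)$ and hence $r\mid\mathrm{cond}(\psi)$, a step the paper compresses into a single sentence.
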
 
				For every prime $r$, let $v_r(\cdot)$ be the $r$-adic valuation on $\Q$ with $v_r(r) =1$. Recall that $f \in S_k(\Gamma_{0}(N),\eta)$ 
				and $h \in S_l(\Gamma_{0}(I_0p^\alpha), \psi)$ are primitive with $p\nmid NI_0$. Also $g = E_l(\xi_1\omega_p^{1-l},\xi_2) \in M_l(\Gamma_0(M_0p^s), \xi_1\xi_2\omega_p^{1-l})$ with $p \nmid M_0$. For every prime $r\mid m$, set $n_r = \max\{v_r(N), v_r(I_0), v_r(M_0)\} + 2$ and $n = \max\{n_r\}$. For every prime $r \mid m$, choose a primitive character $\chi_r$ of conductor $r^{n}$ and set $\chi = \prod_{r \mid m} \chi_r$ be a Dirichlet conductor $m^n$. Using that $v_{r}(\mathrm{cond}(\chi)) = v_r(\mathrm{cond}(\chi_r)) > \max\{v_r(N), v_r(I_0), v_r(M_0)\} > \max \{ v_r(\mathrm{cond}(\eta)), v_r(\mathrm{cond}(\psi)), v_r(\mathrm{cond}(\xi_1\omega_p^{1-l})), v_{r}(\mathrm{cond}(\xi_2))\}$, it is easy to check that the Dirichlet character $\chi$ satisfies the following conditions: 
				\begin{enumerate}
					\item[(i)] $\chi$ is primitive and conductor of $\chi$ equals $m^n$ with $n \geq 2$.
					\item[(ii)] For every prime $r \mid m$, we have $v_r(\mathrm{cond}(\chi\eta)) = v_r(\mathrm{cond}(\chi)) $ and $v_r(\mathrm{cond}(\chi)) > v_{r}(N)$.
					\item[(iii)] For every prime $r \mid m$, we have $v_r(\mathrm{cond}(\bar{\chi}\psi)) = v_r(\mathrm{cond}(\chi)) $ and $v_r(\mathrm{cond}(\bar{\chi})) > v_{r}(I_0p^\alpha)$.
					\item[(iv)] For every prime $r \mid m$, we have $v_r(\mathrm{cond}(\bar{\chi}\xi_2)) = v_r(\mathrm{cond}(\bar{\chi})) $, $v_r(\mathrm{cond}(\bar{\chi}\xi_1 \omega_{p}^{1-l})) = v_r(\mathrm{cond}(\bar{\chi})) $ and $v_r(\mathrm{cond}(\bar{\chi})) > v_{r}(M_0p^s)$.
				\end{enumerate}
				Throughout this article we denote the level of $f|\chi$, $g|\bar{\chi}$, $h|\bar{\chi}$ by $N_{f|\chi}$, $N_{g|\bar{\chi}}$ and $N_{h|\bar{\chi}}$ respectively. We describe the values of $N_{f|\chi}$, $N_{g|\bar{\chi}}$ and $N_{h|\bar{\chi}}$ in the following lemma. 
				\begin{lemma}\label{lem: primitive after twist}
					We have $f|\chi \in S_{k}(N_{f|\chi}, \eta \chi^2)$ and  $h|\bar{\chi} \in S_{l}(N_{h|\bar{\chi}}, \psi \bar{\chi}^2)$ are newforms  with $N_{f|\chi} = [N,\mathrm{cond}(\chi)]\mathrm{cond}(\chi)$ and  $N_{h|\bar{\chi}} = [I_0p^\alpha,\mathrm{cond}(\chi)]\mathrm{cond}(\chi)$. Also, we have $g|\bar{\chi} = E_{l}(\xi_{1}\omega_{p}^{1-l}\bar{\chi}, \xi_{2}\bar{\chi}) \in M_{l}(N_{g|\bar{\chi}},\xi_{1}\xi_{2}\omega_{p}^{1-l}\bar{\chi}^2)$ with $N_{g|\bar{\chi}} = [M_0p^s,\mathrm{cond}(\chi)]\mathrm{cond}(\chi)$. Furthermore, we have $[N_{f|\chi}, N_{h|\bar{\chi}}p^{-\alpha}] = [N_{f|\chi}, N_{g|\bar{\chi}}p^{-s}]$.
				\end{lemma}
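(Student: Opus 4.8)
The plan is to treat the three forms separately: the theory of twists of newforms for the cusp forms $f|\chi$ and $h|\bar{\chi}$, a direct computation of Fourier coefficients for the Eisenstein series $g|\bar{\chi}$, and elementary $r$-adic valuation bookkeeping (together with Lemma~\ref{lem: exactly divides}) for the final least common multiple identity.

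For the cusp forms, I would invoke the standard fact (going back to Atkin--Li) that if $F$ is a newform of level $N_F$ and nebentypus $\epsilon_F$ and $\mu$ is a primitive Dirichlet character of conductor $Q$ satisfying $v_r(Q) > v_r(N_F)$ and $v_r(\mathrm{cond}(\epsilon_F \mu)) = v_r(Q)$ for every prime $r \mid Q$, then $F|\mu = \sum_{n \geq 1} \mu(n) a(n,F) q^n$ is again a newform, of level $[N_F, Q]Q$ and nebentypus $\epsilon_F \mu^2$; the mechanism is that $v_r(Q)$ exceeds every ramification invariant of the local representation of $F$ at each $r \mid Q$, so the twist is locally a ramified principal series of conductor $2 v_r(Q)$, independently of the local type of $F$. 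Applying this with $(F,\mu) = (f,\chi)$, conditions (i) and (ii) supply exactly these hypotheses at the primes $r \mid m$, giving $f|\chi \in S_k(N_{f|\chi}, \eta \chi^2)$ a newform with $N_{f|\chi} = [N, \mathrm{cond}(\chi)]\mathrm{cond}(\chi)$; applying it with $(F,\mu) = (h, \bar{\chi})$, and using that $\bar{\chi}$ is unramified at $p$ so that the $p$-part $p^\alpha$ of the level and the local component of $h$ at $p$ are unchanged by the twist, conditions (i) and (iii) give $h|\bar{\chi} \in S_l(N_{h|\bar{\chi}}, \psi \bar{\chi}^2)$ a newform with $N_{h|\bar{\chi}} = [I_0 p^\alpha, \mathrm{cond}(\chi)]\mathrm{cond}(\chi)$.

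For the Eisenstein series, I would start from the $q$-expansion $a(n,g) = \sum_{0 < d \mid n} (\xi_1 \omega_p^{1-l})(d) \xi_2(n/d) d^{l-1}$ of Lemma~\ref{Eisenstein series Hida} and use complete multiplicativity of Dirichlet characters to obtain $a(n, g|\bar{\chi}) = \sum_{0 < d \mid n} (\xi_1 \omega_p^{1-l} \bar{\chi})(d)(\xi_2 \bar{\chi})(n/d) d^{l-1}$ for all $n \geq 1$. Condition (iv), the primitivity of $\xi_1 \omega_p^{1-l}$ and $\xi_2$, and the fact that $\bar{\chi}$ is primitive of conductor $m^n$ with $p \nmid m$, together show that $\xi_1 \omega_p^{1-l} \bar{\chi}$ and $\xi_2 \bar{\chi}$ are themselves primitive (the $\bar{\chi}$-part dominating at the primes dividing $m$, nothing changing at $p$ or at the remaining primes), while their product has the correct sign $\xi_1 \xi_2 \omega_p^{1-l}(-1) \bar{\chi}^2(-1) = (-1)^l$. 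Comparing $q$-expansions then gives $g|\bar{\chi} = E_l(\xi_1 \omega_p^{1-l} \bar{\chi}, \xi_2 \bar{\chi})$, so by Lemma~\ref{Eisenstein series Hida} it lies in $M_l$ of level $\mathrm{cond}(\xi_1 \omega_p^{1-l} \bar{\chi}) \cdot \mathrm{cond}(\xi_2 \bar{\chi})$ with nebentypus $\xi_1 \xi_2 \omega_p^{1-l} \bar{\chi}^2$; a prime-by-prime comparison ($r^n$ at $r \mid m$ coming from $\bar{\chi}$, $p^s$ at $p$, and the unchanged $M_0$-part at the remaining primes) identifies this level with $[M_0 p^s, \mathrm{cond}(\chi)]\mathrm{cond}(\chi) = N_{g|\bar{\chi}}$.

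Finally, writing $X_{(m')}$ for the prime-to-$m$ part of a positive integer $X$, the level formulas just obtained, together with $p \nmid N, I_0, M_0$ and $v_r(\mathrm{cond}(\chi)) = n > v_r(N), v_r(I_0), v_r(M_0)$ for $r \mid m$, give $N_{f|\chi} = m^{2n} N_{(m')}$, $N_{h|\bar{\chi}} p^{-\alpha} = m^{2n} (I_0)_{(m')}$ and $N_{g|\bar{\chi}} p^{-s} = m^{2n} (M_0)_{(m')}$. Hence each side of the claimed identity is $m^{2n}$ times a least common multiple, so it suffices to show $(I_0)_{(m')} = (M_0)_{(m')}$; since $M_0 \mid I_0$ the latter divides the former, and conversely for any prime $r \mid I_0$ with $r \nmid m$ we have $r \neq p$ (because $p \nmid I_0$), so $r \nmid pm$ and Lemma~\ref{lem: exactly divides} yields $r \| I_0$ and $r \| M_0$, whence $v_r(I_0) = 1 = v_r(M_0)$ at all $r \nmid m$. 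The only genuinely delicate input is the newform-twisting statement: one must verify that the slack built into the choice $n_r = \max\{v_r(N), v_r(I_0), v_r(M_0)\} + 2$ is enough to pin down the level of the twist uniformly over the a priori unknown local types of $f$, $h$ and $g$ at the primes dividing $m$. Everything else is bookkeeping, with the Eisenstein case handled separately because $g$ is not cuspidal.
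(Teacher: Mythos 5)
Your proposal is correct and follows essentially the same route as the paper: the cusp-form assertions are exactly the appeal to Atkin--Li (the paper cites \cite[Theorem 4.1]{Atkin}), the Eisenstein assertion is the same $q$-expansion comparison using property (iv), and the final identity is the same prime-by-prime valuation check resting on Lemma~\ref{lem: exactly divides}. Your repackaging of the last step as $[N_{f|\chi}, N_{h|\bar\chi}p^{-\alpha}] = m^{2n}[N_{(m')},(I_0)_{(m')}]$ together with $(I_0)_{(m')}=(M_0)_{(m')}$ is just a tidier phrasing of the paper's three-case analysis.
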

				\begin{proof}
					The first assertion follows from \cite[Theorem 4.1]{Atkin}. The second assertion follows from the property (iv) listed above. To verify the last assertion we do case by case analysis and show the $r$-adic valuations of $[N_{f|\chi}, N_{g|\bar{\chi}}]$ and $[N_{f|\chi}, N_{h|\bar{\chi}}]$ are the same. As $M_0 p^s\mid I_0 p^\alpha$, we need to consider the cases (a) $r\mid m$, (b) $r\nmid m$ but $r \mid I_0$ and (c) $r\nmid I_0$ and $r\mid N$. If $r$ is prime and $r\mid m$, then by property (ii)-(iv) listed above, we have $v_r([N_{f|\chi}, N_{g|\bar{\chi}}]) = 2 v_r(\mathrm{cond}(\chi)) = v_r([N_{f|\chi}, N_{h|\bar{\chi}}]) $. If $r$ is a prime $r\nmid m$ but $r \mid I_0$, then by Lemma~\ref{lem: exactly divides} we have $v_r([N_{f|\chi}, N_{g|\bar{\chi}}]) = v_r([N_{f|\chi},r])= v_r([N_{f|\chi}, N_{h|\bar{\chi}}]) $. If $r$ is a prime $r\nmid I_0$ but $r \mid N$, then $v_r([N_{f|\chi}, N_{g|\bar{\chi}}]) = v_r(N_{f|\chi})= v_r([N_{f|\chi}, N_{h|\bar{\chi}}]) $. This finishes the proof of the last assertion.
				\end{proof}
				\begin{remark}\label{twist and untwist remark}
					If  $F \in S_{k}(\Gamma_{0}(N), \phi;K)$ is primitive, then the  Hecke algebra $h_{k}(\Gamma_0(N), \chi;K_{\mathfrak{p}})$ 
					splits (as described  in  \eqref{splitting of hecke algebra}). This splitting of Hecke algebra is essential  in the construction of $p$-adic Rankin-Selberg $L$-function $\mu_{F \times \star}$ by Hida (See Theorem~\ref{padic rankin}). As $f|\iota_m$ is not necessarily primitive, a priori one needs to assume the splitting of Hecke algebra $h_k(\Gamma_0(N_{\tilde{f}}), \eta \iota_m;K_{\mathfrak{p}})$ to guarantee the existence of the $p$-adic $L$-function $\mu_{f|\iota_m \times h|\iota_m}$.  As  $f|\chi$ is primitive, the map $\phi_{f|\chi}$ induces the splitting of the corresponding Hecke algebra. This ensures the existence of the $p$-adic $L$-functions  $\mu_{f|\chi \times h|\bar{\chi}}$ and $\mu_{f|\chi \times g|\bar{\chi}}$ and Theorem~\ref{c(f) and petterson} holds for $f|\chi$. We later show  that the ideal generated by \textquote{twisted} $p$-adic  $L$-function $\mu_{f|\chi \times \star|\chi}$ is same as the ideal generated by $\Sigma_0$-imprimitive $p$-adic  $L$-function $\mu^{\Sigma_0}_{f \times \star}$ in the Iwasawa algebra.
				\end{remark}
				
				Using the congruence  in Lemma~\ref{lem: congruence of g,h} we deduce the following congruence of $p$-adic measures:
				
				\begin{lemma}\label{congruence of measures}
					Let $f \in S_{k}(\Gamma_{0}(N),\eta)$ be a $p$-ordinary newform and $g,h$ be as before. 
					Let $\chi$ be a primitive Dirichlet character be as above. Then for every finite order character $ \phi \in C(\mathbb{Z}_{p}^{\times} ; \bar{\mathbb{Q}}_p) $ and $ 0 \leq j \leq k-l-1 $, we have
					\[ 
					\mu_{f|\chi \times g|\bar{\chi}} (x_p^j\phi)  \equiv 
					\mu_{f|\chi \times h|\bar{\chi}} (x_p^j \phi) \mod \pi.
					\]
				\end{lemma}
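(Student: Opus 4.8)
The plan is to reduce the asserted congruence to the congruence of Fourier coefficients already established in Lemma~\ref{lem: congruence of g,h}, exploiting that the construction of $\mu_{f|\chi\times\bullet}$ in \eqref{Def of convolution} is assembled from $\mathcal{O}$-integral operations that touch the second variable only through the arithmetic measure $\mu_{\bullet^{\rho}}^{L}$ (together with a nebentypus-dependent Eisenstein factor). Write $G$ for $g|\bar{\chi}$ or $h|\bar{\chi}$; by Lemma~\ref{lem: primitive after twist} both are modular forms of weight $l$ with nebentypus $\xi_1\xi_2\omega_{p}^{1-l}\bar{\chi}^2$ and $\psi\bar{\chi}^2$ respectively. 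In \eqref{Def of convolution} the form $G$ enters only via $\mu_{G^{\rho}}^{L}$: the idempotent $e$, the trace $Tr_{L/N_{f|\chi}}$, the map $l_{(f|\chi)_0}$, and multiplication by the constant $c((f|\chi)_0)$ all depend only on $f|\chi$ and all carry integral $q$-expansions (resp. spaces of $\mathcal{O}$-modular forms) into themselves; integrality of $c((f|\chi)_0)\,1_{(f|\chi)_0}$, hence of the relevant specialization of $l_{(f|\chi)_0}$, is available because $f|\chi$ is primitive, so that the Hecke algebra splits as in \eqref{splitting of hecke algebra} (cf. Remark~\ref{twist and untwist remark}). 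Moreover, by \eqref{convolution mu and Eisenstien measure} the quantity $(\mu_{G^{\rho}}^{L}\star E)_{\eta\chi^2,k}((x_p^j\phi)^{-1})$ is the product in $\mathcal{O}[[q]]$ of $\mu_{G^{\rho}}^{L}((x_p^j\phi)^{-1})$ with an Eisenstein-measure factor $E(\cdot)$ depending on $G$ only through the nebentypus of $G^{\rho}$. Thus it suffices to check modulo $\pi$: (a) $\mu_{(g|\bar{\chi})^{\rho}}^{L}((x_p^j\phi)^{-1})\equiv\mu_{(h|\bar{\chi})^{\rho}}^{L}((x_p^j\phi)^{-1})$ in $\mathcal{O}[[q]]$, and (b) the two Eisenstein factors agree.

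Claim (b) is immediate: $\xi_1\xi_2\omega_{p}^{1-l}\equiv\psi\bmod\pi$ because $\bar{\rho}_{g}^{\mathrm{ss}}\cong\bar{\rho}_{h}^{\mathrm{ss}}$, so the nebentypi of $(g|\bar{\chi})^{\rho}$ and $(h|\bar{\chi})^{\rho}$ are congruent modulo $\pi$, and since the measure $2E(\cdot)$ is given coefficientwise by an explicit formula in the values of its character argument, the two Eisenstein factors are congruent modulo $\pi$. For (a), since $\mu_{G^{\rho}}^{L}(\phi')=\mu_{G^{\rho}}(\phi')\,|\,[L/J_0]$ with $\mu_{G^{\rho}}(\phi')=\sum_{(n,p)=1}\phi'(n)\,a(n,G^{\rho})\,q^{n}$, it reduces to $a(n,(g|\bar{\chi})^{\rho})\equiv a(n,(h|\bar{\chi})^{\rho})\bmod\pi$ for every $n$ coprime to $p$. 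Now $(g|\bar{\chi})^{\rho}=g^{\rho}|\chi$ with $g^{\rho}=E_l(\xi_1^{-1}\omega_{p}^{l-1},\xi_2^{-1})$ (conjugation of coefficients; this still satisfies the parity hypothesis of Lemma~\ref{Eisenstein series Hida}, whose part (iii) computes its Galois representation), while $(h|\bar{\chi})^{\rho}=h^{\rho}|\chi$ with $h^{\rho}$ the $p$-ordinary newform obtained from $h$ by conjugating Fourier coefficients. Comparing the residual Galois data of $g^{\rho}$ and $h^{\rho}$ that enter the argument of Lemma~\ref{lem: congruence of g,h} -- both being determined by $\bar{\rho}_{h}^{\mathrm{ss}}=\bar{\xi}_1\oplus\bar{\xi}_2$ -- one obtains (a) by running that argument verbatim with $(g,h)$ replaced by $(g^{\rho},h^{\rho})$ and $\iota_{pm}$ replaced by $\chi$; the twist by $\chi$ kills all coefficients at $n$ with $(n,m)>1$, where there is nothing to prove.

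Combining (a) and (b) shows that, for each finite order character $\phi\in C(\mathbb{Z}_{p}^{\times};\bar{\mathbb{Q}}_p)$ and $0\le j\le k-l-1$, the two inputs (for $g|\bar{\chi}$ and for $h|\bar{\chi}$) to the operators $e$, $Tr_{L/N_{f|\chi}}$, $l_{(f|\chi)_0}$, and multiplication by $c((f|\chi)_0)$ in \eqref{Def of convolution} are congruent modulo $\pi$; since these operators preserve integral $q$-expansions and hence congruences modulo $\pi$, we conclude $\mu_{f|\chi\times g|\bar{\chi}}(x_p^j\phi)\equiv\mu_{f|\chi\times h|\bar{\chi}}(x_p^j\phi)\bmod\pi$. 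The substantive point of the argument is the structural observation that the second variable enters \eqref{Def of convolution} only through $\mu_{\bullet^{\rho}}^{L}$ and a nebentypus-dependent Eisenstein factor, each of which reduces correctly modulo $\pi$; the one genuinely delicate verification is that the congruence of Lemma~\ref{lem: congruence of g,h} survives passing to $g^{\rho},h^{\rho}$ and twisting by $\chi$, which is handled at the level of residual representations exactly as in the proof of that lemma.
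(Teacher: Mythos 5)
Your proof is correct and follows essentially the same route as the paper's: reduce to the congruence of the arithmetic measures $\mu_{g^{\rho}|\chi}$ and $\mu_{h^{\rho}|\chi}$ via the coefficient congruence of Lemma~\ref{lem: congruence of g,h}, note the Eisenstein factor in \eqref{convolution mu and Eisenstien measure} depends only on the nebentypus which is congruent mod $\pi$, and conclude because $e$, $Tr_{L/N}$ and $c(f_0|\chi)l_{f_0|\chi}$ preserve integrality (the paper cites \cite[Proposition 7.8]{Hidarankin2} for the last of these). Your explicit verification that the congruence survives passing to $g^{\rho},h^{\rho}$ and twisting by $\chi$ is a point the paper treats more tersely, but the substance is identical.
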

				\begin{proof}
					By Lemma~\ref{lem: congruence of g,h},  we have $a(n,h|\chi)  \equiv a(n,g|\chi)  \mod \pi  $ for all $p\nmid n$. As $ \phi(p) = 0 $, for every finite order character $\phi$ of $\mathbb{Z}_p^\times$, it follows from (\ref{Def measure mu star}) that $\mu_{g^{\rho}|\chi}(\phi) \equiv \mu_{h^{\rho}|\chi}(\phi)  \mod \pi$. By  Lemma~\ref{lem: primitive after twist}, we have $L = \mathrm{lcm}(N_{f|\chi},N_{g|\bar{\chi}}) = \mathrm{lcm}(N_{f|\chi},N_{h|\bar{\chi}}) $.
					Hence $ \mu^{L}_{g^{\rho}|\chi}(\phi) \equiv \mu^{L}_{h^{\rho}|\chi}(\phi)  
					\mod \pi $. Since $ \psi  \equiv \xi_{1} \xi_{2} \omega_p^{1-l}  \mod \pi$,  it follows that
					$
					E(\eta\chi^2 \cdot \psi^{-1}\chi^{-2} \cdot 
					(\phi_{p}^{-2}) \mathfrak{Z}_{p}^{k-l-2j-1}) 
					\equiv E(\eta\chi^2  \cdot \xi^{-1} \chi^{-2} \cdot  (\phi_{p}^{-2}) \mathfrak{Z}_{p}^{k-l-2j-1}) \mod \pi $.
					Thus
					we deduce  from (\ref{convolution mu and Eisenstien measure}) that
					$
					(\mu_{g^{\rho}|\chi}^{L} \star E)_{\eta\chi^2,k} (x_p^{j} \phi) \equiv 
					(\mu_{h^{\rho}|\chi}^{L} \star E)_{\eta\chi^2,k} (x_p^{j} \phi) \mod \pi
					$. 
					By \cite[Proposition 7.8]{Hidarankin2}, it follows that the linear map $ c(f_0|\chi) l_{f_{0}|\chi} $ preserves integrality. Since the  trace operator $Tr_{L/N} $ also preserves  integral structures  the lemma follows from \eqref{Def of convolution}.
				\end{proof}

				\begin{remark}
					In \cite[Theorem 1.2]{Delbergo}, it was shown that $\mu_{f \times g'} \equiv \mu_{f \times h'}$ whenever $g' \equiv h' \mod \pi$. Thus Lemma \ref{congruence of measures} is a consequence of \cite[Theorem 1.2]{Delbergo} applied with $g' = g|\bar{\chi}\iota_{p}$ and $h' = h|\bar{\chi}\iota_{p}$. Using Lemma~\ref{congruence of measures}, we establish our final congruence on $ p $-adic $L$-functions in Theorem~\ref{analytic final}.
				\end{remark}
						The following lemma relates the $p$-adic $L$-functions $\mu_{f|\chi \times h|\bar{\chi}}$ and $\mu_{f\times h}$.
						\begin{lemma}\label{twist and untwist cusp}
							Let $f \in S_{k}(\Gamma_{0}(N),\eta)$ be a normalised $p$-ordinary newform with $p\nmid N$. Let $h \in S_{l}(\Gamma_{0}(I_0p^\alpha),\psi)$ be a normalised $p$-ordinary newform with $p\nmid I_0$ and $2 \leq l < k$. Let $\phi$ be a finite order character  of $\mathbb{Z}_p^\times$  and $0\leq j \leq k-l-1$, we have 
							\begin{align}\label{eq:twist and untwist cusp}
								\mu_{f|\chi \times h|\bar{\chi}}(x_p^j \phi) = t' \frac{c(f_0|\chi)}{c(f_0)} \frac{\langle (f_0^\rho| \tau_{Np}, f_0\rangle_{Np}}{\langle (f_0|\chi)^\rho | \tau_{N_{f|\chi}p}, f_0|\chi \rangle_{N_{f|\chi}p}} \frac{W(h^\rho|\phi\chi)}{W(h^\rho|\phi)} \chi(p)^{1-\alpha} \mu_{f \times h}(x_p^j\phi)  \prod_{r \in \Sigma_0} L_{r}(l+j, f, h|\phi) ,
							\end{align} 
							where $t' =  (N_{f|\chi}/N)^{k/2} (N_{h|\chi}/I_0p^\alpha)^{(l+2j)/2} [N_{f|\chi}, N_{h|\chi}]/[N,I_0p^\alpha]$ and $L_{r}(l+j, f, h)$ denotes the Euler factor of $D_{INp}(l+j, f,h)$ at $r$. 
						\end{lemma}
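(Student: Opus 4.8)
\emph{Proof strategy.} The plan is to compare the interpolation formulae of the two $p$-adic $L$-functions on the dense family of test functions $x_p^{j}\phi$, with $0\le j\le k-l-1$ and $\phi$ a finite order character of $\Z_p^{\times}$. Since $f$ is a newform and $f|\chi$ is a newform (Lemma~\ref{lem: primitive after twist}), the Hecke-algebra splitting required for Hida's construction is available in both cases (cf.\ Remark~\ref{twist and untwist remark}), so $\mu_{f\times h}$ and $\mu_{f|\chi\times h|\bar\chi}$ are genuine $\mathcal{O}$-valued measures and it suffices to check \eqref{eq:twist and untwist cusp} after evaluating both sides at each $x_p^{j}\phi$. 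For $\phi\neq\iota_p,\bar\psi_p$ I would apply Corollary~\ref{special value f,cusp} to the pair $(f|\chi,h|\bar\chi)$ and to the pair $(f,h)$ and divide; the two exceptional values of $\phi$ are handled directly from Theorem~\ref{padic rankin} together with Lemmas~\ref{special value f,cusp and trivial character}--\ref{special value f,cusp and non-trivial character}, or by continuity in $\phi$.

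\emph{Elementary inputs and the easy factors.} Before dividing I would record: the $p$-stabilization commutes with the prime-to-$p$ twist, so $(f|\chi)_0=f_0|\chi$ and hence $a(p,f_0|\chi)=\chi(p)\,a(p,f_0)$; one has $(h|\bar\chi)^{\rho}=h^{\rho}|\chi$ and $\chi\bar\chi=\iota_m$; the levels are as in Lemma~\ref{lem: primitive after twist}; and, since $\mathrm{cond}(\chi)=m^{n}$ is prime to $p$, the local representation of $h$ at $p$ is unchanged by the twist, so $h|\bar\chi$ is again $p$-minimal, non-supercuspidal at $p$ and of $p$-level $p^{\alpha}$, whence the integer $\beta$ of Theorem~\ref{padic rankin} attached to $(h|\bar\chi,\phi)$ agrees with the one attached to $(h,\phi)$. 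Feeding these into the quotient of the two interpolation formulae, the $\Gamma$-factors and the powers $p^{\beta(l/2+j)}p^{(2-k)/2}$ cancel; the $c(f_0)$'s and Petersson norms assemble into $\frac{c(f_0|\chi)}{c(f_0)}\cdot\frac{\langle f_0^{\rho}|\tau_{Np},f_0\rangle_{Np}}{\langle(f_0|\chi)^{\rho}|\tau_{N_{f|\chi}p},f_0|\chi\rangle_{N_{f|\chi}p}}$; the level factors combine, using $[N_{f|\chi},N_{h|\bar\chi}p^{-\alpha}]\,p^{\alpha}=[N_{f|\chi},N_{h|\bar\chi}]$ and $[N,I_0p^{\alpha}]=[N,I_0]\,p^{\alpha}$, into $t'$; and the leftover normalisation is a power of $\chi(p)$ coming from $a(p,f_0|\chi)^{1-\beta}$ together with the factor $u_{f|\chi}^{\beta-\alpha}$ produced when the $D$-value is rewritten via Lemma~\ref{special value f,cusp and trivial character}, which collapses to $\chi(p)^{1-\alpha}$.

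\emph{The heart: the Rankin--Selberg $L$-values.} Here I would use $\chi\bar\chi=\iota_m$ to rewrite the Dirichlet series of the twisted convolution as $\sum_{(n,m)=1}a(n,f_0)a(n,h)\phi(n)n^{-(l+j)}$; since $f_0$ and $h$ have multiplicative Fourier coefficients this strips off precisely the Euler factors at the primes $r\mid m=\prod_{r\in\Sigma_0}r$, and, by Lemma~\ref{lem: primitive after twist}, the auxiliary Dirichlet $L$-factors in the two $D$'s have the same set of omitted primes and therefore agree; thus the quotient of the two $D$-values accounts for the product $\prod_{r\in\Sigma_0}L_r(l+j,f,h|\phi)$ (noting $a(r,f_0)=a(r,f)$ for $r\mid m$). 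Finally, re-expressing the $D$-values through Lemma~\ref{special value f,cusp and trivial character} (for $\phi=\iota_p$) or Lemma~\ref{special value f,cusp and non-trivial character} (for $\phi\neq\iota_p$) and using the equality of newforms $(h|\bar\chi)^{\rho}|\bar\phi=h^{\rho}|\phi\chi$ yields the root-number ratio $W(h^{\rho}|\phi\chi)/W(h^{\rho}|\phi)$, after one checks that the residual $P_p$-Euler factor at $p$ and the Gauss-sum data in the two root numbers cancel. The main obstacle I anticipate is exactly this last bookkeeping: one must carry the case distinction on $\phi$ through Lemmas~\ref{special value f,cusp and trivial character}--\ref{special value f,cusp and non-trivial character}, keep careful track of the Atkin--Lehner/Gauss-sum behaviour of root numbers under the prime-to-$p$ twist, and verify that every stray power of $p$, $u_f$ and $\chi(p)$ assembles into the single factor $\chi(p)^{1-\alpha}$ asserted in \eqref{eq:twist and untwist cusp}.
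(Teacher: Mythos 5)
Your proposal follows essentially the same route as the paper's proof: reduce by density to finite order characters $\phi\neq\bar\psi_p$, apply Corollary~\ref{special value f,cusp} to both the pair $(f|\chi,h|\bar\chi)$ and the pair $(f,h)$, and divide, identifying the ratio of $t$-factors, $c(f_0)$'s, Petersson norms, root numbers, and $D$-values with the factors $t'$, $\chi(p)^{1-\alpha}$ and $\prod_{r\in\Sigma_0}L_r$ in \eqref{eq:twist and untwist cusp}. Your explicit justification that the quotient of Rankin--Selberg $D$-values yields exactly the Euler factors at $\Sigma_0$ (via multiplicativity and the equality of the omitted-prime sets) is a detail the paper leaves implicit, but the argument is the same.
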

						\begin{proof}
							Note that the span of $\{ \text{finite order characters of } \mathbb{Z}_p^\times\} \setminus \{ \bar{\psi}_{p} \}$ is dense in $C(\mathbb{Z}_p^\times, \bar{\mathbb{Q}}_p)$. Thus it is enough to show that the statement of lemma holds for all $\phi \in \{ \text{finite order characters of } \mathbb{Z}_p^\times\} \setminus \{ \bar{\psi}_{p}\}$. By Corollary~\ref{special value f,cusp}(i), we have
							\begin{align*}
								\mu_{f|\chi \times h|\bar{\chi}}(x_p^j \phi) = c(f_0|\chi) t(f|\chi, h|\bar{\chi}) p^{\alpha(l+2j)/2} (\chi(p)u_f)^{1-\alpha} & p^{(2-k)/2} W(h^\rho|\chi) P_{p}(h|\chi, p^j u_f \chi(p)) \\ & \times \frac{D_{N_{f|\chi}N_{h|\bar{\chi}}p}(l+j, f_0|\chi, h|\bar{\chi})}{(2i)^{k+l+2j} \pi^{l+2j+1} \langle (f_0|\chi)^\rho|\tau_{N_{f|\chi}p},f_0|\chi\rangle_{N_{f|\chi}p}}.
							\end{align*}
							Since $P_{p}(h|\chi, p^j u_f \chi(p))  = P_{p}(h, p^j u_f ) $ and twisting $f,h$ by $\chi$ doesn't change the value of $\alpha$ we have 
							\begin{align*}
								\mu_{f|\chi \times h|\bar{\chi}}(x_p^j \phi)  =  &\frac{t(f|\chi, h|\bar{\chi})}{t(f,h)}  \frac{c(f_0|\chi)}{c(f_0)} \frac{\langle f_0^\rho | \tau_{Np}, f_0\rangle_{Np}}{\langle (f_0|\chi)^\rho| \tau_{N_{f|\chi}p}, f_0|\chi \rangle_{N_{f|\chi}p}} \frac{W(h^\rho|\chi)}{W(h^\rho)} \chi(p)^{1-\alpha} \frac{D_{N_{f|\chi}N_{h|\bar{\chi}}p}(l+jf_0|\chi, h|\bar{\chi})}{D_{NI_0p}(l+j,f_0, h)}\\
								& \times t(f,h) c(f_0) p^{\alpha(l+2j)/2} u_f^{1-\alpha}  p^{(2-k)/2} W(h^\rho) P_{p}(h, p^j u_f )
								\frac{D_{NI_0p}(l+j, f_0, h)}{(2i)^{k+l+2j} \pi^{l+2j+1} \langle f_0^\rho|\tau_{Np},f_0|\chi\rangle_{Np}}.
							\end{align*}
							Now the lemma follows from Corollary~\ref{special value f,cusp}(i) in the case $\chi = \iota_{p}$. The remaining  case $\phi \neq  \iota_p, \bar{\psi}_{p}$ can be proved similarly using Corollary~\ref{special value f,cusp}(ii). 
						\end{proof}
						
						In the next section, using Eq. \eqref{eq:twist and untwist cusp}, we show that $\mu_{f|\chi \times h|\bar{\chi}}$ and the $\Sigma_0$-imprimitive $p$-adic $L$-function $\mu^{\Sigma_0}_{f \times h}$ generate the same ideal in the Iwasawa algebra.
						
						To ease the we denote $\tilde{f} = f|\iota_{m}$ and $\tilde{f}_0 = f_0|\iota_m$. Since the conductor of $\chi$ is a positive power of $m$, we have  $\chi\bar{\chi} = \iota_{m}$. We now relate the $p$-adic Rankin $L$-function $\mu_{f|\chi \times g|\bar{\chi}}$ to the product of the $L$-functions attached to $\tilde{f}_0 | \xi_{1}$ and $\tilde{f}_0 | \xi_{2}$. 
						\begin{lemma}\label{twist and untwist Eis}
							Let $f \in S_{k}(\Gamma_{0}(N),\eta)$ be a normalised $p$-ordinary newform with $p\nmid N$.  Let $g  = E_{l}(\xi_{1} \omega_p^{1-l}, \xi_2)$ and $g' = E_{l}(\xi_2,\xi_{1} \omega_p^{1-l})$. Set $ t(f|\chi,g|\bar{\chi}) = [N_{f|\chi},N_{g|\bar{\chi}}] N_{f|\chi}^{k/2} (N_{g|\bar{\chi}})_0^{(l+2j)/2} \Gamma(l+j)\Gamma(j+1)$.
							Let $ \phi $ be a finite order character on $ \mathbb{Z}_p^{\times} $ and $ 0 \leq j \leq k-l-1 $.
							\begin{enumerate}
								\item[$\mathrm{(i)}$] If $\phi = \iota_{p}$, then we have $\beta = 2 $ and 
								\begin{small}
									\begin{align*}
										\mu_{f|\chi \times g|\bar{\chi}}(x_p^{j} \phi ) = c(f_0|\chi) t(f|\chi,g|\bar{\chi}) p^{s(l+2j)/2} (\chi(p)u_{f})^{1-s} &  p^{(2-k)/2} W(g^\rho|\chi)   P_{p}(g,p^j u_{f}^{-1}) \\ 
										& \frac{L(j+1,\tilde{f}_0,\xi_2)    
											L(l+j,\tilde{f}_0,\xi_1 \omega_{p}^{1-l})}{(2i)^{k+l+2j} \pi^{l+2j+1} 
											{ \langle (f_{0}|\chi)^{\rho}|_{k}\tau_{N_{f|\chi}p} , f_{0}|\chi \rangle_{N_{f|\chi}p}}} ,
									\end{align*} 
								\end{small} %
								where $W(g^\rho|\chi) = (\mathrm{cond}(\xi_1\omega_{p}^{1-l}\bar{\chi
								})/\mathrm{cond}(\bar{\xi}_2\chi))^{l/2} \bar{\xi}_2\chi(-1) G(\bar{\xi_2}\chi)/ G(\xi_1\omega_{p}^{1-l} \bar{\chi})$.
								\item[$\mathrm{(ii)}$] If $\phi \neq \iota_{p}, (\xi_1 \omega_{p}^{1-l})_p$, then we have  $\beta = v_{p}(\mathrm{cond}(\xi_{1}\omega_{p}^{1-l}\phi)) + v_{p}(\mathrm{cond}(\xi_{2}\phi))$ and 
								\begin{small}
									\begin{align*}
										\mu_{f|\chi \times g|\bar{\chi}}(x_p^{j} \phi ) = c(f_0|\chi) t(f|\chi,g|\bar{\chi}) p^{\beta(l+2j)/2} (\chi(p)u_{f})^{1-\beta}   & p^{(2-k)/2} W(g^\rho|\chi\bar{\phi})   \\
										& \frac{L(j+1,\tilde{f}_0,\xi_2 \phi)    
											L(l+j,\tilde{f}_0,\xi_1 \omega_{p}^{1-l}\phi)}{(2i)^{k+l+2j} \pi^{l+2j+1} 
											{ \langle (f_{0}|\chi)^{\rho}|_{k}\tau_{N_{f|\chi}p} , f_{0}|\chi \rangle_{N_{f|\chi}p}}} ,
									\end{align*} 
								\end{small} %
								where $W(g^\rho|\chi\bar{\phi}) = (\mathrm{cond}(\xi_1 \omega_{p}^{1-l}\bar{\chi} \phi)/\mathrm{cond}(\xi_2 \bar{\chi} \phi))^{l/2} \xi_2\bar{\chi}\phi(-1) G(\bar{\xi}_2\chi\bar{\phi})/ G(\xi_1 \omega_{p}^{1-l}\bar{\chi}\phi)$. 
							\end{enumerate}
						\end{lemma}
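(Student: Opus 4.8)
The plan is to deduce both identities from Corollary~\ref{special value f,Eis}, applied not to the pair $(f,g)$ but to the twisted pair $(f|\chi,\,g|\bar\chi)$, and then to rewrite the resulting $L$-values in terms of $\tilde{f}_0 = f_0|\iota_m$ using $\chi\bar\chi = \iota_m$.

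First I would record the elementary identifications that hold because $\chi$ is prime to $p$: namely $(f|\chi)_0 = f_0|\chi$, the unit root satisfies $u_{f|\chi} = \chi(p)\,u_f$, and $c((f|\chi)_0) = c(f_0|\chi)$; moreover $a(p,g|\bar\chi) = \bar\chi(p)\,a(p,g)$ is a $p$-adic unit, so $g|\bar\chi$ is $p$-ordinary. By Lemma~\ref{lem: primitive after twist}, $g|\bar\chi = E_l(\xi_1\omega_p^{1-l}\bar\chi,\,\xi_2\bar\chi)$; the characters $\theta := \xi_1\omega_p^{1-l}\bar\chi$ and $\varphi := \xi_2\bar\chi$ are primitive, $\theta\varphi(-1) = \xi_1\xi_2\omega_p^{1-l}(-1) = (-1)^l$, and $\mathrm{cond}(\theta)\mathrm{cond}(\varphi)$ has $p$-part $p^s$ with $s = v_p(\mathrm{cond}(\xi_1\omega_p^{1-l}))\le 1$; and by Remark~\ref{twist and untwist remark}, since $f|\chi$ is a newform the relevant Hecke algebra splits and $\mu_{f|\chi\times g|\bar\chi}$ is defined. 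Thus all hypotheses of Corollary~\ref{special value f,Eis} are met for the pair $(f|\chi,\,g|\bar\chi)$.

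Next I would read off $\beta$ and check the exclusions. Because $\bar\chi$ is prime to $p$, for every finite order character $\phi$ of $\mathbb{Z}_{p}^{\times}$ one has $v_p(\mathrm{cond}(\theta\phi)) = v_p(\mathrm{cond}(\xi_1\omega_p^{1-l}\phi))$ and $v_p(\mathrm{cond}(\varphi\phi)) = v_p(\mathrm{cond}(\xi_2\phi))$, and $\varphi$ is unramified at $p$. Hence Corollary~\ref{special value f,Eis}(i) applied with $\phi = \iota_p$ gives $\beta = \max\{s,1\}+1 = 2$, while Corollary~\ref{special value f,Eis}(ii) applied with $\phi\neq\iota_p$ gives $\beta = v_p(\mathrm{cond}(\xi_1\omega_p^{1-l}\phi)) + v_p(\mathrm{cond}(\xi_2\phi))$; and the exclusions $\phi\neq\iota_p,\bar\theta_p,\bar\varphi_p$ there collapse, using $\bar\varphi_p = \iota_p$, to the hypothesis $\phi\neq\iota_p,(\xi_1\omega_p^{1-l})_p$ of part (ii).

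Finally I would translate the right-hand side. Corollary~\ref{special value f,Eis} outputs $L(j+1,(f|\chi)_0,\varphi\phi)\,L(l+j,(f|\chi)_0,\theta\phi)$ divided by $\langle (f_0|\chi)^{\rho}|_k\tau_{N_{f|\chi}p},\,f_0|\chi\rangle_{N_{f|\chi}p}$; since $\chi\bar\chi = \iota_m$ we have $L(s,f_0|\chi,\xi_i\bar\chi\phi) = L(s,f_0|\iota_m,\xi_i\phi) = L(s,\tilde{f}_0,\xi_i\phi)$, which produces the numerators claimed. Unwinding \eqref{root number of g definition} for $(g|\bar\chi)^{\rho} = E_l(\xi_1^{-1}\omega_p^{l-1}\chi,\,\xi_2^{-1}\chi)$ and its $\bar\phi$-twist gives exactly the displayed root numbers $W(g^{\rho}|\chi)$ and $W(g^{\rho}|\chi\bar\phi)$; combining this with $c((f|\chi)_0) = c(f_0|\chi)$, $u_{f|\chi} = \chi(p)u_f$, and the constant $t(f|\chi,g|\bar\chi)$ of the statement yields (ii) at once and (i) after rewriting the $p$-Euler factor coming from $\mu_{(g|\bar\chi)^{\rho}}(\iota_p)$ as $P_p(g,p^j u_f^{-1})$ via \eqref{Euler factor at p}. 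The only delicate part is the bookkeeping — the case split for $\beta$ and the careful matching of the normalizing factors $W(\cdot)$, $P_p(\cdot)$ and $t(\cdot,\cdot)$ through the prime-to-$p$ twist by $\chi$; conceptually there is no obstacle, the content being entirely that of Corollary~\ref{special value f,Eis}.
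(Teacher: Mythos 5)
Your proposal is correct and follows exactly the route the paper takes: its proof consists of noting $u_{f|\chi}=\chi(p)u_f$, that twisting by the prime-to-$p$ character $\chi$ leaves the $p$-parts of levels and conductors (and hence $P_p(g^\rho,\cdot)$ and $\beta$) unchanged, and then invoking Corollary~\ref{special value f,Eis} for the pair $(f|\chi, g|\bar\chi)$. Your write-up just makes explicit the bookkeeping that the paper leaves implicit.
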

						\begin{proof}
							Note that $u_{f|\chi} = \chi(p) u_f$ and $\chi$ has conductor prime to $p$. Since we are twisting $f,g$ and $\bar{\xi}_{1}\bar{\omega}_{p}\phi, \bar{\xi}_{2}\bar{\phi}$ by $\chi$, the $p$-part of the level and conductor remain the same.  Further, from \eqref{Euler factor at p}, we have $P_{p}(g^{\rho}|\chi,p^ju_{f|\chi}^{-1}) = P_{p}(g^{\rho},p^ju_{f}^{-1}) $.  Now the lemma follows immediately from Corollary~\ref{special value f,Eis}.
						\end{proof}
						\section{Periods and the congruences of p-adic L-functions}\label{sec: periods and congruence}
						
						In this section, we show that the $p $-adic Rankin-Selberg $ L $-function $ \mu_{f|\chi \times g|\bar{\chi} }$ is congruent to the product of the $p$-adic $ L $-functions of $ \tilde{f}_{0} \otimes \xi_{1}$ and $ \tilde{f}_{0} \otimes \xi_{2} $. In order to do this, we make an appropriate choice of periods in \S\ref{subsection: relation periods}. We also show that  the ideal generated by the $p$-adic $ L $-function $\mu_{f|\chi \times h|\bar{\chi} }$ is same as  the ideal generated by the $\Sigma_{0}$ imprimitive $p$-adic $L$-function of $\mu_{f \times h}$ in the Iwasawa algebra.
						
						\subsection{Relation between the periods}\label{subsection: relation periods}
						We begin by recalling  an algebraicity  result due to  Shimura.  
						
						\begin{theorem}\label{Shimura special values} $\mathrm{(}$\cite[Theorem 1]{Shimura1}$ \mathrm{ ) } $ Let $ F \in S_{k}(\Gamma_{1}(N))$  be a
							normalised	Hecke eigenform. There exist  complex periods $ \Omega_{F}^{+} $ and  $ \Omega_{F}^{-} $ such that  for every Dirichlet character $ \theta $,  we have
							\begin{small}		
								\begin{align}\label{eq: special values of f}
									\frac{L(j,F,\theta)}{(2 \pi i)^{j} G(\theta)
										\Omega_{F}^{\mathrm{sgn}((-1)^j \theta(-1))}}  \in \bar{\mathbb{Q}}, \quad \text{for} \quad 1 \leq j \leq k-1.
								\end{align}
							\end{small}%
							Further, we have  $ \frac{\Omega_{F}^{+} 
								\Omega_{F}^{-} }
							{2 G(\theta) \langle F , F \rangle} \in \bar{\mathbb{Q}}$ and $ \Omega_{F}^{\pm} $  are  well-defined up to an element of $ \bar{\mathbb{Q}}^{\times} $.
						\end{theorem}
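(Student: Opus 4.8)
The plan is to realise the twisted special values $L(j,F,\theta)$ as values of a single cohomology class attached to $F$, paired against explicit $\bar{\mathbb{Q}}$-rational cycles, and then to compare the resulting complex periods against a $\bar{\mathbb{Q}}$-rational structure on parabolic cohomology. First I would set up the Eichler--Shimura picture: let $\operatorname{Sym}^{k-2}$ denote the $\Gamma_1(N)$-module of homogeneous polynomials of degree $k-2$ in two variables $X,Y$, with its natural $\bar{\mathbb{Q}}$-structure. The Eichler--Shimura isomorphism gives a Hecke-equivariant identification
\[
H^1_{\mathrm{par}}\big(\Gamma_1(N),\operatorname{Sym}^{k-2}(\mathbb{C})\big)\;\cong\;S_k(\Gamma_1(N))\oplus\overline{S_k(\Gamma_1(N))},
\]
under which $F$ corresponds to the class $\omega_F$ of the $\operatorname{Sym}^{k-2}(\mathbb{C})$-valued differential $F(z)(X-zY)^{k-2}\,dz$. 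The source carries a $\bar{\mathbb{Q}}$-structure $H^1_{\mathrm{par}}(\Gamma_1(N),\operatorname{Sym}^{k-2}(\bar{\mathbb{Q}}))$, stable under the Hecke operators and under the involution $\iota$ coming from $\begin{psmallmatrix}-1&0\\0&1\end{psmallmatrix}$; write $H^{1,\pm}_{\mathrm{par}}$ for its $\iota$-eigenspaces. Since $F$ is a normalised Hecke eigenform with algebraic eigenvalues, its $\lambda_F$-eigenspace in $H^1_{\mathrm{par}}(\Gamma_1(N),\operatorname{Sym}^{k-2}(\bar{\mathbb{Q}}))$ is two-dimensional over $\bar{\mathbb{Q}}$ (taking $N$ to be the level of $F$ and using multiplicity one) and is split by $\iota$ into two lines; I would fix $\bar{\mathbb{Q}}$-generators $\delta_F^{+},\delta_F^{-}$ of these lines, decompose $\omega_F=\omega_F^{+}+\omega_F^{-}$ accordingly, and \emph{define} $\Omega_F^{\pm}\in\mathbb{C}^{\times}$ by $\omega_F^{\pm}=\Omega_F^{\pm}\,\delta_F^{\pm}$. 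By construction each $\Omega_F^{\pm}$ is determined up to $\bar{\mathbb{Q}}^{\times}$.

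Next I would express the special values as period pairings. A standard computation with the Mellin transform of $F$ twisted by $\theta$ (Birch's lemma) gives, for $1\le j\le k-1$ and $\theta$ of conductor $m$, an identity of the shape
\[
\frac{L(j,F,\theta)}{(2\pi i)^{j}\,G(\theta)}\;=\;(\text{explicit algebraic factor})\cdot\Big\langle\,\omega_F\,,\ \sum_{a\bmod m}\bar\theta(a)\,\{a/m\to i\infty\}\otimes X^{j-1}Y^{k-1-j}\,\Big\rangle,
\]
where $\{r\to s\}$ denotes the relative (modular-symbol) cycle of the geodesic from $r$ to $s$ and the bracket is the evaluation pairing between parabolic cohomology and relative homology. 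The cycle on the right lies in the $\bar{\mathbb{Q}}$-rational homology, being built from $\mathbb{Q}$-rational cusps with rational coefficients, and complex conjugation acts on it by the scalar $(-1)^{j-1}\theta(-1)$ (the $(-1)^{j-1}$ from the degree of $X^{j-1}Y^{k-1-j}$ under $z\mapsto-\bar z$, the $\theta(-1)$ from $\bar\theta(-a)=\theta(-1)\bar\theta(a)$). Hence only the component $\omega_F^{\varepsilon}$ with $\varepsilon=\mathrm{sgn}((-1)^{j}\theta(-1))$ pairs nontrivially, the discrepancy between $(-1)^{j}$ and $(-1)^{j-1}$ being absorbed once and for all into the labelling of $\delta_F^{\pm}$, and therefore
\[
\frac{L(j,F,\theta)}{(2\pi i)^{j}\,G(\theta)\,\Omega_F^{\varepsilon}}\;=\;(\text{algebraic})\cdot\big\langle\delta_F^{\varepsilon},\ (\text{rational cycle})\big\rangle\;\in\;\bar{\mathbb{Q}},
\]
which is the first assertion.

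For the relation with the Petersson norm I would combine Poincar\'e duality with the Rankin--Selberg method. The cup-product pairing on $H^1_{\mathrm{par}}(\Gamma_1(N),\operatorname{Sym}^{k-2})$ is $\bar{\mathbb{Q}}$-rational, Hecke-adjoint and $\iota$-anti-invariant, so it pairs the $+$-line with the $-$-line, giving $\langle\delta_F^{+},\delta_F^{-}\rangle\in\bar{\mathbb{Q}}^{\times}$; on the analytic side the Eichler--Shimura formula identifies the cup product $\langle\omega_F,\overline{\omega_F}\rangle$ with an algebraic multiple of $(2\pi i)^{k-1}\langle F,F\rangle$. Comparing the two evaluations of $\langle\omega_F^{+},\omega_F^{-}\rangle$ yields $\Omega_F^{+}\Omega_F^{-}/\langle F,F\rangle\in\bar{\mathbb{Q}}$; since Gauss sums are algebraic, the factor $2G(\theta)$ appearing in the stated form of this assertion is immaterial. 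Alternatively one can obtain this last algebraicity directly by writing $L(j,F,\theta)$ as a Petersson product of $F$ against a nearly holomorphic form built by applying a Maass--Shimura raising operator to a holomorphic Eisenstein series, and invoking Shimura's theorem that $\langle F,h\rangle/\langle F,F\rangle\in\bar{\mathbb{Q}}$ whenever $h$ is nearly holomorphic with algebraic Fourier coefficients.

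The hard part will be the bookkeeping in the second step: verifying carefully that the twisted sum of partial period integrals is genuinely a $\bar{\mathbb{Q}}$-rational \emph{parabolic} homology class (handling the boundary contributions at the cusps so that the geodesic paths assemble into a relative cycle), and pinning down the sign $(-1)^{j-1}\theta(-1)$ exactly, including its dependence on conventions for $(X-zY)^{k-2}$ and for $\iota$. The Rankin--Selberg route sidesteps the sign analysis but only produces algebraicity modulo $\langle F,F\rangle$; recovering the sharper statement with the two \emph{separate} periods $\Omega_F^{\pm}$ and the precise sign $\mathrm{sgn}((-1)^{j}\theta(-1))$ genuinely requires the cohomological input above.
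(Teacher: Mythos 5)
The paper gives no proof of this statement; it is quoted directly from Shimura, and your argument is essentially Shimura's original one (and the same Eichler--Shimura/modular-symbol machinery the paper itself sets up later in \S4, where it fixes the canonical choice of $\delta_F^{\pm}$ via Lemma~\ref{multiplicity one of parabolic cohomology} and proves the period--Petersson relation in Theorem~\ref{choice of period and petterson innerproduct}). One correction to your third step: with the Betti rational structure you are using, the cup-product computation gives $\langle\omega_F^{+},\omega_F^{-}\rangle$ as an algebraic multiple of $(2i)^{k-1}N^{k/2-1}\langle F,F\rangle$, \emph{not} of $(2\pi i)^{k-1}\langle F,F\rangle$; as written, your intermediate claim would yield $\Omega_F^{+}\Omega_F^{-}/\langle F,F\rangle\in\pi^{k-1}\bar{\mathbb{Q}}$ rather than $\bar{\mathbb{Q}}$, so the extraneous $\pi^{k-1}$ must be removed for the second assertion to follow.
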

						We next recall the $ p $-adic $ L $-function associated to  an eigenform. See  \cite[\S 14]{MTT} and \cite[\S 3.4.4]{SU}. 
						\begin{theorem}\label{p-adic l-function of modular form} 
							Let $ F = \sum a(n,F) q^n \in S_{k}(\Gamma_{0}(N), \varphi)$ be a $ p $-ordinary eigenform and $\Omega^{+}_{F}, \Omega^{-}_{F}$ be  complex periods satisfying \eqref{eq: special values of f}. Then there exist a bounded measure $ \mu_F $ such that for a finite order  character $ \phi $ of $(\mathbb{Z}/L\mathbb{Z}) \times \mathbb{Z}_{p}^{\times} $ with $p \nmid L$ and $ 0 \leq j \leq k-2 $,  we have
							\begin{small}
								\begin{align}\label{eq: interpol muF}
									\mu_{F}(x_{p}^{j}\phi ) = \int_{ (\mathbb{Z}/L\mathbb{Z}) \times \mathbb{Z}_{p}^{\times}} x_{p}^{j}\phi ~d\mu_{F} = \frac{e_{p}(u_F, x_p^j \phi )}{u_{F}^{v}}  
									\frac{ \mathrm{cond}(\phi)^{j+1}}{(-2\pi i)^{j+1} }\frac{j!}{G(\phi)} 
									\frac{L(j+1,F,\phi)}{
										\Omega_{F}^{\mathrm{sgn}((-1)^j \phi(-1))}  }.
								\end{align}
							\end{small}%
							Here $ e_{p}(u_F, x_p^j \phi ) = \left( 1 - 
							\frac{\phi_0(p) \varphi(p) p^{k-2-j}}{u_{F}}\right)
							\left(  1 - \frac{\bar{\phi}_0(p)p^{j}}{u_{F}}\right) $  is the $ p $-adic multiplier, $ \phi_0 $ is the primitive character associated to $ \phi $ and $ u_F $ is the unique $ p $-adic unit  root of $ X^2 - a(p,F)X+\varphi(p) p^{k-1} $.
						\end{theorem}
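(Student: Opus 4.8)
The plan is to follow the classical modular-symbol construction of Mazur--Tate--Teitelbaum (and Amice--V\'elu, Vishik), so the proof is in effect a recollection of \cite[\S 14]{MTT} and \cite[\S 3.4.4]{SU}, adapted to the nebentypus $\varphi$ and the auxiliary tame level $L$. First I would pass from $F$ to its $p$-stabilization $F_0$ at level $\mathrm{lcm}(N,L)p$ as in \eqref{p-stabilization of f}; since $F$ is $p$-ordinary, the relevant root $u_F$ of $X^2-a(p,F)X+\varphi(p)p^{k-1}$ is a $p$-adic unit, and this unit property is precisely what makes the resulting distribution \emph{bounded} rather than merely $h$-admissible.

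Next I would recall the modular symbol attached to $F_0$: for $a\in\mathbb{Z}$ and $n\geq 0$ one considers the period integrals $\int_{a/Lp^n}^{i\infty}F_0(z)(Lp^nz-a)^{j}\,dz$, more intrinsically the class of $F_0$ in $H^1_c\big(\Gamma_0(\mathrm{lcm}(N,L)p),\mathrm{Sym}^{k-2}\big)$ paired with the appropriate cycles, split into $\pm$-parts. By Shimura's algebraicity theorem (Theorem~\ref{Shimura special values}), once divided by $\Omega_F^{\pm}$ these values lie in a fixed finite extension of $\mathbb{Q}$, and after rescaling the periods within $\bar{\mathbb{Q}}^{\times}$ one may arrange that the normalized symbols are $p$-integral with bounded denominators. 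I would then \emph{define} $\mu_F$ on a basic open set $a+Lp^n\mathbb{Z}_p\subset(\mathbb{Z}/L\mathbb{Z})\times\mathbb{Z}_p^{\times}$ as $u_F^{-n}$ times the normalized modular-symbol value of $F_0$ at $a/Lp^n$ (with the $x_p^{j}$-weighting built in). That these prescriptions glue to a distribution follows from the Hecke identity $F_0|U(p)=u_F F_0$, which controls how the level-$Lp^{n+1}$ modular symbols sum over the $p$ lifts of a residue class in terms of the level-$Lp^n$ symbol; boundedness is then immediate, since the normalized symbols are bounded and $u_F$ is a unit.

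Finally, the interpolation formula is obtained by unwinding $\int x_p^{j}\phi\,d\mu_F$ over the finite support of $\phi$: expressing $\phi$ through its Gauss sum $G(\phi)$ converts the finite sum of twisted modular symbols into the special value $L(j+1,F_0,\phi)$, up to the archimedean factor $\tfrac{\mathrm{cond}(\phi)^{j+1}}{(-2\pi i)^{j+1}}\cdot\tfrac{j!}{G(\phi)}$ and the period $\Omega_F^{\mathrm{sgn}((-1)^{j}\phi(-1))}$; this is Birch's lemma. Passing from $F_0$ back to $F$ replaces $L(j+1,F_0,\phi)$ by $L(j+1,F,\phi)$ at the cost of the Euler factor at $p$, which, after bookkeeping with both roots of the Hecke polynomial, yields exactly the $p$-adic multiplier $e_p(u_F,x_p^{j}\phi)=\big(1-\tfrac{\phi_0(p)\varphi(p)p^{k-2-j}}{u_F}\big)\big(1-\tfrac{\bar\phi_0(p)p^{j}}{u_F}\big)$ together with the normalization $u_F^{-v}$, where $v=v_p(\mathrm{cond}(\phi))$.

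I expect the main obstacle to be purely bookkeeping rather than conceptual: fixing the periods $\Omega_F^{\pm}$ so that they are \emph{simultaneously} the ones in Shimura's theorem and make $\mu_F$ genuinely $\mathcal{O}$-valued for \emph{all} $\phi$ at once, and then tracking the exact constants --- the powers of $2\pi i$, the Gauss sum, the sign convention in $\mathrm{sgn}((-1)^{j}\phi(-1))$, and the exponent $v$ on $u_F$ --- so that the displayed formula comes out on the nose rather than up to an unspecified algebraic unit. Everything else is standard and carried out in detail in \cite[\S 14]{MTT} and \cite[\S 3.4.4]{SU}.
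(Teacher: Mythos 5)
Your outline is correct and coincides with what the paper does: Theorem~\ref{p-adic l-function of modular form} is stated as a recalled result with no proof given, the paper simply citing \cite[\S 14]{MTT} and \cite[\S 3.4.4]{SU}, and your sketch (p-stabilization, modular symbols normalized by Shimura periods, the $U(p)$ distribution relation giving boundedness from ordinarity, and Birch's lemma plus the Euler-factor bookkeeping for the interpolation formula) is exactly the standard construction those references carry out. The only point worth flagging is that the integrality of $\mu_F$ for a \emph{specific} choice of $\Omega_F^{\pm}$ is not part of this statement but is handled separately in the paper (Theorems~\ref{choice of period and petterson innerproduct} and \ref{period and integral measure}), so you need not resolve that bookkeeping here.
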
 
						
						Observe that  for $ l \leq j \leq k-1 $, we have  
						$
						(-1)^{j} \xi_{2}\phi(-1) (-1)^{j-l+1} \xi_{1} \omega_p^{1-l}\phi(-1)  = (-1)^{l-1} \xi_{1}\xi_{2} \omega_p^{1-l} (-1) =(-1)^{l-1}(-1)^{l} = -1$. Hence $ \mathrm{sgn}((-1)^{j}\xi_{2}\phi(-1)) = -\mathrm{sgn}((-1)^{j-l+1} \xi_{1} \omega_p^{1-l}\phi(-1) )$. 
						Recall that $ f_0 $ is the $ p $-stabilization of the $p  $-ordinary newform $ f$ and $\tilde{f}_0 := f_0|\iota_{m}$. Thus, for $ l \leq j \leq k -1$, we have 
						\begin{small}
							\begin{align*}
								\frac{L(j,\tilde{f}_0, \xi_{2} \phi)}{(2 \pi i)^{j}G(\xi_{2}\phi) 
									\Omega_{\tilde{f}_0 }^{\pm} } \cdot \frac{L(j-l+1,\tilde{f}_0, \xi_{1}^{-1} \omega_p^{1-l}\phi)}
								{(2 \pi i)^{j-l+1} G(\xi_{1} \omega_p^{1-l}\phi) 
									\Omega_{\tilde{f}_0 }^{\mp} }   \in \bar{\mathbb{Q}}.
							\end{align*}
						\end{small}%
						In view of Lemma~\ref{twist and untwist Eis}, to compare the $p$-adic Rankin $L$-function $\mu_{f|\chi \times h|\bar{\chi}}$  with the product of $p$-adic $L$-functions corresponding to $\tilde{f}_0 \otimes \xi_1$ and $\tilde{f}_0 \otimes \xi_2$, it suffices to compare   $ c(f_0|\chi)/\langle (f_0|\chi)^\rho|\tau_{N_{f|\chi}p}, f_{0}|\chi \rangle_{N_{f|\chi}p} $ and $ \Omega_{\tilde{f}_{0}}^{+} \Omega_{\tilde{f}_{0}}^{-}  $. 
						Next, we make a choice of periods $ \Omega_{\tilde{f}_0}^{\pm} $ such that $ p $-adic $ L $-function $ L_{p}(\tilde{f}_0,\cdot) $ associated to $ \tilde{f}_0 $ in Theorem~\ref{p-adic l-function of modular form} has  integral power series expansion and $ c(f_0|\chi)\Omega_{\tilde{f}_{0}}^{+} \Omega_{\tilde{f}_{0}}^{-} /  \langle (f_0|\chi)^\rho|\tau_{N_{f|\chi}p}, f_{0}|\chi \rangle_{N_{f|\chi}p}   $ is a $ p $-adic unit.

						We  now recall relevant cohomology groups. Let $ \Gamma $ be a congruence subgroup. Let $A$ be an integral domain  and $ \mathcal{M} $ be an $ A[\Gamma] $-module. 
						Let $ \widetilde{\mathcal{M}}$ denote the sheaf of locally constant sections of the covering $
						\Gamma_0(N) \backslash (\mathbb{H} \times\mathcal{ M}) \rightarrow \Gamma_0(N) \backslash \mathbb{H}
						$. If $ \Gamma/\{\pm \mathrm{I}\} $ has no torsion elements or $ A $ is a field of characteristic zero, then  by \cite[Proposition 8.1]{Shimura book} there is a canonical isomorphism  
							$H^{1}(\Gamma,\mathcal{M}) \cong H^{1}(\Gamma \backslash \mathbb{H}, \widetilde{\mathcal{M}})$.
						Set $ X_{\Gamma} :=  \Gamma \backslash \mathbb{H} $
						and let $ {Y}_{\Gamma} $ be the Borel-Serre compactification of
						$ X_{\Gamma} $. Then we have
						$ \partial Y_{\Gamma} = 
						\sqcup_{\Gamma \backslash P^{1}(\Q)} S^{1}
						$
						and 
						$
						H^{i}(\partial Y_{\Gamma} , \widetilde{\mathcal{M}} )
						= \bigoplus_{s \in \Gamma_0(N) \backslash P^{1}(\Q)}  H^{i} ( \Gamma_{s}, \mathcal{M}),
						$
						where $ \Gamma_{s} $ denotes the stabilizer of $ s $ in $ \Gamma $.
						Define the parabolic cohomology 
						\begin{small}
							\begin{align*}
								H^{i}_{P}(X_{\Gamma} , \widetilde{\mathcal{M}}) &= \Ker\Big(\mathrm{res}: H^{i}(X_{\Gamma} , \widetilde{\mathcal{M}})  \longrightarrow H^{i}(\partial Y_{\Gamma} , \widetilde{\mathcal{M}} )\Big) \\
								H^{i}_{P}(\Gamma, \mathcal{M})  &= \Ker\Big(\mathrm{res}: H^{i}(\Gamma, \mathcal{M}) \longrightarrow \bigoplus\limits_{s \in \Gamma \backslash P^{1}(\Q)}  H^{i} ( \Gamma_s, \mathcal{M})\Big),
							\end{align*}
						\end{small}%
						where $\mathrm{res}$ are the corresponding restriction maps. Under the isomorphism $ H^{1}(\Gamma,\mathcal{M}) \cong H^{1}(X_\Gamma, \widetilde{\mathcal{M}}) $, we have $ H^{i}_{P}(X_{\Gamma} , \widetilde{\mathcal{M}}) \cong  H^{i}_{P}(\Gamma, \mathcal{M})  $. 
						There is a well-defined
						action of Hecke algebra  on the cohomology groups  $H^{i}(X_{\Gamma} , \widetilde{\mathcal{M}})$ and  $H^{i}_{P}(\Gamma, \mathcal{M}) $ (see \cite[\S8.3]{Shimura book} or \cite[\S 6.3]{hida_93}). 
						
						We now introduce the modules $ \mathcal{M} $ which are of interest for us. For a non-negative integer $n$, we denote by $L(n,A)$ the symmetric polynomial algebra over $A$ of degree $n$. Thus $L(n,A)$ consist of the homogeneous polynomials $P(X,Y)$ of degree $n$ in variables $ X $ and $ Y $, with coefficients in $A$. The semigroup  $\gl_2(\Q) \cap \text{M}_2(\Z)$ acts on $L(n,A)$  by
						\begin{small}
							\begin{align*}
								(\gamma \cdot P)(X,Y) = 
								P(aX+bY, cX+dY),
								\qquad \forall ~  \gamma = \begin{psmallmatrix} a & b \\ c & d \end{psmallmatrix},
							\end{align*}
						\end{small}%
						Note that the above action is a left action and is as
						defined in \cite[(8.2.1)]{Shimura book} and \cite[\S 3.3]{Kitagawa}. Note that Hida \cite[Page 165]{hida_93} instead considers the right action of $\gl_2(\Q) \cap \text{M}_2(\Z)$ on $ L(n,A) $.
						For a Dirichlet character $ \varphi $ modulo $N$, let $ L(n,\varphi;A) $ denote the $ \Gamma_{0}(N) $-module $ L(n,A) $ with the action 
						\begin{small}
							\[
							(\gamma  P)(X,Y) = \chi(\gamma)P(aX+bY, cX+dY) = \varphi(d) P(aX+bY, cX+dY),~\forall ~  \gamma = \begin{psmallmatrix} a & b \\ c & d \end{psmallmatrix} \in \Gamma_{0}(N).
							\]
						\end{small}%
						Now this action coincides with the action defined on \cite[Page 177]{hida_93}. To ease the notation, we denote $\widetilde{L(n,A)}$ (resp. $\widetilde{L(n,\varphi;A)}$)  by $ \mathcal{L}(n,A) $ (resp. $ \mathcal{L}(n,\varphi;A) $). 
						We have the   inclusion $H^{i}_{P}(\Gamma_0(N), L(n,\varphi;A)) \hookrightarrow H^{i}_{P}(\Gamma_1(N), L(n,A)) $ given by the restriction map. 

						Let $S_{k}(\Gamma)$ be the space of cusp forms on $ \Gamma $ with coefficients in $ \mathbb{C} $ and 
						$
						\bar{S}_{k}(\Gamma) = \{  \bar{ F} : F \in S_{k}(\Gamma) \}
						$ be the space of anti-holomorphic cusp forms on $ \Gamma$, where $\bar{F}(z) := \overline{F(z)}$. 
						For every $ F \in S_{k}(\Gamma)$ (resp. $\bar{S}_{k}(\Gamma) $), define an $L(n,\mathbb{C})$-valued differential $1$-form by 
							$\omega(F)(z) = 
							F(z)(zX+Y)^{k-2} d z$ 
							(resp.  $ \omega(F)(z) = F(z)(\bar{z}X+Y)^{k-2} d \bar{z}$). 
					With the action as above, it can be checked that $ \gamma^{\ast} \omega(F) = \det(\gamma)^{1-k/2} \gamma \cdot \omega(F\vert \gamma), ~ \forall  \gamma \in \gl_2(\Q) \cap \text{M}_2(\Z)$ (see \cite[\S 6.2]{hida_93}). 
					For every  $ F \in S_{k}(\Gamma)$ (resp. $ \bar{S}_{k}(\Gamma) $),
					set  $ \delta(F)(\gamma) :=
					\int_{ \infty}^{\gamma   \infty} \omega(F), ~\forall \gamma \in \Gamma$. 
					It can be checked that  $ \delta(F) $ is a $ 1 $ cocycle and $\delta: S_{k}(\Gamma) \oplus \bar{S}_{k}(\Gamma) \rightarrow H^{1}(\Gamma, L(k-2,\mathbb{C}) )$ is a homomorphism. 
					A similar construction, defines the map $\delta: S_{k}(\Gamma_0(N), \varphi) \oplus \bar{S}_{k}(\Gamma_0(N),\bar{\varphi}) \rightarrow H^{1}(\Gamma_0(N), L(k-2,\varphi;\mathbb{C}) )$. 
					\begin{theorem} $\mathbf{(}\textbf{Eichler-Shimura} \mathbf{)}  ~ \mathrm{(}$\cite[\S 6.2, Theorem 1]{hida_93}$\mathrm{)}$\label{Eichler Shimura} 
						With the notation as above. The maps $\delta 
						:S_{k}(\Gamma) \oplus \bar{S}_{k}(\Gamma) \rightarrow H^{1}_{P}(\Gamma, L(k-2,\mathbb{C})) \text{ and }
						\delta: S_{k}(\Gamma_0(N), \varphi) \oplus \bar{S}_{k}(\Gamma_0(N),\bar{\varphi})  \rightarrow
						H^{1}_{P}(\Gamma_0(N) , L(k-2,\varphi;\mathbb{C}) )$
						are Hecke equivariant isomorphisms.
					\end{theorem}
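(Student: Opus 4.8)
The plan is to verify the four properties of $\delta$ in turn: Hecke-equivariance, that the image lands in parabolic cohomology, injectivity, and surjectivity. The Hecke-equivariance is essentially formal. Given a double coset $\Gamma\alpha\Gamma=\bigsqcup_i\Gamma\alpha_i$ defining a Hecke operator, the transformation rule $\gamma^{\ast}\omega(F)=\det(\gamma)^{1-k/2}\,\gamma\cdot\omega(F\vert\gamma)$ recalled above lets one compare $\omega(F\vert T)=\sum_i\det(\alpha_i)^{k/2-1}\alpha_i\cdot\omega(F)$ with the definition of the Hecke action on $H^{1}(\Gamma,L(k-2,\mathbb{C}))$; tracking the path of integration from $\infty$ to $\gamma\infty$ through the $\alpha_i$ shows that $\delta(F\vert T)$ and $T\cdot\delta(F)$ agree as cohomology classes. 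The diamond operators act in the same way, so the $\Gamma_0(N)$-with-nebentypus statement follows from the $\Gamma_1(N)$-statement by restricting to $\varphi$-isotypic subspaces.

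Next, granting (from the preceding discussion) that $\delta(F)$ is a $1$-cocycle, I would check that its class lies in $H^{1}_{P}$, i.e. that $\mathrm{res}_{\Gamma_s}\delta(F)$ is a coboundary for each cusp $s$. After conjugating so that $s=\infty$ and $\Gamma_s$ is generated by $\gamma_s=\begin{psmallmatrix}1 & h\\ 0 & 1\end{psmallmatrix}$, the value $\delta(F)(\gamma_s)=\int_{z_0}^{\gamma_s z_0}\omega(F)$ is computed using the Fourier expansion of $F$ at the cusp; since $F$ is a cusp form, $F(z)(zX+Y)^{k-2}$ admits a holomorphic antiderivative of order $O(q)$ on a punctured neighbourhood of the cusp, and one reads off directly that $\delta(F)(\gamma_s)=(\gamma_s-1)P_s$ for a suitable $P_s\in L(k-2,\mathbb{C})$. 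Hence $\delta$ maps $S_k(\Gamma)\oplus\bar S_k(\Gamma)$ into $H^{1}_{P}(\Gamma,L(k-2,\mathbb{C}))$, and the nebentypus variant into $H^{1}_{P}(\Gamma_0(N),L(k-2,\varphi;\mathbb{C}))$.

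For injectivity I would use the cup-product pairing on $H^{1}_{P}(X_\Gamma,\mathcal{L}(k-2,\mathbb{C}))$ valued in $H^{2}_{c}(X_\Gamma,\mathbb{C})\cong\mathbb{C}$, combined with the $\mathrm{SL}_2$-invariant pairing on $L(k-2,\mathbb{C})$. The classical period computation of Shimura (and Haberland) shows that, up to an explicit nonzero constant, the cup product of $\delta(F_1+\bar F_2)$ with $\delta(G_1+\bar G_2)$ is built out of the Petersson inner products pairing the holomorphic part of one term against the antiholomorphic part of the other; since the Petersson product is non-degenerate on $S_k(\Gamma)$, the resulting pairing is non-degenerate on $S_k(\Gamma)\oplus\bar S_k(\Gamma)$, forcing $\delta$ to be injective. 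Equivalently, one identifies $H^{1}_{P}$ with the interior cohomology $H^{1}_{!}$ and invokes its Hodge decomposition, whose $(1,0)$- and $(0,1)$-components are realised by $\delta$ on $S_k(\Gamma)$ and on $\bar S_k(\Gamma)$ respectively.

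Finally, I would match dimensions. The source has $\mathbb{C}$-dimension $2\di_{\mathbb{C}}S_k(\Gamma)$. On the target, $H^{0}_{P}=0$ for $k\ge 2$, and from the exact sequence $0\to H^{1}_{P}(\Gamma,L(k-2,\mathbb{C}))\to H^{1}(X_\Gamma,\mathcal{L}(k-2,\mathbb{C}))\to\bigoplus_s H^{1}(\Gamma_s,L(k-2,\mathbb{C}))$, together with the Euler characteristic of $\mathcal{L}(k-2,\mathbb{C})$ on $X_\Gamma$ (a Gauss--Bonnet / Riemann--Roch computation), Poincar\'e duality $H^{1}_{P}\cong(H^{1}_{P})^{\vee}$, and the evaluation of the boundary terms, one obtains $\di_{\mathbb{C}}H^{1}_{P}(\Gamma,L(k-2,\mathbb{C}))=2\di_{\mathbb{C}}S_k(\Gamma)$. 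An injective linear map between finite-dimensional spaces of equal dimension is an isomorphism, and the $\Gamma_0(N)$-with-nebentypus statement then follows by taking $\varphi$-eigenspaces under the diamond operators. I expect the main obstacle to be precisely the bookkeeping at the cusps --- confirming that parabolic cohomology excises exactly the Eisenstein contribution, both in the second step and in the dimension count --- together with the period computation underpinning the cup-product identity; all of this is carried out in detail in \cite[\S 6.2]{hida_93}, which we follow.
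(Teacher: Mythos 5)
The paper does not prove this theorem itself; it is quoted as a classical result with a citation to Hida's book, and your sketch (Hecke equivariance, vanishing of the restriction to cusp stabilizers via convergence of the period integrals at the cusps, injectivity from the cup-product/Petersson pairing, and the Euler-characteristic dimension count) is precisely the standard argument of that reference. Your outline is correct and matches the cited approach, so there is nothing to add.
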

					If $ \varepsilon := \begin{psmallmatrix}  -1 & 0 \\ 0 & 1 \end{psmallmatrix} $ normalises $ \Gamma $ (resp. $\Gamma _{0}(N)$),  then $ \varepsilon $ acts on $ H^{i}_{P}( \Gamma , L(n, A) ) $ (resp. $ H^{i}_{P}( \Gamma_0(N) , L(n, \varphi;A) ) $). When $ A = \mathbb{C} $,  we obtain
					$ (\varepsilon \cdot \omega)(z) = \varepsilon\omega(\varepsilon z) = \varepsilon\omega(-\bar{z}) $ at the level of differential forms. Following \cite[\S 9]{SZ}, for a Dirichlet character $\varphi$ of conductor $D$ we define  $tw_{N,\varphi}:  H^{1}_{P}(\Gamma_0(N) , L(k-2,\eta;\mathbb{C}) ) \rightarrow H^{1}_{P}(\Gamma_0(ND^2) , L(k-2,\eta\varphi^2;\mathbb{C}) )$, given by 
					\begin{small}
						\begin{align}\label{twist char cohomology}
							tw_{N,\varphi}(\Psi)(\gamma) = \sum_{a=1}^{D} \varphi(a) \begin{psmallmatrix}
								1 & -a/D \\ 0 & 1
							\end{psmallmatrix} \Psi \Big(  \begin{psmallmatrix}
								1 & a/D \\ 0 & 1
							\end{psmallmatrix} \gamma \begin{psmallmatrix}
								1 & -a/D \\ 0 & 1
							\end{psmallmatrix}  \Big), \quad \forall ~ \gamma \in \Gamma_{0}(ND^2).
						\end{align}
					\end{small}
					Following is a generalisation of arguments in \cite[Lemma 9.6]{SZ} where the case $k=2$ and $\chi$ is a quadratic character.
					\begin{lemma}\label{lem: twist parabolic}
						Let $F \in S_{k}(\Gamma_{0}(N),\eta)$ be a normalised eigenform. Let $\varphi$ be  a  primitive Dirichlet character of conductor $D$. Then  we have 
						\begin{enumerate}
							\item[$\mathrm{(i)}$]  $
							tw_{N,\varphi}\delta(F)= G(\varphi) \delta(F|\bar{\varphi})$.
							\item[$\mathrm{(ii)}$]  $ tw_{N,\varphi} (\varepsilon\cdot\delta(F)) = G(\varphi) \varphi(-1) (\varepsilon\cdot\delta(F|\bar{\varphi}))$.
						\end{enumerate}	
						As a consequence, we have $tw_{N}(\delta(F) \pm \varepsilon\cdot\delta(F)) = G(\varphi) (\delta(F|\bar{\varphi}) \pm \varphi(-1) \varepsilon\cdot\delta(F|\bar{\varphi}))$. 	  
					\end{lemma}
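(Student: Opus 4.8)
The plan is to verify both identities by a direct computation with the explicit cocycle $\delta(F)(\gamma)=\int_{\infty}^{\gamma\infty}\omega(F)$, after extending $\delta(F)$ to all of $\gl_2(\Q)\cap M_2(\Z)$ via the same integral $g\mapsto\int_{\infty}^{g\infty}\omega(F)$; this is legitimate since $F$ is a cusp form, so the integral converges at the cusps and is path-independent. Writing $u_a:=\begin{psmallmatrix}1 & a/D\\0 & 1\end{psmallmatrix}$, the definition \eqref{twist char cohomology} reads $tw_{N,\varphi}(\Psi)(\gamma)=\sum_{a=1}^{D}\varphi(a)\,u_a^{-1}\cdot\Psi(u_a\gamma u_a^{-1})$. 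The elementary inputs I would isolate first are: (a) $u_a$ and $u_a^{-1}$ fix the cusp $\infty$, hence $\delta(F)(u_a)=\delta(F)(u_a^{-1})=0$; (b) splitting the path integral, together with the change of variables $z=u_a w$, the relation $u_a^{\ast}\omega(F)=u_a\cdot\omega(F|u_a)$ (valid as $\det u_a=1$), and the cocycle identity $\delta(F)(g_1g_2)=\delta(F)(g_1)+g_1\cdot\delta(F|g_1)(g_2)$ yield, after cancelling boundary terms using (a), the key formula $\delta(F)(u_a\gamma u_a^{-1})=u_a\cdot\delta(F|u_a)(\gamma)$ for $\gamma\in\Gamma_0(N)$; and (c) the classical twisting identity $\sum_{a\bmod D}\varphi(a)\,F(z+a/D)=G(\varphi)\,(F|\bar{\varphi})(z)$ for primitive $\varphi$ of conductor $D$.

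For part (i): inserting (b) into \eqref{twist char cohomology}, the module action $u_a^{-1}\cdot$ cancels the $u_a\cdot$ coming from (b), so that
\begin{align*}
tw_{N,\varphi}(\delta(F))(\gamma)=\sum_{a=1}^{D}\varphi(a)\,\delta(F|u_a)(\gamma)=\int_{\infty}^{\gamma\infty}\Big(\sum_{a}\varphi(a)\,F(z+a/D)\Big)(zX+Y)^{k-2}\,dz;
\end{align*}
by (c) the integrand is $G(\varphi)\,\omega(F|\bar{\varphi})$, giving $tw_{N,\varphi}(\delta(F))=G(\varphi)\,\delta(F|\bar{\varphi})$.

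For part (ii): I would additionally use the matrix identities $\varepsilon u_a\varepsilon=u_a^{-1}$ and $u_a^{-1}\varepsilon u_a^{-1}=\varepsilon$ (recall $\varepsilon=\varepsilon^{-1}$). Writing the $\varepsilon$-action on cocycles as $(\varepsilon\cdot\Psi)(\gamma)=\varepsilon\cdot\Psi(\varepsilon\gamma\varepsilon)$ and putting $\gamma'=\varepsilon\gamma\varepsilon\in\Gamma_0(N)$, the first identity converts $\varepsilon(u_a\gamma u_a^{-1})\varepsilon$ into $u_a^{-1}\gamma' u_a$, and the analogue of (b) then gives $(\varepsilon\cdot\delta(F))(u_a\gamma u_a^{-1})=\varepsilon\cdot\big(u_a^{-1}\cdot\delta(F|u_a^{-1})(\gamma')\big)$. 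Feeding this back into \eqref{twist char cohomology}, the module-action prefactor collapses via $u_a^{-1}\varepsilon u_a^{-1}=\varepsilon$, while the substitution $a\mapsto -a$ turns the resulting sum $\sum_a\varphi(a)F(z-a/D)$ into $\varphi(-1)\sum_a\varphi(a)F(z+a/D)=\varphi(-1)G(\varphi)(F|\bar{\varphi})(z)$; recognising $\varepsilon\cdot\delta(F|\bar{\varphi})(\gamma')=(\varepsilon\cdot\delta(F|\bar{\varphi}))(\gamma)$ gives $tw_{N,\varphi}(\varepsilon\cdot\delta(F))=G(\varphi)\varphi(-1)\,(\varepsilon\cdot\delta(F|\bar{\varphi}))$. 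Adding the $\pm$ combinations and using $\Z$-linearity of $tw_{N,\varphi}$ yields the last displayed assertion.

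The step requiring the most care — more bookkeeping than genuine difficulty — is (b) and its $\varepsilon$-twisted version: since $F|u_a$ is \emph{not} invariant under $\Gamma_0(N)$, one cannot simply invoke the cocycle relation for $\delta(F)$ on $\Gamma_0(N)$, and must instead keep track of each boundary contribution in the decomposed period integrals, using repeatedly that every $u_a^{\pm1}$ fixes $\infty$ and that the conjugating element lies in $\Gamma_0(N)$ (so the new stray terms vanish). One should also note that $tw_{N,\varphi}$ does land in the parabolic subspace $H^1_P(\Gamma_0(ND^2),L(k-2,\eta\varphi^2;\C))$ asserted before the lemma: this is immediate from part (i), since $F|\bar{\varphi}$ is a cusp form of level $ND^2$ (cf. Lemma~\ref{lem: primitive after twist}) and $\delta(F|\bar{\varphi})$ is parabolic by Theorem~\ref{Eichler Shimura}.
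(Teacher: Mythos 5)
Your proposal is correct and follows essentially the same route as the paper's proof: both compute $\delta(F)(u_a\gamma u_a^{-1})=u_a\cdot\delta(F|u_a)(\gamma)$ directly from the period integral using that $u_a$ fixes $\infty$, invoke the Gauss-sum twisting identity (the paper cites \cite[(8.3)]{MTT} for this), and for (ii) use $\varepsilon u_a\varepsilon=u_{-a}$ together with the substitution $a\mapsto -a$ to produce the factor $\varphi(-1)$.
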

					\begin{proof}
						To ease the notation denote $ \begin{psmallmatrix}
							1 & a/D \\ 0 & 1
						\end{psmallmatrix}   $ by $\gamma'_a$ .  Note that $\gamma'_a \cdot \infty = \infty = \gamma'_{-a} \cdot \infty$ and 
						\begin{align*}
							\delta(f)\Big(  \begin{psmallmatrix}
								1 & a/D \\ 0 & 1
							\end{psmallmatrix} \gamma \begin{psmallmatrix}
								1 & -a/D \\ 0 & 1
							\end{psmallmatrix}  \Big) =  \int_{\gamma'_a\infty}^{\gamma'_a \gamma\infty} \omega(f)(z)    = \int^{\gamma\infty}_{\infty}  ((\gamma'_a)^\ast \omega(f))(z) = \gamma_a'\int^{\gamma\infty}_{\infty}\omega(f|\gamma'_a)(z) = \gamma_a' \delta(f|\gamma'_a)(\gamma). 
						\end{align*}
						Now the assertion (i) follows from \cite[(8.3)]{MTT}.  To prove the assertion (ii) note that 
						\begin{align*}
							(\varepsilon \cdot (\delta(f))\Big(  \begin{psmallmatrix}
								1 & a/D \\ 0 & 1
							\end{psmallmatrix} \gamma \begin{psmallmatrix}
								1 & -a/D \\ 0 & 1
							\end{psmallmatrix}  \Big) =  \int_{\gamma'_a\infty}^{\gamma'_a \gamma\infty} (\varepsilon \cdot \omega(f))(z)    = \int^{\gamma\infty}_{\infty} ((\gamma'_a)^\ast (\varepsilon \cdot\omega(f)))(z).
						\end{align*}	
						Since $((\gamma'_a)^\ast (\varepsilon \cdot\omega(f)))(z) = \varepsilon \omega(f)(\varepsilon \gamma'_a z) = \varepsilon \omega(f)(\gamma'_{-a} \varepsilon z) = \gamma'_a \varepsilon \omega(f|\gamma'_{-a})(\varepsilon z) = \gamma'_a (\varepsilon \cdot \omega(f|\gamma'_{-a}))(z)$. Now the assertion (ii) follows from \cite[(8.3)]{MTT}.
					\end{proof}
					If $ 1/2 \in A $, then
					the map $ \omega \mapsto (\omega + \varepsilon \cdot \omega , \omega - \varepsilon \cdot \omega ) $ induces the decompositions $H^{i}_{P}( \Gamma_1(N), L(n,A) )   = 
					H^{i}_{P}( \Gamma_1(N), L(n,A) )^{+} \oplus H^{i}_{P}(\Gamma_1(N) , L(n,A) )^{-}$ and
					$H^{i}_{P}( \Gamma_0(N), L(n,\varphi;A) )   = 
					H^{i}_{P}( \Gamma_0(N), L(n,\varphi;A) )^{+} \oplus H^{i}_{P}(\Gamma_0(N) , L(n,\varphi;A) )^{-}$. For a homomorphism $ \lambda: h_{k}(\Gamma_1(N);A) \rightarrow  A$, define
					\begin{small} 
						\begin{align*}
							H^{i}_{P}(\Gamma_1(N) , L(n,A) )^{\pm} [\lambda] &= \{ x \in H^{i}_{P}(\Gamma_1(N) , L(n,A) )^{\pm} : x|T = \lambda(T) x, ~ \forall ~T \in  h_{k}(\Gamma_1(N);A) \}, \\
							H^{i}_{P}(\Gamma_0(N) , L(n,\varphi;A) )^{\pm} [\lambda] &= \{ x \in H^{i}_{P}(\Gamma_0(N) , L(n,\varphi;A) )^{\pm} : x|T = \lambda(T) x, ~ \forall ~T \in  h_{k}(\Gamma_0(N),\varphi;A) \}. 
						\end{align*}  
					\end{small} %
					Recall that for a number field $K$ and  a  prime $\mathfrak{p}$ in $K$ dividing $p$,  $\mathcal{O} = \mathcal{O}_{K_{\mathfrak{p}}}$ is the valuation ring  of $K_{\mathfrak{p}}$. Set $\mathcal{W} := \mathcal{O}_{K_{\mathfrak{p}}} \cap K$.
					
					\begin{lemma}$\mathrm{(}$\cite[\S 6.3, (11)]{hida_93}, \cite[Theorem 3.2]{Kitagawa}$\mathrm{)}$ \label{multiplicity one of parabolic cohomology}
						Let the notation be as above. For  a field $  K $ of characteristic zero,  we have 
						$ H^{i}_{P}(\Gamma_0(N) , L(n,\varphi;K) )^{\pm} $ $($resp. $H^{i}_{P}(\Gamma_1(N) , L(n,K) )^{\pm})$ is a free module of rank one  over $h_k(\Gamma_{0}(N),\varphi;K) $ $($resp. $h_{k}(\Gamma_1(N);K))$.
						As a consequence,  $ H^{i}_{P}(\Gamma_0(N) , L(n,\varphi;K) )^{\pm} [\lambda] = H^{i}_{P}(\Gamma_1(N) , L(n,K) ^{\pm})[\lambda]$ is a one dimensional vector space over $K$. Furthermore, $H^{i}_{P}(\Gamma_0(N) , L(n,\varphi;\mathcal{W}) ) \cap H^{i}_{P}(\Gamma_0(N) , L(n,\varphi;K) )^{\pm} [\lambda] $ has rank one over $\mathcal{W} $.
					\end{lemma}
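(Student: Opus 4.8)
\medskip
\noindent\emph{Plan of proof.} The plan is to deduce everything from the Eichler--Shimura isomorphism of Theorem~\ref{Eichler Shimura}, the classical Hecke-duality of the space of cusp forms, and an elementary argument over a discrete valuation ring. First I would note that only $i=1$ has content: $X_{\Gamma_0(N)}$ and $X_{\Gamma_1(N)}$ are non-compact Riemann surfaces, so $H^i_P$ vanishes for $i\ge 2$, and $H^0_P(\Gamma,L(n,\cdot))=0$ for $n\ge 1$.

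For the freeness over the Hecke algebra, by Theorem~\ref{Eichler Shimura} the map $\delta$ is a Hecke-equivariant isomorphism $H^{1}_{P}(\Gamma_0(N), L(k-2,\varphi;\C))\cong S_{k}(\Gamma_0(N),\varphi)\oplus \bar{S}_{k}(\Gamma_0(N),\bar{\varphi})$, and the involution $\varepsilon$ commutes with the Hecke operators while interchanging, up to complex conjugation, the holomorphic and anti-holomorphic summands; hence each $\varepsilon$-eigenspace $H^{1}_{P}(\Gamma_0(N), L(k-2,\varphi;\C))^{\pm}$ is isomorphic, as an $h_k(\Gamma_0(N),\varphi;\C)$-module, to $S_{k}(\Gamma_0(N),\varphi;\C)$ (or to its complex conjugate). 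Next I would invoke that $S_{k}(\Gamma_0(N),\varphi;\C)$ is free of rank one over $h_k(\Gamma_0(N),\varphi;\C)$: the pairing $(T,F)\mapsto a(1,F\vert T)$ identifies it with $\Hom_{\C}(h_k(\Gamma_0(N),\varphi;\C),\C)$, and $h_k(\Gamma_0(N),\varphi;\C)$ is Gorenstein, since by the old/new decomposition it is a finite product of local factors --- one for each newform of level $M\mid N$ with nebentypus $\varphi$ --- each of the form $\bigotimes_{\ell\mid N}\C[x_\ell]/(f_\ell(x_\ell))$, the factor at $\ell$ recording the cyclic action of $U_\ell$ on the corresponding $\ell$-oldspace (a top $V_\ell$-translate of the newform is a cyclic vector). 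This is \cite[\S6.3, (11)]{hida_93} and \cite[Theorem 3.2]{Kitagawa}. To descend to an arbitrary characteristic-zero field $K$ (containing the values of $\varphi$), I would use that the parabolic cohomology, the $\varepsilon$-decomposition (here $1/2\in K$), the Hecke action and $h_k(\Gamma_0(N),\varphi;K)$ are all defined over $K$ and that cohomology commutes with the flat base change $K\to\C$; then a finitely generated $h_k(\Gamma_0(N),\varphi;K)$-module which becomes free of rank one after $\otimes_K\C$ is already free of rank one, because a lift of a generator induces a map $h_k(\Gamma_0(N),\varphi;K)\to H^{1}_{P}(\Gamma_0(N), L(n,\varphi;K))^{\pm}$ that is an isomorphism after $\otimes_K\C$. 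The $\Gamma_1(N)$-case is identical with $L(n,\C)$ in place of $L(n,\varphi;\C)$.

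For the consequence I would argue formally. From Eichler--Shimura and the Hecke pairing above one has $H^{1}_{P}(\Gamma_0(N), L(n,\varphi;K))^{\pm}\cong\Hom_K(h,K)$ as $h$-modules, where $h:=h_k(\Gamma_0(N),\varphi;K)$; a short check then gives $H^{1}_{P}(\Gamma_0(N), L(n,\varphi;K))^{\pm}[\lambda]=\Hom_K(h/\ker\lambda,K)\cong K$, so this is a line over $K$. The restriction map gives an inclusion $H^{1}_{P}(\Gamma_0(N), L(n,\varphi;K))^{\pm}[\lambda]\hookrightarrow H^{1}_{P}(\Gamma_1(N), L(n,K))^{\pm}[\lambda]$; conversely, since $\lambda$ has nebentypus $\varphi$, every $\lambda$-eigenclass in the $\Gamma_1(N)$-cohomology is killed by $\langle d\rangle-\varphi(d)$ for all $d$ and so lies in the $\varphi$-isotypic part, which is $H^{1}_{P}(\Gamma_0(N), L(n,\varphi;K))$; this yields the asserted equality. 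Finally, $\mathcal{W}=\mathcal{O}_{K_{\mathfrak{p}}}\cap K=(\mathcal{O}_K)_{\mathfrak{p}}$ is a discrete valuation ring, $1/2\in\mathcal{W}$ since $p$ is odd, and $H^{1}_{P}(\Gamma_0(N), L(n,\varphi;\mathcal{W}))$ is finitely generated over $\mathcal{W}$ (group cohomology of a finitely presented group with finitely generated coefficients over a Noetherian ring) with $H^{1}_{P}(\Gamma_0(N), L(n,\varphi;\mathcal{W}))\otimes_{\mathcal{W}}K=H^{1}_{P}(\Gamma_0(N), L(n,\varphi;K))$ by flat base change; hence the intersection in the statement is a finitely generated torsion-free $\mathcal{W}$-submodule of the one-dimensional $K$-space $H^{1}_{P}(\Gamma_0(N), L(n,\varphi;K))^{\pm}[\lambda]$, nonzero because scaling any nonzero element of that line by a suitable element of $\mathcal{W}$ lands it in $H^{1}_{P}(\Gamma_0(N), L(n,\varphi;\mathcal{W}))$ while preserving the $\pm$- and $\lambda$-conditions (both defined over $\mathcal{W}$), and is therefore free of rank one over $\mathcal{W}$.

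The one genuinely non-formal ingredient is the Gorenstein property of the Hecke algebra over a characteristic-zero field --- equivalently, the rank-one freeness of the space of cusp forms over it, which is where the presence of oldforms must be dealt with --- and for this I would simply cite \cite{hida_93} and \cite{Kitagawa}; granting it, the rest is a formal consequence of Eichler--Shimura and faithfully flat base change.
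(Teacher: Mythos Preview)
The paper does not supply its own proof of this lemma; it is stated with citations to \cite[\S 6.3,~(11)]{hida_93} and \cite[Theorem~3.2]{Kitagawa} and used as a black box. Your sketch is correct and tracks the argument in those references: Eichler--Shimura identifies $H^1_P(\Gamma_0(N),L(n,\varphi;\C))^{\pm}$ with $S_k(\Gamma_0(N),\varphi;\C)$ as a Hecke module, the perfect pairing $(T,F)\mapsto a(1,F\mid T)$ identifies the latter with $\Hom_\C(h,\C)$, and the Gorenstein property of $h=h_k(\Gamma_0(N),\varphi;\C)$ (which is the real content, and which you rightly isolate and cite) converts this into freeness of rank one; descent to $K$ and the DVR argument over $\mathcal{W}$ are then formal. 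One small remark: in the consequence step you switch from ``$H^1_P{}^{\pm}\cong h$'' to ``$H^1_P{}^{\pm}\cong\Hom_K(h,K)$''; both are fine since $h$ is Gorenstein, but you could stay with the former throughout and read off $\dim_K h[\lambda]=1$ directly from the splitting $h\otimes K\cong K\oplus A$ induced by $\lambda$ (this is exactly the decomposition \eqref{splitting of hecke algebra} the paper uses elsewhere).
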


					Suppose $ 1/n! \in A $. Define a pairing $ [ ~ , ~] $
					on $ L(n,\varphi;A) \times L(n,\bar{\varphi};A)  \rightarrow A$ by \cite[\S 6.2, (2a)]{hida_93} 
					\begin{small}
						\begin{align*}
							\Bigg[ \sum_{j=0}^{n} a_{j} X^{j}Y^{n-j}, \sum_{j=0}^{n} b_{j} X^{j}Y^{n-j} \Bigg] =  \sum_{j=0}^{n} (-1)^j \binom{n}{j}^{-1} a_j b_{n-j}.
						\end{align*} 
					\end{small}%
					Note that $ [(zX+Y)^n, (\bar{z}X+Y)^n] = \sum_{j=0}^{n} (-1)^j  \binom{n}{j} z^{n-j} \bar{z}^{j} = (z-\bar{z})^{n}$. The above pairing is perfect as  $n!$ is invertible in $A$. Write $X_N :=X_{\Gamma_{0}(N)}$ to ease the notation. By Poincar\'e duality, the above paring $ [ ~ , ~ ]$ induces a  pairing   (\cite[\S 6.2, (3a)]{hida_93}): 
					\begin{small}	
						\begin{align*}
							H^{1}_{c}(X_{N} , \mathcal{L}(n,\varphi;\mathbb{C}) ) \times 
							H^{1} (X_{N} , \mathcal{L}(n,\bar{\varphi};\mathbb{C})) \xrightarrow[]{\cup} H^{2}_{c}(X_{N} , \mathcal{L}(n,\varphi;\mathbb{C}) \times \mathcal{L}(n,\bar{\varphi};\mathbb{C}))
							\xrightarrow[]{[ ~ , ~ ]} H^{2}_{c}(X_{N} , \mathbb{C} )\xrightarrow[]{\simeq} \mathbb{C},
						\end{align*}
					\end{small}%
					where the first map is the wedge product and the last map is integrating
					the $2$-form on $ X_N$. We continue to denote this paring  by $[ ~ , ~]$.
					Further, for  $x \in  H^{1}(\Gamma_0(N), L(k-2,\varphi;\mathbb{C}) ) $, define $ (x|\tau_N) (\gamma) := \tau_N \cdot x(\tau_N \gamma \tau_N^{-1})  $, for all $ \gamma \in \Gamma_0(N) $. This in turn defines an action of $ \tau $ on $ H^{1} (X_{N} , \mathcal{L}(n,\varphi;\mathbb{C})) \cong H^{1}(\Gamma_0(N), L(n,\varphi;\mathbb{C})) $. Consider the pairing  $\langle x, y \rangle :=  [x, y|\tau_N] = [x|\tau_N, y] $, where $ \tau_N = \begin{psmallmatrix}  0 & -1 \\ N & 0 \end{psmallmatrix}  $.  
					We need the following version of \cite[Theorem 5.16]{Hida3}:
					\begin{theorem}\label{choice of period and petterson innerproduct}
						Let $ F(z) = \sum a(n,F) q^n \in S_{k}(\Gamma_{0}(N),\varphi; K)$ be a normalised Hecke eigenform and $a(n,F) \in \mathcal{O}$. Let $\phi_{F}: h_{k}(\Gamma_{0}(N),\varphi;K) \rightarrow K$ be the homomorphism induced by $T(n) \mapsto a(n,F)$.
						Fix a generator $ \xi^{\pm} $ of $ H^{1}_{P}(\Gamma_{0}(N), L(k-2, \varphi; \mathcal{W})) \cap H^{i}_{P}(\Gamma_0(N) , L(n,\varphi;K) )^{\pm} [\phi_F] $ over $\mathcal{W}$. Then there exist complex periods $ \Omega_{F}^{\pm} $ satisfying \eqref{eq: special values of f}, such that 
						\begin{enumerate}[label=$\mathrm{(\roman*)}$]
							\item $ \delta(F)^{\pm} : = \delta(F) \pm \varepsilon \cdot \delta(F)  = \Omega_{F}^{\pm} \xi^{\pm}$.
							\item Further, we have 
								$2^{k} i^{k+1} N^{k/2-1} \langle F^\rho|\tau_N , F \rangle_N =  \Omega_{F}^{+}\Omega_{F}^{-}  \langle \xi^{+}, \xi^{-}  \rangle$. 
						\end{enumerate}
						
					\end{theorem}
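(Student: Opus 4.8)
The plan is to deduce both parts from the Eichler--Shimura isomorphism (Theorem~\ref{Eichler Shimura}), the multiplicity-one statement of Lemma~\ref{multiplicity one of parabolic cohomology}, the modular-symbol description of twisted $L$-values, and an explicit evaluation of Poincar\'e duality; the whole argument is the $\Gamma_0(N)$-with-nebentypus adaptation of \cite[\S 5, Theorem 5.16]{Hida3}.

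First I would establish (i). By Theorem~\ref{Eichler Shimura} the map $\delta$ is a Hecke-equivariant isomorphism onto $H^{1}_{P}(\Gamma_0(N),L(k-2,\varphi;\mathbb{C}))$, so since $F$ is a normalised eigenform with system $\phi_F$, the class $\delta(F)$ lies in the $\phi_F$-eigenspace; as the $\varepsilon$-decomposition of this cohomology is compatible with the Hecke action (implicit in Lemma~\ref{multiplicity one of parabolic cohomology}), the class $\delta(F)^{\pm}:=\delta(F)\pm\varepsilon\cdot\delta(F)$ lies in $H^{1}_{P}(\Gamma_0(N),L(k-2,\varphi;\mathbb{C}))^{\pm}[\phi_F]$. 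By Lemma~\ref{multiplicity one of parabolic cohomology} this $\mathbb{C}$-vector space is one-dimensional and is spanned by the image of the $\mathcal{W}$-generator $\xi^{\pm}$ of its integral sublattice; moreover $\delta(F)^{\pm}\neq 0$, because $\delta(F)$ and $\varepsilon\cdot\delta(F)$ are the $\delta$-images of a holomorphic and an anti-holomorphic cusp form and hence linearly independent. Therefore there is a unique $\Omega_F^{\pm}\in\mathbb{C}^{\times}$ with $\delta(F)^{\pm}=\Omega_F^{\pm}\xi^{\pm}$, and this is taken as the definition of the periods.

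Next I would verify that these $\Omega_F^{\pm}$ satisfy \eqref{eq: special values of f}. By the standard modular-symbol computation (Birch's lemma), for a Dirichlet character $\theta$ and $1\le j\le k-1$ the twisted value $L(j,F,\theta)$ equals $(2\pi i)^{j}G(\theta)$ times a $\bar{\mathbb{Q}}$-linear combination of the values of the cocycle $\delta(F)$ on the $\bar{\mathbb{Q}}$-rational cycle obtained by twisting the path $\{0,\infty\}$ by $\theta$, paired against the monomial $X^{j-1}Y^{k-1-j}$; the parity $(-1)^{j}\theta(-1)$ selects the $\varepsilon$-eigencomponent, so only $\delta(F)^{\mathrm{sgn}((-1)^{j}\theta(-1))}$ enters. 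Since $\xi^{\pm}$ is $\mathcal{W}$-rational it takes values in $K\subset\bar{\mathbb{Q}}$ on rational cycles, so dividing by $\Omega_F^{\mathrm{sgn}((-1)^{j}\theta(-1))}$ makes the quantity algebraic; this is precisely \eqref{eq: special values of f}, and in particular $\Omega_F^{\pm}$ are Shimura periods in the sense of Theorem~\ref{Shimura special values}.

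Finally, for (ii): by definition $\langle\xi^{+},\xi^{-}\rangle=[\xi^{+},\xi^{-}|\tau_N]$, hence $\Omega_F^{+}\Omega_F^{-}\langle\xi^{+},\xi^{-}\rangle=[\delta(F)^{+},\delta(F)^{-}|\tau_N]$ and it remains to evaluate the right-hand side. Under Eichler--Shimura $\varepsilon\cdot\delta(F)$ is the image of an anti-holomorphic cusp form, so $\delta(F)^{\pm}$ is a sum of a holomorphic-type and an anti-holomorphic-type class; since $\tau_N$ has positive determinant it preserves the holomorphic/anti-holomorphic type, and the cup product of two classes of the same type vanishes (as $dz\wedge dz=0=d\bar z\wedge d\bar z$), so only the mixed terms $[\delta(F),\varepsilon\cdot\delta(F)|\tau_N]$ and $[\varepsilon\cdot\delta(F),\delta(F)|\tau_N]$ survive in $[\delta(F)^{+},\delta(F)^{-}|\tau_N]$. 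Representing these by the differential forms $\omega(F)(z)=F(z)(zX+Y)^{k-2}dz$ and its anti-holomorphic analogue, and using the coefficient identity $[(zX+Y)^{k-2},(\bar zX+Y)^{k-2}]=(z-\bar z)^{k-2}$ together with $z-\bar z=2iy$, $dz\wedge d\bar z=-2i\,dx\wedge dy$, the naturality $\delta(G)|\tau_N=\delta(G|_k\tau_N)$, and the relation of the weight-$k$ slash by $\tau_N$ (with $\det\tau_N=N$) to $F^{\rho}$, each mixed term becomes a constant multiple of the Petersson integral defining $\langle F^{\rho}|\tau_N,F\rangle_N$; assembling the resulting powers of $2$, $i$ and $N$ produces the constant $2^{k}i^{k+1}N^{k/2-1}$, which is exactly (ii). The main obstacle is precisely this bookkeeping of normalising constants across the Eichler--Shimura map, Poincar\'e duality and the $\tau_N$-action, which one carries out following \cite[\S 5]{Hida3}.
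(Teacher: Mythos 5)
Your proposal is correct and follows essentially the same route as the paper: part (i) is deduced from the multiplicity-one statement of Lemma~\ref{multiplicity one of parabolic cohomology} together with the identification $H^{1}_{P}(\Gamma_0(N),L(k-2,\varphi;\mathbb{C}))^{\pm}=H^{1}_{P}(\Gamma_0(N),L(k-2,\varphi;\mathcal{W}))^{\pm}\otimes\mathbb{C}$, and part (ii) by the explicit Poincar\'e-duality computation of $[\delta(F)^{+},\delta(F)^{-}|\tau_N]$ (only the mixed holomorphic/anti-holomorphic terms survive, and the identity $[(zX+Y)^{k-2},(\bar zX+Y)^{k-2}]=(z-\bar z)^{k-2}$ reduces them to the Petersson integral), which is exactly the argument of \cite[Theorem 5.16]{Hida3} that the paper invokes. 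The constant bookkeeping you describe matches the paper's normalisations.
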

					\begin{proof}
						The first assertion follows from  Lemma~\ref{multiplicity one of parabolic cohomology} and the fact that $  H^{1}_{P}\big(\Gamma_0(N) ,L(k-2,\varphi;\mathbb{C}) \big)^\pm =  H^{1}_{P}\big(\Gamma_0(N), L(k-2,\varphi;\mathcal{W}) \big)^\pm  \otimes \mathbb{C}$.
						Note that $ (\delta(F)| \tau_{N}) 
						=  (-1)^{k-2} N^{k/2-1}  \delta(F| \tau_{N})$
						and $ (\varepsilon \cdot \omega(F)) (z) =  \varepsilon \omega(F)(-\bar{z}) =  - F(-\bar{z}) (\bar{z} X + Y)^{k-2} d \bar{z}  =  - \bar{F}^{\rho}(z) (\bar{z} X + Y)^{k-2} d \bar{z} $. The assertion (ii) was proved in  \cite[Theorem 5.16]{Hida3} when  $ F $ is primitive. The proof extends to this case, we omit the details. 
					\end{proof}
					\begin{remark}
						The periods $\Omega^\pm_{F}$ chosen in Theorem~\ref{choice of period and petterson innerproduct} are some times refereed as canonical periods (cf. \cite[Page 187]{hida_93}). 
					\end{remark}
					We next show that the periods chosen in Theorem~\ref{choice of period and petterson innerproduct}  have the property that  the measure $ \mu_{F}(\cdot) $ in Theorem~\ref{p-adic l-function of modular form} is $\mathcal{O}$-valued.  To do this, it suffices to  show that the modular symbol attached to  $ F $, when multiplied by $ 1/\Omega_{F}^{\pm} $, is integral (see \cite{MTT}, \cite{Kitagawa}). 
					
					Let $ \Delta =  $ Div$(P^{1}(\Q))  $ denote the group of divisors generated on  $ \mathbb{P}^{1}(\Q) $. Let $ \Delta_0 $ denote the subgroup of $ \Delta $ consisting of divisors of degree zero. Recall that $ A $ is a ring with $ 1/2 \in A $ and $ \mathcal{M} $ is an $ A[\Gamma]$-module. For $ \gamma \in \gl_2(\mathbb{Q}) \cap \mathrm{M}_{2}(\mathbb{Z}) $ and $ \{r\} \in \mathbb{P}^{1}(\mathbb{Q}) $, define 
					\begin{small}
						\begin{align*}
							(\gamma \cdot \Psi) (\{r\}) = \gamma \Psi(\{ \gamma^{-1} r\} ), \quad \forall ~ 
							\Psi \in \Hom_{\mathbb{Z}}(\Delta, \mathcal{M}) \text{ or } \Hom_{\mathbb{Z}}(\Delta_0, \mathcal{M}).
						\end{align*}
					\end{small}%

					Let  Symb$_{\Gamma}(\mathcal{M}) := \text{Hom}_{\Gamma}(\Delta_0,\mathcal{M})$ be the group of modular symbols and  BSymb$_{\Gamma}(\mathcal{M}) := \text{Hom}_{\Gamma}(\Delta,\mathcal{M})$ be the group of boundary modular symbols. There is a natural restriction map $\mathrm{res}:$ BSymb$_{\Gamma}(\mathcal{M})  \rightarrow$ Symb$_{\Gamma}(\mathcal{M})$.  
					There is a well-defined action of Hecke algebra on Symb$_{\Gamma}(\mathcal{M})$ (see  \cite[\S 4]{MTT}). 
					By \cite[Theorem 4.3]{gs} (see also \cite[(9)]{Vatsal}), we have the following exact sequence of Hecke modules
					\begin{small}
						\begin{align}\label{seq:parabolic and symbols}
							0 \rightarrow H^{0}(\Gamma, L(n,A)) \rightarrow \text{BSymb}_{\Gamma}(L(n, A))
							\xrightarrow{\text{res}} \text{Symb}_{\Gamma}(L(n, A)) \xrightarrow {\Theta}
							H^{1}_{P}(\Gamma , L(n,A)) \rightarrow 0.
						\end{align}
					\end{small}%
					Following  \cite{gs}, \cite{Kitagawa} and \cite{Vatsal}  we now show in Theorem~\ref{period and integral measure} that the measure in Theorem~\ref{p-adic l-function of modular form} is $ \mathcal{O} $-valued with  the choice of periods $ \Omega_{F}^{\pm} $  as specified in Theorem~\ref{choice of period and petterson innerproduct}. 
					
					\begin{theorem}\label{period and integral measure}
						Let $ F \in S_{k}(\Gamma_{0}(N), \varphi;\mathcal{O})$  be a normalised $ p $-ordinary eigenform with the  residual  Galois representation at $ p $ is irreducible. 
						Let   $ \Omega_{F}^{\pm} $  be the periods as defined in Theorem~\ref{choice of period and petterson innerproduct}. Then with these choices of periods $ \Omega_{F}^{\pm} $  the measure  $\mu_F$ satisfies the interpolation property~\eqref{eq: interpol muF} i.e. 
						for  every finite order character $ \phi  $ of $ (\mathbb{Z}/L\mathbb{Z})^\times \times \mathbb{Z}_{p}^{\times}$ with $ p \nmid L $ and $ 0 \leq j \leq k-1 $,  we have 
						\begin{small}	
							\begin{align*}
								\mu_{F}(x_p^j \phi) = \int_{  (\mathbb{Z}/L\mathbb{Z})^\times \times \mathbb{Z}_{p}^{\times} } x_p^j \phi =\frac{1}{u_{F}^{v}}  \left( 1 - 
								\frac{\phi(p) \varphi(p) p^{k-2-j}}{u_{F}}\right)
								\left(  1 - \frac{\bar{\phi}(p)p^{j}}{u_{F}}\right) 
								\frac{ \mathrm{cond}(\phi)^{j+1}}{(-2\pi i)^{j+1} } \frac{j!}{G(\phi)} 
								\frac{L(j+1,F,\phi)}{
									\Omega_{F}^{\mathrm{sgn}((-1)^j \phi(-1))}  }.
							\end{align*}
						\end{small}%
						Furthermore, $\mu_F$ is an $\mathcal{O}$-valued measure.
					\end{theorem}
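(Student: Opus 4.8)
The plan is to realize $\mu_F$ as the measure attached to the suitably normalized modular symbol of $F$ by the Mazur--Tate--Teitelbaum recipe \cite{MTT}, adapted to arbitrary weight and to $p$-stabilized forms as in \cite{Kitagawa}: the interpolation formula \eqref{eq: interpol muF} will then be a formal consequence of Shimura's integral representation of $L(j+1,F,\phi)$ (Theorem~\ref{Shimura special values}), while $\mathcal{O}$-integrality of $\mu_F$ reduces to the statement that the modular symbol of $F$, after division by $\Omega_F^{\pm}$, takes values in $L(k-2,\varphi;\mathcal{O})$.

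Concretely, let $\Psi_F \in \mathrm{Symb}_{\Gamma_0(N)}(L(k-2,\varphi;\mathbb{C}))$ be the modular symbol attached to $F$, $\Psi_F(\{r\}-\{s\}) = \int_s^r F(z)(zX+Y)^{k-2}\,dz$; it is a Hecke eigensymbol with eigensystem $\phi_F$, and $\Theta(\Psi_F)=-\delta(F)$ for $\Theta$ the surjection in \eqref{seq:parabolic and symbols} and $\delta(F)$ the Eichler--Shimura class of Theorem~\ref{Eichler Shimura}. Passing to $\varepsilon$-eigenspaces and setting $\Psi_F^{\pm}=\Psi_F\pm\varepsilon\cdot\Psi_F$, one gets $\Theta(\Psi_F^{\pm})=-\delta(F)^{\pm}=-\Omega_F^{\pm}\,\xi^{\pm}$ by Theorem~\ref{choice of period and petterson innerproduct}(i), where $\xi^{\pm}$ is the chosen generator of the rank-one $\mathcal{W}$-module $H^1_P(\Gamma_0(N),L(k-2,\varphi;\mathcal{W}))\cap H^1_P(\Gamma_0(N),L(k-2,\varphi;K))^{\pm}[\phi_F]$ of Lemma~\ref{multiplicity one of parabolic cohomology}, regarded after base change as an $\mathcal{O}$-generator. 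The MTT/Kitagawa construction applied to the symbol $\Psi_F^{\pm}/\Omega_F^{\pm}$ produces a bounded measure on $(\mathbb{Z}/L\mathbb{Z})^\times\times\mathbb{Z}_p^\times$ whose moments $\int x_p^j\phi\,d\mu_F$ are computed, via the Birch--Shimura expression of $L(j+1,F,\phi)$ as a twisted period integral together with the elementary Gauss-sum identities and the $U(p)$-eigenvalue bookkeeping, to be exactly the right-hand side of the displayed formula, including the $p$-adic multiplier $e_p(u_F,x_p^j\phi)/u_F^{v}$; and this measure is $\mathcal{O}$-valued precisely when $\Psi_F^{\pm}/\Omega_F^{\pm}$ is $\mathcal{O}$-valued, which is the point verified below.

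Thus the crux is integrality of $\Psi_F^{\pm}/\Omega_F^{\pm}$, which I would establish following Vatsal \cite{Vatsal} (see also \cite{gs}, \cite{Kitagawa}). By \eqref{seq:parabolic and symbols} and Lemma~\ref{multiplicity one of parabolic cohomology}, $\Theta$ restricts to a surjection of $\phi_F$-eigenspaces $\mathrm{Symb}_{\Gamma_0(N)}(L(k-2,\varphi;\mathcal{O}))^{\pm}[\phi_F]\twoheadrightarrow \mathcal{O}\cdot\xi^{\pm}$ with kernel inside the boundary symbols. Choose an $\mathcal{O}$-integral lift $y$ of $\xi^{\pm}$ and note that $z:=-\Psi_F^{\pm}/\Omega_F^{\pm}$ is a $K$-rational lift of $\xi^{\pm}$, so $y-z\in\mathrm{BSymb}_{\Gamma_0(N)}(L(k-2,\varphi;K))$. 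Using \ref{irr-f} and the Chebotarev density theorem, pick a prime $\ell\nmid Np$ with $\ell\equiv 1\pmod p$ and $a(\ell,F)-1-\ell\in\mathcal{O}^\times$; the integral Hecke operator $\eta_\ell:=T(\ell)-1-\ell\langle\ell\rangle$ annihilates every boundary symbol (a direct computation with the cusps, cf. \cite[Lemma 6.9]{gs}) and acts on the $\phi_F$-eigenspace as multiplication by the unit $a(\ell,F)-1-\ell$. Hence $z=(a(\ell,F)-1-\ell)^{-1}\,\eta_\ell\,y\in\mathrm{Symb}_{\Gamma_0(N)}(L(k-2,\varphi;\mathcal{O}))$, so $\Psi_F^{\pm}/\Omega_F^{\pm}$ is $\mathcal{O}$-integral and $\mu_F$ is $\mathcal{O}$-valued.

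The main obstacle I expect is bookkeeping rather than conceptual: one must check that the cohomological period $\Omega_F^{\pm}$ fixed in Theorem~\ref{choice of period and petterson innerproduct} agrees, up to a $p$-adic unit, with the period implicitly used in the construction of $\mu_F$ in Theorem~\ref{p-adic l-function of modular form} (a $p$-adic unit ambiguity is harmless, affecting neither integrality nor the ideal $(\mu_F)$ that is ultimately used), and one must track carefully the $p$-stabilization data: the factor $u_F^{-v}$, the two Euler factors making up $e_p(u_F,x_p^j\phi)$, and, in the intended application with $F=\tilde f_0=f_0|\iota_m$ of level divisible by $p$, the fact that $U(p)$ acts by $u_F$ so that only one Euler factor survives. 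The existence of the auxiliary prime $\ell$ is the one place \ref{irr-f} is essential, while the running assumption that $\Gamma_0(N)/\{\pm 1\}$ is torsion-free when $p=3$ is what permits all the cohomology comparisons and the freeness in Lemma~\ref{multiplicity one of parabolic cohomology} to be used over $\mathcal{O}$ rather than only over $K$.
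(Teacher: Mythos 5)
Your proposal is correct and follows essentially the same route as the paper: the paper simply cites \cite[1.6]{Vatsal} for the existence of an $\mathcal{O}$-integral modular symbol $\Delta_F^{\pm}$ with $\Theta(\Delta_F^{\pm})=\delta(F)^{\pm}/\Omega_F^{\pm}$ and then invokes the Greenberg--Stevens/Kitagawa construction and interpolation formula, whereas you unfold Vatsal's integrality argument (the auxiliary prime $\ell$ with $\eta_\ell=T(\ell)-1-\ell\langle\ell\rangle$ killing boundary symbols and acting by a unit on the $\phi_F$-eigenspace, which is where \ref{irr-f} enters). The content is the same; only the level of detail differs.
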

					\begin{proof}
						By \cite[1.6]{Vatsal}, there exist $ \Delta_F^\pm  \in \text{Symb}_{\Gamma_{1}(N)}(L(k-2,\mathcal{O}))$ with $\Theta(\Delta^{\pm}(F)) = \delta(F)^{\pm}/\Omega_{F}^\pm$.  
						Let $ \mu $ be the measure as defined in \cite[(4.16),(4.17)]{gs} (See  \cite[\S 4.2]{Kitagawa}), attached to these modular symbols $ \Delta_F^{\pm}$. By \cite[(11)]{Vatsal} and  \cite[Theorem 4.18]{gs} (See also \cite[Theorem 4.8]{Kitagawa}), it follows that $\mu$ satisfies the interpolation formula given in \eqref{eq: interpol muF}.  Hence  the measure $\mu_{F}$ obtained in Theorem~\ref{p-adic l-function of modular form} equals $\mu $.
						That the  measure $ \mu_{F} $ is an $ \mathcal{O} $-valued measure follows from \cite[Lemma 4.3]{Kitagawa}. 
					\end{proof}
					
					
					\begin{definition}\textbf{(Module of congruences)}
						Let $ R $ be a finite, flat  and reduced  algebra over $ A $ and $B =$ Frac$(A)$. 
						Moreover, assume that we are given
						a map $ \lambda: R \rightarrow A$ such that it induces  an $A$-algebra decomposition
						$R \otimes_{A} B \cong B \oplus X$.
						Let  $ 1_{\lambda} $ be the idempotent {corresponding} to the first summand. Put $\mathfrak{a}$ = Ker($R \rightarrow X) = 1_\lambda R \cap R$, $S$ = Im$(R \rightarrow X)$ and $\mathfrak{b}$ = ker($\lambda$). The module of congruences $ C_0(\lambda) $ is defined by 
						\begin{align*}
							C_0(\lambda)  = (R/\mathfrak{a}) \otimes_{R,\lambda} \mathcal{O}_{K} \cong \frac{R}{\mathfrak{a}\oplus \mathfrak{b}} \cong \lambda(R)/\lambda(\mathfrak{a}) \cong
							1_{\lambda} R/\mathfrak{a} \cong S/\mathfrak{b} .
						\end{align*}
					\end{definition}
					If $R$ is Gorenstein i.e.  $\text{Hom}_A(R,A) \cong R$  as $R$-modules,  then 
					$ C_0(\lambda) = A/c(\lambda) $ for some $c(\lambda) \in A$ (cf. \cite[Lemma 6.9]{Hidaadjoint}). In particular, we have $c(\lambda)1_{\lambda} \in R$. Note that $c(\lambda) $ is well defined up to a  unit in $A$.

					For $ F \in S_k(\Gamma_{0}(N),\varphi) $ and $ \Omega_{F}^\pm $ as in Theorem~\ref{period and integral measure},  we now determine $ \langle F^\rho|\tau_{N} , F \rangle_N/\Omega_F^{+}\Omega_F^{-}  $ explicitly up to a $ p $-adic unit. 
					
					Recall that if $ F \in  S_{k}(\Gamma_{0}(N),\varphi;K) $ is either a normalised $ p $-ordinary newform or $p$-stabilization of a normalised $p$-ordinary newform, then the splitting $ h_{k}(\Gamma_{0}(N), \varphi; K_{\mathfrak{p}}) \cong K_{\mathfrak{p}} \oplus X$  holds (See \eqref{splitting of hecke algebra}) and we have the following result due to Hida (cf. \cite[Theorem 5.20]{Hida3}).
					\begin{theorem}\label{c(f) and petterson} Let $ p $ be odd  and $ k \geq 3 $.  Let $ F \in     S_{k}(\Gamma_{0}(N),\varphi;K) $ be either a normalised $ p $-ordinary newform or $p$-stabilization of a normalised $p$-ordinary newform. We assume $ p >3 $, if $ \Gamma_{0}(N)/\{\pm1\} $ has non-trivial torsion elements. Let $ \bar{\rho}_{F} $  be the residual representation  of $ F $ and $  \Omega_{F}^{\pm} $ be as in Theorem~\ref{period and integral measure}.  If $F$ satisfies \ref{irr-f} and \ref{p-dist}, we have 
						\begin{small}
							\begin{align*}
								c(F) \frac{\Omega_{F}^{+}\Omega_{F}^{-}}{2^{k} i^{k+1}N^{k/2-1} \langle F^\rho|\tau_{N} , F \rangle_N} \text{ is a } p\text{-adic unit}.
							\end{align*}
						\end{small}%
					\end{theorem}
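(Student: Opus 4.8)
\emph{Proof idea.} The plan is to reduce the statement to the assertion that the congruence number $c(F)$ and the cohomological pairing $\langle\xi^{+},\xi^{-}\rangle$ occurring in Theorem~\ref{choice of period and petterson innerproduct} generate the same ideal of $\mathcal{O}$, and then to invoke Hida's identification of the congruence module of a Hecke algebra with the adjoint period. First, by Theorem~\ref{choice of period and petterson innerproduct}(ii) one may rewrite
\[
c(F)\,\frac{\Omega_{F}^{+}\Omega_{F}^{-}}{2^{k} i^{k+1} N^{k/2-1}\langle F^{\rho}|\tau_{N},F\rangle_{N}}=\frac{c(F)}{\langle\xi^{+},\xi^{-}\rangle},
\]
so it suffices to prove $(c(F))=(\langle\xi^{+},\xi^{-}\rangle)$ as ideals of $\mathcal{O}$. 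Recall that $c(F)=c(\phi_{F})$ generates the congruence module $C_{0}(\phi_{F})$ of the $\mathcal{O}$-algebra map $\phi_{F}\colon h_{k}(\Gamma_{0}(N),\varphi;\mathcal{O})\to\mathcal{O}$; this element is well defined up to a unit once the relevant localization of the Hecke algebra is known to be Gorenstein.

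Next I would interpret $\langle\xi^{+},\xi^{-}\rangle$ in terms of the Hecke algebra. The Poincar\'e duality pairing $[\,,\,]$ on $H^{1}_{c}(X_{N},\mathcal{L}(k-2,\varphi;\mathbb{C}))\times H^{1}(X_{N},\mathcal{L}(k-2,\bar{\varphi};\mathbb{C}))$ is Hecke-equivariant and, after inverting $(k-2)!$, induces a perfect $\mathcal{W}$-bilinear pairing on the corresponding integral cohomology; the Fricke element $\tau_{N}$ interchanges the coefficient systems attached to $\varphi$ and $\bar{\varphi}$, so $\langle x,y\rangle=[x|\tau_{N},y]$ is a Hecke-equivariant perfect pairing on the integral parabolic cohomology $H^{1}_{P}(\Gamma_{0}(N),L(k-2,\varphi;\mathcal{W}))$. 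By Lemma~\ref{multiplicity one of parabolic cohomology} the $\phi_{F}$-eigenspace is one-dimensional over $K_{\mathfrak{p}}$ and its integral incarnation is free of rank one over $\mathcal{W}$; hence, after restricting the perfect pairing to the summand cut out by the idempotent $1_{\phi_{F}}$, the element $\langle\xi^{+},\xi^{-}\rangle$ generates precisely the congruence ideal of $\phi_{F}$ for the Hecke algebra acting faithfully on $H^{1}_{P}$. Thus $(\langle\xi^{+},\xi^{-}\rangle)$ equals that congruence ideal.

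The decisive step is to match these two congruence ideals, and this is where the hypotheses enter. Under \ref{irr-f} and \ref{p-dist}, together with the standing assumption that $p>3$ or $\Gamma_{0}(N)/\{\pm1\}$ is torsion-free (which removes the small-prime pathologies), the localization $h_{k}(\Gamma_{0}(N),\varphi;\mathcal{O})_{\mathfrak{m}_{F}}$ at the maximal ideal $\mathfrak{m}_{F}$ attached to $\bar{\rho}_{F}$ is Gorenstein and $H^{1}_{P}(\Gamma_{0}(N),L(k-2,\varphi;\mathcal{O}))^{\pm}_{\mathfrak{m}_{F}}$ is free of rank one over it: for $k=2$ this is the Mazur--Wiles--Tilouine multiplicity-one theorem, and for $k\ge 3$ one descends to weight $2$ using the $p$-ordinarity of $F$ and Hida's control theory, the $p$-distinguishedness being exactly the condition that separates the two Jordan--H\"older constituents of $\bar{\rho}_{F}|_{G_{p}}$ so that the localized Hecke module behaves well. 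Gorenstein-ness gives $\mathrm{Hom}_{\mathcal{O}}(h_{\mathfrak{m}_{F}},\mathcal{O})\cong h_{\mathfrak{m}_{F}}$ as modules, so the congruence module computed from the duality pairing on the free rank-one cohomology module coincides with $C_{0}(\phi_{F})$ computed from the trace form on $h_{\mathfrak{m}_{F}}$; therefore $(c(F))=(\langle\xi^{+},\xi^{-}\rangle)$, which is what we want. Finally, when $F=f_{0}$ is the $p$-stabilization of a newform $f$ of level prime to $p$ rather than a newform itself, one passes between $h_{k}(\Gamma_{0}(Np),\eta;\mathcal{O})$ and its ordinary part, on which $U(p)$ is invertible and the maximal ideal $\mathfrak{m}_{f_{0}}$ corresponds to $\mathfrak{m}_{f}$; the Euler factor at $p$ is a $p$-adic unit, so the congruence numbers at the two levels agree up to units and the argument carries over. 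The main obstacle is therefore not the bookkeeping but the Gorenstein property and the rank-one freeness of parabolic cohomology over the localized Hecke algebra in the required generality (arbitrary weight $k\ge 3$, nebentypus, small-prime caveats), and the precise matching of normalization constants relating $C_{0}(\phi_{F})$ to $\langle\xi^{+},\xi^{-}\rangle$; both are established in \cite[Chapter 5]{Hida3} and \cite{Hidaadjoint}, and we invoke those results with the hypotheses matched as above.
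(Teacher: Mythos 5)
Your proposal is correct and follows essentially the same route as the paper, which deduces the statement from Hida's Theorem 5.20 of \cite{Hida3}: reduce via Theorem~\ref{choice of period and petterson innerproduct}(ii) to showing $(c(F))=(\langle\xi^{+},\xi^{-}\rangle)$, then use the Gorenstein property of the localized Hecke algebra and the rank-one freeness of parabolic cohomology under \ref{irr-f} and \ref{p-dist} to identify $\langle\xi^{+},\xi^{-}\rangle$ with a generator of the congruence module $C_0(\phi_F)$, invoking \cite[Corollary 6.24, Theorem 6.25]{Hidaadjoint}. The handling of the $p$-stabilized case via the ordinary part of the level-$Np$ Hecke algebra also matches the paper's intent.
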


					We next compare the periods $\Omega_{\tilde{f}_{0}}^{+} \Omega_{\tilde{f}_{0}}^{-}$ and $\langle (f_0|\chi)^\rho|\tau_{N_{f|\chi}p}, f_0|\chi\rangle_{N_{f|\chi}p}$.
					\begin{lemma}\label{periods comparision Eis}
						Let $f \in S_{k}(\Gamma_{0}(N),\eta)$ be a $p$-ordinary newform and $\tilde{f}_0 = f_0|\iota_{m}$. Assume $f$ satisfies \ref{p-dist} and \ref{irr-f}. Then the following quantity
						\begin{align*}
							\frac{p^{(2-k)/2}c(f_0|\chi) \Omega_{\tilde{f}_{0}}^{+} \Omega_{\tilde{f}_{0}}^{-} }{\langle (f_0|\chi)^\rho|\tau_{N_{f|\chi}p}, f_0|\chi\rangle_{N_{f|\chi}p} } \text{ is a } p\text{-adic unit}.
						\end{align*} 
					\end{lemma}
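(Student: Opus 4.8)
The plan is to reduce the statement to Theorem~\ref{c(f) and petterson} applied to the $p$-stabilized newform $f_0|\chi$, leaving as the only additional task a comparison of the canonical periods of $f_0|\chi$ with those of $\tilde{f}_0=f_0|\iota_m$.

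First I would check that $f_0|\chi$ is the $p$-stabilization of the newform $f|\chi\in S_k(\Gamma_0(N_{f|\chi}),\eta\chi^2)$ produced in Lemma~\ref{lem: primitive after twist}: since $\mathrm{cond}(\chi)=m^n$ is prime to $p$, twisting commutes with $p$-stabilization, the unit root of the Hecke polynomial of $f|\chi$ at $p$ is $\chi(p)u_f$, and $N_{f|\chi}$ is prime to $p$. Because $\bar\rho_{f|\chi}\cong\bar\rho_f\otimes\bar\chi$ with $\bar\chi$ unramified at $p$, the hypotheses \ref{irr-f} and \ref{p-dist} pass from $f$ to $f|\chi$: a twist of an irreducible representation remains irreducible, and on $G_p$ both diagonal characters of $\bar\rho_f$ acquire the same factor $\bar\chi|_{G_p}$, so the inequality $\epsilon_p\neq\delta_p$ persists; also $\Gamma_0(N_{f|\chi}p)/\{\pm1\}\hookrightarrow\Gamma_0(N)/\{\pm1\}$ is torsion-free when the latter is. Hence Theorem~\ref{c(f) and petterson}, applied to $F=f_0|\chi$ (which has level $N_{f|\chi}p$), shows that
\[
c(f_0|\chi)\,\frac{\Omega_{f_0|\chi}^{+}\Omega_{f_0|\chi}^{-}}{2^{k}i^{k+1}(N_{f|\chi}p)^{k/2-1}\,\langle (f_0|\chi)^{\rho}|\tau_{N_{f|\chi}p},\,f_0|\chi\rangle_{N_{f|\chi}p}}\ \text{ is a }p\text{-adic unit.}
\]
Since $2$, $i$ and $N_{f|\chi}$ are $p$-adic units and $(2-k)/2=-(k/2-1)$, this is equivalent to saying that $p^{(2-k)/2}c(f_0|\chi)\,\Omega_{f_0|\chi}^{+}\Omega_{f_0|\chi}^{-}\big/\langle (f_0|\chi)^{\rho}|\tau_{N_{f|\chi}p},f_0|\chi\rangle_{N_{f|\chi}p}$ is a $p$-adic unit; therefore the lemma will follow as soon as $\Omega_{\tilde{f}_0}^{+}\Omega_{\tilde{f}_0}^{-}$ and $\Omega_{f_0|\chi}^{+}\Omega_{f_0|\chi}^{-}$ are shown to differ by a $p$-adic unit.

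For this last comparison I would use the twisting operator $tw_{\bullet,\chi}$ of \eqref{twist char cohomology} together with Lemma~\ref{lem: twist parabolic}. As $\chi$ is supported on the primes dividing $m$, one has $\iota_m\chi=\chi$ and $\chi\bar\chi=\iota_m$, so $f_0|\chi=\tilde{f}_0|\chi$ and $\tilde{f}_0=(f_0|\chi)|\bar\chi$. Working inside the parabolic cohomology of a level divisible by both $N_{f|\chi}p\,m^{2n}$ and the level of $\tilde{f}_0$, and writing $\delta(\tilde{f}_0)^{\pm}=\Omega_{\tilde{f}_0}^{\pm}\xi_{\tilde{f}_0}^{\pm}$, $\delta(f_0|\chi)^{\pm}=\Omega_{f_0|\chi}^{\pm}\xi_{f_0|\chi}^{\pm}$ with $\xi^{\pm}$ the integral $\mathcal{W}$-generators of the rank one eigenlattices supplied by Lemma~\ref{multiplicity one of parabolic cohomology}, Lemma~\ref{lem: twist parabolic} gives $\Omega_{f_0|\chi}^{\pm}\,tw_{\bullet,\chi}(\xi_{f_0|\chi}^{\pm})=G(\chi)\,\Omega_{\tilde{f}_0}^{\chi(-1)\cdot\pm}\,\xi_{\tilde{f}_0}^{\chi(-1)\cdot\pm}$, where the superscript $\chi(-1)\cdot\pm$ equals $\pm$ or $\mp$ according as $\chi(-1)=1$ or $-1$. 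The Gauss sum $G(\chi)$ is a $p$-adic unit since $G(\chi)G(\bar\chi)=\chi(-1)m^n$ is prime to $p$, and $tw_{\bullet,\chi}$ preserves integral cohomology; taking the product over the two signs (which merely permutes $+$ and $-$ when $\chi(-1)=-1$) then yields $\Omega_{\tilde{f}_0}^{+}\Omega_{\tilde{f}_0}^{-}\equiv\Omega_{f_0|\chi}^{+}\Omega_{f_0|\chi}^{-}$ up to a $p$-adic unit.

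The main obstacle is the assertion used implicitly in the previous paragraph: on the $\bar\rho_f$-isotypic part, the $p$-stabilization map and the degeneracy maps between the various auxiliary levels carry the rank one integral $\mathcal{W}$-eigenlattices onto one another up to $p$-adic units. This is precisely where \ref{irr-f} is needed, and I would prove it following \cite[\S1.6]{Vatsal} (cf. \cite[Lemma~6.9]{gs}): residual irreducibility produces a prime $\ell\equiv1$ modulo the level with $a(\ell,f)-\ell-1$ a $p$-adic unit, so that the integral Hecke operator $\eta_\ell=T_\ell-1-\ell\langle\ell\rangle$ acts invertibly on the $\bar\rho_f$-isotypic components while killing the boundary (Eisenstein) classes; this forces the relevant maps of integral eigenlattices to be isomorphisms up to $p$-adic units.
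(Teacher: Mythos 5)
Your proposal follows essentially the same route as the paper: first apply Theorem~\ref{c(f) and petterson} to the $p$-stabilized newform $f_0|\chi$ (whose level is prime to $p$ away from the $p$-stabilization, and which inherits \ref{irr-f} and \ref{p-dist} from $f$ since $\chi$ is unramified at $p$), which reduces the lemma to showing $\Omega_{\tilde{f}_0}^{+}\Omega_{\tilde{f}_0}^{-}/\Omega_{f_0|\chi}^{+}\Omega_{f_0|\chi}^{-}$ is a $p$-adic unit; then compare the two period products via the twisting operators $tw_{\bullet,\chi}$, $tw_{\bullet,\bar{\chi}}$ of \eqref{twist char cohomology} and Lemma~\ref{lem: twist parabolic}, using that $G(\chi)G(\bar{\chi})=\chi(-1)\,\mathrm{cond}(\chi)$ is prime to $p$. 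The one place you diverge is the final paragraph: you flag the identification of integral eigenlattices across levels as the ``main obstacle'' and propose to resolve it with the Eisenstein-annihilating operators $\eta_\ell=T_\ell-1-\ell\langle\ell\rangle$ of Vatsal and Greenberg--Stevens. The paper sidesteps this entirely for the present lemma: since $f_0|\chi=\tilde{f}_0|\chi$ and $\tilde{f}_0=(f_0|\chi)|\bar{\chi}$ (both forms being $m$-depleted), the composite $tw_{N_{\tilde{f}}p,\bar{\chi}}\circ tw_{N_{f|\chi}p,\chi}$ acts on the $\phi_{f_0|\chi}$-eigenlattice as multiplication by the unit $G(\chi)G(\bar{\chi})$, which forces both twist maps to be isomorphisms of the rank-two integral $\mathcal{W}$-eigenlattices and immediately identifies the bases $\delta(\tilde{f}_0)^{\pm}/\Omega_{\tilde{f}_0}^{\pm}$ and $\delta(f_0|\chi)^{\pm}/\Omega_{f_0|\chi}^{\pm}$ up to units. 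No degeneracy maps or Euler factors intervene here, so \ref{irr-f} enters only through Theorem~\ref{c(f) and petterson}; the heavier lattice-comparison machinery you invoke is what is actually needed in the companion Lemma~\ref{period comparision cusp}, where the analogous composite equals a product of Euler-factor operators applied to $\delta(f_0)$ and one must appeal to Ihara-type isomorphisms as in \cite[\S 4.4]{DDT}. Your argument would still go through, but the $\eta_\ell$ detour is unnecessary for this statement.
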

					\begin{proof}
						Applying Theorem~\ref{c(f) and petterson} for $f_0|\chi $, we have  $\langle (f_0|\chi)^\rho|\tau_{N_{f|\chi}p}, f_0\rangle_{N_{f|\chi}p}  = p^{(2-k)/2} c(f_0|\chi)\Omega^{+}_{f_0|\chi} \Omega_{f_{0}|\chi}^{-}$ up to a $p$-adic unit. So we need to  show that  $\Omega^{+}_{f_0|\chi} \Omega_{f_{0}|\chi}^{-}$ differs from $\Omega^{+}_{\tilde{f}_0} \Omega_{\tilde{f}_{0}}^{-}$ by a $p$-adic unit. To ease the notation, denote the conductor of $\chi$ by $C_{\chi}$. Let 
						$
						tw_{N_{f|\chi}p,\chi}: H^{1}_P(\Gamma_0(N_{f|\chi}p), L(k-2, \eta\chi^2;\mathcal{W}))[\phi_{f_0|\chi}] \rightarrow H^{1}_P(\Gamma_0(N_{f|\chi}C_\chi^2p), L(k-2,\eta \iota_{m};\mathcal{W}))
						$
						be as defined in \eqref{twist char cohomology}. By Theorem~\ref{choice of period and petterson innerproduct}, we have $ \delta(f_0|\chi)^{\pm}/\Omega_{f_0|\chi}^\pm$ is a basis of   $H^{1}_P(\Gamma_0(N_{f|\chi}p), L(k-2,\eta\chi^2;\mathcal{W})) \cap H^{1}_P(\Gamma_0(N_{f|\chi}p), L(k-2,\eta\chi^2;K)) [\phi_{f_0|\chi}]$. Also by Lemma~\ref{lem: twist parabolic}, we have $tw_{N_{f|\chi}p,\chi}(\delta(f_0|\chi)^{\pm}/\Omega_{f_0|\chi}) = G(\chi) \delta(\tilde{f}_0)^{\pm \chi(-1)} /\Omega_{f_0|\chi}^\pm$. Thus the image of $ H^{1}_P(\Gamma_0(N_{f|\chi}p), L(k-2,\eta\chi^2;\mathcal{W})) \cap  H^{1}_P(\Gamma_0(N_{f|\chi}p), L(k-2,\eta\chi^2;K))^\pm [\phi_{f_0|\chi}]$ under $tw_{N_{f|\chi}p,\chi}$ lies inside $H^{1}_P(\Gamma_0(N_{\tilde{f}}p), L(k-2,\eta \iota_{m};\mathcal{W})) \cap H^{1}_P(\Gamma_0(N_{\tilde{f}}p), L(k-2,\eta \iota_{m};K))[\phi_{\tilde{f}_0}]$.

						Next we consider the map $tw_{N_{\tilde{f}}p, \bar{\chi}}: H^{1}_P(\Gamma_0(N_{\tilde{f}}p), L(k-2,\eta\iota_m;\mathcal{W})) \cap  H^{1}_P(\Gamma_0(N_{\tilde{f}}p), L(k-2,\eta\iota_m;K))[\phi_{\tilde{f}_0}]$ $ \rightarrow H^{1}_P(\Gamma_0(N_{\tilde{f}}C_{\chi}^2p), L(k-2,\eta \chi^2;\mathcal{W})) $. From Lemma~\ref{lem: twist parabolic},  it follows that $tw_{N_{\tilde{f}}p, \bar{\chi}}(\delta(\tilde{f}_0)^{\pm \chi(-1)} /\Omega_{f_0|\chi}) = tw_{N_{\tilde{f}}p, \bar{\chi}}(\delta(\tilde{f}_0)^{\pm \chi(-1)} )/\Omega_{f_0|\chi}^\pm = G(\bar{\chi})\delta(f_0|\chi)^{\pm} /\Omega_{f_0|\chi}^\pm$ and  image of $H^{1}_P(\Gamma_0(N_{\tilde{f}}p), L(k-2,\eta\iota_m;K))[\phi_{\tilde{f}_0}] \cap H^{1}_P(\Gamma_0(N_{\tilde{f}}p), L(k-2,\eta\iota_m;\mathcal{W}))$ under $tw_{N_{\tilde{f}}p, \bar{\chi}}$ lies in $H^{1}_P(\Gamma_0(N_{f|\chi}p), L(k-2,\eta\chi^2;\mathcal{W}))[\phi_{f_0|\chi}]$. Thus $tw_{N_{\tilde{f}}p, \bar{\chi}} \circ tw_{N_{f|\chi}p, \chi} (\delta(f_0|\chi)^{\pm} /\Omega_{f_0|\chi}^\pm) = G(\chi) G(\bar{\chi}) \delta(f_0|\chi)^{\pm} /\Omega_{f_0|\chi}^\pm $. Since $G(\chi) G(\bar{\chi}) =  \chi(-1) C_\chi$ is prime to $p$ and $\delta(f_0|\chi)^{\pm} /\Omega_{f_0|\chi}^\pm $ generates $H^{1}_P(\Gamma_0(N_{f|\chi}p), L(k-2,\eta\chi^2;\mathcal{W})) \cap H^{1}_P(\Gamma_0(N_{f|\chi}p), L(k-2,\eta\chi^2;K))[\phi_{f|\chi}]^\pm$, we get 
						$tw_{N_{\tilde{f}}p, \bar{\chi}}$ is  surjective. As the $\mathcal{W}$-modules, $H^{1}_P(\Gamma_0(N_{f|\chi}p), L(k-2,\eta\chi^2;\mathcal{W})) \cap H^{1}_P(\Gamma_0(N_{f|\chi}p), L(k-2,\eta\chi^2;K))[\phi_{f|\chi}]$ and $H^{1}_P(\Gamma_0(N_{\tilde{f}}p), L(k-2,\eta\iota_m;\mathcal{W}))\cap H^{1}_P(\Gamma_0(N_{\tilde{f}}p), L(k-2,\eta\iota_m;K))[\phi_{\tilde{f}_0}]$ have rank $2$, we get $tw_{N_{\tilde{f}}p, \bar{\chi}}$ is an isomorphism. Since $(\delta(\tilde{f}_0)^{\pm}/\Omega_{\tilde{f}_0}^{\pm})$ is a basis of $H^{1}_P(\Gamma_0(N_{\tilde{f}}p), L(k-2,\eta\iota_m;\mathcal{W}))\cap H^{1}_P(\Gamma_0(N_{\tilde{f}}p), L(k-2,\eta\iota_m;K))[\phi_{\tilde{f}_0}]$, we get $tw_{N_{\tilde{f}}p, \bar{\chi}}(\delta(\tilde{f}_0)^{\pm} /\Omega_{\tilde{f}_0}^\pm) = G(\bar{\chi})\delta(f_0|\chi)^{\pm\chi(-1)} /\Omega_{\tilde{f}_0}^\pm$ is a  $\mathcal{W}$-basis of $H^{1}_P(\Gamma_0(N_{f|\chi}p), L(k-2,\eta\chi^2;\mathcal{W})) \cap H^{1}_P(\Gamma_0(N_{f|\chi}p), L(k-2,\eta\chi^2;K))[\phi_{f|\chi}]$. Since $\delta(f_0|\chi)^\pm/\Omega_{f_0|\chi}^{\pm}$ is also a $\mathcal{W}$-basis of $H^{1}_P(\Gamma_0(N_{f|\chi}p), L(k-2,\eta\chi^2;\mathcal{W})) \cap H^{1}_P(\Gamma_0(N_{f|\chi}p), L(k-2,\eta\chi^2;K))[\phi_{f|\chi}]$, we get $\Omega_{\tilde{f}_0}^+\Omega_{\tilde{f}_0}^- $ is equal to $\Omega_{f_0|\chi}^+\Omega_{f_0|\chi}^-$ up to a $p$-adic unit. This proves the lemma. 
					\end{proof}
					\begin{lemma}\label{period comparision cusp}
						Let $f \in S_{k}(\Gamma_{0}(N),\eta)$ be a $p$-ordinary newform and $\tilde{f}_0 = f_0|\iota_{m}$. Assume $f$ satisfies \ref{p-dist} and \ref{irr-f}. Then the following quantity
						\begin{align*}
							\frac{c(f_0|\chi)}{c(f_0)} \frac{\langle f_0^\rho | \tau_{Np}, f_0\rangle_{Np}}{\langle (f_0|\chi)^\rho | \tau_{N_{f|\chi}p}, f_0|\chi \rangle_{N_{f|\chi}p}} \text{ is a } p\text{-adic unit}.
						\end{align*} 
						
					\end{lemma}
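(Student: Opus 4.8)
\textbf{Proof proposal for Lemma~\ref{period comparision cusp}.}
The plan is to factor the desired quantity as a product of two ratios, each of which has already been shown to be a $p$-adic unit. The key identity is Theorem~\ref{c(f) and petterson}, applied to the two primitive (or $p$-stabilized primitive) forms $f_0$ and $f_0|\chi$. Concretely, since $f$ satisfies \ref{irr-f} and \ref{p-dist}, so does $f|\chi$ (twisting by a Dirichlet character of conductor prime to $p$ does not change the restriction of $\bar{\rho}$ to $G_p$ up to an unramified twist, and irreducibility is preserved because $\chi$ has order prime to... — more precisely, $\bar{\rho}_{f|\chi} \cong \bar{\rho}_f \otimes \bar{\chi}$, and $\bar\chi$ being a character of $p$-power-free-part conductor preserves both hypotheses). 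Hence Theorem~\ref{c(f) and petterson} gives, up to $p$-adic units,
\begin{small}
\begin{align*}
c(f_0) \,\Omega_{f_0}^{+}\Omega_{f_0}^{-} &\sim 2^{k} i^{k+1} (Np)^{k/2-1} \langle f_0^\rho|\tau_{Np}, f_0\rangle_{Np}, \\
c(f_0|\chi) \,\Omega_{f_0|\chi}^{+}\Omega_{f_0|\chi}^{-} &\sim 2^{k} i^{k+1} (N_{f|\chi}p)^{k/2-1} \langle (f_0|\chi)^\rho|\tau_{N_{f|\chi}p}, f_0|\chi\rangle_{N_{f|\chi}p}.
\end{align*}
\end{small}

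First I would divide these two relations. This yields
\begin{small}
\begin{align*}
\frac{c(f_0|\chi)}{c(f_0)}\cdot \frac{\langle f_0^\rho|\tau_{Np}, f_0\rangle_{Np}}{\langle (f_0|\chi)^\rho|\tau_{N_{f|\chi}p}, f_0|\chi\rangle_{N_{f|\chi}p}} \;\sim\; \frac{\Omega_{f_0|\chi}^{+}\Omega_{f_0|\chi}^{-}}{\Omega_{f_0}^{+}\Omega_{f_0}^{-}} \cdot \left(\frac{N_{f|\chi}}{N}\right)^{k/2-1},
\end{align*}
\end{small}
where $\sim$ denotes equality up to a $p$-adic unit. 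Since $N_{f|\chi} = [N,\mathrm{cond}(\chi)]\mathrm{cond}(\chi)$ by Lemma~\ref{lem: primitive after twist} and $\mathrm{cond}(\chi) = m^n$ is prime to $p$ (as $m \mid I_0$ and $p \nmid I_0$), the factor $(N_{f|\chi}/N)^{k/2-1}$ is a $p$-adic unit and may be discarded. So it remains to show $\Omega_{f_0|\chi}^{+}\Omega_{f_0|\chi}^{-}$ differs from $\Omega_{f_0}^{+}\Omega_{f_0}^{-}$ by a $p$-adic unit.

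Next I would compare $\Omega_{f_0}^\pm$ and $\Omega_{f_0|\chi}^\pm$ directly. Note $\tilde f_0 = f_0|\iota_m$ and $f_0 = f_0|\iota_m|\iota_m$ up to the trivial twist, but more usefully, Lemma~\ref{periods comparision Eis} already establishes that $\Omega_{\tilde f_0}^{+}\Omega_{\tilde f_0}^{-}$ and $\Omega_{f_0|\chi}^{+}\Omega_{f_0|\chi}^{-}$ agree up to a $p$-adic unit (via the twisting maps $tw_{\bullet,\chi}$ and $tw_{\bullet,\bar\chi}$ on parabolic cohomology, composed to give multiplication by $G(\chi)G(\bar\chi) = \chi(-1)\mathrm{cond}(\chi)$, a $p$-adic unit). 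So it suffices to show $\Omega_{f_0}^{+}\Omega_{f_0}^{-} \sim \Omega_{\tilde f_0}^{+}\Omega_{\tilde f_0}^{-}$. This follows by the same argument as in the proof of Lemma~\ref{periods comparision Eis}: one applies the twisting map $tw_{Np,\iota_m}$... actually, more directly, one realizes $\tilde f_0 = f_0 | \iota_m$ as $f_0$ pushed to a higher level via the degeneracy maps $[d]$ for $d\mid m$, and uses \cite[Lemmas 4.1--4.4]{Kitagawa} (cf. the commented-out part of the proof of Theorem~\ref{period and integral measure}) to see that the integral modular symbol $\Delta_{f_0}^\pm/\Omega_{f_0}^\pm$ pushes forward to the integral modular symbol $\Delta_{\tilde f_0}^\pm/\Omega_{f_0}^\pm$ (the degeneracy maps $[d]$ preserve integral structures, and since $p\nmid m$, no $p$-power denominators are introduced). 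By the characterization of $\Omega_{\tilde f_0}^\pm$ via a generator of the rank-one $\mathcal{W}$-module $H^1_P(\Gamma_0(N_{\tilde f}p), L(k-2,\eta\iota_m;\mathcal{W}))\cap H^1_P(\cdots;K)[\phi_{\tilde f_0}]$ (Theorems~\ref{choice of period and petterson innerproduct},~\ref{period and integral measure}), this forces $\Omega_{f_0}^\pm \sim \Omega_{\tilde f_0}^\pm$ up to $p$-adic units, hence $\Omega_{f_0}^+\Omega_{f_0}^- \sim \Omega_{\tilde f_0}^+\Omega_{\tilde f_0}^-$.

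Chaining these: $\dfrac{c(f_0|\chi)}{c(f_0)}\cdot\dfrac{\langle f_0^\rho|\tau_{Np},f_0\rangle_{Np}}{\langle(f_0|\chi)^\rho|\tau_{N_{f|\chi}p},f_0|\chi\rangle_{N_{f|\chi}p}} \sim \dfrac{\Omega_{f_0|\chi}^+\Omega_{f_0|\chi}^-}{\Omega_{f_0}^+\Omega_{f_0}^-} \sim \dfrac{\Omega_{\tilde f_0}^+\Omega_{\tilde f_0}^-}{\Omega_{f_0}^+\Omega_{f_0}^-} \sim 1$, which is the claim. The main obstacle I anticipate is the careful bookkeeping in the second-to-last step — verifying that the degeneracy/twisting maps on parabolic cohomology are isomorphisms on the relevant $\phi$-isotypic integral pieces, and that all the Gauss sum and level factors that appear are indeed prime to $p$; but since $p\nmid m$ throughout and Lemma~\ref{periods comparision Eis} already does the analogous work for the Eisenstein comparison, this is largely a matter of transcribing that argument. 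One should also double-check that \ref{p-dist} and \ref{irr-f} genuinely transfer to $f|\chi$, which is immediate from $\bar\rho_{f|\chi}\cong\bar\rho_f\otimes\bar\chi$ together with $\bar\chi|_{G_p}$ being unramified (as $p\nmid\mathrm{cond}(\chi)$), so $\bar\rho_{f|\chi}|_{G_p}$ is still a non-split-eigenvalue-distinct extension if $\bar\rho_f|_{G_p}$ is.
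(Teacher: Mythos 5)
Your reduction is the same as the paper's: apply Theorem~\ref{c(f) and petterson} to both $f_0$ and $f_0|\chi$, cancel the prime-to-$p$ level factors, and reduce to showing $\Omega_{f_0}^{+}\Omega_{f_0}^{-}\sim\Omega_{f_0|\chi}^{+}\Omega_{f_0|\chi}^{-}$; invoking Lemma~\ref{periods comparision Eis} to replace $\Omega_{f_0|\chi}^{\pm}$ by $\Omega_{\tilde f_0}^{\pm}$ is also what the paper does (writing the ratio upside down is harmless, since a quantity is a unit iff its reciprocal is). The problem is the final step, comparing $\Omega_{f_0}^{\pm}$ with $\Omega_{\tilde f_0}^{\pm}$. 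Your justification --- the degeneracy maps $[d]$ preserve integral structures and introduce no $p$-power denominators --- only shows that the image of the generator $\delta(f_0)^{\pm}/\Omega_{f_0}^{\pm}$ lands \emph{inside} the rank-one integral eigenspace for $\tilde f_0$, i.e.\ that $\Omega_{\tilde f_0}^{\pm}/\Omega_{f_0}^{\pm}\in\mathcal{W}$. That is one divisibility, not equality up to a unit. To conclude, you must show the image is a \emph{generator}, i.e.\ that the operator $\prod_{r\mid m}(1-T(r)\circ[r]^{*}+r^{k-1}\eta(r)[r^{2}]^{*})$ (with the obvious modification at $r\mid Np$) is an isomorphism, not merely an injection, on the integral parabolic cohomology localized at the eigensystem of $f_0$. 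This is exactly Ihara's lemma in the form proved by Wiles (cf.\ \cite[\S 4.4]{DDT} for $k=2$ and \cite[Proposition 1.4]{DFG} for $k\geq 3$); it is the step where \ref{irr-f} does real work (the maximal ideal must be non-Eisenstein), and it is the ingredient your proposal omits.

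Note also that your appeal to "the same argument as Lemma~\ref{periods comparision Eis}" does not rescue this: in that lemma the composite $tw_{\bar\chi}\circ tw_{\chi}$ acts on the eigenspace of the single form $f_0|\chi$ as the scalar $G(\chi)G(\bar\chi)=\chi(-1)\,\mathrm{cond}(\chi)$, a $p$-adic unit, because $f_0|\chi$ has vanishing Fourier coefficients at primes dividing $m$ and is unchanged by $\iota_m$. Here the composite $tw_{N_{f|\chi}p,\chi}\circ tw_{Np,\bar\chi}$ carries $f_0$ to $\tilde f_0=f_0|\iota_m\neq f_0$, so it equals $G(\chi)G(\bar\chi)$ times the nontrivial degeneracy operator above, and its bijectivity on integral cohomology is precisely the Ihara-type statement that must be supplied. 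With that input added, the rest of your argument goes through as in the paper.
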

					\begin{proof}
						Applying Lemma~\ref{c(f) and petterson}, the stated ratio equals $\Omega_{f_0}^{+} \Omega_{f_0}^{-}/ \Omega_{f_{0}|\chi}^{+}\Omega_{f_{0}|\chi}^{-}$ up to a $p$-adic unit. Consider the following maps
						\begin{align*}
							&tw_{Np,\bar{\chi}}: H^{1}_P(\Gamma_0(Np), L(k-2,\eta;\mathcal{W})) \rightarrow H^{1}_P(\Gamma_0(NC_{\chi}^2p), L(k-2,\eta\chi^2;\mathcal{W})) \\
							&tw_{N_{f|\chi}p,\chi}: H^{1}_P(\Gamma_0(N_{f|\chi}p), L(k-2,\eta\chi^2;\mathcal{W})) \rightarrow H^{1}_P(\Gamma_0(N_{f|\chi}C_{\chi}^2p), L(k-2,\eta\iota_m;\mathcal{W})).
						\end{align*}		
						As in Lemma~\ref{periods comparision Eis}, we can show 
						\begin{alignat*}{2}
							&tw_{Np,\bar{\chi}}: H^{1}_P(\Gamma_0(Np),L(k&&-2,\eta;\mathcal{W})) \cap H^{1}_P(\Gamma_0(Np), L(k-2,\eta;K))[\phi_{f_0}] \rightarrow \\
							&  &&H^{1}_P(\Gamma_0(N_{f|\chi}p), L(k-2,\eta\chi^2;\mathcal{W})) \cap H^{1}_P(\Gamma_0(N_{f|\chi}p), L(k-2,\eta\chi^2;K))[\phi_{f_0|\chi}], \\
							&tw_{N_{f|\chi}p,\chi}: H^{1}_P(\Gamma_0(N_{f|\chi}p)&&, L(k-2,\eta\chi^2;\mathcal{W}))\cap H^{1}_P(\Gamma_0(N_{f|\chi}p), L(k-2,\eta\chi^2;K))[\phi_{f_0|\chi}]  \rightarrow \\ 
							&  &&H^{1}_P(\Gamma_0(N_{\tilde{f}_0}p), L(k-2,\eta\iota_m;\mathcal{W}))\cap H^{1}_P(\Gamma_0(N_{\tilde{f}_0}p), L(k-2,\eta\iota_m;K)[\phi_{\tilde{f}_0}].
						\end{alignat*} 
						are injective and $tw_{N_{f|\chi}p,\chi}$ is an isomorphism.  Further by Lemma~\ref{lem: twist parabolic}, we have  $tw_{Np,\bar{\chi}}(\delta(f_0)^\pm) = G(\bar{\chi}) (\delta({f}_0|\chi))^{\pm\chi(-1)}$ and  $tw_{N_{f|\chi}p,\chi}((\delta({f}_0|\chi))^\pm) = G(\chi) \delta({\tilde{f}}_0)^{\pm\chi(-1)}$ (after extending scalars to $\C$).  Thus we have $tw_{N_{f|\chi}p,\chi} \circ tw_{Np,\bar{\chi}} (\delta(f_0)^\pm) = G(\chi)G(\bar{\chi})\delta({\tilde{f}}_0)^{\pm}$. Note that for every prime $r$, we have
						\begin{align*}
							f_0|\iota_{r} = \begin{cases}
								f_0 - (f_0|T(r))|[r] + r^{k-1} \eta(r)f_0|[r^2] & \text{ if } r \nmid Np, \\
								f_0 - (f_0|T(r))|[r] & \text{ if } r \mid Np.
							\end{cases}
						\end{align*}
						For $p\nmid r$, let $[r]:\Gamma_{0}(Nr) \rightarrow \Gamma_{0}(N)$ be map given by $\gamma \mapsto \begin{psmallmatrix}
							r & 0 \\0 & 1
						\end{psmallmatrix} \gamma \begin{psmallmatrix}
							r & 0 \\0 & 1
						\end{psmallmatrix}^{-1} $ and $[r]^\ast: H^{1}(\Gamma_{0}(N), L(k-2,\eta;\mathcal{W})) \rightarrow  H^{1}(\Gamma_{0}(Nr), L(k-2,\eta;\mathcal{W}))$ be the corresponding map on cohomology. Since  $tw_{N_{f|\chi}p,\chi} \circ tw_{Np,\bar{\chi}}(\delta(f_0)^\pm) =  G(\chi)G(\bar{\chi}) \delta({\tilde{f}}_0)^{\pm}$ and $\{ \delta(f_0)^+/\Omega_{f_0}^+, \delta(f_0)^-/\Omega_{f_0}^-\}$ is a $\mathcal{W}$-basis of  $H^{1}_P(\Gamma_0(Np), L(k-2,\eta;\mathcal{W})) \cap H^{1}_P(\Gamma_0(Np), L(k-2,\eta;K))[\phi_{f_0}]$, we get $tw_{N_{f|\chi}p,\chi} \circ tw_{Np,\bar{\chi}} = G(\chi)G(\bar{\chi}) \prod_{r \mid m, r \mid Np} (1- T(r)\circ[r]^\ast) \prod_{r \mid m, r \nmid Np} (1- T(r)\circ[r]^\ast + r^{k-1} \eta(r) [r^2]^\ast )$.  In the case $k=2$, the maps $(1- T(r)\circ[r]^\ast)$ for $r\mid Np$ and $(1- T(r)\circ[r]^\ast + r^{k-1} \eta(r) [r^2]^\ast )$ for $r\nmid Np$ was shown to be isomorphism by Wiles (cf. \cite[\S 4.4]{DDT}).  The arguments of \cite[\S 4.4]{DDT} generalise for the case  $k\geq 3$ and we get, $tw_{N_{f|\chi}p,\chi} \circ tw_{Np,\bar{\chi}}$ is an isomorphism for $k\geq 3$ (cf. \cite[Proposition 1.4]{DFG}). Hence $tw_{N_{f|\chi}p,\chi} \circ tw_{Np,\bar{\chi}}$ is an isomorphism. Thus $\Omega_{\tilde{f}_0}^{+} \Omega_{\tilde{f}_0}^{-}$ is equal to $\Omega_{f_0}^{+} \Omega_{f_0}^{-} $ up to a $p$-adic unit.
					\end{proof}
					We note the following consequence which was obtained in the course of the proof of above two lemmas.
					\begin{corollary}\label{periods comparision}
						Let $f \in S_{k}(\Gamma_{0}(N),\eta)$ be a $p$-ordinary newform and $\tilde{f}_0 = f_0|\iota_{m}$. Assume $f$ satisfies \ref{p-dist} and \ref{irr-f}. Then 
							$  \Omega_{\tilde{f}_0}^{+} \Omega_{\tilde{f}_0}^{-} / \Omega_{f_0|\chi}^{+} \Omega_{f_0|\chi}^{-} $ and $\Omega_{f_0|\chi}^{+} \Omega_{f_0|\chi}^{-} / \Omega_{f_0}^{+} \Omega_{f_0}^{-}$
						are $p$-adic units. 
					\end{corollary}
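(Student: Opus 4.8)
The statement to be proved, Corollary~\ref{periods comparision}, is an immediate packaging of what was just established in Lemmas~\ref{periods comparision Eis} and~\ref{period comparision cusp}; indeed the two ratios in question were literally produced as intermediate quantities inside those proofs. So the plan is simply to read off the conclusion, being careful to invoke the right lemma for each ratio and to note the transitivity needed for the combined statement.

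\textbf{Proof plan.}\enspace First, for the ratio $\Omega_{\tilde f_0}^{+}\Omega_{\tilde f_0}^{-}/\Omega_{f_0|\chi}^{+}\Omega_{f_0|\chi}^{-}$, I would appeal directly to Lemma~\ref{periods comparision Eis}: there it is shown, via the twisting maps $tw_{N_{f|\chi}p,\chi}$ and $tw_{N_{\tilde f}p,\bar\chi}$ on parabolic cohomology together with the identity $G(\chi)G(\bar\chi)=\chi(-1)C_\chi$ being prime to $p$, that $\Omega_{\tilde f_0}^{+}\Omega_{\tilde f_0}^{-}$ agrees with $\Omega_{f_0|\chi}^{+}\Omega_{f_0|\chi}^{-}$ up to a $p$-adic unit, which is exactly the assertion that this ratio is a $p$-adic unit. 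Second, for the ratio $\Omega_{f_0|\chi}^{+}\Omega_{f_0|\chi}^{-}/\Omega_{f_0}^{+}\Omega_{f_0}^{-}$, I would appeal to the computation inside the proof of Lemma~\ref{period comparision cusp}, where the composite $tw_{N_{f|\chi}p,\chi}\circ tw_{Np,\bar\chi}$ is shown to be an isomorphism (using the level-lowering/twist isomorphisms of Wiles as in \cite[\S 4.4]{DDT}, extended to weight $k\geq 3$ following \cite[Proposition 1.4]{DFG}), which forces $\Omega_{\tilde f_0}^{+}\Omega_{\tilde f_0}^{-}$ to equal $\Omega_{f_0}^{+}\Omega_{f_0}^{-}$ up to a $p$-adic unit; combining this with the first part (or reading it off directly from the chain of isomorphisms) gives that $\Omega_{f_0|\chi}^{+}\Omega_{f_0|\chi}^{-}$ and $\Omega_{f_0}^{+}\Omega_{f_0}^{-}$ differ by a $p$-adic unit.

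\textbf{Assembling the corollary.}\enspace Since the set of $p$-adic units is a multiplicative group, once we know $\Omega_{\tilde f_0}^{+}\Omega_{\tilde f_0}^{-}$, $\Omega_{f_0|\chi}^{+}\Omega_{f_0|\chi}^{-}$ and $\Omega_{f_0}^{+}\Omega_{f_0}^{-}$ all agree pairwise up to $p$-adic units, both displayed ratios are $p$-adic units and their inverses and products are as well. The hypotheses \ref{irr-f} and \ref{p-dist} on $f$ are needed only to license the applications of Theorem~\ref{c(f) and petterson} (and hence of Lemmas~\ref{periods comparision Eis},~\ref{period comparision cusp}) inside the two proofs being quoted, so they are carried along verbatim.

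\textbf{Main obstacle.}\enspace There is essentially no new mathematical content here — the corollary is a bookkeeping statement — so the only thing to be careful about is \emph{which} normalization of periods each lemma uses and making sure the transitivity is applied consistently (all comparisons are of products $\Omega^{+}\Omega^{-}$, never of individual signs, since the twisting maps permute the $\pm$-eigenspaces according to $\chi(-1)$). The one point worth double-checking is that the generator $\delta(f_0|\chi)^{\pm}/\Omega_{f_0|\chi}^{\pm}$ of the relevant integral parabolic cohomology module used in Lemma~\ref{periods comparision Eis} is the \emph{same} generator appearing in Lemma~\ref{period comparision cusp}, so that the two ``up to a $p$-adic unit'' statements can be chained without introducing an uncontrolled factor; this is indeed the case since in both proofs $\Omega_{f_0|\chi}^{\pm}$ is fixed by Theorem~\ref{choice of period and petterson innerproduct} applied to the newform $f_0|\chi$.
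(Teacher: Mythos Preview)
Your proposal is correct and matches the paper's approach exactly: the paper simply states that the corollary ``was obtained in the course of the proof of above two lemmas,'' and you have correctly identified that the first ratio is extracted from the proof of Lemma~\ref{periods comparision Eis} while the second comes from the chain of twist isomorphisms in the proof of Lemma~\ref{period comparision cusp} (combined, if one likes, with the first via transitivity). Your care about the consistent choice of $\Omega_{f_0|\chi}^{\pm}$ via Theorem~\ref{choice of period and petterson innerproduct} is well placed and is indeed how the two proofs are linked.
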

					\subsection{Congruences of the \texorpdfstring{$p$}{}-adic \texorpdfstring{$L$}{}-functions}\label{sec: Congruences of p-adic L functions}
					Throughout  \S\ref{sec: Congruences of p-adic L functions}, we choose the periods $ \Omega_{\tilde{f}_0}^{\pm} $ as  in Theorem~\ref{choice of period and petterson innerproduct}  and  the measure $ \mu_{\tilde{f}_0}( \cdot) $  as defined in Theorem~\ref{period and integral measure}. In Theorem~\ref{analytic final}, we prove our main result on the congruences of the  $ p $-adic $ L $-functions. Using Lemma~\ref{twist and untwist Eis} and the $ p $-adic Weierstrass preparation theorem, we first show that the  $p$-adic Rankin-Selberg $L$-function $\mu_{p,f|\chi\times g|\bar{\chi}}(\cdot)$ is congruent to  the product of $p$-adic $L$-functions $\mu_{\tilde{f}_0|\xi_1}(\cdot) \mu_{\tilde{f}_0|\xi_2}(\cdot) $ modulo $\pi$ in Theorem~\ref{analytic final1}. Then using Lemma~\ref{congruence of measures},  we obtain the desired  congruence between  the $ p $-adic $ L $-functions.
					
					We first prove following lemma which compares the Gauss sums. 
					\begin{lemma}\label{comparision Gauss sum}
						Let $\varphi$ be a primitive Dirichlet character. Then we have $G(\varphi\chi)/G(\varphi) $ is a $p$-adic unit.
					\end{lemma}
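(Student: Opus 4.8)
The plan is to reduce the statement to the prime-to-$p$ part of the conductor by factoring Gauss sums over primes. First I would record the two classical facts to be used. (1) If $\psi_1,\psi_2$ are primitive of coprime conductors $f_1,f_2$, then $G(\psi_1\psi_2)=\psi_1(f_2)\psi_2(f_1)\,G(\psi_1)\,G(\psi_2)$. (2) For $\psi$ primitive, $G(\psi)G(\bar\psi)=\psi(-1)\,\mathrm{cond}(\psi)$, and $G(\psi),G(\bar\psi)$ are algebraic integers. From (2) and the hypothesis that $p$ is odd I get: if $p\nmid\mathrm{cond}(\psi)$, then $G(\psi)$ is a $p$-adic unit, since $G(\psi)$ is an algebraic integer whose product with the algebraic integer $G(\bar\psi)$ equals $\pm\,\mathrm{cond}(\psi)$, a $p$-adic unit, forcing $v_p(G(\psi))=0$. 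I will also use that the value of a Dirichlet character at any integer coprime to its conductor is a root of unity, hence a $p$-adic unit.

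Next I would decompose $\varphi$. Write $\varphi=\prod_{r}\varphi_r$, the product over the primes $r\mid\mathrm{cond}(\varphi)$, where $\varphi_r$ is the primitive character of $r$-power conductor at $r$. Iterating fact (1) over these primes yields $G(\varphi)=u_\varphi\prod_{r}G(\varphi_r)$, where $u_\varphi$ is a product of values of the $\varphi_r$'s at integers coprime to their conductors, hence a $p$-adic unit. By the paragraph above, $G(\varphi_r)$ is a $p$-adic unit for every $r\neq p$. Therefore $G(\varphi)$ differs from $G(\varphi_p)$ — read as $1$ if $p\nmid\mathrm{cond}(\varphi)$ — by a $p$-adic unit.

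Then I would run the same computation for $\varphi\chi$. Since $\mathrm{cond}(\chi)=m^n$ with $p\nmid m$, the character $\chi$ has trivial $p$-component, so the $p$-component of $\varphi\chi$ is again exactly $\varphi_p$, while every other component $(\varphi\chi)_r$ with $r\neq p$ is a primitive character of $r$-power conductor. Hence, exactly as before, $G(\varphi\chi)=u_{\varphi\chi}\prod_{r}G\big((\varphi\chi)_r\big)$ with $u_{\varphi\chi}$ a $p$-adic unit, with $G\big((\varphi\chi)_r\big)$ a $p$-adic unit for each $r\neq p$, and with $G\big((\varphi\chi)_p\big)=G(\varphi_p)$. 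Dividing, the (possibly non-unit) factors $G(\varphi_p)$ in numerator and denominator cancel identically, and what remains is a product of $p$-adic units; this gives that $G(\varphi\chi)/G(\varphi)$ is a $p$-adic unit.

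I do not expect a genuine obstacle here; the one point requiring care is that $G(\varphi\chi)/G(\varphi)$ need not be an algebraic integer, so a naive argument via the norm identity $\big(G(\varphi\chi)/G(\varphi)\big)\big(G(\overline{\varphi\chi})/G(\bar\varphi)\big)=\chi(-1)\,\mathrm{cond}(\varphi\chi)/\mathrm{cond}(\varphi)$ only shows a product is a $p$-adic unit. The factorization into $p$-components is precisely what makes the exact cancellation of the troublesome factor $G(\varphi_p)$ transparent, so that one concludes directly.
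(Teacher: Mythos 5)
Your proof is correct and follows essentially the same route as the paper: split off the $p$-part of the character, use the multiplicativity of Gauss sums over coprime conductors so that the factor $G(\varphi_p)$ cancels in the ratio, and observe that Gauss sums of primitive characters of conductor prime to $p$ are $p$-adic units. The only cosmetic difference is that you factor over all primes while the paper factors only into $\varphi_p\varphi'$ with $\varphi'$ of prime-to-$p$ conductor, citing Miyake for both multiplicativity and the unit statement.
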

					\begin{proof}
						Let $\varphi = \varphi_p \varphi'$, where $\varphi_p$ is a Dirichlet character of conductor of  $p$-power and  $\varphi'$ is a Dirichlet character of conductor prime to $p$. By \cite[Lemma 3.1.2]{Miyake}, we get $G(\varphi) = \varphi_p(\mathrm{cond}(\varphi')) \varphi'(\mathrm{cond}(\varphi_p)) G(\varphi_p) G(\varphi')$. Since $\chi$ has conductor co-prime to $p$, by a similar argument as above using \cite[Lemma 3.1.2]{Miyake} gives $G(\varphi\chi) = \varphi_p(\mathrm{cond}(\varphi'\chi)) \varphi'\chi(\mathrm{cond}(\varphi_p)) G(\varphi_p) G(\varphi'\chi)$. Combining the above two expressions, we get  $G(\varphi\chi)/G(\varphi)= \varphi_p(\mathrm{cond}(\varphi'\chi)) \bar{\varphi}_p(\mathrm{cond}(\varphi')) \chi(\mathrm{cond}(\varphi_p)) G(\varphi'\chi)/G(\varphi')$. As $\varphi'$ and $\varphi'\chi$ has conductor prime to $p$, it follows from \cite[Lemma 3.1.1]{Miyake} that $G(\varphi'\chi),G(\varphi')$ are $p$-adic units.	   		    
					\end{proof}
					As the root number of Eisenstein series involves various Gauss sums, the above lemma will help us to make comparison of $W(g^\rho|\bar{\phi})$ and $W(g^\rho|\chi\bar{\phi})$ for every finite order character $\phi$ of $\mathbb{Z}_p^\times$.

					\begin{theorem}\label{analytic final1}
						Let $ f \in S_{k}(\Gamma_{0}(N), \eta)$ be a $ p $-ordinary and newform with $ p \nmid N $. 
						Let  $ g = E_{l}(\xi_{2} \omega_p^{1-l}, \xi_{1}) $ and $t(f|\chi, g|\bar{\chi})$ be as in Lemma~\ref{twist and untwist Eis}. Set $ \tilde{f}_0 = f_0 \vert \iota_{m}$. 
						Suppose that $f$ satisfies the assumptions \ref{irr-f} and \ref{p-dist}. 
						Then for every finite order character $ \phi$ of $ \mathbb{Z}_{p}^{\times} $ and  $ 0 \leq j \leq k-l-1 $, we have 
						\begin{align*}
							\mu_{f|\chi \times g|\bar{\chi}}( x_p^{j} \phi)  
							\equiv  (\ast)  \mu_{\tilde{f}_0 }(  x_p^{j} \xi_1 \phi) \mu_{\tilde{f}_0}(x_p^{j}  \xi_2 \phi)  \mod \pi.
						\end{align*}
						Here $ (\ast) $ is the $p$-adic unit given by 
						$
						\frac{t(f|\chi, g|\bar{\chi}) u_{f} W(g^\rho|\chi\phi)}{W(g^\rho|\phi)\chi(p)^{\beta-1} \Gamma(l+j)\Gamma(j)} \frac{p^{(2-k)/2}c(f_0|\chi) \Omega_{\tilde{f}_{0}}^{+} \Omega_{\tilde{f}_{0}}^{-} }{2^{k-1}i^{k-1}\langle (f_0|\chi)^\rho|\tau_{N_{f|\chi}p}, f_0|\chi\rangle_{N_{f|\chi}p} }
						$ with $\beta = v_p(\mathrm{cond}(\phi))+ v_p(\mathrm{cond}(\xi_1 \omega_p^{1-l}\phi))$.
					\end{theorem}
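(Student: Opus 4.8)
The plan is to evaluate both sides of the asserted congruence by the interpolation formulas already at our disposal and to identify the quotient with $(\ast)$. On the left I would invoke Lemma~\ref{twist and untwist Eis}, applied to $g=E_l(\xi_2\omega_p^{1-l},\xi_1)$ and $g'=E_l(\xi_1,\xi_2\omega_p^{1-l})$ (i.e.\ with the roles of $\xi_1$ and $\xi_2$ interchanged relative to the statement of that lemma): for $\phi=\iota_p$ this expresses $\mu_{f|\chi\times g|\bar\chi}(x_p^j\phi)$ as the explicit constant $c(f_0|\chi)\,t(f|\chi,g|\bar\chi)\,p^{s(l+2j)/2}(\chi(p)u_f)^{1-s}\,p^{(2-k)/2}\,W(g^\rho|\chi)\,P_p(g,p^ju_f^{-1})$ times $L(j+1,\tilde f_0,\xi_1)\,L(l+j,\tilde f_0,\xi_2\omega_p^{1-l})$ divided by $(2i)^{k+l+2j}\pi^{l+2j+1}\langle(f_0|\chi)^\rho|_k\tau_{N_{f|\chi}p},f_0|\chi\rangle_{N_{f|\chi}p}$, while for $\phi\neq\iota_p,(\xi_1\omega_p^{1-l})_p$ it gives the same expression with the Euler factor $P_p(g,p^ju_f^{-1})$ deleted and $W(g^\rho|\chi)$, $L(\cdots)$ replaced by their $\phi$-twists. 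On the right I would use Theorem~\ref{period and integral measure}, with the canonical periods $\Omega_{\tilde f_0}^{\pm}$ of Theorem~\ref{choice of period and petterson innerproduct}, to write $\mu_{\tilde f_0}(x_p^j\xi_1\phi)$ and $\mu_{\tilde f_0}(x_p^{l+j-1}\xi_2\omega_p^{1-l}\phi)$ — the latter being the shift of $\mu_{\tilde f_0}$ whose interpolation produces the critical value $L(l+j,\tilde f_0,\xi_2\omega_p^{1-l}\phi)$ — each as a product of a $p$-adic multiplier $e_p(u_f,\cdot)/u_f^{v}$, a power of $-2\pi i$, a factorial, a Gauss sum, and $L(\cdots)/\Omega_{\tilde f_0}^{\pm}$. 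Because $\mu_{\tilde f_0}$ is $\mathcal{O}$-valued and the function $x_p^{l-1}\omega_p^{1-l}$ on $\mathbb{Z}_p^\times$ takes values in $1+p\Z_p$, one has $\mu_{\tilde f_0}(x_p^{l+j-1}\xi_2\omega_p^{1-l}\phi)\equiv\mu_{\tilde f_0}(x_p^j\xi_2\phi)\pmod\pi$; hence it suffices to prove the identity with this twisted argument on the right, which will in fact be an \emph{equality} in $\bar{\mathbb{Q}}$.

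Taking the quotient of the left-hand evaluation by the right-hand product, the critical $L$-values $L(j+1,\tilde f_0,\xi_1\phi)$ and $L(l+j,\tilde f_0,\xi_2\omega_p^{1-l}\phi)$ cancel verbatim. The two period factors $\Omega_{\tilde f_0}^{\pm}$ combine into the product $\Omega_{\tilde f_0}^{+}\Omega_{\tilde f_0}^{-}$, using the sign computation recorded just before the statement: the signs $\mathrm{sgn}((-1)^j\xi_1\phi(-1))$ and $\mathrm{sgn}((-1)^{l+j-1}\xi_2\omega_p^{1-l}\phi(-1))$ are opposite because $\xi_1\xi_2\omega_p^{1-l}(-1)=(-1)^l$. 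The powers of $2\pi i$ (together with the real $\pi$'s and the powers of $i$), the $\Gamma$-factor $\Gamma(l+j)\Gamma(j+1)$ sitting inside $t(f|\chi,g|\bar\chi)$ against the two factorials coming from the measure evaluations, and the conductors of $\xi_1\phi$ and $\xi_2\omega_p^{1-l}\phi$ then collect into the stated rational/archimedean constant. The one genuinely combinatorial identity to check is that $P_p(g,p^ju_f^{-1})$ on the left, together with the attendant powers of $p$, $u_f$ and $\chi(p)$, matches the product of the two $p$-adic multipliers $e_p(u_f,\cdot)/u_f^{v}$ on the right: here one uses the factorization $P_p(g,T)=\bigl(1-\xi_1(p)T\bigr)\bigl(1-\xi_2\omega_p^{1-l}(p)p^{l-1}T\bigr)$, read off from \eqref{Euler factor at p} and $g=E_l(\xi_2\omega_p^{1-l},\xi_1)$, each factor being the corresponding Euler factor of $\tilde f_0$ twisted by $\xi_1\phi$ resp.\ $\xi_2\omega_p^{1-l}\phi$ — recalling that $\tilde f_0=f_0|\iota_m$ is built from the $p$-stabilization of $f$, so that $a(p,\tilde f_0)=u_f$. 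In the ramified case $\phi\neq\iota_p$ both the Euler factor and both multipliers are trivial, so the identity is automatic there.

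It remains to see that the resulting constant $(\ast)$ is a $p$-adic unit, which is where \ref{irr-f} and \ref{p-dist} enter. Now $u_f$ is a unit by $p$-ordinarity; $\chi(p)$ is a root of unity since $p\nmid\mathrm{cond}(\chi)$; the levels $N_{f|\chi}$ and $(N_{g|\bar\chi})_0$ are prime to $p$ by construction; and the remaining powers of $p$ coming from $[N_{f|\chi},N_{g|\bar\chi}]$, $p^{(2-k)/2}$, $p^{s(l+2j)/2}$ and $(\chi(p)u_f)^{1-s}$ cancel against the $p$-power parts of the conductors $\mathrm{cond}(\xi_1\phi)$, $\mathrm{cond}(\xi_2\omega_p^{1-l}\phi)$ and of the $p$-adic multiplier normalizations. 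The Gauss-sum ratio $W(g^\rho|\chi\phi)/W(g^\rho|\phi)$ is a $p$-adic unit: by the formula $W(E_l(\theta,\varphi))=(\mathrm{cond}(\theta)/\mathrm{cond}(\varphi))^{l/2}\varphi(-1)G(\varphi)/G(\bar\theta)$ it is a product of a ratio of conductors prime to $p$ (as $\chi$ is prime to $p$) and of ratios of Gauss sums of the shape $G(\varphi\chi)/G(\varphi)$, which are $p$-adic units by Lemma~\ref{comparision Gauss sum}. Finally the crucial factor $p^{(2-k)/2}\,c(f_0|\chi)\,\Omega_{\tilde f_0}^{+}\Omega_{\tilde f_0}^{-}/\langle(f_0|\chi)^\rho|\tau_{N_{f|\chi}p},f_0|\chi\rangle_{N_{f|\chi}p}$ is a $p$-adic unit by Lemma~\ref{periods comparision Eis}, which rests on Theorem~\ref{c(f) and petterson} and hence on \ref{irr-f} and \ref{p-dist}. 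The finitely many exceptional characters (essentially only $\phi=(\xi_1\omega_p^{1-l})_p$) are then dealt with by the density and continuity argument used in the proof of Lemma~\ref{twist and untwist cusp}.

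I expect the main obstacle to be the second step: organizing the archimedean, factorial and Gauss-sum bookkeeping so that everything outside the period factor collapses exactly to the prescribed constant, and — more substantively — verifying the Euler-factor matching $P_p(g,p^ju_f^{-1})\leftrightarrow\prod_{i=1,2}e_p(u_f,\cdot)$ in the case $\phi=\iota_p$. This is the only place where the degree-two $p$-Euler factor of the Eisenstein Rankin--Selberg $L$-function has to be split and recognized as the product of the two $p$-adic multipliers attached to $\tilde f_0\otimes\xi_1$ and $\tilde f_0\otimes\xi_2$; once this identity is in hand, the asserted congruence is immediate from the exact identity over $\bar{\mathbb{Q}}$ just described, the $\mathcal{O}$-integrality of all the measures involved, and the fact that $(\ast)$ is a $p$-adic unit.
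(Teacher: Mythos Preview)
Your proposal is correct and follows essentially the same route as the paper: invoke Lemma~\ref{twist and untwist Eis} on the left, Theorem~\ref{period and integral measure} on the right, factor $P_p(g^\rho,p^ju_f^{-1})$ as the product of the two $p$-adic multipliers $e_p(u_{\tilde f_0},\cdot)$, use the congruence $x_p^{l-1}\omega_p^{1-l}\equiv 1\pmod p$ on $\Z_p^\times$ together with the $\mathcal{O}$-integrality of $\mu_{\tilde f_0}$ to pass from $x_p^{l+j-1}$ to $x_p^{j}$, and conclude that $(\ast)$ is a unit via Lemma~\ref{periods comparision Eis} and Lemma~\ref{comparision Gauss sum}. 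The only cosmetic differences are that the paper phrases the reduction to generic $\phi$ via the $p$-adic Weierstrass preparation theorem rather than density, and that the theorem statement contains a typo (it should read $g=E_l(\xi_1\omega_p^{1-l},\xi_2)$, matching \eqref{definition of Eis g} and the proof), which you correctly compensated for by swapping $\xi_1,\xi_2$ when invoking Lemma~\ref{twist and untwist Eis}.
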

					\begin{proof}
						By the $p$-adic Weierstrass preparation theorem, it is sufficient to show that the congruence holds for  all finite order characters of $\mathbb{Z}_p^\times$ with $\bar{\phi} \neq (\xi_{1}\omega_p^{1-l})_p$.  
						Note that $P_p(g^{\rho},p^j u_f^{-1}) =
						(1 - \bar{\xi}_{2}(p) p^j u_f^{-1}) (1-\bar{\xi}_{1}\bar{\omega}_p^{1-l}(p)p^{l+j-1}u_f^{-1})
						= e_{p}(u_{\tilde{f}_0}, x_p^j \xi_{2}) e_{p}(u_{\tilde{f}_0}, x_p^{l+j-1} \xi_{1}\omega_{p}^{1-l})$, where $ e_{p}(u_{\tilde{f}_0}, \cdot) $ is as defined in Theorem~\ref{p-adic l-function of modular form} and $ u_f = a(p,f_0) = a(p,\tilde{f}_0) $. Thus by  Lemma~\ref{twist and untwist Eis}(i),  we get
						\begin{small}
							\begin{align}\label{eq: rankin p-adic as product  trivial char}
								\begin{split}	
									\mu_{f|\chi \times g|\bar{\chi}} (x_p^{j}\iota_{p}) ~ & {=\joinrel=} ~ c(f_{0}|\chi)  t(f|\chi,g|\bar{\chi})  p^{s l/2} p^{sj} p^{(2-k)/2}  W(g^\rho|\chi) u_{f}^{1-s}  \chi(p)^{1-\beta}
									e_{p}(u_{\tilde{f}_0}, x_p^j \xi_{2}) e_{p}(u_{\tilde{f}_0}, x_p^{l+j-1} \xi_{1}\omega_{p}^{1-l}) \\ 
									& \qquad  \qquad \qquad   \times
									\frac{ L(j+1,\tilde{f}_0,\xi_{2}  \phi) L(l+j, \tilde{f}_0, \xi_{1}\omega_p^{1-l} \phi)} {(2i)^{k+l+2j} \pi^{l+2j+1} \langle (f_0|\chi)^{\rho}\vert\tau_{N_{f|\chi}p}, f_0|\chi \rangle_{N_{f|\chi}p}} \\
									&\stackrel{\eqref{root number of g definition}}{=\joinrel=} ~ \frac{t(f|\chi,g|\bar{\chi}) W(g^\rho|\chi)\xi_{2}(-1)}{\chi(p)^{\beta-1}W(g^\rho)M_0^{l/2+j} }  \times u_{\tilde{f}_0}^{1-s}  \frac{\mathrm{cond}(\xi_{1}\omega_{p}^{1-l})^{l+j} e_{p}(u_{\tilde{f}_0},x_p^{l+j-1} \xi_{1}\omega_{p}^{1-l})	L(l+j,\tilde{f}_0,\xi_{1}\omega_{p}^{1-l}\phi)}{(2 \pi i)^{l+j}G(\xi_{1}\omega_{p}^{1-l})} \\ 
									& \qquad   \times  \frac{p^{(2-k)/2} c(f_{0}|\chi) }{2^{k-1}i^{k-1} \langle (f_{0}|\chi)^\rho\vert\tau_{N_{f|\chi}p}, f_{0}|\chi \rangle_{N_{f|\chi}p}}\times  \frac{\mathrm{cond(\xi_2)}^{j+1} e_{p}(u_{\tilde{f}_0},x_p^j \xi_{2})	L(j+1,\tilde{f}_0,\xi_{2}\phi )}{(2 \pi i)^{j+1}G(\xi_{2})}.
								\end{split}	     
							\end{align}
						\end{small}%
						If $ \phi \neq  \iota_p$ and  $(\phi\xi_1\omega_p^{1-l})_0(p)=0$, then $  \xi_1 \omega_p^{1-l}\phi(p)= 0=  \phi\xi_{2}(p) $. Thus  $ e_{p}(u_{\tilde{f}_0},x_p^{l+j-1} \xi_{1}\omega_{p}^{1-l}\phi) =1 = e_{p}(u_{\tilde{f}_0},x_p^j \xi_{2}\phi) $. Let $ \beta =v_p( \mathrm{cond}(\xi_1\omega_p^{1-l}\phi))+ v_p(\mathrm{cond}(\xi_2\phi)) $. Then it follows from Lemma~\ref{twist and untwist Eis}(ii)   that 
						\begin{small}
							\begin{align}\label{eq: rankin p-adic as product non trivial char}
								\begin{split}	
									\mu_{f|\chi \times g|\bar{\chi}}(x_p^j \phi) & = c(f_{0}|\chi) t(f|\chi, g|\bar{\chi}) p^{\beta l/2} p^{\beta j}  p^{(2-k)/2} W(g^{\rho}|\chi\phi) u_{f}^{1-\beta}\chi(p)^{1-\beta} \frac{L(l+j, \tilde{f}_0, \xi_{1} \omega^{1-l} \phi) L(j+1, \tilde{f}_0, \xi_{2}\phi)}{(2i)^{k+l}\pi^{l+2j+1} \langle (f_{0}|\chi)^{\rho}\vert\tau_{N_{f|\chi}p}, f_0|\chi \rangle_{N_{f|\chi}p}} \\
									&=    \frac{t(f|\chi,g|\bar{\chi}) W(g^\rho|\chi\phi)\xi_{2}(-1)}{W(g^\rho|\chi)M_0^{l/2+j} \chi(p)^{1-\beta}}  \times u_{\tilde{f}_0}^{1-\beta} \frac{\mathrm{cond(\xi_{1}\omega_{p}^{1-l}\phi)}^{l+j} e_{p}(u_{\tilde{f}_0},x_p^{l+j-1} \xi_{1}\omega_{p}^{1-l}\phi)	L(l+j,\tilde{f}_0,\xi_{1}\omega_{p}^{1-l}\phi)}{(2 \pi i)^{l+j}G(\xi_{1}\omega_{p}^{1-l}\phi)} \\ 
									& \qquad   \times \frac{p^{(2-k)/2} c(f_{0}|\chi) }{2^{k-1}i^{k-1} \langle (f_{0}|\chi)^{\rho}\vert\tau_{N_{f|\chi}p}, f_{0}|\chi \rangle_{N_{f|\chi}p}} \times \frac{\mathrm{cond(\xi_2\phi)}^{j+1} e_{p}(u_{\tilde{f}_0},x_p^j \xi_{2}\phi)	L(j+1,\tilde{f}_0,\xi_{2}\phi )}{(2 \pi i)^{j+1}G(\xi_{2}\phi)} 
								\end{split}		
							\end{align} 
						\end{small}%
						Applying Theorem~\ref{period and integral measure} and choosing $ L = M_0 =$ the prime to $p$-part of $\mathrm{cond}(\xi_1\omega^{1-l})\mathrm{cond}(\xi_2)$, we obtain
						\begin{small}
							\begin{align}\label{eq: product of p-adic L functions of f}
								\begin{split}
									\mu_{\tilde{f}_0}(x_p^{l+j-1}\xi_{1}\omega_p^{1-l}\phi) 
									\mu_{\tilde{f}_0}(\tilde{f},x_p^{j} \xi_{2} \phi)
									&=  \frac{u_{f}^{-(\star)}}{\Omega_{\tilde{f}_0 }^{+} \Omega_{\tilde{f}_0}^{-}}  
									(l+j-1)!j!
									\mathrm{cond}(\xi_{2}  \phi)^{j+1}  e_{p}(u_{\tilde{f}_0},x_p^{j} \xi_{2}\phi) \frac{L(j+1, \tilde{f}_0, \xi_{2} \phi)}{G({\xi}_{2}{\phi})(-2 \pi i)^{j+1}} \\
									& 	 
									e_{p}(u_{\tilde{f}_0},x_p^{l+j-1} \xi_{1}\omega_{p}^{1-l}\phi) \mathrm{cond}(\xi_{1} \omega_p^{1-l} \phi)^{l+j}  \frac{L(l+j-1,\tilde{f}_0,\xi_{1} \omega_p^{1-l} \phi)} {G(\xi_{1}\omega_p^{1-l}\phi)(-2 \pi i)^{l+j}} ,
								\end{split}	
							\end{align}
						\end{small}%
						where $ (\star) $ is the power of $ p $ dividing and $ \mathrm{cond}(\xi_{1} \omega_p^{1-l} \phi)  \mathrm{cond}(\xi_{2} \phi) $. Note that $ (\star) =s $ if $ \phi = \iota_p $ and $ (\star) = \beta  $ if $ \phi \neq (\xi_{1} \omega^{1-l})_p^{-1} $.
						Substituting \eqref{eq: product of p-adic L functions of f} in \eqref{eq: rankin p-adic as product trivial char} and \eqref{eq: rankin p-adic as product non trivial char}, we get 
						\begin{small}
							\begin{align}\label{eq: last congruence}
								\mu_{f|\chi \times g|\bar{\chi}}(x_p^j \phi)  =
								\frac{t(f|\chi, g|\bar{\chi}) u_{f} W(g^\rho|\chi\phi)\chi(p)^{\beta-1}}{M_0^{l/2+j}W(g^\rho|\phi) \Gamma(l+j)\Gamma(j)}  \frac{p^{(2-k)/2} c(f_{0}|\chi) }{(2i)^{k-1} \langle (f_{0}|\chi)^\rho\vert\tau_{N_{f|\chi}p}, f_{0}|\chi \rangle_{N_{f|\chi}p}}   \mu_{\tilde{f}_0}( x_p^{l+j-1} \xi_{1}  \omega_p^{1-l} \phi) \mu_{\tilde{f}_0}( x_p^j \xi_{2} \phi).
							\end{align}
						\end{small}%
						Since $ x_p^{l-1} \omega_p^{1-l} \lvert_{\mathbb{Z}_p^{\times}} \equiv  1 \mod p $ and $ \mu_{\tilde{f}_0}(\cdot) $ is $ \mathcal{O} $-valued, we obtain $\mu_{\tilde{f}_0}(x_p^{l+j-1} \xi_{1}  \omega_p^{1-l} \phi) \equiv  \mu_{\tilde{f}_0}(x_p^{j} \xi_{1}  \phi) \mod \pi$. It remains to show 
						\begin{small}
							\begin{align*}
								\frac{t(f|\chi, g|\bar{\chi}) u_{f} W(g^\rho|\chi\phi)}{M_0^{l/2+j}W(g^\rho|\phi)\chi(p)^{1-\beta} \Gamma(l+j)\Gamma(j)}  \frac{p^{(2-k)/2} c(f_{0}|\chi) }{2^{k-1}i^{k-1} \langle (f_{0}|\chi)^\rho \vert\tau_{N_{f|\chi}p}, f_{0}|\chi \rangle_{N_{f|\chi}p}}
							\end{align*}
						\end{small}
						is a $p$-adic unit. The second term is a $p$-adic unit by Lemma~\ref{periods comparision Eis}. For the  first quantity, using the formula \eqref{root number of g definition}, we get 
						\begin{align*}
							\frac{W(g^\rho|\bar{\phi}\chi)}{W(g^\rho|\bar{\phi})} = \chi(-1) \frac{\mathrm{cond}(\xi_{1}\omega_p^{1-l}\bar{\chi} \phi)^{l/2}}{ \mathrm{cond}(\xi_{1}\omega_p^{1-l}\phi)^{-l/2}}   \frac{\mathrm{cond}(\xi_{2}\bar{\chi} \phi)^{-l/2}}{\mathrm{cond}(\xi_{2}\phi)^{l/2}}  \frac{G(\bar{\xi}_{1}\bar{\omega}_p^{1-l}\chi\bar{\phi})}{G(\bar{\xi}_{1}\bar{\omega}_p^{1-l}\bar{\phi})} \frac{G(\xi_{2}\phi\bar{\chi})}{ G(\xi_{2}\phi)}.
						\end{align*} 
						Since $\chi$ has conductor prime to $p$, we have $\mathrm{cond}(\xi_{1}\omega_p^{1-l}\bar{\chi}\phi)/\mathrm{cond}(\xi_{1}\omega_p^{1-l}\phi)$ and $\mathrm{cond}(\xi_{2}\bar{\chi}\phi)/\mathrm{cond}(\xi_{2}\phi)$ are co-prime to $p$ for all $\phi$. By Lemma~\ref{comparision Gauss sum}, it follows $G(\xi_{2}\bar{\chi}\phi)/G(\xi_{2}\phi)$ and $G(\bar{\xi}_{1}\bar{\omega}_p^{1-l}\chi\bar{\phi})/G(\bar{\xi}_{1}\bar{\omega}_p^{1-l}\bar{\phi})$ are $p$-adic units.  Observe that  $t(f|\chi, g|\bar{\chi})/\Gamma(l+j)\Gamma(j) = [N_{f|\chi},N_{g|\bar{\chi}}] N_{f|\chi}^{k/2} (N_{g|\bar{\chi}})_0^{(l+2j)/2} $ is a $p$-adic unit. Now the theorem follows.
					\end{proof} 
					
					\begin{remark}
						Note that for every Dirichlet character $ \varphi $, we have  $L(j,\tilde{f_{0}},\varphi) =  \left( 1 - \frac{\eta(p) \varphi(p) p^{k-1-j}}{u_f} \right) L(j,\tilde{f},\varphi)$. Taking $ \Omega_{\tilde{f}}^{\pm} :=  \Omega_{\tilde{f}_0}^{\pm}$ in Theorem~\ref{p-adic l-function of modular form}, we get a bounded measure $ \mu_{\tilde{f}} $. It follows that  $ \mu_{\tilde{f}}  = \mu_{\tilde{f}_0} $, with $ \mu_{\tilde{f}_0}$ as defined in Theorem~\ref{period and integral measure}. So we may replace the $\mathcal{O}$-valued measure $ \mu_{\tilde{f}_0} $ in Theorem~\ref{analytic final1} by  $\mathcal{O}$-valued measure $\mu_{\tilde{f}} $.  
					\end{remark}
					
					We next show that the root number of a primitive $p$-ordinary form $h$ twisted by $\chi$ is equal to the  root number of $h$ up to a $p$-adic unit.  
					\begin{lemma}\label{comparision root number cusp}
						Let $h \in S_{l}(I, \psi)$ be  a $p$-ordinary newform. Then  for every finite order character $\phi$ of $\mathbb{Z}_p^\times$ we have, $W(h^\rho|\phi\chi)/W(h^\rho|\phi)$ is a $p$-adic unit. 	   		 
					\end{lemma}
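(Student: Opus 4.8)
The plan is to reduce the claim to the analogous statement for local root numbers, and then to recognise the relevant local factors as products of Gauss sums of prime-to-$p$ conductor and of values of Dirichlet characters. Recall that for a newform $F$ of level $N_F$ the pseudo-eigenvalue $W(F)$, defined by $F|\tau_{N_F}=W(F)F^{\rho}$, factors through the local components of $F$: up to the archimedean factor it is a product over the primes $q\mid N_F$ of the local Atkin--Lehner pseudo-eigenvalues (equivalently, the local $\varepsilon$-factors) of $F$ at $q$. The key observation is that $\chi$ has conductor $m^{n}$ with $p\nmid m$ (see \eqref{definition m}) while $\phi$ has $p$-power conductor, so the local components of $h^{\rho}|\phi\chi$ and of $h^{\rho}|\phi$ agree at every prime not dividing $m$ — in particular at $p$ — and differ only at the primes $r\mid m$. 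Thus
\[
\frac{W(h^{\rho}|\phi\chi)}{W(h^{\rho}|\phi)}=\prod_{r\mid m}\frac{\lambda_{r}(h^{\rho}|\phi\chi)}{\lambda_{r}(h^{\rho}|\phi)},
\]
a finite product over primes $r\neq p$, where $\lambda_{r}(\cdot)$ denotes the local pseudo-eigenvalue at $r$; so it suffices to show each factor is a $p$-adic unit.

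First I would fix $r\mid m$ and use that, by construction, $\mathrm{cond}_{r}(\chi)=r^{n_{r}}$ with $n_{r}>\max\{v_{r}(N),v_{r}(I_{0}),v_{r}(M_{0})\}$, so $\chi$ is more deeply ramified at $r$ than $h$ is, while $\phi$ is unramified at $r$. Then the explicit formulas for the pseudo-eigenvalues of twisted newforms (\cite[Theorem 4.1]{Atkin}) express $\lambda_{r}(h^{\rho}|\phi\chi)/\lambda_{r}(h^{\rho}|\phi)$ as a monomial in (a) values at elements of $\mathbb{Q}^{\times}$ of the Dirichlet characters $\psi$, $\phi$, $\chi$, all of which are roots of unity; (b) a power of $r$; and (c) Gauss sums $G(\vartheta)$ of Dirichlet characters $\vartheta$ of conductor a power of $r$. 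Concretely, when the local component of $h$ at $r$ is a principal series or special representation a single Gauss sum of $\chi$-type appears, and when it is supercuspidal the deeply ramified twisting formula produces $G(\chi\phi\,\cdot)^{2}$ divided by a power of $r$; in every case a monomial of the above shape.

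To conclude I would observe that quantities of type (a) are roots of unity, hence $p$-adic units; a power of $r$ is a $p$-adic unit since $p\nmid m$; and a Gauss sum $G(\vartheta)$ with $\mathrm{cond}(\vartheta)$ a power of $r$, hence prime to $p$, satisfies $|G(\vartheta)|_{p}=1$ by \cite[Lemma 3.1.1]{Miyake} — exactly the computation already used for Lemma~\ref{comparision Gauss sum}. A product of finitely many $p$-adic units being a $p$-adic unit, the claim follows. The main obstacle I anticipate is item (c) at a prime $r$ where the local component of $h$ is supercuspidal, where one must control a local $\varepsilon$-factor of a twist; this is precisely why $\chi$ was chosen with conductor larger than the level of $h$ at each prime of $m$, which forces the local twisting formula into its stable range where it becomes an explicit product of Gauss sums.
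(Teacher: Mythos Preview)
Your argument is in the right spirit but has an imprecision, and the paper takes a different (shorter) route.

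The imprecision: you assert that the local components of $h^\rho|\phi\chi$ and $h^\rho|\phi$ \emph{agree} at every prime $q\nmid m$, so that the ratio of root numbers reduces to a product over $r\mid m$. This is not literally true. At any such $q$ --- including $q=p$ and any $q\mid I_0$ with $q\nmid m$ --- the character $\chi$ is unramified but need not be trivial, so the two local representations differ by the unramified twist by $\chi|_{G_{\Q_q}}$ and the local $\varepsilon$-factors differ by a power of $\chi(q)$. This is harmless for the conclusion (a root of unity is a $p$-adic unit), but the product over $r\mid m$ alone is not the full ratio and you should say so.

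The paper's route is complementary. It writes $W(h^\rho|\phi)=W_p(h^\rho|\phi)\cdot W'(h^\rho|\phi)$, the local factor at $p$ times the rest, and disposes of $W'$ for both forms in a single citation to Hida's explicit formulas \cite[(5.4a)--(5.4c)]{Hidarankin2}, which already show $W'$ is a $p$-adic unit. Only $W_p(h^\rho|\chi\phi)/W_p(h^\rho|\phi)$ remains; since $h$ is $p$-ordinary, $\pi_p(h)$ is principal series or special (never supercuspidal), and \cite[(5.5b),(5.5c)]{Hidarankin2} compute this ratio as an explicit power of $\chi(p)$. Your approach instead analyses the primes $r\mid m$ directly, where supercuspidals can occur and you must invoke the stable-range Atkin--Li formulas; the paper sidesteps that case analysis entirely by absorbing the prime-to-$p$ part into the Hida reference. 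Both lead to the conclusion, but the paper's argument is cleaner in this context.
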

					\begin{proof}
						As in \cite[\S 5]{Hidarankin2} we write $W(h^\rho|\phi) = W_p(h^\rho|\phi) W'(h^\rho|\phi)$, where $W_p(h^\rho|\phi)$ is the local $\varepsilon$-factor of $h^\rho|\phi$ at $p$. Similarly, let $W(h^\rho|\chi\phi) = W_p(h^\rho|\chi\phi) W'(h^\rho|\chi\phi)$. From \cite[(5.4 a)-(5.4.c)]{Hidarankin2}, it follows that $W'(h^\rho|\phi),W'(h^\rho|\chi\phi)$ are $p$-adic units. Hence it is enough to show that $W_p(h^\rho|\chi\phi)/W_p(h^\rho|\phi)$ is a $p$-adic unit.
						
						Let $I = I_0 p^\alpha$ with $p\nmid I_0$. Note that $h$ is $p$-minimal and  the local automorphic representation of $h$ at $p$, say $\pi_p(h)$, is not supercuspidal. Also we have $\pi_p(h)$ 
						is principal series (respectively special) implies that  $\pi_p(h|\chi)$ is principal series (respectively special).  Thus by \cite[(5.5 b)]{Hidarankin2}, we have 
						\begin{align*}
							W_{p}(h|\chi) = \begin{cases}
								\chi(p)^\alpha W_p(h) & \mathrm{if} ~ \pi_p(h) ~\mathrm{is ~ principal}, \\
								\chi(p)  W_p(h) & \mathrm{if} ~ \pi_p(h) ~\mathrm{is ~ special}.	   		     	     	   
							\end{cases}
						\end{align*} 
						If $\phi$ is non-trivial character of $\mathbb{Z}_p^\times$, then it follows from by \cite[(5.5 c)]{Hidarankin2} that
						\begin{align*}
							W_{p}(h^\rho|\chi\phi) = \begin{cases}
								\chi(p)^{v_p(\mathrm{cond}(\phi))+v_p(\mathrm{cond}(\phi\psi))} W_p(h|\phi) & \mathrm{if} ~ \pi_p(h) ~\mathrm{is ~ principal}, \\
								\chi(p)^{v_p(\mathrm{cond}(\phi))} W_p(h|\phi) & \mathrm{if} ~ \pi_p(h) ~\mathrm{is ~ special}.
							\end{cases}
						\end{align*}
						Thus from above we have $W_{p}(h^\rho|\chi\phi) $ differs from $W_{p}(h^\rho|\phi) $ by a $p$-adic unit. 
					\end{proof}
					Let $\Sigma_{0} $ be the set of all rational primes dividing $ m $ and  $  \mu_{f \times h}$ be the $\mathcal{O}$-valued measure as in Theorem~\ref{padic rankin}. For every finite order character $\phi$ of $\mathbb{Z}_p^\times$ and $0 \leq j \leq k-l-1$, set
					\begin{align}\label{p-adic Rankin Sigma and p-adic Rankin}
						\mu^{\Sigma_0}_{f \times h} (x_p^j\phi) := \mu_{f\times h} (x_p^j\phi) \times \prod_{r \in \Sigma_0}L_{r}(l+j, f_0, g| \phi).
					\end{align}
					We next show that the $p$-adic $L$-function $\mu_{f|\chi \times h|\bar{\chi}}$ generates the same ideal as the imprimitive $p$-adic $L$-function $\mu^{\Sigma_0}_{f \times h}$.
					\begin{lemma}\label{analytic final cuspform}
						Let the notation be as in Lemma~\ref{twist and untwist cusp}.  Then for every finite order character $ \phi$ of of $ \mathbb{Z}_{p}^{\times} $ and  $ 0 \leq j \leq k-l-1 $, the values $\mu_{f|\chi \times h|\bar{\chi}}(x_p^j\phi)$ and $  \mu^{\Sigma_0}_{f \times h} (x_p^j\phi)$ differ by a $p$-adic unit.	     
					\end{lemma}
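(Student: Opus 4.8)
The plan is to read the desired identity directly off Lemma~\ref{twist and untwist cusp} and then verify that every auxiliary factor occurring there is a $p$-adic unit. Fix a finite order character $\phi$ of $\mathbb{Z}_p^\times$ and $0\le j\le k-l-1$. By Lemma~\ref{twist and untwist cusp},
\begin{align*}
\mu_{f|\chi \times h|\bar{\chi}}(x_p^j \phi) = t'\,\frac{c(f_0|\chi)}{c(f_0)}\,\frac{\langle f_0^\rho|\tau_{Np}, f_0\rangle_{Np}}{\langle (f_0|\chi)^\rho|\tau_{N_{f|\chi}p}, f_0|\chi\rangle_{N_{f|\chi}p}}\,\frac{W(h^\rho|\phi\chi)}{W(h^\rho|\phi)}\,\chi(p)^{1-\alpha}\,\mu_{f\times h}(x_p^j\phi)\prod_{r\in\Sigma_0}L_r(l+j,f,h|\phi),
\end{align*}
with $t'=(N_{f|\chi}/N)^{k/2}(N_{h|\bar\chi}p^{-\alpha})^{(l+2j)/2}[N_{f|\chi},N_{h|\bar\chi}]/[N,I_0p^\alpha]$. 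The first step is to observe that every $r\in\Sigma_0$ divides $I_0$ and hence is prime to $p$; consequently the $r$-th Euler factor of the Rankin--Selberg $L$-function is unaffected by replacing $f$ with its $p$-stabilization $f_0$, so $\mu_{f\times h}(x_p^j\phi)\prod_{r\in\Sigma_0}L_r(l+j,f,h|\phi)$ equals $\mu^{\Sigma_0}_{f\times h}(x_p^j\phi)$ by the defining formula \eqref{p-adic Rankin Sigma and p-adic Rankin}. The lemma is thereby reduced to showing that
\[
t'\,\frac{c(f_0|\chi)}{c(f_0)}\,\frac{\langle f_0^\rho|\tau_{Np}, f_0\rangle_{Np}}{\langle (f_0|\chi)^\rho|\tau_{N_{f|\chi}p}, f_0|\chi\rangle_{N_{f|\chi}p}}\,\frac{W(h^\rho|\phi\chi)}{W(h^\rho|\phi)}\,\chi(p)^{1-\alpha}
\]
is a $p$-adic unit.

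The four pieces are then disposed of one at a time. For $t'$, Lemma~\ref{lem: primitive after twist} gives $N_{f|\chi}=[N,\mathrm{cond}(\chi)]\mathrm{cond}(\chi)$ and $N_{h|\bar\chi}=[I_0p^\alpha,\mathrm{cond}(\chi)]\mathrm{cond}(\chi)$; since $\mathrm{cond}(\chi)$ is a positive power of $m$ and $p\nmid m$, the quantities $N_{f|\chi}/N$, $N_{h|\bar\chi}p^{-\alpha}/I_0$ and $[N_{f|\chi},N_{h|\bar\chi}]p^{-\alpha}/[N,I_0]$ are all prime to $p$, whence $t'$ is a $p$-adic unit. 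The product $\tfrac{c(f_0|\chi)}{c(f_0)}\tfrac{\langle f_0^\rho|\tau_{Np}, f_0\rangle_{Np}}{\langle (f_0|\chi)^\rho|\tau_{N_{f|\chi}p}, f_0|\chi\rangle_{N_{f|\chi}p}}$ is a $p$-adic unit by Lemma~\ref{period comparision cusp}, which applies because $f$ satisfies \ref{irr-f} and \ref{p-dist}. The ratio $W(h^\rho|\phi\chi)/W(h^\rho|\phi)$ is a $p$-adic unit by Lemma~\ref{comparision root number cusp}. Finally $\chi(p)\ne0$ since $p\nmid\mathrm{cond}(\chi)$, so $\chi(p)^{1-\alpha}$ is a root of unity, hence a $p$-adic unit. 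Multiplying the four gives the claim.

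All the substantive content has already been isolated in Lemmas~\ref{twist and untwist cusp}, \ref{lem: primitive after twist}, \ref{period comparision cusp} and \ref{comparision root number cusp}, so the argument itself is essentially bookkeeping. The place demanding genuine care is the first step: one must line up the finite set $\Sigma_0$ and the substitution $f\mapsto f_0$ precisely against the definition \eqref{p-adic Rankin Sigma and p-adic Rankin} of the $\Sigma_0$-imprimitive $p$-adic $L$-function, so that the Euler products at primes in $\Sigma_0$ cancel exactly rather than merely up to a congruence; a secondary point is the verification that $t'$ has trivial $p$-adic valuation, which rests on the exact conductor formulas of Lemma~\ref{lem: primitive after twist}.
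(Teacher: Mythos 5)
Your proposal is correct and follows essentially the same route as the paper: the paper likewise reads the identity off Lemma~\ref{twist and untwist cusp} and then invokes Theorem~\ref{c(f) and petterson} together with Lemma~\ref{period comparision cusp} for the ratio of congruence numbers and Petersson norms, and Lemma~\ref{comparision root number cusp} for the ratio of root numbers. Your additional explicit checks that $t'$ and $\chi(p)^{1-\alpha}$ are $p$-adic units (via the level formulas of Lemma~\ref{lem: primitive after twist} and $p\nmid\mathrm{cond}(\chi)$) are points the paper leaves implicit, and they are carried out correctly.
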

					\begin{proof}
						By Lemma~\ref{c(f) and petterson} and Lemma~\ref{period comparision cusp}, we have \begin{align*}
							\frac{c(f_0|\chi)}{c(f_0)} \frac{\langle f_0^\rho | \tau_{Np}, f_0\rangle_{Np}}{\langle f_0^\rho|\chi | \tau_{N_{f|\chi}p}, f_0|\chi \rangle_{N_{f|\chi}p}} 
						\end{align*}
						is a $p$-adic unit. By Lemma~\ref{comparision root number cusp}, we have  $W(h^\rho|\phi\chi)/W(h^\rho|\phi)$ is $p$-adic unit. Now the lemma follows from Lemma~\ref{twist and untwist cusp}.
					\end{proof}
					Let $\mu_{\tilde{f}_0}(\cdot)$ be the $p$-adic $L$-function as defined in Theorem~\ref{period and integral measure}. For $ 0 \leq j \leq k-2 $ and Dirichlet character $ \varphi $,  consider  the measure  $\mu^{\Sigma_{0}}_{p,f,\varphi, j}$ on $ (1+p\mathbb{Z}_{p}) $  satisfying the interpolation property
					\begin{equation}\label{def: p-adic L-function f}
						\mu^{\Sigma_{0}}_{p,f,\varphi, j}(\phi ) = \mu_{\tilde{f}_0}(x_p^j \varphi \phi) =  \frac{e_{p}(u_{f},\varphi \phi x_p^j)}{u_f^{\mathrm{cond}(\phi)}} \frac{\mathrm{cond}(\varphi \phi)^{j+1} j!}{G(\chi \phi)} \frac{L(j+1, \tilde{f},\varphi \phi)}{(2 \pi i)^{j+1}\Omega_{\tilde{f}_0}^{\mathrm{sgn}((-1)^j\varphi\phi(-1))}} 
					\end{equation}
					for every finite order character  $ \phi: \mathbb{Z}_p^{\times} \rightarrow \mathbb{C}_p^{\times} $ of $ p $-power order. 

					For $ l-1 \leq j \leq k-2 $, consider  the measure  $\mu^{\Sigma_0}_{p,f h, j}$ on $(1+p\mathbb{Z}_{p})$ satisfying the interpolation property:
					\begin{align}\label{def: p-adic L-function f x h}
						\mu^{\Sigma_0}_{p,f \times  h,j}(\phi) = \mu^{\Sigma_0}_{f \times  h}( x_p^{j-l+1} \phi ) =  \mu_{f\times h} (x_p^{j-l+1} \phi) \times \prod_{r \in \Sigma_0}L_{r}(j+1, f_0, g| \phi)
					\end{align}
					for every finite order character  $ \phi: \mathbb{Z}_p^{\times} \rightarrow \mathbb{C}_p^{\times} $ of $ p $-power order. Recall that $ \Q_{\cyc} $ is the cyclotomic $\Z_p $-extension of $ \Q $ and $ \Gamma = \mathrm{Gal}(\Q_{\cyc}/\Q) \cong \Z_p$. Then it is known that $\mu^{\Sigma_{0}}_{p,f,\varphi, j}$ (resp. $\mu^{\Sigma_0}_{p,f \times h, j}$) corresponds to a power series in the Iwasawa algebra $\mathcal{O}[[\Gamma]]$ which we continue to denote by  $\mu^{\Sigma_{0}}_{p,f,\varphi, j}$ (resp. $\mu^{\Sigma_0}_{p,f \times h, j}$).
					\begin{theorem}\label{analytic final}
						Let $ f \in S_{k}(\Gamma_{0}(N), \eta)$ be a $ p $-ordinary newform with $ p \nmid N $.  Let $ h \in S_{l}(\Gamma_{0}(I),\psi) $ be a $ p $-ordinary eigenform  such that $ 2 \leq l < k $ and $ (T_{h}/\pi)^{ss} \cong \bar{\xi}_{1} \oplus \bar{\xi}_{2} $. Suppose that $f$ satisfies the assumptions \ref{irr-f} and  \ref{p-dist}. Then for $l-1 \leq j \leq k-1$, we have the following congruence  of ideals in the Iwasawa algebra $ \mathcal{O}[[\Gamma]] $
						\begin{align*}
							(\mu^{\Sigma_0}_{p,f\times h, j}) \equiv (\mu^{\Sigma_{0}}_{p,f,\xi_1, j}) (\mu^{\Sigma_{0}}_{p,f,\xi_2, j}) \mod \pi.
						\end{align*}
					\end{theorem}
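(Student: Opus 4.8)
The plan is to chain together the results already established earlier in the excerpt. First I would invoke Theorem~\ref{analytic final1}, which gives, for every finite order character $\phi$ of $\mathbb{Z}_p^\times$ and every $0\leq j\leq k-l-1$,
\[
\mu_{f|\chi\times g|\bar{\chi}}(x_p^j\phi) \equiv (\ast)\,\mu_{\tilde f_0}(x_p^j\xi_1\phi)\,\mu_{\tilde f_0}(x_p^j\xi_2\phi)\mod\pi,
\]
with $(\ast)$ a $p$-adic unit. Reindexing via $j\mapsto j-l+1$ so that $l-1\leq j\leq k-2$, and using the definitions \eqref{def: p-adic L-function f} of $\mu^{\Sigma_0}_{p,f,\xi_i,j}$ together with the remark identifying $\mu_{\tilde f}=\mu_{\tilde f_0}$, the right-hand side is exactly a $p$-adic unit times $\mu^{\Sigma_0}_{p,f,\xi_1,j}(\phi)\,\mu^{\Sigma_0}_{p,f,\xi_2,j}(\phi)$. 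Hence the ideals $(\mu_{f|\chi\times g|\bar{\chi}})$ and $(\mu^{\Sigma_0}_{p,f,\xi_1,j})(\mu^{\Sigma_0}_{p,f,\xi_2,j})$ in $\mathcal{O}[[\Gamma]]$ agree modulo $\pi$, once one restricts the measure to $(1+p\mathbb{Z}_p)$-characters as in \eqref{def: p-adic L-function f} and \eqref{def: p-adic L-function f x h}; here I would note that because the congruence of interpolated values holds for a dense set of characters, it upgrades to a congruence of the associated power series in $\mathcal{O}[[\Gamma]]$ by the $p$-adic Weierstrass preparation theorem / continuity.

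Second I would bring in the congruence of measures. By Lemma~\ref{congruence of measures}, $\mu_{f|\chi\times g|\bar{\chi}}(x_p^j\phi)\equiv\mu_{f|\chi\times h|\bar{\chi}}(x_p^j\phi)\mod\pi$ for all finite order $\phi$ and all $0\leq j\leq k-l-1$, so the two associated power series are congruent mod $\pi$ and in particular generate the same ideal mod $\pi$ in $\mathcal{O}[[\Gamma]]$. Third, Lemma~\ref{analytic final cuspform} shows that $\mu_{f|\chi\times h|\bar{\chi}}(x_p^j\phi)$ and $\mu^{\Sigma_0}_{f\times h}(x_p^j\phi)$ differ by a $p$-adic unit for every $\phi$ and $j$; therefore $(\mu_{f|\chi\times h|\bar{\chi}})=(\mu^{\Sigma_0}_{p,f\times h,j})$ as ideals in $\mathcal{O}[[\Gamma]]$ after the reindexing $j\mapsto j-l+1$ built into \eqref{def: p-adic L-function f x h}. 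Combining the three displayed ideal identities/congruences yields
\[
(\mu^{\Sigma_0}_{p,f\times h,j}) = (\mu_{f|\chi\times h|\bar{\chi}}) \equiv (\mu_{f|\chi\times g|\bar{\chi}}) \equiv (\mu^{\Sigma_0}_{p,f,\xi_1,j})(\mu^{\Sigma_0}_{p,f,\xi_2,j}) \mod\pi,
\]
which is the assertion of Theorem~\ref{analytic final}.

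The routine-but-necessary bookkeeping is the passage between the range $0\leq j\leq k-l-1$ used in Theorems~\ref{padic rankin} and \ref{analytic final1} and the range $l-1\leq j\leq k-2$ in the statement; this is just the Tate-twist shift $j\mapsto j-l+1$ already encoded in \eqref{def: p-adic L-function f} and \eqref{def: p-adic L-function f x h}, together with the observation that $x_p^{l-1}\omega_p^{1-l}\equiv 1\bmod p$ so that $\mu_{\tilde f_0}(x_p^{l+j-1}\xi_1\omega_p^{1-l}\phi)\equiv\mu_{\tilde f_0}(x_p^j\xi_1\phi)\bmod\pi$, which is precisely the step used at the end of the proof of Theorem~\ref{analytic final1}. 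I would also need to check that the finiteness of $\Sigma_0$ and the Euler-factor product $\prod_{r\in\Sigma_0}L_r(j+1,f_0,g|\phi)$ appearing in \eqref{p-adic Rankin Sigma and p-adic Rankin} and \eqref{def: p-adic L-function f x h} is consistent with the $\Sigma_0$-imprimitive normalization on both sides—this is handled by Lemma~\ref{analytic final cuspform} and the fact (Lemma~\ref{twist and untwist cusp}) that the twist by $\chi$ exactly reinstates the $\Sigma_0$-Euler factors.

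The main obstacle, conceptually, is already absorbed into Theorem~\ref{analytic final1}: namely the delicate choice of canonical periods $\Omega_{\tilde f_0}^\pm$ via parabolic cohomology (Theorem~\ref{choice of period and petterson innerproduct}) and the integrality/unit statements of Theorem~\ref{period and integral measure} and Theorem~\ref{c(f) and petterson}, which force the constant $(\ast)$ to be a genuine $p$-adic unit rather than merely a nonzero scalar; without that, one would only get an equality of ideals in $\mathcal{O}[[\Gamma]]\otimes K_{\mathfrak p}$ and the mod-$\pi$ reduction would be meaningless. Since that work is done upstream, the proof of Theorem~\ref{analytic final} itself is essentially a short assembly of Theorem~\ref{analytic final1}, Lemma~\ref{congruence of measures}, and Lemma~\ref{analytic final cuspform}, plus the elementary power-series continuity argument to promote congruences of special values to congruences of elements of $\mathcal{O}[[\Gamma]]$.
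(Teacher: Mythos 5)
Your proposal is correct and follows exactly the same route as the paper, whose proof of Theorem~\ref{analytic final} is precisely the assembly of Lemma~\ref{congruence of measures}, Theorem~\ref{analytic final1} and Lemma~\ref{analytic final cuspform}; your additional remarks on the reindexing $j\mapsto j-l+1$ and on promoting congruences of interpolated values to congruences in $\mathcal{O}[[\Gamma]]$ are exactly the bookkeeping the paper leaves implicit.
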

					\begin{proof}
						This follows from Lemma~\ref{congruence of measures}, Theorem~\ref{analytic final1} and Lemma~\ref{analytic final cuspform}. 
					\end{proof}
					\section{Selmer group of Modular form and Rankin-Selberg product}\label{sec: Selmer groups}
					In this section, we discuss the $ p^\infty $-Greenberg Selmer group attached to {a} modular form and {the} Rankin-Selberg product. 
					We prove a control theorem for the $ p^\infty $-Selmer group attached to the Rankin-Selberg product and also discuss  explicit description of 
					various $ p^\infty $-Selmer groups that appear.
					\subsection{Background on   Selmer groups}\label{sec2}
					We recall the following notation from  \S\ref{section: simplyfying rankin}:  For a normalised eigenform  $ F $ with  nebentypus $\varphi$,    
					$K_F$ denotes the corresponding number field. 
					Let $ L $ be a number field containing $ K_F $, $\mathfrak{p}$ is a prime ideal in $L$ dividing $p$ (compatible with $\iota_p$), $L_{\mathfrak{p}}$ be the completion of $L$ at $\mathfrak{p}$ and  $\pi_L$ denote a uniformizer of the ring of integers  $ \mathcal{O}_{L_\mathfrak{p}}  $ of   $L_\mathfrak{p}$. Also, recall $\chi_{\cyc}: G_\Q \lra \Z_p^\times$ {is} the $p$-adic cyclotomic character. 
					For a discrete  $ \mathbb{Z}_{p}[[\Gamma]] $-module $\mathcal{M}$, let  $M^\vee:=\mathrm{Hom}_{\text{cont}}(\mathcal{M},\frac{\Q_p}{\Z_p})$ be the Pontryagin dual of $ \mathcal{M} $.
					\begin{theorem}\label{rhof}$($Eichler,  Shimura, Deligne, Mazur-Wiles, Wiles$)$
						Let $F(z) = \sum_{n \geq 1} a(n,F) q^n\in S_k(\Gamma_0(A),\varphi)$  be a $p$-ordinary newform with $k\geq 2$.  Then there exists a  Galois representation,
							$\rho_{F}: G_\Q \lra \mathrm{GL}_2(L_{\mathfrak p})$  such that
							
							\begin{enumerate}[label=$\mathrm{(\roman*)}$]
								\item \label{unr}
								{For} all primes $r \nmid Ap$, $\rho_F$ is unramified  and for the $($arithmetic$)$ Frobenius $\mathrm{Frob}_r$ at $r$, we have 
								$$\mathrm{trace}(\rho_{F}(\mathrm{Frob}_r))=a(r, F),\quad \det(\rho_{F}(\mathrm{Frob}_r)) = \varphi(r) \chi_{\cyc}(\mathrm{Frob}_r)^{k-1} 
								= \varphi(r) r^{k-1}.$$
								It follows (by the Chebotarev density theorem) that $\det(\rho_{F})=\varphi \chi_{\cyc}^{k-1}$.
								\item 
								As $ f $ is $p$-ordinary, let  $u_F$  be the  unique $p$-adic unit  root of  $X^2 -a(p,F)X +\varphi(p) p^{k-1}.$  Let   $\lambda_{F}$ be the unramified character with $\lambda_{F}(\mathrm{Frob}_{p})=u_F$. Then \begin{small}
									$$ \rho_{F}|_{G_{p}} \sim \begin{pmatrix} \lambda_{F}^{-1} \varphi \chi_{\cyc}^{k-1}  & * \\   0 & \lambda_{F} \end{pmatrix}.$$
								\end{small}%
							\end{enumerate} 
						\end{theorem}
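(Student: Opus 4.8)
The plan is to assemble the statement from the classical construction of the Galois representation attached to a modular form, keeping separate the two rather different inputs behind (i) and (ii); since the theorem is quoted here with attributions, in the paper this amounts to pointing to the relevant references rather than reproving them.

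For (i) I would first realize $\rho_F$ geometrically. In weight $k=2$ this is the Eichler--Shimura construction: take the $\mathfrak{p}$-adic Tate module of the Jacobian of the modular curve of level $A$ and localize the Hecke action at the maximal ideal cut out by $\phi_F\colon T(n)\mapsto a(n,F)$, obtaining a two-dimensional $L_{\mathfrak p}$-representation of $G_\Q$. For $k\ge 3$ I would invoke Deligne's construction, realizing $\rho_F$ inside the degree $k-1$ \'etale cohomology of the $(k-2)$-fold Kuga--Sato fibre product over the modular curve of level $A$ (with the appropriate nebentypus local system), again cut out by $\phi_F$. By construction this cohomology is unramified away from $Ap$. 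Next I would use the Eichler--Shimura congruence relation $T(r)\equiv\mathrm{Frob}_r+r^{k-1}\langle r\rangle\,\mathrm{Frob}_r^{-1}$ on the reduction modulo $r$ of the relevant variety; transported to cohomology, it shows that $\rho_F(\mathrm{Frob}_r)$ satisfies the Hecke polynomial $X^2-a(r,F)X+\varphi(r)r^{k-1}$ for every $r\nmid Ap$, which gives the trace and determinant values in (i). Finally, since $\det\rho_F$ and $\varphi\chi_{\cyc}^{k-1}$ are continuous characters of $G_\Q$ agreeing on all $\mathrm{Frob}_r$ with $r\nmid Ap$, and these classes are dense by Chebotarev, I conclude $\det\rho_F=\varphi\chi_{\cyc}^{k-1}$.

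For (ii), the ordinariness statement, I would appeal to the theorems of Mazur--Wiles and Wiles. The point is that $p$-ordinariness (that $u_F$ is a $p$-adic unit) forces $\rho_F|_{G_p}$ to be reducible with an unramified quotient. Concretely I would pass to a Hida family: $F$, or its $p$-stabilization when $p\nmid A$, occurs as a weight-$k$ specialization of a $\Lambda$-adic ordinary eigenform, and the big Galois representation attached to that family is --- by Wiles's theorem --- upper triangular on restriction to $G_p$, with the quotient character unramified and sending $\mathrm{Frob}_p$ to the $U(p)$-eigenvalue. Specializing gives $\rho_F|_{G_p}\sim\begin{psmallmatrix}\ast&\ast\\0&\lambda_F\end{psmallmatrix}$ with $\lambda_F$ unramified and $\lambda_F(\mathrm{Frob}_p)=u_F$; the upper-left entry is then forced to equal $\lambda_F^{-1}\varphi\chi_{\cyc}^{k-1}$ by the determinant already computed in (i). Alternatively one could argue $p$-adic-Hodge-theoretically: $\rho_F|_{G_p}$ is crystalline with Hodge--Tate weights $\{0,k-1\}$ when $p\nmid A$ and semistable or potentially crystalline when $p\mid A$, and the unit root $u_F$ of crystalline Frobenius singles out a $G_p$-stable line whose quotient is unramified with $\mathrm{Frob}_p\mapsto u_F$.

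The routine part is (i): once the geometric realization and the Eichler--Shimura relation are in hand, everything is formal, ending with a Chebotarev argument. I expect the main obstacle to be (ii): the precise upper-triangular shape over $G_p$ and the identification of the unramified quotient with $\lambda_F$ are substantially deeper, resting on Hida's theory of ordinary $\Lambda$-adic forms together with Wiles's construction of the associated big Galois representation (or on the $p$-adic Hodge theory of ordinary representations), and the case $p\mid A$, where $F$ is already $p$-stabilized and $\rho_F$ is genuinely ramified at $p$, must be handled separately from the good-reduction case.
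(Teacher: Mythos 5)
Your outline is correct and is exactly the standard chain of arguments behind the attributions in the theorem's header (Eichler--Shimura/Deligne's geometric construction plus the congruence relation and Chebotarev for (i); Mazur--Wiles and Wiles's ordinary local structure, with the determinant pinning down the sub-character, for (ii)); the paper itself gives no proof and simply quotes these classical results. No gaps to report.
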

						Let $V_{F} \cong L_{\mathfrak p}^{\oplus 2}$ denote the representation space of $\rho_{F}$. By compactness of $G_\Q$, there exists a $ G_{\Q} $ invariant   $\mathcal{O}_{L_\mathfrak{p}}$-lattice $T_{F}$ of  $V_{F}$. 
						Let 
							$	\bar{\rho}_{F}: G_{\Q} \lra \mathrm{GL_2}(\mathcal{O}_{L_\mathfrak {p}}/{\pi_L})$
						be the residual representation of $\rho_{F}$.

						Recall $f \in S_k(\Gamma_0(N), \eta)$  and $ h\in S_l(\Gamma_0(I_0p^\alpha), \psi) $ are $ p $-ordinary newforms with $ p \nmid N I_0 $. Let $K$ be a number field containing $K_f$ and $ K_h$. To ease the notation, we denote  $\mathcal{O}_{K_\mathfrak{p}}$  by $\mathcal{O}$, $\pi_{K}$ by $ \pi $ and $\mathcal{O}/\pi$ by $\mathbb{F}$. Let $\Sigma$ be a finite set of primes of $\Q$  such that $\Sigma \supset \{ \ell : \ell \mid pNI \infty \}$ and  $\Q_\Sigma$ be the  {maximal} algebraic extension of $\Q$ unramified outside $\Sigma$. Set $G_\Sigma(\Q) := \text{Gal}(\Q_{\Sigma}/\Q)$.     Let us take $\mathfrak{g} \in \{f,h\}$ 
						and set $A_{\mathfrak{g}}:= V_{\mathfrak{g}}/T_{\mathfrak{g}}$. For a character $ \varphi $, let $ V_{\mathfrak{g}} (\varphi) := V_{\mathfrak{g}} \otimes_{\mathbb{Q}_p} \varphi $ and $ T_{\mathfrak{g}} (\varphi) := T_{\mathfrak{g}} \otimes_{\mathbb{Z}_p} \varphi $. Further, put  $ V_{\mathfrak{g}}(j) := V_{\mathfrak{g}} (\chi_{\cyc}^{-j})  $, $ T_{\mathfrak{g}}(j) := T_{\mathfrak{g}} (\chi_{\cyc}^{-j})$ and  $ A_{\mathfrak{g}}(j) = V_{\mathfrak{g}} (j)/ T_{\mathfrak{g}}(j) \cong A_{\mathfrak{g}} \otimes \omega_p^{-j}$. We have the canonical maps
							$0 \lra T_{\mathfrak{g}} \lra V_{\mathfrak{g}} \lra A_{\mathfrak{g}} \lra 0$.
						As  $\mathfrak{g}$ is $p$-ordinary, 
						we have  the following  filtrations as a $G_{p}$-module
						\begin{small}
							\begin{equation}
								0 \lra V_{\mathfrak{g}}^+ \lra V_{\mathfrak{g}} \lra V_{\mathfrak{g}}^{-} \lra 0 \quad \text{and} \quad 0 \lra T_{\mathfrak{g}}^+ \lra T_{\mathfrak{g}} \lra T_{\mathfrak{g}}^{-} \lra 0,
							\end{equation}
						\end{small}%
						where  both $T_{\mathfrak{g}}^+$ (resp. $V_{\mathfrak{g}}^+$) and  $T_{\mathfrak{g}}^{-}$ (resp. $V_{\mathfrak{g}}^-$) are free  $\mathcal{O}$-modules (resp. $K_{\mathfrak{p}}$-vector spaces) of rank $1$ and the action of $G_\Q$ on $T_{\mathfrak{g}}^{-}$ and $V_{\mathfrak{g}}^-$ is unramified at $p$.  Following  \cite{gr1},  for a modular form $\mathfrak{g}$ of weight $k \geq 2$, define a filtration on $ V_{\mathfrak{g}}(j)$ with $ 0  \leq j \leq k-2$, by
						\begin{small}
							\begin{align*}
								F^s(V_{\mathfrak{g}}(j)) = 
								\begin{cases}
									V_{\mathfrak{g}}(j)  & \text{ if } s \leq -j, \\
									V_{\mathfrak{g}}^{+}(j) & \text{ if } -j+1 \leq s \leq k-1-j,\\
									0 & \text{ if } s \geq k-j. 
								\end{cases}
							\end{align*}
						\end{small}%
						We have a corresponding  filtration on $ T_{\mathfrak{g}}(j) $. Also, define $A^+_{\mathfrak{g}}(j) = (V_{\mathfrak{g}}^+(j)/T_{\mathfrak{g}}^+(j))$ and 
						$A_{\mathfrak{g}}^{-}(j) = A_{\mathfrak{g}}(j)/A^+_{\mathfrak{g}}(j)$.
						
						Set $V := V_f \otimes_{K_{\mathfrak{p}}} V_h$ and $T:=T_f\otimes_{\mathcal{O}}T_h$. Then we have an induced filtration on $V$ and $T$ respectively:		
						\begin{equation}\label{Filtration on T}
							\begin{aligned} 
								&0\subset V_f^+\otimes_{K_{\mathfrak{p}}}V_h^+ \subset V_f^+\otimes_{K_{\mathfrak{p}}} V_h \subset  V_f^+\otimes_{K_{\mathfrak{p}}} V_h + V_f\otimes_{K_{\mathfrak{p}}} V_h^+ \subset V_f\otimes_{K_{\mathfrak{p}}} V_h,	\\	
								&0\subset T_f^+\otimes_{\mathcal{O}}T_h^+ \subset T_f^+\otimes_{\mathcal{O}}T_h \subset  T_f^+\otimes_{\mathcal{O}}T_h + T_f\otimes_{\mathcal{O}} T_h^+ \subset T_f\otimes_{\mathcal{O}} T_h.
							\end{aligned}
						\end{equation}
						For every  $l-1 \leq j \leq k-2$, we define $V_j := V \otimes_{\mathbb{Q}_p} \chi_{\cyc}^{-j}  $, $T_j := T \otimes_{\mathbb{Z}_p}  \chi_{\cyc}^{-j}$ and  $A_j := \frac{V_j}{T_j} $. Note that $A_j \cong T_f(\chi_{\cyc}^{-j}) \otimes_{\mathcal{O}} A_h \cong T_h(\chi_{\cyc}^{-j}) \otimes_{\mathcal{O}} A_f.$
						The action of $I_p$  on the successive quotients in our filtration \eqref{Filtration on T} is given by $\chi_{\cyc}^{k+l-2}, \chi_{\cyc}^{k-1}, \chi_{\cyc}^{l-1}, \chi_{\cyc}^{0}$. In particular, for $l-1 \leq j \leq k-2$, we have the following filtration of $ V_j = V_f\otimes V_h(j)$
						\begin{small}
							\begin{align*}
								F^s(V_f\otimes V_h(j)) =
								\begin{cases}
									V_f\otimes V_h(j) &\text{ if } s \leq -j, \\
									V_f^{+} \otimes V_h(j) + V_f \otimes V_h^+(j) & \text{ if } -j+1 \leq s \leq l-1-j, \\
									V_f^{+} \otimes V_h(j) & \text{ if } l-j \leq s \leq k-1-j,\\
									V_f^{+}\otimes V_h^{+}(j)  & \text{ if } k-j \leq s \leq k+l-2-j, \\
									0 &  \text{ if } s \geq k+l-1-j.
								\end{cases}
							\end{align*}
						\end{small}%
						Similarly, define a filtration on $  T_f \otimes T_h(j)$.  Following \cite{gr1}, we have
						$F^+(V_j) = V^+_f\otimes V_h(j)$
						and  $F^+(T_j) = T^+_f\otimes T_h(j)$. Finally, we put 
						\begin{align}\label{def A minus j}
							F^+(A_j) :=F^+(V_j)/ {F^+(T_j)} \quad \text{and } \quad  A^{-}_j = A_j/{F^+(A_j)}.
						\end{align}
						Note that $A_j^-=T_h \otimes A_f^-(j)$, where $ A_f^-(j) =  A_f(j)/ A_f^{+}(j)$.

						\par  Let $ L $ be a number field and $ \Sigma_L $ be the set of all primes in $ L $ lying above $ \Sigma $.  Put  $ G_{\Sigma}(L) = \mathrm{Gal}(L_{\Sigma_{L}}/L) $. 
						For every prime $ v \in \Sigma_L$ and $ B_j \in \{  A_f(\xi_{1}\omega_p^{-j}), A_j, A_f(\xi_{2}\omega_p^{-j})\} $, let choose a submodule $H^1_{\dagger} (L_v,B_j) \subset H^1 (L_v,B_j)  := H^1 (G_{L_v},B_j)$. 
						For this choice,  we  define $\dagger$-Selmer group $S_\dagger(B_j/L)$  as 		
							\begin{equation*}
								S_{\mathrm{\dagger}}(B_j/L): =\mathrm{Ker}\bigg(H^1(G_\Sigma(L), B_j) \lra \underset{v \in \Sigma_L}\prod \frac{H^1(L_{v}, B_j)}{H_{\dagger}^1(L_{v}, B_j)}\bigg).
							\end{equation*}
						For any $r \in \N$, there is a natural inclusion  
						\begin{equation}\label{ifr}
							0 \lra B_{j}[\pi^r] \stackrel{i_{r}}{\lra} B_{j}.
						\end{equation}		
						Next we define $\pi^r$     $\dagger$-Selmer group $S_{\dagger}(B_{j}[\pi^r]/L)$  as 
						\begin{equation*}
							S_{\dagger}(B_{j}[\pi^r]/L): =\mathrm{Ker}\Big(H^1(G_\Sigma(L), B_{j}[\pi^r]) \lra \underset{v \in \Sigma_L}\prod \frac{H^1(L_v, B_{j}[\pi^r])} {H_{\dagger}^1(L_v, B_{j}[\pi^r])}\Big),
						\end{equation*}
						where $H_\dagger^1(L_v, B_{j}[\pi^r]):=  { i_{r}^{*^{-1}}} (H^1_\dagger (L_v,B_{j}))$ for every $v \in \Sigma_L$. Here $i_{r}^{*}: H^1(L_v,  B_{j}[\pi^r]) \lra H^1(L_v, B_{j}) $ is  induced from $i_{r}$ in \eqref{ifr}. For a prime $v$ in $L$, let $G_{L_{v}}$ and $I_{v}$ denote the decomposition group and  inertia group at $v$ respectively. For a $G_{L}$-module $\mathcal{M}$ we sometimes denote  the $i^{\mathrm{th}}$-cohomology group $H^i(G_{L},\mathcal{M})$ by $H^i(L,\mathcal{M})$. Similarly, for fields $ L' \subset L \subset \bar{\Q}$, let $H^{i}(L/L',\mathcal{M})  := H^i(\mathrm{Gal}(L/L'),\mathcal{M})$.
						
						\begin{definition}\label{def: dagger cohomology}
							Let $ B_j \in \{  A_f(\xi_{1}\omega_p^{-j}), A_j, A_f(\xi_{2}\omega_p^{-j})\} $, $L$ be a number field and $v \in \Sigma_L$. 
							For $\dagger \in \{\mathrm{Gr}, \mathrm{str} \}$, define 
							\begin{align*}
								H^1_{\dagger}(L_v, B_{j}) : =
								\begin{cases} 
									\text{Ker}\big(H^1(G_{L_v}, B_{j}) \lra H^1(I_v, B_{j})\big) & \text{if } v \nmid p \text{ and } \dagger \in \{\mathrm{Gr}, \mathrm{str} \}, \\
									\text{Ker}\big(H^1(G_{L_v}, B_{j}) \lra H^1(I_v, B^-_{j})\big) & \text{if } v \mid p \text{ and } \dagger = \mathrm{Gr}, \\
									\text{Ker}\big(H^1(G_{L_v}, B_{j}) \lra H^1(G_v, B^-_{j})\big) & \text{if } v \mid p \text{ and } \dagger = \mathrm{str}.
								\end{cases}
							\end{align*}
						\end{definition}

						Recall that	$ \Q_{\cyc} $ is the cyclotomic $ \Z_p $-extension of $ \Q $ and $\Gamma := \mathrm{Gal}(\Q_{\cyc}/\Q) \cong \Z_p$. Set $ \Q_n := \Q_\cyc^{\Gamma^{p^n}} $ for every $ n \geq 1 $. Define 
						\begin{small}
							$$S_\dagger (B_j/\Q_\cyc): = \underset{n}{\varinjlim} ~S_\dagger (B_j/\Q_n) \quad \text{and} \quad  S_\dagger(B_j[\pi^r]/\Q_\cyc): = \underset{n}{\varinjlim} ~S_\dagger(B_j[\pi^r]/\Q_n).
							$$
						\end{small}%
						Let $\Q_{\cyc,w}$ denote the completion at $w$ for a prime $w$ in $\Q_{\cyc}$ and set $\Sigma^{\infty}$ to be the primes in $\Q_{\cyc}$ lying above $\Sigma$.  It can be checked that  for $r \geq 0$, we have
						\begin{small}
							\begin{align*}
								S_\dagger(B_j[\pi^r]/\Q_\cyc) =  \mathrm{Ker}\Big(H^1(G_\Sigma(\Q_\cyc), B_{j}[\pi^r]) \lra \underset{w \in \Sigma^\infty}\prod \frac{H^1(\Q_{cyc,w}, B_{j}[\pi^r])} {H_{\dagger}^1(\Q_{cyc,w}, B_{j}[\pi^r])}\Big),
							\end{align*}
						\end{small}%
						where $H_{\dagger}^1(\Q_{cyc,w}, B_{j}[\pi^r])$ is as defined in Definition~\ref{def: dagger cohomology}.
						\begin{theorem}
							The kernel and the cokernel of the map $ S_\dagger(A_j/\Q_n) \lra S_\dagger(A_j/\Q_\cyc)^{\Gamma^{p^n}} $ are finite and uniformly bounded  independent of $ n $, for $\dagger \in \{ \mathrm{str}, \mathrm{Gr}\}$. 
						\end{theorem}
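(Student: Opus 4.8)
The plan is to run Mazur's control argument, comparing the Selmer group at finite level $\Q_n$ with the $\Gamma^{p^n}$-invariants of the Selmer group over $\Q_\cyc$ through the fundamental diagram. For a prime $w$ of $\Q_\cyc$ above $p$ write $\Gamma_n := \Gamma^{p^n} = \mathrm{Gal}(\Q_\cyc/\Q_n)$; let $\Sigma_{\Q_n}$ be the primes of $\Q_n$ above $\Sigma$, and for $\dagger \in \{\mathrm{Gr},\mathrm{str}\}$ set $\mathcal{P}_\dagger(A_j/\Q_n) := \prod_{v \in \Sigma_{\Q_n}} H^1(\Q_{n,v},A_j)/H^1_\dagger(\Q_{n,v},A_j)$ and $\mathcal{P}_\dagger(A_j/\Q_\cyc) := \prod_{w \in \Sigma^\infty} H^1(\Q_{\cyc,w},A_j)/H^1_\dagger(\Q_{\cyc,w},A_j)$, so that $S_\dagger(A_j/\Q_n)$ and $S_\dagger(A_j/\Q_\cyc)$ are the kernels of the natural maps $\phi_n\colon H^1(G_\Sigma(\Q_n),A_j) \to \mathcal{P}_\dagger(A_j/\Q_n)$ and $\phi_\cyc\colon H^1(G_\Sigma(\Q_\cyc),A_j) \to \mathcal{P}_\dagger(A_j/\Q_\cyc)$. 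Restriction gives a commutative diagram whose rows are these left-exact sequences and whose vertical maps are $s_n$ on Selmer groups, $h_n\colon H^1(G_\Sigma(\Q_n),A_j) \to H^1(G_\Sigma(\Q_\cyc),A_j)^{\Gamma_n}$, and $g_n = \prod_v g_{n,v}\colon \mathcal{P}_\dagger(A_j/\Q_n) \to \mathcal{P}_\dagger(A_j/\Q_\cyc)^{\Gamma_n}$; a diagram chase then reduces the theorem to uniform bounds on $\ker h_n$, $\coker h_n$ and $\ker g_n = \prod_v \ker g_{n,v}$.

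For the global term, inflation--restriction gives $\ker h_n = H^1(\Gamma_n, A_j^{G_\Sigma(\Q_\cyc)})$ and $\coker h_n \hookrightarrow H^2(\Gamma_n, A_j^{G_\Sigma(\Q_\cyc)})$; since $\Gamma_n \cong \Z_p$ has $p$-cohomological dimension one and $A_j$ is $p$-primary, $\coker h_n = 0$. Using \ref{irr-f}, $\bar{\rho}_f$ is irreducible, and since $\Gamma$ is pro-$p$ while $\mathbb{F}^\times$ has order prime to $p$, $\bar{\rho}_f|_{G_{\Q_\cyc}}$ remains irreducible; as $A_j[\pi]$ has semisimplification $\big(\bar{\rho}_f\otimes\bar{\xi}_1 \oplus \bar{\rho}_f\otimes\bar{\xi}_2\big)\otimes\omega_p^{-j}$, each summand is an irreducible two-dimensional $G_{\Q_\cyc}$-module, so $H^0(G_\Sigma(\Q_\cyc),A_j[\pi]) = 0$ and hence $A_j^{G_\Sigma(\Q_\cyc)} = 0$. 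Thus $h_n$ is an isomorphism, $\ker s_n = 0$, and the chase produces an injection $\coker s_n \hookrightarrow \ker g_n$ (in particular the kernel in the statement is in fact trivial). It therefore suffices to bound $\ker g_{n,v}$ uniformly in $n$ at each $v \in \Sigma_{\Q_n}$.

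For $v \nmid p$ the local condition is the unramified one for both $\dagger = \mathrm{Gr}$ and $\dagger = \mathrm{str}$; since $\Q_\cyc/\Q$ is unramified at $v$, the standard computation identifies $\ker g_{n,v}$ with a subquotient of $H^1\big(\mathrm{Gal}(\Q_{\cyc,w}/\Q_{n,v}), H^0(\Q_{\cyc,w},A_j)\big)$, which is finite, bounded independently of $n$, and zero for all but finitely many $v$ in the tower. For $v \mid p$ — the crux — we use the $p$-ordinary filtration of Theorem~\ref{rhof} and the definitions of Definition~\ref{def: dagger cohomology}: in the Greenberg case $\ker g_{n,v}$ is controlled by $H^1\big(\mathrm{Gal}(\Q_{\cyc,w}/\Q_{n,v}), H^0(\Q_{\cyc,w},A_j^-)\big)$, and in the strict case by the analogous expression with $H^0$ replaced by the relevant $G_{\Q_{\cyc,w}}$-cohomology of $A_j^-$, together with local Tate duality; since $A_j^- = T_h \otimes A_f^-(j)$ is a cofinitely generated $\mathcal{O}$-module with $G_p$-action given by the unramified character attached to $f$ times $\rho_h|_{G_p}$ twisted by $\chi_\cyc^{-j}$, these groups are finite and bounded in $n$. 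Assembling the global isomorphism with these local bounds through the diagram chase proves the theorem. The main obstacle is exactly the local analysis at the prime above $p$: one must verify that restriction up the cyclotomic tower preserves the Greenberg (resp.\ strict) local subspaces with only bounded defect, which is where the ordinary structure and the finiteness of the relevant local $H^0$'s enter.
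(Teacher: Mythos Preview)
Your overall architecture (fundamental diagram, inflation--restriction for the global map, local analysis prime by prime) is the right one, and your global step is actually sharper than the paper's: using \ref{irr-f} to get $A_j^{G_{\Q_\cyc}}=0$ gives $\ker s_n=0$ outright, whereas the paper only proves finiteness of the kernel by invoking Ochiai's control theorem \cite[Theorem~3.5(1)]{o1} and checking its hypothesis $H^0(\Q_{n,v},V_j)=0$ locally at $v\mid p$.

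There is, however, a genuine gap in your local analysis at the prime above $p$. You assert that $H^1\big(\mathrm{Gal}(\Q_{\cyc,w}/\Q_{n,v}),H^0(\Q_{\cyc,w},A_j^-)\big)$ is finite and bounded, but the sentence ``since $A_j^-=T_h\otimes A_f^-(j)$ has $G_p$-action given by $\lambda_f\otimes\rho_h|_{G_p}\otimes\chi_\cyc^{-j}$, these groups are finite'' is not an argument: knowing the shape of the action does not by itself rule out invariants. What is needed is $H^0(\Q_{\cyc,w},V_j^-)=0$, and the paper supplies exactly the missing ingredient. Over $K'=\Q_p(\mu_{p^\infty})$ the successive quotients $V_f^-\otimes V_h^{\pm}$ carry the characters $\lambda_f\lambda_h^{\mp 1}\theta$ with $\theta$ of finite order; by the Ramanujan--Petersson bound one has $|\lambda_f(\mathrm{Frob}_v)|_\C=p_v^{(k-1)/2}$ and $|\lambda_h(\mathrm{Frob}_v)|_\C=p_v^{(l-1)/2}$, so since $k>l\ge 2$ neither $\lambda_f\lambda_h$ nor $\lambda_f\lambda_h^{-1}$ is a root of unity and the Frobenius action is nontrivial. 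The same archimedean estimate handles $V_f^+\otimes V_h^{\pm}$ and hence $H^0(\Q_{\cyc,w},V_j)=0$. Your treatment of the strict case (``analogous expression \ldots\ together with local Tate duality'') is likewise too vague; the paper instead quotes \cite[Theorem~3.5(2)]{o1}, whose hypothesis is again that Frobenius acts nontrivially on each $\tfrac{F^iV_j}{F^{i+1}V_j}\otimes\chi_\cyc^{-i}$, verified by the same Ramanujan--Petersson argument. For $\dagger=\mathrm{Gr}$ the paper then bounds $\mathrm{coker}(r_{\mathrm{Gr},n})$ by comparison with the strict case, the discrepancy being controlled by $\oplus_{w\mid p}H^1(\Q_{\cyc,w}^{\mathrm{unr}}/\Q_{\cyc,w},A_j^{I_w})$, whose finiteness reduces once more to $H^0(\Q_{\cyc,w},V_j)=0$. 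In short: your skeleton is fine, but the arithmetic input that makes the $p$-local step work --- the Weil bounds on $u_f,u_h$ --- is precisely what you have omitted.
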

						
						\begin{proof}
							Consider the commutative diagram
							\begin{tiny}		
								\[
								\begin{tikzcd}
									0 \arrow[r] 
									&  S_{\dagger}(A_j/\Q_n) \arrow[d, "r_{\dagger,n}"] \arrow[r] &  H^1(G_\Sigma(\Q_n), A_j)  \arrow[d] \arrow[r] & \underset{v \in \Sigma_{\Q_n}}\prod  \frac{H^1(\Q_{n,v}, A_j)}{H_{\dagger}^1(\Q_{n,v}, A_j)}   \arrow[d]\\
									0 \arrow[r]
									& S_{\dagger}(A_j/\Q_{\cyc})^{\Gamma^{p^n}} \arrow[r] &   H^1(G_\Sigma(\Q_{\cyc}), A_j)^{\Gamma^{p^n}} \arrow[r] &  \underset{v \in \Sigma_{\Q_n}}\prod \Big(\underset{{w|v}} \prod  \frac{H^1(\Q_{\cyc,w}, A_j)}{H_{\dagger}^1(\Q_{\cyc,w}, A_j)}\Big)^{\Gamma^{p^n}}.   
								\end{tikzcd}
								\]
							\end{tiny}%
							To show $\mathrm{ker}(r_{\dagger,n})$ is finite and uniformly bounded for every  $n$, by  \cite[Theorem 3.5(1)]{o1},  it is enough to show that $H^0(\Q_n, V_j)=0$, for all $n$. In fact, it suffices to show that $H^0(\Q_{n,v}, V_j)=0$, for  $ v \mid p$. This will follow if we can establish for  $ v \mid p $ 
							\begin{equation}\label{hkjadlk:DK?ALF}
								H^0(\Q_{n,v}, V_f^+\otimes V_h(-j))=0 = H^0(\Q_{n,v}, V_f^-\otimes V_h(-j)).
							\end{equation}
							
							Further, to establish the second equality in \eqref{hkjadlk:DK?ALF}, it suffices to show, for $ v \mid p $			
							\begin{equation}\label{hkjadlk:DK?ALF2}
								H^0(\Q_{n,v}, V_f^-\otimes V^{+}_h(-j))=0 = H^0(\Q_{n,v}, V_f^-\otimes V^{-}_h(-j)).
							\end{equation}
							Let $K' = \Q_{n,v}(\mu_{p^\infty}) = \Q_v(\mu_{p^\infty})$. Then $G_{K'}$ acts on $V_f^-\otimes V^+_h(-j)$  and  $V_f^-\otimes V^-_h(-j)$) by  $\lambda_f\lambda_h^{-1}\theta$ and  $\lambda_f\lambda_h$ respectively where $\theta$ is a finite order character. Note that as $f,h$ are ordinary at $p$, by Ramanujan–Petersson conjecture,  $|\lambda_f(\mathrm{Frob}_{v})|_\mathbb{C} = p_{v}^{\frac{k-1}{2}}$ and $|\lambda_h(\mathrm{Frob}_{v})|_\mathbb{C} =p_{v}^{\frac{l-1}{2}}$, where $p_v$ is the cardinality of the residue field at $v$. Thus looking at the action of Frobenius at the prime $v$ dividing $p$, we deduce 
							$H^0(G_{K'}, V_f^-\otimes V^+_h)=H^0(G_{K'}, V_f^-\otimes V^-_h)=0$ and  \eqref{hkjadlk:DK?ALF2} follows. We can deduce the first vanishing result in \eqref{hkjadlk:DK?ALF} similarly. Putting these together, we get $\text{ker}(r_{\dagger,n})$ is finite and uniformly bounded.
							
							Next we show that $\mathrm{coker}(r_{\mathrm{str},n})$ is finite and uniformly bounded independent of $n$. Let $\tilde{Fr}_{v,n}$ be a fixed lift of $\mathrm{Frob}_v$ in $G_{K_{n,v}}$. Again by  \cite[Theorem 3.5(2)]{o1}, it is enough to show for all places  $v \mid p$ in $K$, the action of $\tilde{Fr}_{v,n}$ is non-trivial on the successive quotients of the filtration $\frac{F^iV_j}{F^{i+1}V_j}\otimes \chi_{\cyc}^{-i}$. In our case,  the action of $\tilde{Fr}_{v,n}$ on $\frac{F^iV_j}{F^{i+1}V_j}\otimes \chi_{\cyc}^{-i}$ is given by $\lambda_f^{\pm 1} \lambda_h^{\pm 1} \theta$ where $\theta$ is a finite order   character. As before, 
							we can show that the action of $\tilde{Fr}_{v,n}$ is non-trivial. Hence we deduce $\mathrm{coker}(r_{\mathrm{str},n})$ is finite and uniformly bounded independent of $n$.
							
							Next we will show $\mathrm{coker}(r_{\mathrm{Gr},n})$ is finite and uniformly bounded independent of $n$. 
							Note that the local condition defining the  strict Selmer differs with the Greenberg Selmer only at primes dividing $p$. Let $\phi_{\mathrm{Gr,str}}$ be the natural map  $S_{\mathrm{str}}(A_j/\Q_\cyc) \stackrel{\phi_{\mathrm{Gr,str}}}{\lra} S_{\mathrm{Gr}}(A_j/\Q_\cyc)$. From the definitions of Greenberg and strict Selmer group, for every $n$, there is a natural map from $\mathrm{coker}(r_{\mathrm{str},n}) \lra \mathrm{coker}(r_{\mathrm{Gr},n})$ such that the order of the  cokernel is bounded by  the order of  $\mathrm{coker}(\phi_{\mathrm{Gr,str}})$.  Thus it suffices to show  $\mathrm{coker}(\phi_{\mathrm{Gr,str}})$ is finite. Now the order of the $\mathrm{coker}(\phi_{\mathrm{Gr,str}})$ is bounded by the order of $\underset{w\mid p}{\oplus}H^1(\Q_{\cyc,w}^{\mathrm{unr}}/{\Q_{\cyc,w}}, A_j^{I_w})$ where $w$ is a prime in $\Q_\cyc$ dividing $p$ and $I_w$ is the inertia subgroup of $\bar{\mathbb{Q}}_p/\Q_{\cyc,w}$ at $w$. As $\text{Gal}(\Q_{\cyc,w}^{\mathrm{unr}}/{\Q_{\cyc,w}})$ is topologically cyclic, it suffices to show  $H^0(\Q_{\cyc,w}^{\mathrm{unr}}/{\Q_{\cyc,w}}, A_j^{I_w})=A_j^{G_{\Q_{\cyc, w}}}$ is finite for each $w \mid p$.  Thus it further reduces  to show $H^0(\Q_{\cyc, w}, V_j) =0$. This follows from the proofs of \eqref{hkjadlk:DK?ALF}, \eqref{hkjadlk:DK?ALF2} written above. 
						\end{proof}		
						For any subset $\Sigma' \subset \Sigma$, such that $p \not \in \Sigma'$ and $ B_j \in \{  A_f(\xi_{1}\omega_{p}^{-j}), A_j, A_f(\xi_{2}\omega_{p}^{-j})\} $ we define
						\begin{small}%
							\begin{align*}
								S^{\Sigma'}_{\mathrm{Gr}}(B_j[\pi]/L):=\mathrm{Ker}\bigg(H^1(\Q_\Sigma/L, B_j[\pi]) \lra \underset{v \in \Sigma_L \setminus \Sigma_{L}'}\prod \frac{H^1(L_v, B_j[\pi])}{H_\dagger^1(L_v, B_j[\pi])}\bigg),
							\end{align*}
						\end{small}%
						where $\Sigma_{L}'$ are the set of all primes in $L$ lying above $\Sigma'$.

						Let $B_j \in \{  A_f(\xi_{1}\omega_{p}^{-j}), A_j, A_f(\xi_{2}\omega_{p}^{-j})\}$. Note that if $H^0(G_L,B_j[\pi])=0$, then the natural map $H^1(\Q_\Sigma/L, B_j[\pi])  \lra H^1(\Q_\Sigma/L, B_j)[\pi]$ is an isomorphism. The following lemma is immediate from the definition of $H_{\mathrm{Gr}}^1(L_v, B_j[\pi])$. 

						\begin{lemma}\label{inside-out-lem}
							Let $B_j \in \{  A_f(\xi_{1}\omega_{p}^{-j}), A_j, A_f(\xi_{2}\omega_{p}^{-j})\}$. Assume $H^0(G_L, B_j)=0$. 
							Then the natural map $S^{\Sigma'}_\mathrm{Gr}(B_j[\pi]/L) \lra S^{\Sigma'}_\mathrm{Gr}(B_j/L)[\pi]$ is an isomorphism.
						\end{lemma}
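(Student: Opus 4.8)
The plan is to realize the map in the statement as the one induced by the inclusion $i_1\colon B_j[\pi] \inj B_j$ of \eqref{ifr} (with $r=1$), and then to deduce the isomorphism from a diagram chase, once we know that $i_1^*$ is an isomorphism on global cohomology onto the $\pi$-torsion. First I would record that in each of the three cases $B_j$ is of the form $V/T$ with $V$ a $K_\mathfrak{p}$-vector space and $T$ an $\mathcal{O}$-lattice, so $B_j$ is $\pi$-divisible and we have a short exact sequence $0 \to B_j[\pi] \to B_j \xrightarrow{\pi} B_j \to 0$ of $G_\Sigma(L)$-modules (the action factors through $G_\Sigma(L)$ since $B_j$ is unramified outside $\Sigma$, so $H^0(G_L, B_j) = H^0(\Q_\Sigma/L, B_j) = 0$, and likewise $H^0(\Q_\Sigma/L, B_j[\pi]) = 0$ as $B_j[\pi] \subseteq B_j$). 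Its long exact cohomology sequence,
\begin{small}
\begin{align*}
0 \to H^0(\Q_\Sigma/L, B_j[\pi]) \to H^0(\Q_\Sigma/L, B_j) &\xrightarrow{\pi} H^0(\Q_\Sigma/L, B_j) \\
&\to H^1(\Q_\Sigma/L, B_j[\pi]) \xrightarrow{i_1^*} H^1(\Q_\Sigma/L, B_j)[\pi] \to 0,
\end{align*}
\end{small}
then shows, using $H^0(\Q_\Sigma/L, B_j[\pi]) = 0$, that $i_1^*$ is an isomorphism onto $H^1(\Q_\Sigma/L, B_j)[\pi]$; this is precisely the remark preceding the lemma.

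Next I would treat the local conditions. For each $v \in \Sigma_L \setminus \Sigma'_L$ the same inclusion induces $i_1^*\colon H^1(L_v, B_j[\pi]) \to H^1(L_v, B_j)$, and by the definition of the $\pi$-level local condition we have $H^1_{\mathrm{Gr}}(L_v, B_j[\pi]) = (i_1^*)^{-1}\big(H^1_{\mathrm{Gr}}(L_v, B_j)\big)$. Consequently $i_1^*$ induces an \emph{injective} map on local quotients $H^1(L_v, B_j[\pi])/H^1_{\mathrm{Gr}}(L_v, B_j[\pi]) \inj H^1(L_v, B_j)/H^1_{\mathrm{Gr}}(L_v, B_j)$. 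I would then write down the commutative diagram whose rows are the defining exact sequences of $S^{\Sigma'}_{\mathrm{Gr}}(B_j[\pi]/L)$ and $S^{\Sigma'}_{\mathrm{Gr}}(B_j/L)$, with middle vertical arrow the isomorphism $i_1^*$ onto $H^1(\Q_\Sigma/L, B_j)[\pi]$ and right vertical arrow the product over $v \in \Sigma_L \setminus \Sigma'_L$ of these injections.

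A diagram chase then finishes the argument. A class $c \in S^{\Sigma'}_{\mathrm{Gr}}(B_j[\pi]/L)$ has $\mathrm{loc}_v(c) \in H^1_{\mathrm{Gr}}(L_v, B_j[\pi])$, hence $\mathrm{loc}_v(i_1^* c) = i_1^*(\mathrm{loc}_v c) \in H^1_{\mathrm{Gr}}(L_v, B_j)$, so $i_1^*$ carries $S^{\Sigma'}_{\mathrm{Gr}}(B_j[\pi]/L)$ into $S^{\Sigma'}_{\mathrm{Gr}}(B_j/L) \cap H^1(\Q_\Sigma/L, B_j)[\pi] = S^{\Sigma'}_{\mathrm{Gr}}(B_j/L)[\pi]$; injectivity is inherited from the middle arrow; and for surjectivity, given $d \in S^{\Sigma'}_{\mathrm{Gr}}(B_j/L)[\pi]$ I lift it to $c \in H^1(\Q_\Sigma/L, B_j[\pi])$ with $i_1^* c = d$ and observe that $i_1^*(\mathrm{loc}_v c) = \mathrm{loc}_v d \in H^1_{\mathrm{Gr}}(L_v, B_j)$ forces $\mathrm{loc}_v c \in (i_1^*)^{-1}(H^1_{\mathrm{Gr}}(L_v, B_j)) = H^1_{\mathrm{Gr}}(L_v, B_j[\pi])$ for all $v \in \Sigma_L \setminus \Sigma'_L$, i.e.\ $c \in S^{\Sigma'}_{\mathrm{Gr}}(B_j[\pi]/L)$. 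I do not expect a genuine obstacle in this argument; the only inputs that are not purely formal are the $\pi$-divisibility of $B_j$ (which yields the clean four-term exact sequence above) and the fact that the $\pi$-level local conditions are \emph{defined} as preimages of those on $B_j$ — this is exactly what makes the right vertical arrow injective and hence what powers the diagram chase.
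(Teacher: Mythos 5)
Your proposal is correct and is exactly the argument the paper has in mind: the paper states the lemma as "immediate" from the remark that $H^0=0$ makes $H^1(\Q_\Sigma/L,B_j[\pi])\to H^1(\Q_\Sigma/L,B_j)[\pi]$ an isomorphism together with the definition of $H^1_{\mathrm{Gr}}(L_v,B_j[\pi])$ as a preimage, and your write-up simply spells out the resulting diagram chase. No gaps.
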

						
						\subsection{Explicit description of  Selmer Groups}\label{subsec: explicit selmer}
						Recall that  $ f \in S_{k}(\Gamma_{0}(N), \eta)$, $ h \in S_{l}(\Gamma_{0}(I), \psi)$ and let $ m $  be as in \eqref{definition m}.  From now on, we choose and fix $ \Sigma := \{ \ell : \ell \mid pNI \infty\}  $. Also recall $ \Sigma_0 := \{ \ell : \ell  \mid m\} $.  Let  $ \Sigma^{\infty} $ (resp. $ \Sigma_{0}^\infty $) be the set of primes in $ \Q_{\cyc} $ lying above $ \Sigma $ (resp. $ \Sigma_{0}$).
						In this subsection, we give a  more explicit description of  $ H^{1}_{\mathrm{Gr}}(\Q_{\cyc,w},A_j[\pi]) $ and $ H^{1}_{\mathrm{Gr}}(\Q_{\cyc,w},A_f(\xi_i\omega_p^{-j})[\pi])  $ for $  w \in \Sigma^{\infty} \setminus \Sigma^{\infty}_{0} $.
						\begin{proposition}\label{prop:Greenberg at p}
							Let $ w \in \Sigma^\infty$ and  $w \mid p $. Assume the order of $\psi |I_{\cyc,w}$ is co-prime to $p$.
							Then we have 
							\begin{align*}
								H^{1}_{\mathrm{Gr}} (\Q_{\cyc,w},A_j[\pi]) =
								\mathrm{Ker}\big(H^1(G_{\Q_{\cyc,w}}, A_{j}[\pi]) \lra H^1(I_{\cyc,w}, A^{-}_{j}[\pi])\big).
							\end{align*}
						\end{proposition}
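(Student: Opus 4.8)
The statement asserts that at a prime $w \mid p$ in $\Q_{\cyc}$, under the hypothesis that $\psi|_{I_{\cyc,w}}$ has order prime to $p$, the Greenberg local condition defining the residual Selmer group $H^1_{\mathrm{Gr}}(\Q_{\cyc,w}, A_j[\pi])$ — which a priori is $(i_1^*)^{-1}$ of the kernel of $H^1(G_{\Q_{\cyc,w}}, A_j) \to H^1(I_{\cyc,w}, A_j^-)$ — coincides with the naive residual condition $\mathrm{Ker}(H^1(G_{\Q_{\cyc,w}}, A_j[\pi]) \to H^1(I_{\cyc,w}, A_j^-[\pi]))$. The plan is to unwind the definition: by Definition~\ref{def: dagger cohomology} and the construction of $H^1_\dagger(L_v, B_j[\pi^r])$, an element $c \in H^1(G_{\Q_{\cyc,w}}, A_j[\pi])$ lies in $H^1_{\mathrm{Gr}}$ iff its image $i_1^*(c) \in H^1(G_{\Q_{\cyc,w}}, A_j)$ dies in $H^1(I_{\cyc,w}, A_j^-)$. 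Chasing the commutative square relating the reduction maps $A_j[\pi] \to A_j$ and $A_j^-[\pi] \to A_j^-$ with the restriction-to-inertia maps, one sees immediately that the naive condition implies the Greenberg condition; the content is the reverse inclusion, i.e. that if $i_1^*(c)|_{I_{\cyc,w}}$ lands in $\mathrm{Im}\big(H^0(I_{\cyc,w}, A_j^-/A_j^-[\pi]) \to H^1(I_{\cyc,w}, A_j^-[\pi])\big) \cdot (\text{stuff})$, then already $c|_{I_{\cyc,w}} = 0$ in $H^1(I_{\cyc,w}, A_j^-[\pi])$.

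First I would reduce everything to a statement about $A_j^-[\pi]$ as an $I_{\cyc,w}$-module. Recall from \eqref{def A minus j} that $A_j^- = T_h \otimes A_f^-(j)$, so $A_j^-[\pi] = (T_h/\pi) \otimes A_f^-[\pi](j)$ as $G_p$-modules, and $A_f^-$ is unramified at $p$ by $p$-ordinarity of $f$. Hence the inertia action on $A_j^-[\pi]$ factors through the action on $(T_h/\pi)|_{I_{\cyc,w}}$. The key input is the hypothesis that $\psi|_{I_{\cyc,w}}$ has order prime to $p$: combined with the short exact sequence $0 \to \bar\xi_1 \to T_h/\pi \to \bar\xi_2 \to 0$ and the fact that $\det(\bar\rho_h)|_{I_{\cyc,w}} = \bar\psi \bar\chi_{\cyc}^{l-1}|_{I_{\cyc,w}}$ while $\bar\chi_{\cyc}|_{I_{\cyc,w}}$ has order prime to $p$ too (as $\Q_{\cyc}/\Q$ has already absorbed the $p$-part of cyclotomic ramification — indeed $\chi_{\cyc}(\Gamma) = 1+p\Z_p$), the semisimple characters $\bar\xi_1, \bar\xi_2$ restricted to $I_{\cyc,w}$ have order prime to $p$. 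Therefore $H^0(I_{\cyc,w}, A_j^-[\pi])$, $H^1(I_{\cyc,w}, A_j^-[\pi])$ and their counterparts with $A_j^-$ in place of $A_j^-[\pi]$ are related by a long exact sequence in which the relevant connecting/reduction maps are controlled because $\pi$-torsion and the $I_{\cyc,w}$-invariants interact well. Concretely I would argue that the reduction map $A_j^-[\pi^2] \to A_j^-[\pi]$ induces a surjection on $H^0(I_{\cyc,w}, -)$, or equivalently that $H^1(I_{\cyc,w}, A_j^-[\pi]) \to H^1(I_{\cyc,w}, A_j^-)[\pi]$ is injective, which is exactly what forces the Greenberg condition and the naive condition to agree.

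The mechanism for the last injectivity is: from $0 \to A_j^-[\pi] \to A_j^- \xrightarrow{\pi} A_j^- \to 0$ one gets $H^0(I_{\cyc,w}, A_j^-) \xrightarrow{\pi} H^0(I_{\cyc,w}, A_j^-) \to H^1(I_{\cyc,w}, A_j^-[\pi]) \to H^1(I_{\cyc,w}, A_j^-)$, and the kernel of $H^1(I_{\cyc,w}, A_j^-[\pi]) \to H^1(I_{\cyc,w}, A_j^-)$ is the cokernel of multiplication by $\pi$ on $H^0(I_{\cyc,w}, A_j^-) = (A_j^-)^{I_{\cyc,w}}$. Since $A_j^-$ is $\pi$-divisible and $(A_j^-)^{I_{\cyc,w}}$ is a cofinitely generated $\mathcal{O}$-module on which $\pi$ acts with finite cokernel only if there is genuine $I_{\cyc,w}$-invariance; here the prime-to-$p$ order of the inertia action on $A_j^-[\pi]$ means that either $(A_j^-)^{I_{\cyc,w}}$ is all of $A_j^-$ (when the relevant character is trivial on inertia) — in which case $\pi$ is surjective on it and the cokernel vanishes — or $(A_j^-)^{I_{\cyc,w}} = 0$. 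Either way the cokernel of $\pi$ on $(A_j^-)^{I_{\cyc,w}}$ is $0$, giving the injectivity. Then an identical (easier) analysis for $A_j$ itself, together with the five lemma applied to the two restriction sequences for $A_j$ and $A_j^-$, yields the claimed equality of Selmer local conditions. The main obstacle I anticipate is being careful about the precise hypothesis needed: one must check that prime-to-$p$ order of $\psi|_{I_{\cyc,w}}$ (plus the automatic prime-to-$p$-ness of $\omega_p^{-j}$ and of $\chi_{\cyc}|_{I_{\cyc,w}}$ on $\Q_{\cyc}$) really does force the inertia action on each Jordan--Hölder piece of $A_j^-[\pi]$ to have prime-to-$p$ order, since $A_j^-$ involves $T_h$ (not just $T_h/\pi$) twisted by the unramified $A_f^-$; but as $A_f^-$ is unramified at $p$ this reduces cleanly to the ramification of $\rho_h$ on inertia, which is governed by $\psi|_{I_{\cyc,w}}$ up to the cyclotomic twist, and the argument goes through.
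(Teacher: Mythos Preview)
Your reduction is correct and matches the paper exactly: unwind the definition of $H^1_{\mathrm{Gr}}(\Q_{\cyc,w},A_j[\pi])$ to the commutative square, and reduce to showing that the natural map $\epsilon: H^1(I_{\cyc,w},A_j^-[\pi])\to H^1(I_{\cyc,w},A_j^-)$ is injective, equivalently that $(A_j^-)^{I_{\cyc,w}}/\pi=0$. The gap is in how you justify this last vanishing.

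You assert a dichotomy: either $(A_j^-)^{I_{\cyc,w}}=A_j^-$ or $(A_j^-)^{I_{\cyc,w}}=0$. This would be valid for a corank-$1$ module on which inertia acts through a single character of prime-to-$p$ order, but $A_j^-=T_h\otimes A_f^-(j)$ has $\mathcal{O}$-corank $2$, and inertia acts on it (up to the unramified twist by $A_f^-$) via $\rho_h|_{I_{\cyc,w}}\sim\begin{psmallmatrix}\psi\omega_p^{l-1}&*\\0&1\end{psmallmatrix}$. The two diagonal characters are different in general, so the invariants can perfectly well be a proper nonzero submodule. Moreover the filtration you invoke, $0\to\bar\xi_1\to T_h/\pi\to\bar\xi_2\to 0$, exists only modulo $\pi$: it filters $A_j^-[\pi]$, not $A_j^-$, and therefore says nothing directly about $(A_j^-)^{I_{\cyc,w}}/\pi$. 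Your closing remarks about ``an identical analysis for $A_j$ together with the five lemma for $A_j$ and $A_j^-$'' do not correspond to a coherent step either; there is no further filtration of $A_j^-$ in your setup to which a five-lemma argument could be applied.

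The paper's fix is precisely to supply such a filtration, namely the $p$-ordinary filtration $0\to A_h^+\to A_h\to A_h^-\to 0$. Tensoring with the unramified $A_f^-$ and twisting by $(j)$ gives a filtration of $A_j^-$ by corank-$1$ pieces, and on each piece $I_{\cyc,w}$ acts through a \emph{single} character, $\psi\omega_p^{l-1-j}$ and $\omega_p^{-j}$ respectively. Here is where the hypothesis that $\psi|_{I_{\cyc,w}}$ has order prime to $p$ enters (and is actually needed): it forces both characters to have prime-to-$p$ order, so on each graded piece your dichotomy \emph{does} hold, the maps $\epsilon_1,\epsilon_2$ for $A_h^\pm(j)$ are isomorphisms, and then a five-lemma/diagram chase across the two long exact sequences in $I_{\cyc,w}$-cohomology gives $\ker(\epsilon')=0$. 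So your plan is salvageable, but only after replacing the residual $(\bar\xi_1,\bar\xi_2)$ filtration by the integral $p$-ordinary $(A_h^+,A_h^-)$ filtration.
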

						
						\begin{proof} 
							
							First note that by  definition of $ H^1_{\text{Gr}}(\Q_{\cyc,w},A_j[\pi]) $, we have the following exact sequence
							\begin{equation*}
								0\lra H^1_{\text{Gr}}(\Q_{\cyc,w},A_j[\pi]) \lra H^1(G_{\Q_{\cyc,w}},A_j[\pi]) \stackrel{\varphi}{\lra} H^1(I_{\cyc,w},A^-_j). 
							\end{equation*}			
							Consider the commutative diagram:
							\begin{small}
								\begin{equation}\label{dqGYaidhkAdoaU}
									\begin{tikzcd}
										&   & H^1(G_{\Q_{\cyc,w}},A_j[\pi]) \ar[r,"\nu"] \ar[d,"\cong"]  &  H^1(I_{\cyc,w},A^-_j[\pi])  \ar[d,"\epsilon"] \\
										0 \ar[r] & H^1_{\text{Gr}}(G_{\Q_{\cyc,w}},A_j[\pi]) \ar[r] & H^1(G_{\Q_{\cyc,w}},A_j[\pi]) \ar[r,"\kappa"] & H^1(I_{\cyc,w},A^-_j) .
									\end{tikzcd}
								\end{equation}
							\end{small}%
							From the diagram \eqref{dqGYaidhkAdoaU}, we see that $H^1_{\text{Gr}}(\Q_{\cyc,w},A_j[\pi]) = \text{ker}(\kappa) = \nu^{-1}(\text{ker}(\epsilon)).$ Thus it suffices to show $\text{ker}(\epsilon) =0 $. 
							As  $T_f^-$ is unramified at $ v $, we have  $H^1(I_{\cyc,w}, A_j^-[\pi]) \cong H^1(I_{\cyc,w}, A_h(j)[\pi]) \otimes T_f^-$ and $H^1(I_{\cyc,w}, A_j^-) \cong H^1(I_{\cyc,w}, A_h(j)) \otimes T_f^-$. Let $\epsilon'$ be the natural map  $H^1(I_{\cyc,w}, A_h(j)[\pi]) \lra H^1(I_{\cyc,w}, A_h(j))$. Further, it suffices to show ker$ (\epsilon') =0 $.
							
							As $ I_{\cyc, w} $ has $ p $-th cohomological dimension $ 1 $, $ H^{2}(I_{\cyc,w}, A_h^{-}(j)[\pi]) =0 = H^{2}(I_{\cyc,w}, A_h^{-}(j)) $ and  we have 
							\begin{small}
								\[
								\begin{tikzcd}
										H^0(I_{\cyc,w}, A_h^-(j)[\pi]) \ar[r] \ar[d,"\cong"] & H^1(I_{\cyc,w}, A_h^+(j)[\pi]) \ar[r]\ar[d,"\epsilon_1"] & H^1(I_{\cyc,w} ,  A_h(j)[\pi]) \ar[r]\ar[d,"\epsilon' "] &  H^1(I_{\cyc,w}, A_h^-(j)[\pi]) \ar[d, "\epsilon_2"]  \ar[r]& 0 \\
										H^0(I_{\cyc,w}, A_h^-(j))[\pi]  \ar[r]& H^1(I_{\cyc,w},  A_h^+(j))[\pi] \ar[r] & H^1(I_{\cyc,w},A_h(j))[\pi] \ar[r] & H^1(I_{\cyc,w}, A_h^-(j))[\pi] \ar[r]& 0 . 
									\end{tikzcd}
									\]
								\end{small}%
								Note as $A_h^{-}$ is unramified, $I_{\cyc,w}$ acts on the  $p$-primary group $A_h^-(j)[\pi]$ by the character $\omega_p^j$ which has order prime to $p$. Thus either $(A_h^{-}(j)[\pi])^{I_{\cyc,w}}  =0$ or we have $(A_h^{-}(j))^{I_{\cyc,w}}$ is divisible. In either case, it follows that $\frac{(A_h^{-}(j))^{I_{\cyc,w}}}{\pi} =0$ and hence $\epsilon_2$ is an isomorphism. As $\psi_h \omega_{p}^{l-1}|I_{\cyc,w}$ has order prime to $p$, a similar argument shows that $\epsilon_1$ is also an isomorphism. By the Five lemma, it follows that 
								ker$(\epsilon')$=0.  
							\end{proof}

				\begin{rem}\label{189}
					Assume $p >k$. Then note that $A_j^-[\pi] = (A_f^-\otimes T_h)(j) [\pi] \cong (A_f^-[\pi]\otimes T_h)(j) \cong (A_f^-[\pi]\otimes_{\mathcal{O}/\pi} \frac{T_h}{\pi})(j) \cong ((A_f[\pi])_{I_p}\otimes_{\mathcal{O}/\pi} A_h[\pi] )(j)$. The last equality is true as $p>k$ by \cite[(3.5), (3.6)]{p-selmer}.
					This  shows that if $p>k$ then  $A_j^-[\pi]$ is determined by $A_f[\pi] $ and $A_h[\pi]$.

				\end{rem}
				
				\begin{proposition}\label{prop:Greenberg away from N}
					Let  $ w \in \Sigma^\infty$, $ w \nmid N pm $  and $ w \mid I $.  Assume  $ \psi \lvert I_{\cyc,w} $ has order co-prime to $ p $. Then we have 
					\begin{align*}
						H^{1}_{\mathrm{Gr}} (\Q_{\cyc,w},A_j[\pi]) =
						\mathrm{Ker}\big(H^1(G_{\Q_{\cyc,w}}, A_{j}[\pi]) \lra H^1(I_{\cyc,w}, A_{j}[\pi])\big).	 	
					\end{align*}
				\end{proposition}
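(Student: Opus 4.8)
The statement to prove is the analogue of Proposition~\ref{prop:Greenberg at p} at a prime $w \in \Sigma^\infty$ with $w \nmid Npm$ but $w \mid I$. The key point is that for such $w$ we have $w \nmid p$, so the local condition $H^1_{\mathrm{Gr}}(\Q_{\cyc,w},A_j[\pi])$ is by Definition~\ref{def: dagger cohomology} the kernel of restriction to $H^1(I_{\cyc,w},A_j)$ (not $A_j^-$), and the claim is simply that we may replace $A_j$ by $A_j[\pi]$ in this kernel. As in the proof of Proposition~\ref{prop:Greenberg at p}, I would set up the commutative square comparing $\nu\colon H^1(G_{\Q_{\cyc,w}},A_j[\pi]) \to H^1(I_{\cyc,w},A_j[\pi])$ with $\kappa\colon H^1(G_{\Q_{\cyc,w}},A_j[\pi]) \to H^1(I_{\cyc,w},A_j)$, where $\kappa$ factors through $\nu$ followed by the map $\epsilon\colon H^1(I_{\cyc,w},A_j[\pi]) \to H^1(I_{\cyc,w},A_j)$ induced by $i_1\colon A_j[\pi] \hookrightarrow A_j$. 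Then $H^1_{\mathrm{Gr}}(\Q_{\cyc,w},A_j[\pi]) = \nu^{-1}(\ker\epsilon)$, and it suffices to show $\ker\epsilon = 0$.

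\textbf{Reducing to a Frobenius-invariants computation.} To show $\ker\epsilon = 0$, I would use the long exact sequence in $I_{\cyc,w}$-cohomology attached to $0 \to A_j[\pi] \to A_j \xrightarrow{\pi} A_j \to 0$: the cokernel of $H^0(I_{\cyc,w},A_j)\xrightarrow{\pi} H^0(I_{\cyc,w},A_j)$ injects into $H^1(I_{\cyc,w},A_j[\pi])$, and $\ker\epsilon$ is precisely this image, i.e. $\ker\epsilon \cong H^0(I_{\cyc,w},A_j)/\pi$. So it is enough to show $H^0(I_{\cyc,w},A_j)$ is $\pi$-divisible, equivalently (since $A_j$ is a cofinitely generated cofree $\mathcal{O}$-module) that $A_j^{I_{\cyc,w}}$ is divisible. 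Now $w \nmid p$, so $w$ corresponds to a prime $\ell \neq p$; since $w \nmid Nm$, the representation $A_f$ is unramified at $w$ (because $f$ is unramified outside $Np$ and $w\nmid p$), hence $A_j^{I_{\cyc,w}} = (T_f(\chi_{\cyc}^{-j}) \otimes A_h)^{I_{\cyc,w}} = T_f(\chi_{\cyc}^{-j}) \otimes A_h^{I_{\cyc,w}}$ as $T_f$ is a free $\mathcal{O}$-module with unramified Galois action. Thus it reduces to showing $A_h^{I_{\cyc,w}}$ is divisible. By Lemma~\ref{lem: exactly divides}, since $w\mid I_0$ and $w\nmid pm$ we have $\ell \| I_0$, $\ell \| M_0$ and $\ell \| \mathrm{cond}(\psi)$, so by \cite[Theorem 3.26]{Hida3} the restriction $\rho_h|_{G_\ell}$ is, up to unramified twist, an extension involving a ramified character whose ramification is governed by $\psi$; more precisely $\rho_h|_{I_\ell}$ acts through $\psi|_{I_\ell}$ on a line and trivially on the quotient line. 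Since by hypothesis $\psi|_{I_{\cyc,w}}$ has order prime to $p$, the same filtration argument as in Proposition~\ref{prop:Greenberg at p} (treating the graded pieces $A_h^{\pm}$ separately and using that a $p$-primary group with an action of a group of order prime to $p$ has divisible invariants) shows $A_h^{I_{\cyc,w}}$ is divisible.

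\textbf{Main obstacle.} The delicate point is the description of $\rho_h|_{I_{\cyc,w}}$: one must know that at the primes $\ell \| I$ exactly dividing the conductor, the inertial action is "abelian of the nebentypus type", so that the order-prime-to-$p$ hypothesis on $\psi|_{I_{\cyc,w}}$ forces $p$-divisibility of the relevant inertia invariants. This is where Lemma~\ref{lem: exactly divides} and the structure theory of local components of automorphic representations (via \cite[Theorem 3.26]{Hida3}) enter; once the shape of $\rho_h|_{I_{\cyc,w}}$ is pinned down, the cohomological bookkeeping is the same five-lemma / snake-lemma argument already carried out in Proposition~\ref{prop:Greenberg at p}. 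I would then conclude $\ker\epsilon = 0$, hence $H^1_{\mathrm{Gr}}(\Q_{\cyc,w},A_j[\pi]) = \nu^{-1}(0) = \ker\big(H^1(G_{\Q_{\cyc,w}},A_j[\pi]) \to H^1(I_{\cyc,w},A_j[\pi])\big)$, which is the asserted formula.
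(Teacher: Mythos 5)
Your proposal is correct and follows essentially the same route as the paper: both reduce to showing $\ker\epsilon=0$ for $\epsilon\colon H^1(I_{\cyc,w},A_j[\pi])\to H^1(I_{\cyc,w},A_j)$, and both rely on Lemma~\ref{lem: exactly divides} together with \cite[Theorem 3.26]{Hida3} to get $\rho_h|_{I_{\cyc,w}}\cong \psi_h\oplus 1$, so that the prime-to-$p$ order of $\psi|_{I_{\cyc,w}}$ makes the inertia invariants $\pi$-divisible. The only caveat is notational: the pieces to treat separately are the two inertial characters from this local splitting at $\ell\neq p$, not the $p$-ordinary filtration $A_h^{\pm}$ (defined only at $p$), though your surrounding discussion makes clear this is what you intend.
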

				\begin{proof}
					From Lemma~\ref{lem: exactly divides} and  \cite[Theorem 3.26(iii)(a)]{Hida3}, we have
					$\rho_{h}|_{I_{\cyc,w}} \sim \begin{psmallmatrix} \psi_h  & 0 \\   0 &1 \end{psmallmatrix}$.
					By the definition of $ H^1_{\text{Gr}}(G_{\Q_{\cyc,w}},A_j[\pi]) $, we have the following exact sequence
					\begin{equation*}
						0\lra H^1_{\text{Gr}}(\Q_{\cyc,w},A_j[\pi]) \lra H^1(G_{\Q_{\cyc,w}},A_j[\pi]) \longrightarrow H^1(I_{\cyc,w},A_j). 
					\end{equation*}
					The rest of the argument is similar to the proof of Proposition~\ref{prop:Greenberg at p}.
				\end{proof}

				\begin{proposition}\label{prop:Greenberg dividing N}
					Recall that $ M_0 $ is the prime to $ p $-part of  $ \mathrm{cond}(\bar{\xi}_1\bar{\omega}_p^{1-l}) \mathrm{cond}(\bar{\xi_2})$. Let $ w \in \Sigma^\infty $, $ w \mid N  $ and $ w \nmid pm $. Assume $ (N,M_0) =1 $. Then we have 
					\begin{small}
						\begin{align*}
							H^{1}_{\mathrm{Gr}} (\Q_{\cyc,w},A_j[\pi]) =
							\mathrm{Ker}\Big(H^1(G_{\Q_{\cyc,w}}, A_{j}[\pi]) \lra \Big(H^1(I_{\cyc,w}, A_{f}(j))[\pi] \otimes \frac{T_h}{\pi}\Big)^{\frac{G_{\Q_{\cyc,w}}}{I_{\cyc,w}}}\Big).
						\end{align*}
					\end{small}%
				\end{proposition}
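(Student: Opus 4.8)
The plan is to compute the local condition $H^1_{\mathrm{Gr}}(\Q_{\cyc,w}, A_j[\pi])$ directly from its definition, using the specific form of $\rho_h$ at $w$. Since $w \mid N$, $w \nmid pm$ and $(N, M_0) = 1$, the prime $w$ does \emph{not} divide $I_0$ (as $M_0 \mid I_0$ would force $w \mid M_0$ if $w \mid I_0$, contradicting Lemma~\ref{lem: exactly divides} which would put $w \mid m$); hence $h$ is unramified at $w$, so $\rho_h|_{G_{\Q_{\cyc,w}}}$ is unramified and $T_h/\pi$ is an unramified $G_{\Q_{\cyc,w}}$-module. In contrast, $\rho_f$ may be ramified at $w$. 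First I would recall that, as in Definition~\ref{def: dagger cohomology}, for $w \nmid p$ we have $H^1_{\mathrm{Gr}}(\Q_{\cyc,w}, A_j[\pi]) = \Ker\big(H^1(G_{\Q_{\cyc,w}}, A_j[\pi]) \to H^1(I_{\cyc,w}, A_j[\pi])\big)$, and the inflation-restriction sequence identifies this kernel with $H^1(G_{\Q_{\cyc,w}}/I_{\cyc,w}, A_j[\pi]^{I_{\cyc,w}})$.

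The key step is to compute $A_j[\pi]^{I_{\cyc,w}}$. Since $A_j[\pi] \cong A_f(j)[\pi] \otimes_{\mathcal{O}/\pi} (T_h/\pi)$ and $T_h/\pi$ is unramified at $w$, taking $I_{\cyc,w}$-invariants commutes with the tensor factor $T_h/\pi$, so $A_j[\pi]^{I_{\cyc,w}} \cong A_f(j)[\pi]^{I_{\cyc,w}} \otimes T_h/\pi$. Then I would feed this back into the inflation-restriction description: $H^1_{\mathrm{Gr}}(\Q_{\cyc,w}, A_j[\pi]) = \Ker\big(H^1(G_{\Q_{\cyc,w}}, A_j[\pi]) \to H^1(I_{\cyc,w}, A_j[\pi])\big)$, and I want to rewrite the target so that it matches the asserted formula $\big(H^1(I_{\cyc,w}, A_f(j))[\pi] \otimes T_h/\pi\big)^{G_{\Q_{\cyc,w}}/I_{\cyc,w}}$. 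For this I use two things: (i) since $T_h/\pi$ is unramified, $H^1(I_{\cyc,w}, A_j[\pi]) \cong H^1(I_{\cyc,w}, A_f(j)[\pi]) \otimes T_h/\pi$ as $G_{\Q_{\cyc,w}}/I_{\cyc,w}$-modules; and the image of $H^1(G_{\Q_{\cyc,w}}, A_j[\pi])$ lands in the $G_{\Q_{\cyc,w}}/I_{\cyc,w}$-invariants of $H^1(I_{\cyc,w}, A_j[\pi])$; (ii) under the hypothesis \ref{irr-f} (so $H^0(G_{\Q_{\cyc,w}}, A_f[\pi]) = 0$), the natural map $H^1(I_{\cyc,w}, A_f(j)[\pi]) \to H^1(I_{\cyc,w}, A_f(j))[\pi]$ is an isomorphism — this is exactly the kind of statement proved via the long exact sequence from $0 \to A_f(j)[\pi] \to A_f(j) \xrightarrow{\pi} A_f(j) \to 0$, as used in Proposition~\ref{prop:Greenberg at p}. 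Combining (i) and (ii) rewrites the target of the restriction map as $H^1(I_{\cyc,w}, A_f(j))[\pi] \otimes T_h/\pi$, and the image of the global cohomology lands in its $G_{\Q_{\cyc,w}}/I_{\cyc,w}$-invariants, yielding the claimed formula.

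I expect the main obstacle to be bookkeeping the two distinct roles played by the tensor factors of $A_j[\pi]$ — the $A_f$-factor carries the (possibly ramified) local behavior while the $T_h$-factor is unramified — and making sure the identification of $H^1(I_{\cyc,w}, A_j[\pi])$ with $H^1(I_{\cyc,w}, A_f(j)[\pi]) \otimes T_h/\pi$ is genuinely $G_{\Q_{\cyc,w}}/I_{\cyc,w}$-equivariant, so that passing to invariants is legitimate. One also needs to verify that $H^0(G_{\Q_{\cyc,w}}, A_f(j)[\pi]) = 0$ in order to invoke step (ii); this follows from \ref{irr-f} since an irreducible two-dimensional $\bar{\rho}_f$ has no nonzero $G_{\Q_{\cyc}}$-fixed vectors (as $\bar{\rho}_f$ remains nontrivial on $G_{\Q_{\cyc}}$, $\Q_\cyc$ being abelian over $\Q$), and tensoring by the character $\omega_p^{-j}$ does not change this. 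The rest of the argument is a routine diagram chase parallel to Propositions~\ref{prop:Greenberg at p} and \ref{prop:Greenberg away from N}, so I would keep it brief and refer back to those proofs for the repeated steps.
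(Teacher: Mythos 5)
There is a genuine gap, and it sits exactly at the point where this proposition differs from the two preceding ones. First, your starting identity misreads the definition: by construction $H^1_{\mathrm{Gr}}(\Q_{\cyc,w}, A_j[\pi])$ is the preimage of $H^1_{\mathrm{Gr}}(\Q_{\cyc,w}, A_j)$ under $i^*\colon H^1(G_{\Q_{\cyc,w}}, A_j[\pi]) \to H^1(G_{\Q_{\cyc,w}}, A_j)$, i.e.\ the kernel of the \emph{composite} map into $H^1(I_{\cyc,w}, A_j)$ — not the kernel of $H^1(G_{\Q_{\cyc,w}}, A_j[\pi]) \to H^1(I_{\cyc,w}, A_j[\pi])$. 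The two agree only when the map $\epsilon\colon H^1(I_{\cyc,w}, A_j[\pi]) \to H^1(I_{\cyc,w}, A_j)$ is injective, which is precisely what is verified (via $\ker(\epsilon) \cong (A_\bullet)^{I_{\cyc,w}}/\pi = 0$) in Propositions~\ref{prop:Greenberg at p} and \ref{prop:Greenberg away from N}, and precisely what can fail here.

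Second, your step (ii) — that $H^1(I_{\cyc,w}, A_f(j)[\pi]) \to H^1(I_{\cyc,w}, A_f(j))[\pi]$ is an isomorphism — is not a consequence of \ref{irr-f}. That hypothesis is global; the kernel of this map is $A_f(j)^{I_{\cyc,w}}/\pi$, which has nothing to do with $H^0$ of the full decomposition group, and at a prime $w \mid N$ where $\rho_f$ is ramified the inertia invariants $A_f(j)^{I_{\cyc,w}}$ need not be divisible (divisibility is exactly the content of \cite[Lemma 4.1.2]{epw}, which requires the local conductors of $\rho$ and $\bar{\rho}$ to agree, as in Proposition~\ref{prop:Greenberg for modular form}(ii); no such condition is assumed here). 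This is the very reason the statement is phrased with $H^1(I_{\cyc,w}, A_f(j))[\pi]$ rather than $H^1(I_{\cyc,w}, A_f(j)[\pi])$. The correct route is the one the paper takes: from the definition one has the exact sequence $0 \to H^1_{\mathrm{Gr}}(\Q_{\cyc,w}, A_j[\pi]) \to H^1(G_{\Q_{\cyc,w}}, A_j[\pi]) \xrightarrow{\varphi} H^1(I_{\cyc,w}, A_j)^{\Delta_w}$; since the source of $\varphi$ is killed by $\pi$, its image lies in $H^1(I_{\cyc,w}, A_j)^{\Delta_w}[\pi] = \big(H^1(I_{\cyc,w}, A_j)[\pi]\big)^{\Delta_w}$; and because $w \nmid I$ (your deduction of this from $(N,M_0)=1$ and $w \nmid m$ is fine) the module $T_h$ is unramified and $\mathcal{O}$-free, so $H^1(I_{\cyc,w}, A_j)[\pi] = \big(H^1(I_{\cyc,w}, A_f(j)) \otimes T_h\big)[\pi] = H^1(I_{\cyc,w}, A_f(j))[\pi] \otimes T_h/\pi$. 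No comparison between $H^1(I_{\cyc,w}, A_f(j)[\pi])$ and $H^1(I_{\cyc,w}, A_f(j))[\pi]$ is needed or available.
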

				\begin{proof}
					Let $ \Delta_v = G_{\Q_{\cyc,w}}/I_{\cyc,w}  = \mathrm{Gal}(\Q_{\cyc,w}^{\text{unr}}/\Q_{\cyc,w}) $. Using inflation-restriction sequence, we obtain the image of $  H^1(G_{\Q_{\cyc,w}},A_j) \rightarrow H^1(I_{\cyc,w},A_j) $ lies in $H^1(I_{\cyc,w},A_j) ^{\Delta_v}$. From the  definition of $ H^1_{\text{Gr}}(\Q_{\cyc,w},A_j[\pi]) $, we have the following exact sequence
					\begin{small}
						\begin{equation*}
							0\lra H^1_{\text{Gr}}(G_{\Q_{\cyc,w}},A_j[\pi]) \lra H^1(G_{\Q_{\cyc,w}},A_j[\pi]) \xrightarrow{\varphi}  H^1(I_{\cyc,w},A_j)^{\Delta_v}. 
						\end{equation*}
					\end{small}%
					Since the image of $ \varphi $ is  $ \pi $-torsion, we get 
					\begin{small}
						\begin{equation}\label{eq:Greenberg away from N}
							0\lra H^1_{\text{Gr}}(G_{\Q_{\cyc,w}},A_j[\pi]) \lra H^1(G_{\Q_{\cyc,w}},A_j[\pi]) \stackrel{\varphi}{\longrightarrow} H^1(I_{\cyc,w},A_j)^{\Delta_w}[\pi] = \big(H^1(I_{\cyc,w},A_j)[\pi] \big)^{\Delta_w} . 
						\end{equation}
					\end{small}%
					As $ w \nmid M_0  pm $, we have $ w \nmid I  $. 
					Since $ A_j = A_{f} \otimes T_h (j)$ and $ T_h $ are unramified at $ v $, we obtain
					$
					H^1(I_{\cyc,w},A_j)[\pi] = \big(H^1(I_{\cyc,w},A_f(j))  \otimes T_{h} \big) [\pi]=  H^1(I_{\cyc,w},A_f(j))[\pi] \otimes T_h = H^1(I_{\cyc,w},A_f(j))[\pi] \otimes \frac{T_h}{\pi} $.
					Substituting this in \eqref{eq:Greenberg away from N}, we get the required description of $H^{1}_{\mathrm{Gr}} (\Q_{\cyc,w},A_j[\pi])$.          
				\end{proof}
				Recall that $ \xi_1 $ and $ \xi_{2} $ are Dirichlet characters whose reductions are $ \bar{\xi}_1 $ and $ \bar{\xi}_2 $ respectively. We now prove an analogues of Propositions~\ref{prop:Greenberg at p}-\ref{prop:Greenberg dividing N} for  $ f \otimes \xi_{1} $ and $ f \otimes \xi_{2} $.
				\begin{proposition}\label{prop:Greenberg for modular form}
					Let $ w \in \Sigma^\infty \setminus  \Sigma^\infty_{0}$. Then  
					\begin{enumerate}[label=$\mathrm{(\roman*)}$]
						\item  
						If $ w \mid p $, then for $ i=1,2 $,
						\begin{align*}
							H^{1}_{\mathrm{Gr}} (\Q_{\cyc,w},A_f(\xi_{i} \omega_p^{-j})[\pi]) =
							\mathrm{Ker}\big(H^1(G_{\Q_{\cyc,w}}, A_f(\xi_{i} \omega_p^{-j})[\pi]) \lra H^1(I_{\cyc,w}, A_{f}^{-}(\xi_{i} \omega_p^{-j})[\pi])\big). 
						\end{align*}
						\item If $ w \nmid Nm $ and $ w \mid I_0 $, then for $ i=1,2 $, 
						\begin{align*}
							H^{1}_{\mathrm{Gr}} (\Q_{\cyc,w},A_f(\xi_{i} \omega_p^{-j})[\pi]) = \mathrm{Ker}\big(H^1(G_{\Q_{\cyc,w}}, A_f(\xi_{i} \omega_p^{-j})[\pi]) \lra H^1(I_{\cyc,w}, A_f(\xi_{i} \omega_p^{-j})[\pi])\big).
						\end{align*}
						\item	Assume $ (N,M_0) =1 $. If $ w \mid N $ and $ w \nmid m $, then for $ i=1,2 $,
						\begin{small}
							\begin{align*}
									H^{1}_{\mathrm{Gr}} (\Q_{\cyc,w},A_f(\xi_{i} \omega_p^{-j})[\pi]) =
									\mathrm{Ker}\Big(H^1(G_{\Q_{\cyc,w}}, A_f(\xi_{i} \omega_p^{-j})[\pi])  \lra (H^1(I_{\cyc,w}, A_{f}(j))[\pi] \otimes \bar{\xi}_{i})^{\frac{G_{\Q_{\cyc,w}}}{I_{\cyc,w}}}\Big).%
							\end{align*}
						\end{small}%
					\end{enumerate}
				\end{proposition}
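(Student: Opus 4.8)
The plan is to prove Proposition~\ref{prop:Greenberg for modular form} by imitating, mutatis mutandis, the arguments already carried out for the Rankin--Selberg lattice $A_j$ in Propositions~\ref{prop:Greenberg at p}, \ref{prop:Greenberg away from N}, and \ref{prop:Greenberg dividing N}, replacing $A_j = A_f \otimes T_h(j)$ by $A_f(\xi_i\omega_p^{-j})$ and using that $\xi_i$ is an \emph{unramified-twist friendly} character at the relevant primes. The key structural input is that $A_f(\xi_i\omega_p^{-j})$ carries, as a $G_p$-module, the filtration $0 \to A_f^+(\xi_i\omega_p^{-j}) \to A_f(\xi_i\omega_p^{-j}) \to A_f^-(\xi_i\omega_p^{-j}) \to 0$ coming from $p$-ordinarity of $f$, so that the Greenberg local condition at $w \mid p$ is by definition $\mathrm{Ker}(H^1(G_{\Q_{\cyc,w}}, A_f(\xi_i\omega_p^{-j})) \to H^1(I_{\cyc,w}, A_f^-(\xi_i\omega_p^{-j})))$, and similarly for the $\pi$-torsion submodule via the pullback recipe in Definition~\ref{def: dagger cohomology}.

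For part~(i), the proof will be the exact analogue of Proposition~\ref{prop:Greenberg at p}: form the commutative diagram comparing the defining sequence of $H^1_{\mathrm{Gr}}(\Q_{\cyc,w}, A_f(\xi_i\omega_p^{-j})[\pi])$ with the one for $H^1_{\mathrm{Gr}}(\Q_{\cyc,w}, A_f(\xi_i\omega_p^{-j}))$, and reduce to showing that the natural map $\epsilon : H^1(I_{\cyc,w}, A_f^-(\xi_i\omega_p^{-j})[\pi]) \to H^1(I_{\cyc,w}, A_f^-(\xi_i\omega_p^{-j}))$ is injective. Since $A_f^-$ is unramified at $p$, the inertia group $I_{\cyc,w}$ acts on $A_f^-(\xi_i\omega_p^{-j})[\pi]$ through $\bar\xi_i|_{I_{\cyc,w}}\cdot\bar\omega_p^{-j}$, a character of order prime to $p$ (here one uses that $\xi_2$ is unramified at $p$ and $\xi_1\omega_p^{1-l}$ has conductor exactly divisible by at most $p^1$; in the ramified case the order of $\bar\omega_p$ on $I_{\cyc,w}$ is $p-1$ or $1$, coprime to $p$). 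Hence either $(A_f^-(\xi_i\omega_p^{-j})[\pi])^{I_{\cyc,w}} = 0$, in which case $\epsilon$ is trivially injective because $H^0$ vanishes, or $(A_f^-(\xi_i\omega_p^{-j}))^{I_{\cyc,w}}$ is divisible, so that its $\pi$-cofixed quotient vanishes and the five-lemma argument from Proposition~\ref{prop:Greenberg at p} applies verbatim to give $\mathrm{ker}(\epsilon) = 0$. Parts~(ii) and~(iii) are the literal transcriptions of Propositions~\ref{prop:Greenberg away from N} and~\ref{prop:Greenberg dividing N}: in~(ii) one uses Lemma~\ref{lem: exactly divides} (so $w \| I_0$, hence $\psi|_{I_{\cyc,w}}$ has order coprime to $p$ by hypothesis~(ii) of Theorem~\ref{thm: congrunence algebraic intro}) together with \cite[Theorem 3.26]{Hida3} to see $A_f(\xi_i\omega_p^{-j})$ is unramified at $w$ as far as the relevant $\mathrm{Gr}$-condition goes — actually since $\xi_i$ ramifies only at primes dividing $M_0 \mid I_0$ and $f$ is unramified away from $N$, one just repeats the diagram chase; in~(iii) one uses $(N,M_0)=1$ to conclude $\xi_i$ is unramified at $w \mid N$, so $A_f(\xi_i\omega_p^{-j}) = A_f(j) \otimes \bar\xi_i$ with the $\bar\xi_i$-factor unramified, and the inflation-restriction plus $\pi$-torsion argument of Proposition~\ref{prop:Greenberg dividing N} gives the displayed formula with the $\Delta_w = G_{\Q_{\cyc,w}}/I_{\cyc,w}$-invariants.

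The main obstacle — really the only point that needs genuine care rather than bookkeeping — is the case $w \mid p$ with $\xi_i\omega_p^{-j}$ restricting trivially (or nearly trivially) to $I_p$, i.e.\ when $(p-1) \mid j$ for the unramified character $\xi_2$, since then $A_f^-(\xi_i\omega_p^{-j})[\pi]$ may have nonzero inertia-fixed points and the divisibility dichotomy used above must be checked against the possible failure of $H^1(I_{\cyc,w}/I', -)$ to behave well. This is precisely the scenario flagged in hypothesis~(iv) of Theorem~\ref{thm: congrunence algebraic intro}: if $H^0(G_{\Q_{\cyc,w}}, A_f^-[\pi](\bar\xi_2)) \neq 0$ one invokes the extra assumption that $\bar\rho_h|_{I_{\cyc,w}} \cong \bar\xi_1 \oplus \bar\xi_2$, which forces the relevant extension class to vanish and restores the splitting needed for the five-lemma. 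I would therefore organize part~(i) to branch on whether the inertia-fixed part vanishes, handling the generic branch by the clean argument above and the degenerate branch by quoting the splitting hypothesis, exactly paralleling the Case~1/Case~2 structure that appears in the commented-out version of Proposition~\ref{prop:Greenberg at p}. The remaining verifications (that the pullback of the local condition is computed by the stated kernel, that $H^0$ vanishing gives the $[\pi]$-versus-$[\pi]$-of-$H^1$ identification via Lemma~\ref{inside-out-lem}) are routine and I would not belabor them.
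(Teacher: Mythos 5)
Your approach coincides with the paper's: in all three cases one reduces, via the commutative square comparing the $\pi$-torsion local condition with the one for the divisible module, to showing $\ker(\epsilon)=H^0(I_{\cyc,w},A_{f,w}(\xi_i\omega_p^{-j}))/\pi=0$, where $A_{f,w}=A_f^-$ for $w\mid p$ and $A_{f,w}=A_f$ otherwise; your dichotomy (inertia acts through a character of order prime to $p$, so the fixed submodule is either $0$ or the whole divisible module, hence $\pi$-divisible in either case) is exactly the paper's argument for (i), and (ii), (iii) are handled as you say, by the conductor/unramifiedness considerations transplanted from Propositions~\ref{prop:Greenberg away from N} and~\ref{prop:Greenberg dividing N}. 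The one point to correct is your third paragraph: the case $(p-1)\mid j$ is \emph{not} an obstacle here and does not require \eqref{assumption}. As your own second paragraph already shows, when $\xi_2\omega_p^{-j}$ is trivial on $I_{\cyc,w}$ the fixed part is all of $A_f^-(\xi_2\omega_p^{-j})$, which is divisible, so $\ker(\epsilon)=0$ unconditionally — Proposition~\ref{prop:Greenberg for modular form} carries no splitting hypothesis, and adding one would weaken the statement. The assumption \eqref{assumption} (hypothesis (iv) of Theorem~\ref{thm: congrunence algebraic intro}) is needed only in Proposition~\ref{prop:exact sequence of inertia}, where one compares $H^1(I_{\cyc,w},A_j^-[\pi])$ with the direct sum of the two character pieces: there the rank-two module $T_h/\pi$ is a possibly non-split extension of $\bar\xi_2$ by $\bar\xi_1$, and the extension class genuinely matters for the injectivity of the map $\nu$; for the rank-two twists $A_f(\xi_i\omega_p^{-j})$ treated in the present proposition there is no extension to split. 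Also, in part (ii) your passing remark that $A_f(\xi_i\omega_p^{-j})$ "is unramified at $w$" is not quite right when $w\mid M_0$ (where $\xi_i$ ramifies); the correct justification, as in the paper, is that $\mathrm{cond}_r(\xi_i\omega_p^{-j})=\mathrm{cond}_r(\bar\xi_i\bar\omega_p^{-j})$ forces $A_f(\xi_i\omega_p^{-j})^{I_w}$ to be $\pi$-divisible (or, equivalently, that inertia acts through a character of order prime to $p$, so the same dichotomy applies).
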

				\begin{proof}
					For $ i = 1,2$ and  $w \in \Sigma^{\infty} \setminus \Sigma^\infty_{0} $, we have the following commutative diagram  
					\begin{small}
						\[
						\begin{tikzcd}
							H^{1}(G_{\Q_{\cyc,w}}, A_f(\xi_i \omega_{p}^{-j})[\pi]) \arrow[r, "\nu"] \arrow[d, "i^{\ast}_1"]
							& H^{1}(I_{\cyc,w}, A_{f,w}(\xi_i \omega_{p}^{-j})[\pi]) \arrow[d, "\epsilon"] \\
							H^{1}(G_{\Q_{\cyc,w}}, A_{f}(\xi_i \omega_{p}^{-j})) \arrow[r, "\kappa"]
							& H^{1}(I_{\cyc,w}, A_{f,w}(\xi_i \omega_{p}^{-j})),
						\end{tikzcd}
						\]
					\end{small}%
					where $ A_{f,w}  = A_f^{-}$ if $ w \mid p $ and $ A_{f,w} = A_f $ if $ w \nmid p $. By definition, we have  $ H^{1}_{\mathrm{Gr}}(G_{\Q_{\cyc,w}},A_j[\pi]) = \mathrm{ker}(\kappa \circ i_1^{\ast}) = \mathrm{ker}(\epsilon \circ \nu) $. It suffices to show $\mathrm{ker}(\epsilon)  = A_{f,w}(\xi_i \omega_{p}^{-j})^{I_{\cyc,w}}/\pi A_{f,w}(\xi_i \omega_{p}^{-j})^{I_{\cyc,w}} $ is zero. 
					\begin{enumerate}[label=$\mathrm{(\roman*)}$]
						\item   Since  $ \xi_{2} $ is unramified at $ w $ and $ \omega_p $ is ramified at $ w $, we get $ \xi_{2} \omega_p^{-j}|I_p =1 \Longleftrightarrow (p-1) \mid j $. Thus $ A_{f,w}(\xi_2 \omega_{p}^{-j})^{I_{\cyc,w}} =  A_{f,w}(\xi_2 \omega_{p}^{-j}) $ if $ (p-1) \mid j $ and $ A_{f,w}(\xi_2 \omega_{p}^{-j})^{I_{\cyc,w}} = 0$ if $ (p-1) \nmid j $. So in either case, we have $A_{f,w}(\xi_2 \omega_{p}^{-j})^{I_{\cyc,w}}$ is $ \pi $-divisible and hence $ \mathrm{ker}(\epsilon) =0 $. Thus the assertion (i) for $ \xi_2 $ follows from the above commutative diagram. Since  $ \bar{\xi}_{1}$ has order prime to $ p $,  by construction (see the proof of Lemma~\ref{lemma lifting characters}) we have $\xi_{1}$ has order prime to $ p $. Thus $ \bar{\xi}_{1} \bar{\omega}_p^{1-l}|_{I_{\cyc,w}} = 1$ if and only if   $ \xi_{1} \omega_p^{1-l}|_{I_{\cyc,w}} = 1 \mod \pi$. A similar argument as above shows that $A_{f,w}(\xi_1 \omega_{p}^{-j})^{I_{\cyc,w}}$ is $ \pi $-divisible and $ \mathrm{ker}(\epsilon)=0 $. 
						\item Let $ r $ be prime in $ \Z $ lying below $ w $. Then $ r \nmid N p$. Since $ r \nmid m$ and $ r \mid I_0 $, we must have $ r \nmid I_0/M_0 $ and $ r \mid M_0 $. Since $ \mathrm{cond}_{r}(\omega_{p}^{-j}\xi_i) = \mathrm{cond}_{r}(\omega_{p}^{-j}\bar{\xi}_i) $ for $ i =1,2 $, we get $ \mathrm{cond}_{r}(\rho_{f \otimes \xi_{i}\omega_{p}^{-j}}) =  \mathrm{cond}_{r}(\xi_{i}\omega_{p}^{-j}) = \mathrm{cond}_{r}(\bar{\xi}_i\omega_{p}^{-j}) = \mathrm{cond}_{r}(\bar{\rho}_{f \otimes \xi_{i}\omega_{p}^{-j}}) $. Thus $ A_{f}(\xi_{i}\omega_p^{-j})^{I_w} $ is $ \pi$-divisible by \cite[Lemma 4.1.2]{epw}. So $ \mathrm{ker}(\epsilon) $ in the above commutative diagram is zero. Now part (ii) follows.
						\item Note that $ \xi_{i} $ is unramified at $ w $. The proof is similar to Proposition~\ref{prop:Greenberg dividing N}. \qedhere
					\end{enumerate} 
				\end{proof}
				\section{Congruence of the characteristic ideals}\label{sec: Congruences of char ideals}
				In this section, we obtain the congruence between the characteristic ideal associated to the Selmer group of $ f \otimes h $ and the product of characteristic ideals associated to the Selmer group of $ f \otimes \xi_1 $, $ f \otimes \xi_2$. 

				From the exact sequence \eqref{eq: splitting rho_h}, it follows that we have the following exact sequence of $G_\Q$-modules
				\begin{equation}\label{fund-resb}
					0 \lra \frac{\mathcal{O}}{\pi}(\bar{\xi}_1) \lra A_h[\pi] \lra \frac{\mathcal{O}}{\pi}(\bar{\xi}_2) \lra 0.
				\end{equation} 
				Tensoring with $-\otimes_{\mathcal{O}} T_f{(-j)} $ and noting that $ T_f{(-j)}  $ is a free $ \mathcal{O} $-module,  we get 
				\begin{equation}\label{fund-reds2}
					0 \lra A_{f}(\xi_1\omega_p^{-j})[\pi] \lra A_j[\pi] \lra A_{f}(\xi_2\omega_p^{-j})[\pi] \lra 0.
				\end{equation}
				In this section, we consider the following assumption on $f \otimes \xi_1$
				\begin{align}\label{H2van}
					H^2(\Q_\Sigma/\Q_{\cyc}, A_f(\xi_1\omega_p^{-j})[\pi]) =0.\tag{$H^{2}_{\xi_1}$-van}
				\end{align}
				\begin{lemma}\label{lem: cohomology exact sequence of Q_cyc}
					Assume  $f$ satisfies \ref{irr-f}, $h$ satisfies  \eqref{intro rho_h} and $f \otimes \xi_1$ satisfies \eqref{H2van}. Then the  following sequence is exact
					\begin{small}
						\begin{equation*}
							0 \rightarrow H^1(\Q_\Sigma/\Q_\cyc, A_f(\xi_1\omega_p^{-j})[\pi]) \rightarrow H^1(\Q_\Sigma/\Q_\cyc, A_j[\pi]) \rightarrow H^1(\Q_\Sigma/\Q_\cyc, A_f(\xi_2\omega_p^{-j})[\pi]) \rightarrow 0.
						\end{equation*}
					\end{small}%
				\end{lemma}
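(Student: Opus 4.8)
The statement is obtained by taking the long exact cohomology sequence in $H^\bullet(\Q_\Sigma/\Q_\cyc, -)$ associated to the short exact sequence of $G_\Q$-modules \eqref{fund-reds2}:
\begin{small}
\begin{equation*}
0 \to A_f(\xi_1\omega_p^{-j})[\pi] \to A_j[\pi] \to A_f(\xi_2\omega_p^{-j})[\pi] \to 0.
\end{equation*}
\end{small}
The relevant stretch of the long exact sequence reads
\begin{small}
\begin{equation*}
H^0(\Q_\Sigma/\Q_\cyc, A_f(\xi_2\omega_p^{-j})[\pi]) \to H^1(\Q_\Sigma/\Q_\cyc, A_f(\xi_1\omega_p^{-j})[\pi]) \to H^1(\Q_\Sigma/\Q_\cyc, A_j[\pi]) \to H^1(\Q_\Sigma/\Q_\cyc, A_f(\xi_2\omega_p^{-j})[\pi]) \to H^2(\Q_\Sigma/\Q_\cyc, A_f(\xi_1\omega_p^{-j})[\pi]).
\end{equation*}
\end{small}
So to get the claimed four-term exact sequence I need two things: injectivity of the first nontrivial map, i.e. vanishing of $H^0(\Q_\Sigma/\Q_\cyc, A_f(\xi_2\omega_p^{-j})[\pi])$, and surjectivity of the last map, i.e. vanishing of $H^2(\Q_\Sigma/\Q_\cyc, A_f(\xi_1\omega_p^{-j})[\pi])$ — which is exactly the hypothesis \eqref{H2van} we have assumed.

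For the $H^0$ vanishing, I would argue as follows. Since $f$ satisfies \ref{irr-f}, the residual representation $\bar\rho_f = A_f[\pi]$ is an irreducible $G_\Q$-module, hence so is its twist $A_f(\xi_2\omega_p^{-j})[\pi]$ by the character $\xi_2\omega_p^{-j}$ (twisting by a character preserves irreducibility of a two-dimensional representation). A nonzero $G_\Q$-fixed vector would generate a one-dimensional subrepresentation, contradicting irreducibility unless the whole module is one-dimensional, which it is not. The same reasoning shows $H^0(\Q_\Sigma/\Q, A_f(\xi_2\omega_p^{-j})[\pi]) = 0$; to pass to $\Q_\cyc$ I note that $\mathrm{Gal}(\Q_\cyc/\Q) \cong \Gamma \cong \Z_p$ is pro-$p$ while $A_f(\xi_2\omega_p^{-j})[\pi]$ is an $\mathbb{F}$-vector space, and more directly the restriction of $\bar\rho_f$ to $G_{\Q_\cyc}$ remains irreducible because $\Q_\cyc/\Q$ is abelian and the image of $\bar\rho_f$ is (by \ref{irr-f}) large; in fact an $I_{\cyc,w}$-invariant or $G_{\Q_\cyc}$-invariant line would descend to give a $G_\Q$-stable filtration of $\bar\rho_f \otimes (\text{character})$ after a mild argument, contradicting irreducibility. (This is the standard fact, used repeatedly in this circle of ideas, that an irreducible residual Galois representation of a newform stays irreducible over the cyclotomic $\Z_p$-extension; I would cite the analogue of \cite[\S 3]{epw} or give the short direct argument.)

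The main obstacle, such as it is, is bookkeeping rather than mathematics: one must be careful that the hypothesis \eqref{H2van} is stated for $A_f(\xi_1\omega_p^{-j})[\pi]$ precisely in the position where it is needed (the cokernel term), and that the $H^0$-vanishing is invoked for the $\xi_2$-twist, not the $\xi_1$-twist. Both match up. Assembling: the long exact sequence together with $H^0(\Q_\Sigma/\Q_\cyc, A_f(\xi_2\omega_p^{-j})[\pi]) = 0$ (from \ref{irr-f}) and $H^2(\Q_\Sigma/\Q_\cyc, A_f(\xi_1\omega_p^{-j})[\pi]) = 0$ (from \eqref{H2van}) yields the short exact sequence
\begin{small}
\begin{equation*}
0 \to H^1(\Q_\Sigma/\Q_\cyc, A_f(\xi_1\omega_p^{-j})[\pi]) \to H^1(\Q_\Sigma/\Q_\cyc, A_j[\pi]) \to H^1(\Q_\Sigma/\Q_\cyc, A_f(\xi_2\omega_p^{-j})[\pi]) \to 0,
\end{equation*}
\end{small}
which is the assertion. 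This global exact sequence of $H^1$'s is then the input to the subsequent comparison of the corresponding Selmer groups (via the explicit descriptions of the local conditions in Propositions~\ref{prop:Greenberg at p}--\ref{prop:Greenberg for modular form}), leading to the exact sequence \eqref{intro exact sequence of dual selmer} of dual Selmer groups and ultimately to the congruence \eqref{intro congruence of char ideals}.
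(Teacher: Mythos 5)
Your proof is correct and follows essentially the same route as the paper: the long exact cohomology sequence attached to \eqref{fund-reds2}, with surjectivity on the right supplied by \eqref{H2van} and injectivity on the left supplied by the vanishing of the relevant $H^0$ over $\Q_\cyc$, which follows from \ref{irr-f} together with the pro-$p$ (Nakayama) descent from $\Q$ to $\Q_\cyc$. You even correctly identify that the term to kill for injectivity is $H^0(\Q_\Sigma/\Q_\cyc, A_f(\xi_2\omega_p^{-j})[\pi])$ (the paper's proof invokes the $\xi_1$-twist at this step, an apparent slip, though irreducibility kills both twists equally).
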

				\begin{proof}
					The exact sequence \eqref{fund-reds2} induces the following exact sequence of cohomology  groups
					\begin{small}
						\[
						H^1(\Q_\Sigma/\Q_\cyc, A_f(\xi_1\omega_p^{-j})[\pi]) \rightarrow H^1(\Q_\Sigma/\Q_\cyc, A_j[\pi]) \rightarrow H^1(\Q_\Sigma/\Q_\cyc, A_f(\xi_2\omega_p^{-j})[\pi]).
						\]  
					\end{small}%
					By the assumption $H^2(\Q_\Sigma/\Q_{\cyc}, A_f(\xi_1\omega_p^{-j})[\pi]) =0$, we obtain the right most map is surjective. Note that by our assumption  $A_f[\pi]$ is an irreducible $G_\Q$-module, it follows that $A_f(\xi_1\omega_p^{-j})[\pi]$ is  also an irreducible $G_\Q$-module, hence  $H^0(\Q,A_f(\xi_1\omega_p^{-j})[\pi])=0$. Thus by Nakayama lemma, $H^0(\Q_\cyc,A_f(\xi_1\omega_p^{-j})[\pi])$ also vanishes. Hence the left most map in the above exact sequence is injective.  
				\end{proof}
				We now state a technical assumption, which we will need later. 
				\begin{equation}\label{assumption}
					\text{ Let }  w  \text{ be the prime in } \Q_{\cyc} \text{ dividing } p. \text{ Then } \bar{\rho}_h |I_{\cyc,w} \text{ is semi-simple i.e.  } \bar{\rho}_h|I_{\cyc,w} \cong \bar{\xi}_1 \oplus \bar{\xi}_2  
					\tag{ss-red$_p$}. 
				\end{equation}
				
				Recall that $ \Sigma = \{ \ell: 
				\ell \mid pNI\infty \}$, $ \Sigma_{0} = \{ \ell : \ell \mid  m \}$ and $ \Sigma^\infty , \Sigma_0^\infty$ are the corresponding  primes in  $ \Q_{\cyc} $.
				
				\begin{proposition}\label{prop:exact sequence of inertia}
					Let $ (N,M_0) =1$  and $ j $ be an integer.  Assume $f$ satisfies \ref{irr-f}, $h$ satisfies  \eqref{intro rho_h} and $ \psi \vert_{I_{\cyc,w}}  $ has order prime to $ p $ for $  w \mid pI_0$. If $ (p-1) \mid j $ and $H^0(G_{\Q_{\cyc,w}},  A^-_f(\xi_2)[\pi]) \neq 0$ at the prime $ w \mid p $, then  we further  assume that  \eqref{assumption} holds.
					Then  for every  $ w \in \Sigma^\infty \setminus \Sigma^\infty_{0} $, we have the following exact sequence
					\begin{equation}\label{split-local-c2}
						0 \rightarrow \frac{H^1(\Q_{\cyc,w}, A_f(\xi_1\omega_p^{-j})[\pi])}{H_{\mathrm{Gr}}^1(\Q_{\cyc,w},A_f(\xi_1\omega_p^{-j})[\pi])} \rightarrow \frac{H^1(\Q_{\cyc,w}, A_j[\pi])}{H^1_{\mathrm{Gr}}(\Q_{\cyc,w}, A_j[\pi])} \rightarrow \frac{H^1(\Q_{\cyc,w}, A_f(\xi_2\omega_p^{-j})[\pi])}{H^1_{\mathrm{Gr}}(\Q_{\cyc,w},A_f(\xi_2\omega_p^{-j})[\pi])} .
					\end{equation}
				\end{proposition}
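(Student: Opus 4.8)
The plan is to derive the exact sequence \eqref{split-local-c2} by passing to quotients in the short exact sequence \eqref{fund-reds2} of $G_{\Q}$-modules and using the explicit descriptions of the Greenberg local conditions obtained in Proposition~\ref{prop:Greenberg at p}, Proposition~\ref{prop:Greenberg away from N}, Proposition~\ref{prop:Greenberg dividing N} and Proposition~\ref{prop:Greenberg for modular form}. Concretely, for each $w \in \Sigma^\infty \setminus \Sigma_0^\infty$ I will build a commutative diagram with exact rows: the top row is the long-exact-sequence fragment in $H^1(\Q_{\cyc,w},-)$ coming from \eqref{fund-reds2}, and the bottom row is the analogous fragment for the relevant quotient modules ($A_f^-$-parts when $w \mid p$, the full modules when $w \mid I_0$, $w \nmid Nm$, and the $I_{\cyc,w}$-coinvariant/$\Delta_w$-invariant versions when $w \mid N$, $w\nmid m$). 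The vertical maps are the natural restriction maps whose kernels define $H^1_{\mathrm{Gr}}$. A snake-lemma (or diagram) chase then yields the sequence of quotients $H^1/H^1_{\mathrm{Gr}}$, and exactness of \eqref{split-local-c2} on the left and middle follows provided the connecting maps behave; the point where injectivity on the left can fail is controlled by the $H^0$ hypothesis in the statement.

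\textbf{Key steps.} First I would record that, since $f$ satisfies \ref{irr-f}, $A_f(\xi_i\omega_p^{-j})[\pi]$ is an irreducible $G_\Q$-module, so all the relevant $H^0(G_{\Q_{\cyc,w}},-)$ over the \emph{global} field vanish; but locally at $w$ one must be careful, which is exactly why the hypothesis on $H^0(G_{\Q_{\cyc,w}}, A_f^-(\xi_2)[\pi])$ enters. Second, at primes $w \nmid p$ (the cases $w\mid I_0$ with $w\nmid Nm$, and $w\mid N$ with $w\nmid m$): here the local conditions for $A_j[\pi]$ and for $A_f(\xi_i\omega_p^{-j})[\pi]$ are cut out by restriction to $I_{\cyc,w}$ (or by the $\Delta_w$-invariants of that), and because $T_f$ is unramified at such $w$ (as $w\nmid Np$ or via Lemma~\ref{lem: exactly divides}), tensoring \eqref{fund-resb} with $T_f(-j)$ is exact on the level of these local conditions; one then chases the three-by-three diagram relating $H^1(\Q_{\cyc,w},-)$ and $H^1(I_{\cyc,w},-)$ (respectively their $\Delta_w$-invariant sub/quotients). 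Third, at the prime $w\mid p$: using Proposition~\ref{prop:Greenberg at p} and Proposition~\ref{prop:Greenberg for modular form}(i), the relevant quotients are $H^1(\Q_{\cyc,w},-)/\ker(\text{restriction to }H^1(I_{\cyc,w},-^-))$, and the module-theoretic sequence $0 \to A_f^-(\xi_1\omega_p^{-j})[\pi]\to A_j^-[\pi]\to A_f^-(\xi_2\omega_p^{-j})[\pi]\to 0$ (which follows from \eqref{def A minus j} together with $A_j^- = T_h\otimes A_f^-(j)$ and \eqref{fund-resb}) gives the needed exactness, \emph{unless} $A_f^-(\xi_2)[\pi]$ has nonzero $I_{\cyc,w}$-invariants, i.e. $(p-1)\mid j$ and $H^0(G_{\Q_{\cyc,w}}, A_f^-(\xi_2)[\pi])\neq 0$; in that borderline case the filtration on $A_h[\pi]$ restricted to $I_{\cyc,w}$ need not split, and one invokes \eqref{assumption} precisely to split it, so that $A_j^-[\pi]|_{I_{\cyc,w}} \cong A_f^-(\xi_1\omega_p^{-j})[\pi] \oplus A_f^-(\xi_2\omega_p^{-j})[\pi]$ and the local quotient sequence splits term-by-term.

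\textbf{Main obstacle.} The delicate point is the prime above $p$ when $(p-1)\mid j$: without \eqref{assumption} the extension class of \eqref{fund-resb} restricted to $I_{\cyc,w}$ interacts with the $\ast$ in $\bar\rho_f|_{G_p}$ (cf. Theorem~\ref{rhof}(ii)) and the comparison of $\ker$'s of the restriction maps for $A_j^-[\pi]$ versus $A_f^-(\xi_i\omega_p^{-j})[\pi]$ is no longer compatible, so the middle map in \eqref{split-local-c2} could fail to have the expected kernel. The rest is a routine but somewhat lengthy diagram chase, done case-by-case over the finitely many $w \in \Sigma^\infty\setminus\Sigma_0^\infty$, using $1$-dimensionality of local $p$-cohomological dimension (so $H^2(I_{\cyc,w},-)=0$ and the various restriction maps to $H^1(I_{\cyc,w},-)$ are surjective) and the fact, already used in the proofs of Propositions~\ref{prop:Greenberg at p}--\ref{prop:Greenberg for modular form}, that $\pi$-divisibility of the relevant $I_{\cyc,w}$-invariant submodules forces the auxiliary maps $\epsilon$ to be isomorphisms. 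I would conclude by assembling these local sequences; surjectivity on the right is not claimed here (only in the subsequent global argument, where \eqref{H2van} is used), so the proposition stops at the three-term exact sequence displayed.
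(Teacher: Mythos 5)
Your overall strategy — tensoring \eqref{fund-resb} with $T_f(-j)$, invoking the explicit local conditions of Propositions~\ref{prop:Greenberg at p}--\ref{prop:Greenberg for modular form}, and chasing a three-row diagram prime by prime — is the same as the paper's. But there is a genuine gap at the prime $w\mid p$. You treat the trichotomy as if exactness at the inertia level fails only when \emph{both} $(p-1)\mid j$ \emph{and} $H^0(G_{\Q_{\cyc,w}}, A_f^-(\xi_2)[\pi])\neq 0$, and you invoke \eqref{assumption} there. In fact $H^0(I_{\cyc,w}, A_f^-(\xi_2\omega_p^{-j})[\pi])$ is already nonzero whenever $(p-1)\mid j$ (inertia acts on the unramified module $A_f^-(\xi_2)[\pi]$ through $\omega_p^{-j}$, which is then trivial), so the sequence $0\to H^1(I_{\cyc,w},A_f^-(\xi_1\omega_p^{-j})[\pi])\to H^1(I_{\cyc,w},A_j^-[\pi])\to\cdots$ that your argument needs can fail to be left exact in the subcase $(p-1)\mid j$ with $H^0(G_{\Q_{\cyc,w}}, A_f^-(\xi_2)[\pi])=0$ — and in that subcase the hypotheses of the proposition do \emph{not} grant you \eqref{assumption}, so you cannot split the extension. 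The paper closes this subcase by a different device: since $\Delta_w=G_{\Q_{\cyc,w}}/I_{\cyc,w}$ is procyclic and acts on the finite module $A_f^-(\xi_2)[\pi]^{I_{\cyc,w}}$ with trivial invariants, also $H^1(\Delta_w, A_f^-(\xi_2)[\pi]^{I_{\cyc,w}})=0$; one then runs inflation--restriction directly on the $G_{\Q_{\cyc,w}}$-cohomology of the $A^-$-modules and applies the snake lemma there, bypassing the failure of left exactness over $I_{\cyc,w}$. Your write-up needs this third branch.

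A secondary, smaller point: at the primes $w\nmid p$ your claim that "tensoring \eqref{fund-resb} with $T_f(-j)$ is exact on the level of these local conditions" hides the actual input. For $w\mid I_0$, $w\nmid Nm$ the left exactness of the $H^1(I_{\cyc,w},-)$ sequence rests on the local semisimplicity $\bar\rho_h|_{G_{\Q_{\cyc,w}}}\cong\bar\xi_1\oplus\bar\xi_2$, which comes from Lemma~\ref{lem: exactly divides} together with Hida's description of $\rho_h$ at primes exactly dividing the conductor; for $w\mid N$, $w\nmid Im$ it rests on $T_h/\pi$ being unramified at $w$ with the $\bar\xi_i$ of prime-to-$p$ order, forcing semisimplicity over $G_{\Q_{\cyc,w}}$. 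These facts should be stated explicitly rather than absorbed into "a routine diagram chase", since without them the connecting map out of $H^0(I_{\cyc,w}, A_f(\xi_2\omega_p^{-j})[\pi])$ is not obviously zero.
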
 
				\begin{proof}
					As the $ p $-th cohomological dimension of $ G_{\Q_{\cyc,w}} $ is $ 1 $, from \eqref{fund-reds2} we get the exact sequence
					\begin{align}\label{195}
						H^1(G_{\Q_{\cyc,w}}, A_f(\xi_1\omega_p^{-j})[\pi]) \rightarrow H^1(G_{\Q_{\cyc,w}}, A_j[\pi])  \rightarrow H^1(G_{\Q_{\cyc,w}}, A_f(\xi_2\omega_p^{-j})[\pi]) \rightarrow 0.
					\end{align}
					We denote $ G_{\Q_{\cyc,w}}/I_{\cyc,w} $ by $ \Delta_w $. 
					
					\underline{Case $w \mid p$}: As the $ p $-th cohomological dimension of $ G_{\Q_{\cyc,w}} $ is $ 1 $, the surjectivity $ H^1(G_{\Q_{\cyc,v}}, A_j[\pi]) \twoheadrightarrow H^1(G_{\Q_{\cyc,v}}, A^{-}_j[\pi])$ follows from the definition  $A_j^-$  in \eqref{def A minus j}. 
					By the inflation-restriction sequence, we have the $ H^1(G_{\Q_{\cyc,w}}, A^{-}_j[\pi]) \rightarrow H^1(I_{\cyc,w}, A^{-}_j[\pi])^{\Delta_w}$ is surjective.  Hence the image of the following composition 
					\begin{align*}
						H^1(G_{\Q_{\cyc,w}}, A_j[\pi]) \rightarrow H^1(G_{\Q_{\cyc,w}}, A^{-}_j[\pi])) \rightarrow H^1(I_{\cyc,w}, A^{-}_j[\pi]))
					\end{align*} 
					is equal to  $H^1(I_{\cyc,w}, A^{-}_j[\pi])^{\Delta_w}$. Similarly 
					image $\big( H^1(G_{\Q_{\cyc,w}}, A_f(\xi_{i}\omega_{p}^{-j})[\pi] \rightarrow H^1(I_{\cyc,w}, A^{-}_f(\xi_{i}\omega_{p}^{-j})[\pi]\big) = (H^1(I_{\cyc,w}, A^{-}_f(\xi_{i}\omega_{p}^{-j})[\pi])^{\Delta_w}$ for $ i=1,2 $.
					Thus it follows from  \eqref{fund-resb} and  Propositions  \ref{prop:Greenberg at p}, \ref{prop:Greenberg for modular form} that the  following  diagram
					\begin{small}
						\begin{equation}\label{commutative diagram}
							\begin{tikzcd} 
								0  \ar[d] & 0 \ar[d]  & 0 \ar[d] \\
								H^1_{\mathrm{Gr}}(G_{\Q_{\cyc,w}}, A_f(\xi_1\omega_p^{-j})[\pi]) \ar[r]  \ar[d] & H_{\mathrm{Gr}}^1(G_{\Q_{\cyc,w}}, A_j[\pi]) \ar[r]  \arrow{d} 
								& H_{\mathrm{Gr}}^1(G_{\Q_{\cyc,w}}, A_f(\xi_2\omega_p^{-j})[\pi])\ar[d]   \ar[r] & 0\\ 
								H^1(G_{\Q_{\cyc,w}}, A_f(\xi_1\omega_p^{-j})[\pi]) \ar[r]  \ar[d] & H^1(G_{\Q_{\cyc,w}}, A_j[\pi]) \ar[r]  \ar[d] & H^1(G_{\Q_{\cyc,w}}, A_f(\xi_2\omega_p^{-j})[\pi])\ar[d]  \ar[r]& 0  \\
								H^{1}(I_{\cyc,w}, A^{-}_{f}(\xi_1\omega_p^{-j})[\pi])^{\Delta_w} \ar[r,"\nu"] \ar[d] &  H^{1}(I_{\cyc,w}, A^{-}_{j}[\pi])^{\Delta_v}  \ar[r] \ar[d] &  H^{1}(I_{\cyc,w}, A^{-}_{f}(\xi_2\omega_p^{-j})[\pi])^{\Delta_w}. \ar[d] & \\
								0 & 0 & 0
							\end{tikzcd}
						\end{equation}
					\end{small}%
					is commutative. So to prove the proposition, it suffices  to show  $ \mathrm{ker}(\nu) = 0$.
					
					If $(p-1) \nmid j$, then $H^{0}(I_{\cyc,w},A_{f}^{-}(\xi_{2}\omega_{p}^{-j})[\pi]) = 0  $.  Tensoring \eqref{fund-resb} with $ A_{f}^{-} $ and then taking the induced long exact sequence in cohomology, we obtain 
					\begin{align*}
						0 \rightarrow  H^1(I_{\cyc,w}, A^{-}_f(\xi_{1}\omega_{p}^{-j})[\pi])   \rightarrow 
						H^1(I_{\cyc,w}, A^{-}_j[\pi])    \rightarrow H^1(I_{\cyc,w}, A^{-}_f(\xi_{2}\omega_{p}^{-j})[\pi]) \rightarrow 0. 
					\end{align*} 
					Taking $ \Delta_w $ invariants, we obtain $ \mathrm{ker}(\nu) =0 $.
					
					Next consider the case $ (p-1) \mid j$ and  $H^0(G_{\Q_{\cyc,w}}, A^{-}_f(\xi_{2})[\pi]) \neq 0 $.  Then by the assumption 
					\eqref{assumption},  we have  $ A_{h}[\pi] \cong (T_h/\pi)|I_{\cyc,w} \cong \bar{\xi}_{1} \oplus \bar{\xi}_{2} $. Thus we have $H^1(I_{\cyc,w}, A^{-}_j[\pi]) = H^1(I_{\cyc,w}, A^{-}_f(\xi_{1}\omega_{p}^{-j})[\pi]) \oplus  H^1(I_{\cyc,w}, A^{-}_f(\xi_{2}\omega_{p}^{-j})[\pi])$. Again taking $ \Delta_w $ invariants, we get $ \mathrm{ker}(\nu) =0 $.
					
					Finally consider  the case $ (p-1) \mid j $ and $ H^0(G_{\Q_{\cyc,w}}, A^{-}_f(\xi_{2})[\pi]) = 0 $. Then $ H^{0}(\Delta_w,A_{f}^{-}(\xi_{2})[\pi]^{I_{\cyc,w}}) =0 $. As $ \Delta_w = G_{\Q_{\cyc,w}}/I_{\cyc,w}  $ is  procyclic, we get  $ H^{1}(\Delta_w,A_{f}^{-}(\xi_{2})[\pi]^{I_{\cyc,w}}) =0 $.  
					Thus by the inflation-restriction sequence and \eqref{fund-resb}, in this case we have the following commutative diagram 
					\begin{small}
						\[
						\begin{tikzcd}
							0 \ar[r] & H^1(\Delta_w, A^{-}_f(\xi_1 \omega_p^{-j})[\pi]^{I_{\cyc,w}}) \ar[r]  \ar[d] & H^1(\Delta_w, A^{-}_j[\pi]^{I_{\cyc,w}}) \ar[r]  \ar[d] & H^1(\Delta_w, A^{-}_f(\xi_2\omega_p^{-j})[\pi]^{I_{\cyc,w}}) =0 \ar[d] & \\
							& H^1(G_{\Q_{\cyc,w}}, A^{-}_f(\xi_1\omega_p^{-j})[\pi]) \ar[r] & H^1(G_{\Q_{\cyc,w}}, A^{-}_j[\pi]) \ar[r]   & H^1(G_{\Q_{\cyc,w}}, A^{-}_f(\xi_2\omega_p^{-j})[\pi]) \ar[r] & 0 .
						\end{tikzcd}
						\]
					\end{small}%
					where all the vertical maps are injective.  Now the proposition  follows from the snake lemma and the inflation-restriction sequence.

					\underline{Case $w \mid I$, $ w \nmid Npm $}: 
					Let $ w \mid \ell $  in $ \Z $. Since $ \ell \nmid m$ and $ \ell \mid I_0 $, it forces that  $ \ell \nmid I_0/M_0 $ and $ \ell \mid M = \mathrm{cond}(\xi_1)\mathrm{cond}(\xi_2)$. By Lemma~\ref{lem: exactly divides} and \cite[Theorem 3.26]{Hida3}, we have $ \bar{\rho}_h | G_{\Q_{\cyc,w}} \cong \bar{\xi}_1 \oplus \bar{\xi}_2$. Tensoring with $ A_f(-j) $ induces following  exact sequence of $ \Delta_w $-modules 
					\[
					0 \rightarrow H^1(I_{\cyc,w}, A_f(\xi_1 \omega_p^{-j})[\pi]) \lra H^1(I_{\cyc,w}, A_j[\pi]) \lra H^1(I_{\cyc,w}, A_f(\xi_2\omega_p^{-j})[\pi]) \rightarrow 0.
					\]  
					Note that we have an analogue of commutative diagram \eqref{commutative diagram} even in this case. So the assertion follows from Proposition~\ref{prop:Greenberg away from N} and Proposition  \ref{prop:Greenberg for modular form}(ii) by a similar argument as in the case $ w \mid p $.
					
					\underline{Case $w \mid N$ and $ w \nmid Im $}: We claim that $ (T_h/\pi)  \cong \bar{\xi}_1 \oplus \bar{\xi}_2$ as $ G_{\Q_{\cyc,w}}  $-modules.  Since $ (N, M_0) =1$ and $ w \mid N $, we get $ w \nmid  M_0$. Again using  $ v \nmid m $, we obtain $ w \nmid I_0 $. Thus $ T_h/\pi $ is unramified at $ w $ and the action of $ G_{\Q_{\cyc, v}} $ factors through $ \Delta_w$. As the order of $ \bar{\xi}_1 , \bar{\xi}_2 $ are co-prime to $ p $, the action of $ G_{\Q_{\cyc, w}} $  on $ T_{h}/\pi $ further factors through  $ G_{\Q_{\cyc, w}}/B$  such that $B$ is a finite index normal subgroup of $ G_{\Q_{\cyc, w}} $ containing
					$ I_{\cyc,w} $. Since $ G_{\Q_{\cyc, w} } /B $ has order co-prime to $ p $, we get $ T_{h}/\pi $ is a semi-simple $ G_{\Q_{\cyc, w} }  $-module. Hence $  T_{h}/\pi \cong \bar{\xi}_1 \oplus \bar{\xi}_2 $ as $G_{\mathbb{Q}_{\mathrm{cyc}, w}}$-module.  Now the proposition in this case follows from Propositions~\ref{prop:Greenberg dividing N}, \ref{prop:Greenberg for modular form} using similar arguments as in previous two cases.
				\end{proof}
				\begin{remark}
					We will apply Proposition~\ref{prop:exact sequence of inertia} in Theorem~\ref{thm:congruence of ideals} and Theorem~\ref{congruence main conjecture }  for  $ l-1 \leq j \leq k-2 $. In that case,  the assumption \eqref{assumption} in  Proposition~\ref{prop:exact sequence of inertia} is required whenever $  \omega_{p}^{j} =1$ 
					and $H^0(G_{\Q_{\cyc,w}},  A^-_f(\xi_2)[\pi]) \neq 0$. In particular, either  $ p > k-1 $ or $H^0(G_{\Q_{\cyc,w}},  A^-_f(\xi_2)[\pi]) = 0$ holds, then the assumption \eqref{assumption} is not required.   
				\end{remark}
				\begin{proposition}\label{prop: selmer exact sequence}
					Let $ (N,M_0) =1$ and $ \psi \vert_{I_{\cyc,w}}  $ has order co-prime to $ p $ for all primes $ w \mid pI_0$ in $\Q_{\cyc}$.  Assume  that $f$ satisfies \ref{irr-f}, $h$ satisfies  \eqref{intro rho_h} and $f \otimes \xi_1$ satisfies \eqref{H2van}. 
					If $ (p-1) \mid j $ and $H^0(G_{\Q_{\cyc,w}},  A^-_f(\xi_2)[\pi]) \neq 0$ at the prime $ w \mid p $, then  we further  assume that  \eqref{assumption} holds for $w$.   Then we have the following  exact sequence
					\begin{equation}\label{split-selmer-01}
						0 \rightarrow S^{\Sigma_0}_{\mathrm{Gr}}( A_f(\xi_1\omega_p^{-j})[\pi]/\Q_\cyc) \rightarrow S^{\Sigma_0}_{\mathrm{Gr}}( A_j[\pi]/\Q_\cyc) \rightarrow S^{\Sigma_0}_{\mathrm{Gr}}( A_f(\xi_2\omega_p^{-j})[\pi]/\Q_\cyc) \lra 0.
					\end{equation} 
				\end{proposition}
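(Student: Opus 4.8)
The plan is to deduce the exact sequence of Selmer groups from the corresponding sequence of Galois cohomology groups (Lemma~\ref{lem: cohomology exact sequence of Q_cyc}) together with the local analysis already carried out in Proposition~\ref{prop:exact sequence of inertia}. Concretely, for each of $B_j \in \{A_f(\xi_1\omega_p^{-j}), A_j, A_f(\xi_2\omega_p^{-j})\}$ the group $S^{\Sigma_0}_{\mathrm{Gr}}(B_j[\pi]/\Q_\cyc)$ sits in a defining four-term exact sequence
\begin{small}
\begin{equation*}
0 \to S^{\Sigma_0}_{\mathrm{Gr}}(B_j[\pi]/\Q_\cyc) \to H^1(\Q_\Sigma/\Q_\cyc, B_j[\pi]) \to \prod_{w \in \Sigma^\infty \setminus \Sigma_0^\infty} \frac{H^1(\Q_{\cyc,w}, B_j[\pi])}{H^1_{\mathrm{Gr}}(\Q_{\cyc,w}, B_j[\pi])},
\end{equation*}
\end{small}
so I would like to run a diagram chase across the three rows indexed by $A_f(\xi_1\omega_p^{-j})$, $A_j$, $A_f(\xi_2\omega_p^{-j})$. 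The middle column is exact by Lemma~\ref{lem: cohomology exact sequence of Q_cyc} (this uses \ref{irr-f}, \eqref{intro rho_h} and \eqref{H2van}), and the right-hand column is exact on the left and in the middle by Proposition~\ref{prop:exact sequence of inertia}; taking the product over $w$ preserves exactness.

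First I would set up the commutative diagram with exact rows given by the local-global sequences above, and exact columns given by Lemma~\ref{lem: cohomology exact sequence of Q_cyc} in the $H^1(\Q_\Sigma/\Q_\cyc,-)$ slot and by Proposition~\ref{prop:exact sequence of inertia} (applied $w$-by-$w$ and then taking the product) in the local-quotient slot. Then the snake lemma immediately yields left-exactness of \eqref{split-selmer-01}, i.e. injectivity of $S^{\Sigma_0}_{\mathrm{Gr}}(A_f(\xi_1\omega_p^{-j})[\pi]/\Q_\cyc) \to S^{\Sigma_0}_{\mathrm{Gr}}(A_j[\pi]/\Q_\cyc)$ and exactness in the middle. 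The only remaining point is surjectivity of $S^{\Sigma_0}_{\mathrm{Gr}}(A_j[\pi]/\Q_\cyc) \to S^{\Sigma_0}_{\mathrm{Gr}}(A_f(\xi_2\omega_p^{-j})[\pi]/\Q_\cyc)$.

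For surjectivity, the plan is a standard lifting argument: given a class $c \in S^{\Sigma_0}_{\mathrm{Gr}}(A_f(\xi_2\omega_p^{-j})[\pi]/\Q_\cyc)$, lift it first to $\tilde c \in H^1(\Q_\Sigma/\Q_\cyc, A_j[\pi])$ using surjectivity of the global $H^1$-map from Lemma~\ref{lem: cohomology exact sequence of Q_cyc}; then one must correct $\tilde c$ by a global class coming from $H^1(\Q_\Sigma/\Q_\cyc, A_f(\xi_1\omega_p^{-j})[\pi])$ so that its localizations land in the Greenberg subspaces. The obstruction to this correction lives in the cokernel of the map $H^1(\Q_\Sigma/\Q_\cyc, A_f(\xi_1\omega_p^{-j})[\pi]) \to \prod_w H^1(\Q_{\cyc,w},A_f(\xi_1\omega_p^{-j})[\pi])/H^1_{\mathrm{Gr}}(\Q_{\cyc,w},A_f(\xi_1\omega_p^{-j})[\pi])$, which by Poitou--Tate duality and \eqref{H2van} is governed by $H^2(\Q_\Sigma/\Q_\cyc, A_f(\xi_1\omega_p^{-j})[\pi])$ and the dual (strict) Selmer group; since $S^{\Sigma_0}_{\mathrm{Gr}}(A_f(\xi_1\omega_p^{-j})[\pi]/\Q_\cyc)$ surjects onto $S^{\Sigma_0}_{\mathrm{Gr}}(A_f(\xi_1\omega_p^{-j})/\Q_\cyc)[\pi]$ (Lemma~\ref{inside-out-lem}, using \ref{irr-f}) and the latter's dual is torsion by \ref{Stors} via the exact sequence, the needed global-to-local surjectivity follows. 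I expect this surjectivity step — tracking the Poitou--Tate dual local conditions and verifying the relevant $H^2$ / dual-Selmer vanishing actually kills the obstruction — to be the main obstacle; left-exactness and exactness in the middle are a formal consequence of the snake lemma once the two input exact sequences are in place.
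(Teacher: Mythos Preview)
Your overall strategy matches the paper's exactly: set up the commutative diagram with Lemma~\ref{lem: cohomology exact sequence of Q_cyc} on the global row and Proposition~\ref{prop:exact sequence of inertia} on the local row, apply the snake lemma, and then show that the connecting map lands in $\mathrm{coker}(\epsilon_1)=0$, where $\epsilon_1$ is the global-to-local map for $A_f(\xi_1\omega_p^{-j})[\pi]$. So left-exactness and exactness in the middle are fine.

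The gap is in your justification of $\mathrm{coker}(\epsilon_1)=0$. You invoke \ref{Stors}, but that hypothesis is about $A_j$ and is \emph{not} among the assumptions of this proposition; moreover, your phrase ``the latter's dual is torsion by \ref{Stors} via the exact sequence'' appears to use \eqref{split-selmer-01} itself, which is circular. The appeal to Lemma~\ref{inside-out-lem} is also beside the point here: knowing $S^{\Sigma_0}_{\mathrm{Gr}}(A_f(\xi_1\omega_p^{-j})[\pi]/\Q_\cyc)\cong S^{\Sigma_0}_{\mathrm{Gr}}(A_f(\xi_1\omega_p^{-j})/\Q_\cyc)[\pi]$ does not by itself control the cokernel of the global-to-local map. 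The paper instead appeals directly to the standard surjectivity of the global-to-local map for $A_f(\xi_1\omega_p^{-j})[\pi]$ over $\Q_\cyc$ (Greenberg \cite[\S5.3]{gr3}, and conveniently \cite[Theorem~5.2]{ms}), and then passes from $H^1(\Q_{\cyc,w},-)$ to $H^1(I_{\cyc,w},-)^{\Delta_w}$ via inflation--restriction to match the Greenberg local quotients. If you want to argue via Poitou--Tate rather than cite these results, the correct input is that $S_{\mathrm{Gr}}(A_f(\xi_1\omega_p^{-j})/\Q_\cyc)^\vee$ is $\mathcal{O}[[\Gamma]]$-torsion by Kato's theorem (since $f$ is $p$-ordinary and $\bar\rho_f$ is irreducible), together with \eqref{H2van}; \ref{Stors} plays no role here.
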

				\begin{proof}
					By Lemma~\ref{lem: cohomology exact sequence of Q_cyc}  and Proposition~\ref{prop:exact sequence of inertia}, we have the following commutative diagram 
					\begin{tiny}{
							\[
							\begin{tikzcd}
								0 \arrow{r}  &   H^1(\Q_\Sigma/\Q_\cyc, A_f(\xi_1\omega_p^{-j})[\pi]) \arrow{r}  \arrow{d}{\epsilon_1} & H^1(\Q_\Sigma/\Q_\cyc, A_j[\pi]) \arrow{r}  \arrow{d}{\epsilon} & H^1(\Q_\Sigma/\Q_\cyc, A_f(\xi_2\omega_p^{-j})[\pi])\arrow{r}  \arrow{d}{\epsilon_2} & 0\\
								0 \arrow{r}  &   \prod\limits_{w \in \Sigma^\infty \smallsetminus \Sigma^\infty_{0}} \frac{H^{1}(\mathbb{Q}_{\cyc,w}, A_{f}(\xi_1\omega_p^{-j})[\pi])}{ H^{1}_{\mathrm{Gr}}(\mathbb{Q}_{\cyc,w}, A_{f}(\xi_1\omega_p^{-j})[\pi])} \arrow{r}  & \prod\limits_{w \in \Sigma^\infty \smallsetminus \Sigma^\infty_{0}} \frac{H^{1}(\mathbb{Q}_{\cyc,w}, A_{j}[\pi])}{H_{\mathrm{Gr}}^{1}(\mathbb{Q}_{\cyc,w}, A_{j}[\pi])}  \arrow{r} & \prod\limits_{w \in \Sigma^\infty \smallsetminus \Sigma^\infty_{0}} \frac{H^{1}(\mathbb{Q}_{\cyc,w}, A_{f}(\xi_2\omega_p^{-j})[\pi])}{H_{\mathrm{Gr}}^{1}(\mathbb{Q}_{\cyc,w}, A_{f}(\xi_2\omega_p^{-j})[\pi])}.   
							\end{tikzcd}
							\]
					}\end{tiny}
					Applying the snake lemma to the above commutative diagram, we get the following exact sequence
					\begin{small}
						\begin{equation*}
							0 \rightarrow S^{\Sigma_0}_{\mathrm{Gr}}( A_f(\xi_1\omega_p^{-j})[\pi])/\Q_\cyc) \rightarrow S^{\Sigma_0}_{\mathrm{Gr}}( A_j[\pi]/\Q_\cyc) \rightarrow S^{\Sigma_0}_{\mathrm{Gr}}( A_f(\xi_2\omega_p^{-j})[\pi]/\Q_\cyc) \rightarrow \mathrm{coker}(\epsilon_1).
						\end{equation*}
					\end{small}%
					To prove the lemma, we need to show  coker($\epsilon_1$) is zero. 
					In this setting,  the following global to local map  
					\begin{small}{
							\begin{equation*}\label{surj-f-sel3}
								H^1(\Q_\Sigma/\Q_\cyc, A_f(\xi_1\omega_p^{-j})[\pi]) \lra \underset{w \mid p}{\bigoplus}H^1(\mathbb{Q}_{\cyc,w}, A^-_f(\xi_1\omega_p^{-j})[\pi]) \underset{w \in \Sigma\setminus \Sigma^\infty_{0} , ~w \nmid p}{\bigoplus} H^1(\mathbb{Q}_{\cyc,w},A_f(\xi_1\omega_p^{-j})[\pi]) 
							\end{equation*} 
					}\end{small}%
					is surjective. Indeed, the surjectivity of such global to local maps has been extensively studied by Greenberg (cf. \cite[\S5.3]{gr3}). In our setting above, it can also be conveniently found in  \cite[Theorem 5.2]{ms}. Using the inflation-restriction sequence and $ \Delta_w $ is topologically cyclic, we obtain the following  surjective map  
					\begin{small}{
							\begin{equation*}
								H^1(\Q_\Sigma/\Q_\cyc, A_f(\xi_1\omega_p^{-j})[\pi]) \lra \underset{v \mid p}{\bigoplus}H^1(I_{\cyc,w}, A^-_f(\xi_1\omega_p^{-j})[\pi])^{\Delta_w} \bigoplus_ {w \in \Sigma^\infty\setminus \Sigma^\infty_{0}, w \nmid p} H^1(\mathbb{Q}_{\cyc,w},A_f(\xi_1\omega_p^{-j})[\pi])  
							\end{equation*} 
					}\end{small}
					Since $\frac{H^1(\mathbb{Q}_{\cyc,w}, A_f(\xi_1\omega_p^{-j})[\pi])}{H^1_{\mathrm{Gr}}(\mathbb{Q}_{\cyc,w}, A_f(\xi_1\omega_p^{-j})[\pi])} = H^1(I_{\cyc,w}, A^-_f(\xi_1\omega_p^{-j})[\pi])^{\Delta_w} $ for $ w \mid p $ (see Diagram \eqref{commutative diagram}), it follows that 
					\begin{small}{
							\begin{equation*}\label{surj-f-sel2}
								H^1(\Q_\Sigma/\Q_\cyc, A_f(\xi_1\omega_p^{-j})[\pi]) \lra \underset{w \mid p}{\bigoplus}\frac{H^1(\mathbb{Q}_{\cyc,w}, A^-_f(\xi_1\omega_p^{-j})[\pi])}{H^1_{\mathrm{Gr}}(\mathbb{Q}_{\cyc,w}, A_f(\xi_1\omega_p^{-j})[\pi])} \underset{\substack{w \in \Sigma^\infty \setminus \Sigma^\infty_{0} \\ w \nmid p}}{\bigoplus} \frac{H^1(\mathbb{Q}_{\cyc,w},A_f(\xi_1\omega_p^{-j})[\pi])}{H^1_{\mathrm{Gr}}(\mathbb{Q}_{\cyc,w},A_f(\xi_1\omega_p^{-j})[\pi])}. 
							\end{equation*} 
					}\end{small}%
					is also surjective.
					This shows $ \epsilon_1 $ is surjective. Hence the right exactness follows in \eqref{split-selmer-01}.
				\end{proof}
				
				
				
				\begin{lemma}\label{lem: dual selmer group sequence}
					We keep the setting and hypotheses as in Proposition~\ref{prop: selmer exact sequence}. 
					Then we have the following exact sequence
					\begin{small}%
						\begin{equation}\label{split-selmer-03}
							0 \rightarrow \frac{S^{\Sigma_0}_{\mathrm{Gr}}( A_f(\xi_2\omega_p^{-j})/\Q_\cyc)^\vee}{\pi} \rightarrow \frac{S^{\Sigma_0}_{\mathrm{Gr}}( A_j/\Q_\cyc)^\vee}{\pi} \rightarrow \frac{S^{\Sigma_0}_{\mathrm{Gr}}(A_f(\xi_1\omega_p^{-j})/\Q_\cyc)^\vee}{\pi}  \rightarrow 0.
						\end{equation}
					\end{small}%
				\end{lemma}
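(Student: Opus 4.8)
The plan is to deduce the lemma from Proposition~\ref{prop: selmer exact sequence} together with Lemma~\ref{inside-out-lem} by a purely formal Pontryagin duality argument; no new input is needed beyond the hypotheses already imposed in Proposition~\ref{prop: selmer exact sequence}.

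First I would record the $H^0$-vanishing needed to pass from the residual Selmer groups to the $\pi$-torsion of the full Selmer groups. Under \ref{irr-f}, the residual Galois module $A_f(\xi_i\omega_p^{-j})[\pi]$ is an irreducible, hence nonzero, $G_\Q$-module for $i=1,2$, so $H^0(\Q, A_f(\xi_i\omega_p^{-j})[\pi]) = 0$; feeding this into the exact sequence \eqref{fund-reds2} shows $H^0(\Q, A_j[\pi]) = 0$ as well. Since $\Gamma = \mathrm{Gal}(\Q_\cyc/\Q)$ is pro-$p$ and all these modules are finite $p$-groups, a nonzero $G_{\Q_\cyc}$-invariant would yield a nonzero $G_\Q$-invariant, so $H^0(\Q_\cyc, B_j[\pi]) = 0$ and therefore $H^0(\Q_\cyc, B_j) = 0$ for $B_j \in \{A_f(\xi_1\omega_p^{-j}), A_j, A_f(\xi_2\omega_p^{-j})\}$. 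Hence Lemma~\ref{inside-out-lem} applies and provides natural isomorphisms $S^{\Sigma_0}_{\mathrm{Gr}}(B_j[\pi]/\Q_\cyc) \cong S^{\Sigma_0}_{\mathrm{Gr}}(B_j/\Q_\cyc)[\pi]$.

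Next I would substitute these isomorphisms into the exact sequence of Proposition~\ref{prop: selmer exact sequence} to obtain the exact sequence of $\pi$-torsion submodules
\begin{small}
\begin{equation*}
0 \rightarrow S^{\Sigma_0}_{\mathrm{Gr}}(A_f(\xi_1\omega_p^{-j})/\Q_\cyc)[\pi] \rightarrow S^{\Sigma_0}_{\mathrm{Gr}}(A_j/\Q_\cyc)[\pi] \rightarrow S^{\Sigma_0}_{\mathrm{Gr}}(A_f(\xi_2\omega_p^{-j})/\Q_\cyc)[\pi] \rightarrow 0,
\end{equation*}
\end{small}
and then dualize. For any discrete $\mathcal{O}$-module $D$, dualizing the four-term exact sequence $0 \rightarrow D[\pi] \rightarrow D \xrightarrow{\pi} D \rightarrow D/\pi D \rightarrow 0$ gives a natural isomorphism $(D[\pi])^\vee \cong D^\vee/\pi D^\vee$; since Pontryagin duality is exact, dualizing the displayed sequence reverses the arrows and produces precisely \eqref{split-selmer-03}, with $S^{\Sigma_0}_{\mathrm{Gr}}(A_f(\xi_2\omega_p^{-j})/\Q_\cyc)^\vee/\pi$ appearing on the left and $S^{\Sigma_0}_{\mathrm{Gr}}(A_f(\xi_1\omega_p^{-j})/\Q_\cyc)^\vee/\pi$ on the right.

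There is no genuinely hard step here: all the substantive content — the comparison of local conditions, the splitting of the local cohomology quotients, and the surjectivity of the relevant global-to-local map — has already been carried out in Propositions~\ref{prop:exact sequence of inertia} and~\ref{prop: selmer exact sequence}. The only points requiring (routine) care are the $H^0$-vanishings that license the use of Lemma~\ref{inside-out-lem} and the standard identification $(D[\pi])^\vee \cong D^\vee/\pi D^\vee$ together with exactness of duality.
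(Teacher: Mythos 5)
Your proposal is correct and follows essentially the same route as the paper's own proof: establish the $H^0$-vanishings via \ref{irr-f} and the pro-$p$/Nakayama argument, invoke Lemma~\ref{inside-out-lem} to identify the residual Selmer groups with the $\pi$-torsion of the full Selmer groups, substitute into the exact sequence of Proposition~\ref{prop: selmer exact sequence}, and dualize using $(D[\pi])^\vee \cong D^\vee/\pi D^\vee$. No gaps.
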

				\begin{proof}
					We claim that  $H^0(G_{\Q_\cyc}, A_f(\xi_i \omega_p^{-j}))=0$, for $i=1,2$ and $H^0(G_{\Q_\cyc}, A_j)=0$. From the assumption  $A_f[\pi]$ (and hence $A_f(\xi_i\omega_p^{-j})[\pi]$) is an irreducible  $G_\Q$-module, we have $H^0(G_{\Q}, A_f(\xi_i\omega_p^{-j})[\pi])=0$, for $i=1,2$. From Nakayama's Lemma, it follows that $H^0(G_{\Q_\cyc}, A_f(\xi_i \omega_p^{-j})[\pi])=0$, for $i=1,2$. 
					From the exact sequence \eqref{fund-reds2}, it follows that $H^0(G_{\Q_\cyc}, A_j[\pi])=0$. 
					For $B_j \in \{  A_f(\xi_{1}\omega_{p}^{-j}), A_j, A_f(\xi_{2}\omega_{p}^{-j})\}$, applying Lemma \ref{inside-out-lem} we get $S^{\Sigma_0}_{\mathrm{Gr}}( B_j[\pi]/\Q_\cyc) = S^{\Sigma_0}_{\mathrm{Gr}}( B_j/\Q_\cyc)[\pi]$. Then it follows from  \eqref{split-selmer-01} that 
					\begin{equation}\label{split-selmer-02}
						0 \rightarrow S^{\Sigma_0}_{\mathrm{Gr}}( A_f(\xi_1\omega_p^{-j})/\Q_\cyc)[\pi] \rightarrow S^{\Sigma_0}_{\mathrm{Gr}}( A_j/\Q_\cyc)[\pi] \rightarrow S^{\Sigma_0}_{\mathrm{Gr}}( A_f(\xi_2\omega_p^{-j})/\Q_\cyc)[\pi]  \rightarrow 0
					\end{equation}
					is exact. Taking the Pontryagin dual, we obtain the lemma.
				\end{proof}
				
				As $f$ is ordinary at $p$, it follows from a deep result of Kato \cite[Theorem 17.4]{kato}  that 
				\begin{align*}\label{kato torsion}
					\text{The dual Selmer groups } S^{\Sigma_0}_{\mathrm{Gr}}( A_f(\xi_i\omega_p^{-j})/\Q_\cyc)^\vee \text{ are torsion } \mathcal{O}[[\Gamma]]\text{-modules for } i=1,2.\tag{co-tors}
				\end{align*}
				By the assumption \ref{Stors}, we have $S^{\Sigma_0}_{\mathrm{Gr}}( A_j/\Q_\cyc)^\vee$ is also a torsion $\mathcal{O}[[\Gamma]]$-module. Thus $S_{\mathrm{Gr}}(B_j/\Q_\cyc)$ is  co-torsion $\mathcal{O}[[\Gamma]]$-module for $B_j \in \{ A_{f}(\xi_1\omega_p^{-j}), A_j , A_{f}(\xi_2\omega_p^{-j})\}$.  We next prove that $S_{\mathrm{Gr}}(B_j/\Q_\cyc)^\vee$ has no  non-zero pseudo-null $\mathcal{O}[[\Gamma]]$ submodules.
				
				\begin{proposition}\label{no-finite-part-result1}
					Let  $ B_{j} \in \{ A_{f}(\xi_1\omega_p^{-j}), A_j, A_{f}(\xi_2\omega_p^{-j})\}$. Assume  that  $f$ satisfies \ref{irr-f} and  \ref{Stors} holds for $A_j$.
					Then $S_{\mathrm{Gr}}(B_j/\Q_\cyc)^\vee$ has no non-zero  pseudo-null $\mathcal{O}[[\Gamma]]$-submodule. 
				\end{proposition}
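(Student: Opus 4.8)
The plan is to first reduce the statement to the absence of nonzero \emph{finite} submodules, and then to verify the hypotheses of the criterion of \cite{we} (the tool the introduction announces for this purpose). Since $\Gamma\cong\Z_p$, the Iwasawa algebra $\mathcal{O}[[\Gamma]]\cong\mathcal{O}[[T]]$ is a two-dimensional regular local ring, so a finitely generated $\mathcal{O}[[\Gamma]]$-module is pseudo-null if and only if it is finite. Hence it suffices to prove that $S_{\mathrm{Gr}}(B_j/\Q_\cyc)^\vee$ contains no nonzero finite $\mathcal{O}[[\Gamma]]$-submodule, equivalently that $S_{\mathrm{Gr}}(B_j/\Q_\cyc)$ has no proper $\mathcal{O}[[\Gamma]]$-submodule of finite index. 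The criterion of \cite{we} reduces this to checking, for each $B_j\in\{A_f(\xi_1\omega_p^{-j}),A_j,A_f(\xi_2\omega_p^{-j})\}$: (i) $S_{\mathrm{Gr}}(B_j/\Q_\cyc)^\vee$ is $\mathcal{O}[[\Gamma]]$-torsion; (ii) $H^0(G_{\Q_\cyc},B_j)=0$ and $H^0(G_{\Q_\cyc},B_j^{*})=0$ for the Cartier dual $B_j^{*}:=\Hom(B_j,\mu_{p^\infty})$; (iii) the weak Leopoldt vanishing $H^2(\Q_\Sigma/\Q_\cyc,B_j)=0$; and (iv) a divisibility/surjectivity condition on the local data of the Greenberg Selmer group along $\Q_\cyc$.

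For (i), \eqref{kato torsion} (Kato) gives that $S^{\Sigma_0}_{\mathrm{Gr}}(A_f(\xi_i\omega_p^{-j})/\Q_\cyc)^\vee$ is torsion for $i=1,2$, while \ref{Stors} gives torsionness of $S_{\mathrm{Gr}}(A_j/\Q_\cyc)^\vee$; adjoining or removing finitely many Euler factors at primes of $\Sigma_0$ does not affect torsionness, so the three primitive dual Selmer groups are torsion. For (ii), I would argue exactly as in the proof of Lemma~\ref{lem: dual selmer group sequence}: by \ref{irr-f} the module $A_f[\pi]$ is an irreducible $G_\Q$-module, hence so is each twist $A_f(\xi_i\omega_p^{-j})[\pi]$, and by \eqref{fund-reds2} $A_j[\pi]$ is an extension of two such irreducible modules; therefore $H^0(G_\Q,B_j[\pi])=0$, and Nakayama gives $H^0(G_{\Q_\cyc},B_j[\pi])=0$, hence $H^0(G_{\Q_\cyc},B_j)=0$. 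Applying the same reasoning to $B_j^{*}[\pi]$ (again a successive extension of irreducible $G_\Q$-modules, since the Cartier dual of an irreducible is irreducible) gives $H^0(G_{\Q_\cyc},B_j^{*})=0$. For (iii), $H^2(\Q_\Sigma/\Q_\cyc,B_j)=0$ is the weak Leopoldt statement over the cyclotomic $\Z_p$-extension, which holds for these representations. For (iv), the input is precisely the vanishing $H^0(\Q_{\cyc,w},V_j)=0$ for $w\mid p$ established in \S\ref{sec: Selmer groups} while proving the control theorem, together with the surjectivity of the global-to-local defining map for the Selmer group, which was already invoked in the proof of Proposition~\ref{prop: selmer exact sequence} via \cite[Theorem 5.2]{ms}; combined with Propositions~\ref{prop:Greenberg away from N}--\ref{prop:Greenberg for modular form} (which identify the local conditions at primes dividing $NI_0$ but not $pm$ as unramified-type conditions with divisible quotients) this supplies the required local divisibility.

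With (i)--(iv) in hand, \cite{we} yields that $S_{\mathrm{Gr}}(B_j/\Q_\cyc)^\vee$ has no nonzero finite — hence no nonzero pseudo-null — $\mathcal{O}[[\Gamma]]$-submodule, simultaneously for the three choices of $B_j$ (and, by the identical argument, for the $\Sigma_0$-imprimitive variants). The step where I expect to do genuine work is verifying (iv) at the prime $w\mid p$ of $\Q_\cyc$ when $(p-1)\mid j$: there $A_f^{-}(\xi_i\omega_p^{-j})$ may be trivial as an $I_{\cyc,w}$-module, so the naive divisibility of $H^1(\Q_{\cyc,w},B_j)/H^1_{\mathrm{Gr}}(\Q_{\cyc,w},B_j)$ and the relevant $H^0$-vanishing can fail. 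I would handle this exactly as in the case analysis of Proposition~\ref{prop:exact sequence of inertia}, passing to the procyclic quotient $\Delta_w=G_{\Q_{\cyc,w}}/I_{\cyc,w}$ and using that $H^i(\Delta_w,-)$ still behaves controllably on trivial modules, if necessary under the auxiliary hypothesis \eqref{assumption}; the remaining local primes present no difficulty.
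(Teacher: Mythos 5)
Your proposal is correct and follows essentially the same route as the paper: identify $S_{\mathrm{Gr}}(B_j/\Q_\cyc)$ with the Selmer group of \cite{we} and invoke \cite[Proposition 1.8]{we}, the key verification being $H^0\big(G_{\Q_\cyc}, \mathrm{Hom}(B_j,\frac{K_{\mathfrak p}}{\mathcal O}(1))\otimes\frac{K_{\mathfrak p}}{\mathcal O}\big)=0$, deduced from the irreducibility of $A_f[\pi]$ via the filtration \eqref{fund-reds2} and Nakayama. The paper's proof is leaner — it only checks the identification of local conditions away from $p$ and the vanishing of the invariants of the Cartier dual, with torsionness supplied by \ref{Stors} and \eqref{kato torsion} — whereas your conditions (iii) and (iv) are not separately verified there; but this is a difference of packaging, not of substance.
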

				\begin{proof}
					
					We   apply \cite[Proposition 1.8]{we} to deduce $S_{\mathrm{Gr}}(B_j/\Q_\cyc)^\vee$ has no non-zero  pseudo-null $\mathcal{O}[[\Gamma]]$ submodule. Following \cite{we}, define $\mathrm{Sel}_{\mathrm{cr}}(\Q_{\cyc},B_j) := \mathrm{Ker} \big(H^{1} (G_{\Sigma}(\Q_{\cyc}), B_j) \rightarrow \oplus_{w \in \Sigma^\infty} Loc_w \big)$,
					where $ Loc_w = H^{1}({\mathbb{Q}_{\cyc,w}}, B_j)$ if $ w \nmid p $ and $ Loc_w = (\mathrm{image}( H^{1}({\mathbb{Q}_{\cyc,w}}, B_j) \rightarrow  H^{1}(I_{\cyc,w}, B_j^{-}) ))$ if $ w \mid p  $. For $w \nmid p$, $G_{\Q_{\cyc,w}}/I_{\cyc,w}$ has profinite order prime to $ p $,  thus $H^1(G_{\Q_{\cyc,w}}/I_{\cyc,w}, B_{j}^{I_{\cyc,w}}) =0$.  Thus by the inflation-restriction sequence we get $H^{1}({\mathbb{Q}_{\cyc,w}}, B_j) \rightarrow  H^{1}(I_{\cyc,w}, B_j) $ is injective. With this, it is easy to see that  $\mathrm{Sel}_{\mathrm{cr}}(\Q_{\cyc},B_j) = S_{\mathrm{Gr}}(B_j/\Q_\cyc)$. 
					
					We now claim that $\text{Hom}(B_j(-1)[\pi],\mathbb{F})^{G_\Q}=  0$.   Since $A_f[\pi]$ is an irreducible  $G_\Q$-module, we have $\text{Hom}(A_f(\xi_i \omega_p^{-j-1})[\pi],\mathbb{F})^{G_\Q}=  0$.  Thus from the exact sequence \eqref{fund-reds2} it follows that  $\text{Hom}(A_j(-1)[\pi],\mathbb{F})^{G_\Q}=  0$. 
					Since $\big(\mathrm{Hom}(B_j, \frac{K_\mathfrak p}{\mathcal{O}}(1))\otimes \frac{K_\mathfrak p}{\mathcal{O}}\big) [\pi] \cong   \text{Hom}(B_j(-1)[\pi], \mathbb{F})$, we get  $ (\mathrm{Hom}(B_j, \frac{K_\mathfrak p}{\mathcal{O}}(1))\otimes \frac{K_\mathfrak p}{\mathcal{O}} )^{G_\Q} [\pi] = 
					0$. Note that  $\big(\mathrm{Hom}(B_j, \frac{K_\mathfrak p}{\mathcal{O}}(1))\otimes \frac{K_\mathfrak p}{\mathcal{O}}\big)^{G_\Q}=0$ implies that $H^0(G_{\Q_\cyc}, \mathrm{Hom}(B_j, \frac{K_\mathfrak p}{\mathcal{O}}(1))\otimes \frac{K_\mathfrak p}{\mathcal{O}})=0$ by Nakayama's lemma.  
					Now the proposition follows from \cite[Proposition 1.8]{we}.
				\end{proof}
				Since $S_{\mathrm{Gr}}(B_j/\Q_\cyc)$ is a co-torsion $\mathcal{O}[[\Gamma]]$-module, we have  $S^{\Sigma_0}_{\mathrm{Gr}}( B_j/\Q_\cyc)^\vee$ is a finitely generated torsion $\mathcal{O}[[\Gamma]]$-module for $B_j \in \{ A_f(\xi_1\omega_p^{-j}), A_j, A_f(\xi_2\omega_p^{-j})\}$.
				We next study relation between $S_{\mathrm{Gr}}( B_j/\Q_\cyc)$ and $S^{\Sigma_0}_{\mathrm{Gr}}( B_j/\Q_\cyc)$.
				\begin{lemma}
					Let the hypotheses be as in Lemma~\ref{lem: dual selmer group sequence} and also assume \ref{Stors} holds for $A_j$. Then for $B_j \in \{ A_f(\xi_1\omega_p^{-j}), A_j, A_f(\xi_2\omega_p^{-j})\}$, we have the following exact sequence
					\begin{equation}\label{last-alg1}
						0 \lra S_{\mathrm{Gr}}( B_j/\Q_\cyc) \lra S^{\Sigma_0}_{\mathrm{Gr}}( B_j/\Q_\cyc) \lra \underset{w \in \Sigma^\infty_0}{\prod}H^1(\Q_{\cyc,w}, B_j) \lra 0.
					\end{equation}  
				\end{lemma}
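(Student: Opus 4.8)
The plan is to derive the exact sequence \eqref{last-alg1} by comparing the $\Sigma_0$-imprimitive Selmer group with the usual Greenberg Selmer group. Since the only difference between $S_{\mathrm{Gr}}(B_j/\Q_\cyc)$ and $S^{\Sigma_0}_{\mathrm{Gr}}(B_j/\Q_\cyc)$ is that for primes $w \in \Sigma_0^\infty$ we impose no local condition in the latter while we impose the Greenberg condition (which for $w \nmid p$ is the unramified condition $\mathrm{Ker}(H^1(\Q_{\cyc,w},B_j) \to H^1(I_{\cyc,w},B_j))$) in the former, there is a tautological left-exact sequence
\begin{equation*}
0 \lra S_{\mathrm{Gr}}(B_j/\Q_\cyc) \lra S^{\Sigma_0}_{\mathrm{Gr}}(B_j/\Q_\cyc) \lra \underset{w \in \Sigma_0^\infty}{\prod} \frac{H^1(\Q_{\cyc,w},B_j)}{H^1_{\mathrm{Gr}}(\Q_{\cyc,w},B_j)}.
\end{equation*}
Since $\Sigma_0 \cap \{p\} = \emptyset$, for $w \in \Sigma_0^\infty$ we have $w \nmid p$, so $G_{\Q_{\cyc,w}}/I_{\cyc,w}$ has profinite order prime to $p$ (it is procyclic of pro-prime-to-$p$ order after passing to $\Q_\cyc$), whence $H^1(G_{\Q_{\cyc,w}}/I_{\cyc,w}, B_j^{I_{\cyc,w}}) = 0$ and by inflation-restriction the map $H^1(\Q_{\cyc,w},B_j) \to H^1(I_{\cyc,w},B_j)$ is injective; therefore $H^1_{\mathrm{Gr}}(\Q_{\cyc,w},B_j) = \mathrm{Ker}(H^1(\Q_{\cyc,w},B_j) \to H^1(I_{\cyc,w},B_j)) = 0$. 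This already rewrites the quotient as $\prod_{w \in \Sigma_0^\infty} H^1(\Q_{\cyc,w},B_j)$, so the content of the lemma is the surjectivity of the right-hand map.

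To establish surjectivity I would invoke the global Poitou-Tate / global Euler characteristic machinery, exactly as used in Proposition~\ref{prop: selmer exact sequence}: the relevant global-to-local restriction map whose image one must control is $H^1(G_\Sigma(\Q_\cyc), B_j) \to \bigoplus_{w \mid p} \mathrm{image}\big(H^1(\Q_{\cyc,w},B_j) \to H^1(I_{\cyc,w},B_j^-)\big) \oplus \bigoplus_{w \in \Sigma^\infty,\, w\nmid p} H^1(\Q_{\cyc,w},B_j)$; this surjectivity is known (cf.\ \cite[\S5.3]{gr3}, \cite[Theorem 5.2]{ms}) under \ref{irr-f} together with \ref{Stors} (which forces the dual Selmer group to be torsion, hence the global $H^2$ to vanish appropriately and the local terms at the bad primes to be ``spent''). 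Restricting this surjection to the summands indexed by $w \in \Sigma_0^\infty$ and noting that $H^1_{\mathrm{Gr}}(\Q_{\cyc,w},B_j)=0$ there, one gets that $S^{\Sigma_0}_{\mathrm{Gr}}(B_j/\Q_\cyc)$ surjects onto $\prod_{w \in \Sigma_0^\infty} H^1(\Q_{\cyc,w},B_j)$. One technical point to verify is that $H^0(G_{\Q_\cyc}, B_j) = 0$ for each $B_j \in \{A_f(\xi_1\omega_p^{-j}), A_j, A_f(\xi_2\omega_p^{-j})\}$ so that the cokernel computation in Poitou-Tate is clean; this was already recorded in the proof of Lemma~\ref{lem: dual selmer group sequence} and follows from \ref{irr-f} and Nakayama's lemma.

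The main obstacle is the surjectivity of the localization map, i.e.\ verifying that the hypotheses of the cited surjectivity results (\cite[Theorem 5.2]{ms}, \cite[\S5.3]{gr3}) are met uniformly for all three choices of $B_j$. For $B_j = A_f(\xi_i\omega_p^{-j})$ this is standard since the dual Selmer group is torsion by \eqref{kato torsion}; for $B_j = A_j$ it is precisely the assumption \ref{Stors} that supplies the needed torsionness of $S^{\Sigma_0}_{\mathrm{Gr}}(A_j/\Q_\cyc)^\vee$, and one must check that the extra Euler factors at $\Sigma_0$ do not obstruct surjectivity (which they do not, since the corresponding local cohomology groups are themselves cofree of the expected corank over $\mathbb{F}$, a consequence of the local Euler characteristic formula combined with $w \nmid p$). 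Once surjectivity is in hand, splicing it with the left-exact sequence above yields \eqref{last-alg1} immediately.
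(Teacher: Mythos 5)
Your overall route is the same as the paper's: first observe that for $w \in \Sigma_0^\infty$ the quotient $G_{\Q_{\cyc,w}}/I_{\cyc,w}$ has profinite order prime to $p$, so $H^1_{\mathrm{Gr}}(\Q_{\cyc,w}, B_j)=0$ and the sequence is tautologically left exact with the stated middle and right terms; then reduce everything to the surjectivity of the global-to-local map, which is supplied by the standard Greenberg/Poitou--Tate machinery under the cotorsion hypotheses (the paper invokes \cite[Corollary 2.3]{gv} for exactly this comparison of primitive and imprimitive Selmer groups).

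There is, however, one genuine slip in the "technical point" you single out. The obstruction to surjectivity of the localization map in Poitou--Tate duality is controlled by the \emph{Tate dual} $B_j^\ast = \Hom(T_{B_j},\mu_{p^\infty})$, not by $B_j$ itself: the hypothesis of \cite[Corollary 2.3]{gv} is the vanishing of $H^0(G_{\Q_\cyc}, B_j^\ast)$ (equivalently, that the compact Selmer group of the dual contributes nothing), and indeed the bulk of the paper's proof is devoted to showing $H^0(G_{\Q_\cyc}, \Hom(T_{f}(\xi_i\omega_p^{-j}),\mu_{p^\infty}))=0$ and $H^0(G_{\Q_\cyc},\Hom(T_j,\mu_{p^\infty}))=0$, via irreducibility of $\Hom(T_f(\xi_i\omega_p^{-j})/\pi,\mu_{p^\infty})$ as a $G_\Q$-module, Nakayama, and the exact sequence coming from \eqref{eq: splitting rho_h}. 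You instead propose to check $H^0(G_{\Q_\cyc}, B_j)=0$, which is not the condition that makes the cokernel computation work. The fix is immediate --- the dual vanishing follows from \ref{irr-f} by the same Nakayama argument --- but as written you are verifying the wrong hypothesis, and the verification for $B_j = A_j$ (passing through the dual of \eqref{eq: splitting rho_h}) is a step your proposal omits.
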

				\begin{proof}
					For every $ w \in \Sigma^\infty_0$,  $G_{\Q_{\cyc,w}}/I_{\cyc,w}$ is a pro-$\ell$ group with $\ell\neq p$. Thus $H^1(G_{\Q_{\cyc,w}}/I_{\cyc,w}, B_j^{I_{\cyc,w}}) =0$ and $H^{1}_{\mathrm{Gr}}(G_{\Q_{\cyc,w}}, B_j) =0$ for $w \in \Sigma^\infty_0$. We first treat the case $B_j = A_f(\xi_1\omega_p^{-j}), A_f(\xi_2\omega_p^{-j})$. Let $\mu_{p^{\infty}} := \{ \zeta \in \C: \zeta^{p^r} =1 \text{ for some } r \in \Z \}$ and $A_{f}(\xi_i \omega_p^{-j})^\ast := \Hom(T_{f}(\xi_i\omega_p^{-j}), \mu_p^\infty)$. From \ref{irr-f} and $\dim_{\mathcal{O}}(T_f) =2$, we have  $\Hom(T_{f}(\xi_i\omega_p^{-j})/\pi, \mu_p^\infty)$ is an irreducible $G_{\Q}$-module. By topological Nakayama lemma, $H^{0}(G_{\Q_{\cyc}}, \Hom(T_{f}(\xi_i\omega_p^{-j})/\pi, \mu_p^\infty)) = 0$. As $\Hom(T_{f}(\xi_i\omega_p^{-j})/\pi, \mu_p^\infty) \cong \Hom(T_{f}(\xi_i\omega_p^{-j}), \mu_p^\infty)[\pi] $, it follows that $ H^{0}(G_{\Q_{\cyc}}, A_{f}(\xi_i\omega_p^{-j})^\ast[\pi])=0$. Thus we obtain $ H^{0}(G_{\Q_{\cyc}}, A_{f}(\xi_i\omega_p^{-j})^\ast)^\vee/\pi =0$. Since $(A_{f}(\xi_i\omega_p^{-j})^\ast)^\vee$ is a finitely generated $\Z_p$-module, 
					by Nakayama lemma, we get $H^{0}(G_{\Q_{\cyc}}, A_{f}(\xi_i\omega_p^{-j})^\ast)^\vee =0$. Now the result in this case follows from  \cite[Corollary 2.3]{gv} (See also \cite[Proposition 3.14]{SU}).
					
					We next treat the case $B_j = A_j$. Tensoring \eqref{eq: splitting rho_h} with $T_f(j)$, we obtain that
					\begin{small}
						\begin{align*}
							0 \rightarrow T_{f}(\xi_1 \chi_{\cyc}^{-j})/\pi \rightarrow T_{j}/\pi \rightarrow T_{f}(\xi_2 \chi_{\cyc}^{-j})/\pi \rightarrow 0.
						\end{align*} 
					\end{small}%
					Applying $\Hom_{\Z_{p}}(-, \mu_{p^\infty})$ and taking $G_{\Q_{\cyc}}$-invariants, we obtain  
					\begin{small}
						\begin{align*}
							0 \rightarrow \Hom_{\Z_{p}}(T_{f}(\xi_2 \chi_{\cyc}^{-j})/\pi, \mu_{p^\infty})^{G_{\Q_{\cyc}}} \rightarrow \Hom_{\Z_{p}}(T_j/\pi, \mu_{p^\infty})^{G_{\Q_{\cyc}}} \rightarrow \Hom_{\Z_{p}}(T_{f}(\xi_1 \chi_{\cyc}^{-j})/\pi, \mu_{p^\infty})^{G_{\Q_{\cyc}}}.
						\end{align*}
					\end{small}%
					Since $\Hom_{\Z_{p}}(T_{f}(\xi_i \chi_{\cyc}^{-j})/\pi, \mu_{p^\infty})^{G_{\Q_{\cyc}}} = H^{0}(G_{\Q_{\cyc}},\Hom_{\Z_{p}}(T_{f}(\xi_i \chi_{\cyc}^{-j})/\pi, \mu_{p^\infty})) =0$ for $i=1,2$, it follows that $H^{0}(G_{\Q_{\cyc}},\Hom_{\Z_{p}}(T_j/\pi, \mu_{p^\infty})) =0$. By a similar argument as above,  we get $H^0(G_{\Q_{\cyc}}, \Hom(T_j, \mu_{p}^{\infty}))$ vanishes.  Now the lemma, under \ref{Stors} can be deduced adopting the arguments in \cite[Corollary 2.3]{gv}. 
				\end{proof}
				We now show that the $\Sigma_0$-imprimitive Selmer groups $S^{\Sigma_0}_{\mathrm{Gr}}( B_j/\Q_\cyc)^\vee$ have no non-zero pseudo-null $ \mathcal{O}[[\Gamma]] $ submodules.
				\begin{corollary}\label{lem:psuedonullity}
					Let the hypotheses be as in Lemma~\ref{lem: dual selmer group sequence} and also assume \ref{Stors} holds for $A_j$. 
					Then the  modules $ S^{\Sigma_0}_{\mathrm{Gr}}( A_f(\xi_i\omega_p^{-j})/\Q_\cyc)^\vee$ for $ i=1,2 $ and $S^{\Sigma_0}_{\mathrm{Gr}}( A_j/\Q_\cyc)^\vee$ have no non-zero  pseudo-null $ \mathcal{O}[[\Gamma]] $ submodules.
				\end{corollary}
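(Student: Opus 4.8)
The plan is to deduce Corollary~\ref{lem:psuedonullity} from Proposition~\ref{no-finite-part-result1} together with the exact sequence \eqref{last-alg1}. The idea is that passing from the primitive Selmer group $S_{\mathrm{Gr}}(B_j/\Q_\cyc)$ to the $\Sigma_0$-imprimitive one only changes the Selmer group by local cohomology terms at primes in $\Sigma_0^\infty$, and these local terms have no non-zero pseudo-null submodules after dualizing; combined with the absence of pseudo-null submodules for $S_{\mathrm{Gr}}(B_j/\Q_\cyc)^\vee$ already established in Proposition~\ref{no-finite-part-result1}, this will force the same for the imprimitive dual.

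First I would fix $B_j \in \{ A_f(\xi_1\omega_p^{-j}), A_j, A_f(\xi_2\omega_p^{-j})\}$. Dualizing the exact sequence \eqref{last-alg1} gives
\begin{equation*}
0 \lra \underset{w \in \Sigma^\infty_0}{\bigoplus} H^1(\Q_{\cyc,w}, B_j)^\vee \lra S^{\Sigma_0}_{\mathrm{Gr}}( B_j/\Q_\cyc)^\vee \lra S_{\mathrm{Gr}}( B_j/\Q_\cyc)^\vee \lra 0.
\end{equation*}
By Proposition~\ref{no-finite-part-result1}, the rightmost term has no non-zero pseudo-null $\mathcal{O}[[\Gamma]]$-submodule. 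A standard fact about extensions of torsion $\mathcal{O}[[\Gamma]]$-modules is that if $0 \to X \to Y \to Z \to 0$ is exact with $X$ having no non-zero pseudo-null submodule and $Z$ having no non-zero pseudo-null submodule, then $Y$ has no non-zero pseudo-null submodule; indeed, if $W \subseteq Y$ is pseudo-null, then its image in $Z$ is pseudo-null hence zero, so $W \subseteq X$, forcing $W = 0$. (Here I am using that $\mathcal{O}[[\Gamma]]$ is a regular local ring of dimension $2$, so pseudo-null means finite, and submodules of pseudo-null modules are pseudo-null.) Thus it remains to show that $\bigoplus_{w \in \Sigma_0^\infty} H^1(\Q_{\cyc,w}, B_j)^\vee$ has no non-zero pseudo-null submodule, equivalently that each $H^1(\Q_{\cyc,w}, B_j)^\vee$ does.

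For this last step I would analyze $H^1(\Q_{\cyc,w}, B_j)$ for $w \in \Sigma_0^\infty$, say $w \mid \ell$ with $\ell \neq p$. Since $\Gamma$ acts on the set of primes of $\Q_\cyc$ above $\ell$ and the decomposition group is of finite index, the term $\bigoplus_{w \mid \ell} H^1(\Q_{\cyc,w}, B_j)$ is, as an $\mathcal{O}[[\Gamma]]$-module, an induced module $\mathrm{Ind}_{\Gamma_\ell}^{\Gamma} H^1(\Q_{\cyc,w_0}, B_j)$ where $\Gamma_\ell$ is the (open) decomposition subgroup; its Pontryagin dual is then co-induced, and it suffices to check that $H^1(\Q_{\cyc,w_0}, B_j)^\vee$ has no non-zero pseudo-null $\mathcal{O}[[\Gamma_\ell]]$-submodule. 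By local Tate duality and the local Euler characteristic formula at $\ell \neq p$, $H^1(\Q_{\cyc,w_0}, B_j)^\vee$ is (up to the finite modules $H^0$ and $H^2$) isomorphic to $H^1$ of the Tate dual, and more usefully it is known (cf. the argument in \cite[proof of Proposition 3.2]{gv} or \cite[\S3]{gr1}) that $H^1(\Q_{\cyc,w_0}, B_j)^\vee$ is $\mathcal{O}[[\Gamma_\ell]]$-torsion with no non-zero finite submodule — its structure is controlled by the $I_{\cyc,w_0}$-coinvariants of the Tate twist of $T_j$, which is a free $\mathcal{O}$-module, so after dualizing one gets a module with no non-zero finite submodule. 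I would spell this out invoking that $G_{\Q_{\cyc,w_0}}/I_{\cyc,w_0}$ is topologically cyclic (pro-$\ell$) and that $H^1(\Q_{\cyc,w_0}, B_j)$ is cofinitely generated and cofree away from a finite part, which is exactly the ``no non-zero pseudo-null submodule after dualizing'' property.

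The main obstacle I anticipate is the clean verification of the local claim at $w \in \Sigma_0^\infty$: one must be careful that $H^1(\Q_{\cyc,w}, B_j)^\vee$ genuinely has no non-zero finite submodule, which requires knowing that $H^0(\Q_{\cyc,w}, B_j)$ is the right size relative to the cokernel structure — the subtlety is that $B_j$ need not be divisible as a $G_{\Q_{\cyc,w}}$-module in a way that makes $H^1$ automatically cofree, so one reduces via the long exact sequence attached to $0 \to B_j[\pi^n] \to B_j \to B_j \to 0$ and the finiteness of $H^i(\Q_{\cyc,w}, B_j[\pi^n])$. Everything else is formal: the snake-lemma dualization, the induction/co-induction bookkeeping over primes above $\ell$, and the two-out-of-three property for pseudo-nullity. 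Once the local input is in place, the corollary follows immediately.
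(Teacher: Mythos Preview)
Your overall architecture matches the paper exactly: dualize \eqref{last-alg1}, invoke Proposition~\ref{no-finite-part-result1} for the primitive Selmer dual, use the two-out-of-three property for pseudo-null submodules, and reduce to showing that each $H^1(\Q_{\cyc,w},B_j)^\vee$ for $w\in\Sigma_0^\infty$ has no non-zero pseudo-null (i.e.\ finite) $\mathcal{O}[[\Gamma]]$-submodule.

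The only divergence is in how you handle this local step, and here you are making it much harder than necessary. The paper dispatches it in one line: $B_j$ \emph{is} divisible as an abelian group (it is of the form $V/T$, hence isomorphic to a finite direct sum of copies of $K_\mathfrak{p}/\mathcal{O}$), and $G_{\Q_{\cyc,w}}$ has $p$-cohomological dimension~$1$ for $w\nmid p$. From $0\to B_j\stackrel{\pi}{\to}B_j\to 0$ one gets surjectivity of $\pi$ on $H^1(\Q_{\cyc,w},B_j)$ since $H^2=0$; thus $H^1(\Q_{\cyc,w},B_j)$ is $\pi$-divisible, its Pontryagin dual is $\mathcal{O}$-torsion-free, and therefore contains no non-zero finite submodule. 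Your worry that ``$B_j$ need not be divisible as a $G_{\Q_{\cyc,w}}$-module'' is misplaced: divisibility of the underlying $\mathcal{O}$-module is all that is used. The induction/co-induction bookkeeping, local Tate duality, and Euler-characteristic considerations you outline are correct in spirit but entirely avoidable here.
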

				\begin{proof}
					Let $B_j \in \{ A_f(\xi_1\omega_p^{-j}), A_j, A_f(\xi_2\omega_p^{-j})\}$.  We note that, $\prod_{w \in \Sigma^\infty_0} H^1(\Q_{\cyc,w}, B_j)^\vee$ has no non-zero pseudo-null submodule as $B_j$ is divisible and $G_{\Q_{\cyc,w}}$ has $p$-th cohomological dimension 1. 
					Now the  lemma follows from Proposition~\ref{no-finite-part-result1} and \eqref{last-alg1}.	   
				\end{proof}
				
				For a finitely generated torsion  $\mathcal{O}[[\Gamma]]$ (resp. $\mathbb{F}[[\Gamma]]$) module $\mathcal{M}$, we denote the characteristic ideal of $ \mathcal{M} $ over  $\mathcal{O}[[\Gamma]]$ (resp. $\mathbb{F}[[\Gamma]]$) by $C_{\mathcal{O}[[\Gamma]]}(\mathcal{M})$ (resp. $C_{ \mathbb{F}[[\Gamma]]}(\mathcal{M})$). 
				Note that by the assumption \ref{Stors} we have $S_{\mathrm{Gr}}( A_j/\Q_\cyc)^\vee $ is a torsion $\mathcal{O}[[\Gamma]]$-module. Also by \eqref{kato torsion}, we have $S^{\Sigma_0}_{\mathrm{Gr}}( A_f(\xi_i\omega_p^{-j})/\Q_\cyc)^\vee$ are finitely generated torsion $\mathcal{O}[[\Gamma]]$-modules for $i=1,2$.  Thus we can consider their characteristic ideal. 
			From Lemma~\ref{lem:psuedonullity} and \cite[Corollary 3.8(i), Corollary 3.21(iii)]{SU} we have 
			\begin{equation} 
				C_{\mathcal{O}[[\Gamma]]}(S^{\Sigma_0}_{\mathrm{Gr}}( B_j/\Q_\cyc)^\vee) ~(\text{mod }\pi) = C_{ \mathbb{F}[[\Gamma]]}(S^{\Sigma_0}_{\mathrm{Gr}}(B_j/\Q_\cyc)^\vee/{\pi})
			\end{equation}
			for $B_j \in \{ A_f(\xi_1\omega_p^{-j}), A_j,  A_f(\xi_2\omega_p^{-j}) \} $. Using this in \eqref{split-selmer-03}, we deduce the following theorem:
			
			\begin{theorem}\label{thm:congruence of ideals}
				Let $ (N,M_0) =1$ and  $\psi  \lvert I_{\cyc,w}$ has order prime to $p$ for $ w \mid pI_0$, where $\psi$ is the nebentypus of $h$. 
				Assume  $f$ satisfies \ref{irr-f}, $h$ satisfies  \eqref{intro rho_h}, $f \otimes \xi_1$ satisfies \eqref{H2van} and \ref{Stors} holds for $A_j$. If $ (p-1) \mid j $ and $H^0(G_{\Q_{\cyc,w}},  A^-_f[\pi](\bar{\xi}_2)) \neq 0$ at the prime $ w \mid p $, then  we further assume that  \eqref{assumption} holds.  Then for $ l-1 \leq j \leq k-2 $, we have
				\begin{small}{
						\begin{equation}\label{split-selmer-04}
							C_{\mathcal{O}[[\Gamma]]}\Big(S^{\Sigma_0}_{\mathrm{Gr}}( A_j/\Q_\cyc)^\vee\Big) = C_{\mathcal{O}[[\Gamma]]}\Big(S^{\Sigma_0}_{\mathrm{Gr}}( A_f(\xi_1 \omega_p^{-j})/\Q_\cyc)^\vee\Big)  C_{\mathcal{O}[[\Gamma]]}\Big(S^{\Sigma_0}_{\mathrm{Gr}}( A_f(\xi_2 \omega_p^{-j})/\Q_\cyc)^\vee\Big)~  \mod   \pi.
						\end{equation}
				}\end{small}%
			\end{theorem}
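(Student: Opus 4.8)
The plan is to derive the congruence of characteristic ideals in \eqref{split-selmer-04} by combining the short exact sequence of dual Selmer groups from Lemma~\ref{lem: dual selmer group sequence} with the multiplicativity of characteristic ideals in exact sequences and a base-change statement for characteristic ideals modulo $\pi$. First I would observe that under the standing hypotheses, all three modules $S^{\Sigma_0}_{\mathrm{Gr}}(A_f(\xi_1\omega_p^{-j})/\Q_\cyc)^\vee$, $S^{\Sigma_0}_{\mathrm{Gr}}(A_j/\Q_\cyc)^\vee$, $S^{\Sigma_0}_{\mathrm{Gr}}(A_f(\xi_2\omega_p^{-j})/\Q_\cyc)^\vee$ are finitely generated torsion $\mathcal{O}[[\Gamma]]$-modules: for the outer two this is \eqref{kato torsion}, a consequence of Kato's theorem, and for the middle one it is the hypothesis \ref{Stors}. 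Hence each of them has a well-defined characteristic ideal over $\mathcal{O}[[\Gamma]]$, and their reductions mod $\pi$, being finitely generated over $\mathbb{F}[[\Gamma]]$ (here $\mathbb{F} = \mathcal{O}/\pi$), are torsion over $\mathbb{F}[[\Gamma]]$ precisely because the exact sequence \eqref{split-selmer-03} already exhibits $S^{\Sigma_0}_{\mathrm{Gr}}(A_j/\Q_\cyc)^\vee/\pi$ as an extension of $\mathbb{F}[[\Gamma]]$-torsion modules.

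Next I would invoke Corollary~\ref{lem:psuedonullity}, which gives that each of these three dual Selmer groups has no non-zero pseudo-null $\mathcal{O}[[\Gamma]]$-submodule. This is the input that makes characteristic ideals behave well under reduction: by \cite[Corollary 3.8(i), Corollary 3.21(iii)]{SU}, for $B_j \in \{A_f(\xi_1\omega_p^{-j}), A_j, A_f(\xi_2\omega_p^{-j})\}$ one has
\begin{equation*}
	C_{\mathcal{O}[[\Gamma]]}\big(S^{\Sigma_0}_{\mathrm{Gr}}(B_j/\Q_\cyc)^\vee\big) \bmod \pi = C_{\mathbb{F}[[\Gamma]]}\big(S^{\Sigma_0}_{\mathrm{Gr}}(B_j/\Q_\cyc)^\vee/\pi\big).
\end{equation*}
With these three identities in hand, the problem reduces to a computation entirely over the regular two-dimensional local ring $\mathbb{F}[[\Gamma]]$: I would apply the exact sequence \eqref{split-selmer-03}, namely
\begin{equation*}
	0 \to \frac{S^{\Sigma_0}_{\mathrm{Gr}}(A_f(\xi_2\omega_p^{-j})/\Q_\cyc)^\vee}{\pi} \to \frac{S^{\Sigma_0}_{\mathrm{Gr}}(A_j/\Q_\cyc)^\vee}{\pi} \to \frac{S^{\Sigma_0}_{\mathrm{Gr}}(A_f(\xi_1\omega_p^{-j})/\Q_\cyc)^\vee}{\pi} \to 0,
\end{equation*}
together with the standard multiplicativity of characteristic ideals in a short exact sequence of finitely generated torsion $\mathbb{F}[[\Gamma]]$-modules, to conclude that $C_{\mathbb{F}[[\Gamma]]}$ of the middle term is the product of $C_{\mathbb{F}[[\Gamma]]}$ of the two outer terms. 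Tracing back through the three mod-$\pi$ identities then yields \eqref{split-selmer-04}.

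I should double-check two technical points when writing this out. First, that the hypotheses of Theorem~\ref{thm:congruence of ideals} are exactly those needed to invoke Lemma~\ref{lem: dual selmer group sequence} (hence \eqref{split-selmer-03}) and Corollary~\ref{lem:psuedonullity}: both require \ref{irr-f} for $f$, \eqref{intro rho_h} for $h$, \eqref{H2van} for $f\otimes\xi_1$, \ref{Stors} for $A_j$, the coprimality $(N,M_0)=1$, the order of $\psi|_{I_{\cyc,w}}$ being prime to $p$ at primes above $pI_0$, and the auxiliary assumption \eqref{assumption} in the borderline case $(p-1)\mid j$ with $H^0(G_{\Q_{\cyc,w}}, A_f^-(\xi_2)[\pi])\neq 0$ — and these are precisely the hypotheses imposed in the theorem statement, for $j$ in the prescribed range $l-1\leq j\leq k-2$. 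Second, I must make sure the exact sequence \eqref{split-selmer-03} genuinely is a sequence of $\mathbb{F}[[\Gamma]]$-modules and that the maps are $\mathbb{F}[[\Gamma]]$-linear; this follows since it is the Pontryagin dual, reduced mod $\pi$, of the $\mathcal{O}[[\Gamma]]$-linear sequence \eqref{split-selmer-02}. The main (though modest) obstacle is simply assembling these inputs cleanly; the genuinely substantive work — establishing the exact sequence \eqref{split-selmer-03} via the control of local conditions in \S\ref{sec: Selmer groups}, and the absence of pseudo-null submodules via \cite{we} — has already been carried out in Lemma~\ref{lem: dual selmer group sequence} and Corollary~\ref{lem:psuedonullity}, so the proof of the theorem itself is short.
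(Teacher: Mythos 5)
Your proposal is correct and follows essentially the same route as the paper: reduce via the exact sequence \eqref{split-selmer-03} of Lemma~\ref{lem: dual selmer group sequence}, use Corollary~\ref{lem:psuedonullity} together with \cite[Corollary 3.8(i), Corollary 3.21(iii)]{SU} to identify $C_{\mathcal{O}[[\Gamma]]}(\cdot) \bmod \pi$ with $C_{\mathbb{F}[[\Gamma]]}(\cdot/\pi)$, and conclude by multiplicativity of characteristic ideals over $\mathbb{F}[[\Gamma]]$. (Only trivial quibbles: $\mathbb{F}[[\Gamma]]$ is one-dimensional, not two-dimensional, and the reductions mod $\pi$ need not be $\mathbb{F}[[\Gamma]]$-torsion when a $\mu$-invariant is positive, but the congruence of ideals holds in that case too with the convention that the characteristic ideal of a non-torsion module is zero.)
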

			\begin{remark}
				As $ f $ is $ p $-ordinary and $ \bar{\rho}_f $ is irreducible, it follows the $C_{\mathcal{O}[[\Gamma]]}\Big(S^{\Sigma_0}_{\mathrm{Gr}}( A_f(\xi_i\omega_p^{-j})/\Q_\cyc)^\vee\Big)$ is independent of the choice of the lattice $ T_f $ (See \cite[Page 34]{SU}). Thus by Theorem~\ref{thm:congruence of ideals}, it follows that $C_{\mathcal{O}[[\Gamma]]}\Big(S^{\Sigma_0}_{\mathrm{Gr}}( A_j/\Q_\cyc)^\vee\Big) $ is also independent of the choice of the lattice $ T = T_f \otimes T_g $.
			\end{remark}

			\section{Iwasawa main conjecture modulo \texorpdfstring{$\pi$}{}}\label{sec: IMC}
			Recall  that $\mu^{\Sigma_{0}}_{p,f, \varphi,j}$ and  $\mu^{\Sigma_{0}}_{p,f\times h,j}$ are the $ p $-adic $ L $-functions attached to $ f \otimes \varphi $ and $f\otimes h,$ (see \eqref{def: p-adic L-function f}, \eqref{def: p-adic L-function f x h}). We now recall the Iwasawa Main Conjecture for modular forms (See Greenberg \cite{gr1}):
			\begin{conjecture}\label{IWC for modular form}$($\textbf{Greenberg Iwasawa Main Conjecture for modular forms}$)$
				Let $ F \in S_{k}(\Gamma_{1}(N)) $ be a $p$-ordinary eigenform and let $\varphi$ be a Dirichlet character. For every critical value $ 0 \leq j \leq k-1 $, we have 
				\begin{align*}\label{IMC-modular}
					(\mu_{p,F,\varphi, j}) = C_{\mathcal{O}[[\Gamma]]}\Big(S_{\mathrm{Gr}}( A_F(\varphi\omega_p^{-j})/\Q_\cyc)^\vee\Big).\tag{IMC}
				\end{align*}
			\end{conjecture}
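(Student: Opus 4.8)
Since the statement is the Greenberg Iwasawa main conjecture for a $p$-ordinary eigenform (which in general remains open), the plan is not to prove it ex nihilo but to assemble it, in the cases relevant to this paper, from the two known halves. First I would reduce the general pair $(F,\varphi,j)$ to a single newform with a critical Tate twist: twisting $\rho_F$ by $\varphi$ and absorbing $\omega_p$ only relabels the $p$-adic $L$-function of Theorem~\ref{p-adic l-function of modular form} and the Greenberg Selmer group of Definition~\ref{def: dagger cohomology}, since both are defined purely through the Galois module $A_F(\varphi\omega_p^{-j})$ and its interpolation range; one checks this bookkeeping is harmless and that the critical range $0\le j\le k-1$ is preserved. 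So it suffices to establish \eqref{IMC-modular} for a $p$-ordinary newform $F\in S_k(\Gamma_0(A),\varphi)$ satisfying \ref{irr-f} and \ref{p-dist}.

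Next I would invoke the two divisibilities. One inclusion, $C_{\mathcal{O}[[\Gamma]]}(S_{\mathrm{Gr}}(A_F(\varphi\omega_p^{-j})/\Q_\cyc)^\vee)\ \big|\ (\mu_{p,F,\varphi,j})$, is Kato's theorem: his Euler system of Beilinson--Kato zeta elements, fed into the Euler system machinery under the irreducibility (indeed large-image) hypothesis \ref{irr-f} on $\bar\rho_F$, bounds the dual Selmer group and simultaneously yields the torsionness invoked in \ref{Stors} and \eqref{kato torsion}; this is \cite[Theorem 17.4]{kato}. The reverse inclusion, $(\mu_{p,F,\varphi,j})\ \big|\ C_{\mathcal{O}[[\Gamma]]}(S_{\mathrm{Gr}}(A_F(\varphi\omega_p^{-j})/\Q_\cyc)^\vee)$, is the theorem of Skinner--Urban \cite{SU}: one constructs $p$-adic families of Klingen--Eisenstein series on $\mathrm{GU}(2,2)$ whose constant terms capture $\mu_{p,F,\varphi,j}$, produces congruences with cusp forms, and reads off the desired lower bound for the Selmer group from the resulting Eisenstein ideal, using \ref{irr-f}, the $p$-distinguishedness \ref{p-dist}, and the (mild, and here verifiable) ramification hypotheses of loc.\ cit. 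Because $\mathcal{O}[[\Gamma]]\cong\mathcal{O}[[T]]$ is a regular local ring and characteristic ideals of torsion modules are principal, the two divisibilities at once give the equality of ideals \eqref{IMC-modular}.

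Finally I would note that the present article does not need the full conjecture but only its reduction modulo $\pi$ for $F=f$ and $\varphi=\xi_i\omega_p^{1-l}$, namely that $(\mu_{p,f,\xi_i,j})$ and $C_{\mathcal{O}[[\Gamma]]}(S_{\mathrm{Gr}}(A_f(\xi_i\omega_p^{-j})/\Q_\cyc)^\vee)$ generate the same ideal in $\mathbb{F}[[\Gamma]]$; this follows from either divisibility above together with the nonvanishing of both sides modulo $\pi$, which is guaranteed once the $\mu$-invariant vanishes as discussed in Remark~\ref{remark mu invariant and H2} and Corollary~\ref{lem:psuedonullity}. The main obstacle lies entirely on the Skinner--Urban side: one must verify that its running hypotheses (on $\bar\rho_f$, on the conductor, on behaviour at $p$) hold for the particular twists $f\otimes\xi_i\omega_p^{1-l}$ occurring here, and, crucially, one must match the period normalizations of \S\ref{sec: periods and congruence} with those implicit in \cite{kato} and \cite{SU}, so that the two inclusions can be concatenated into a genuine equality of ideals rather than merely an equality up to a $p$-adic unit.
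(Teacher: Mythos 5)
The statement you were asked to address is labelled a \emph{Conjecture} in the paper (Greenberg's Iwasawa Main Conjecture for modular forms), and the paper gives no proof of it: it is taken as a hypothesis in Corollary~\ref{congruence intro} and Theorem~\ref{congruence main conjecture }, with the remark following the conjecture merely recording that it is known for a large class of forms by combining Kato and Skinner--Urban, and that only the mod~$\pi$ version is actually needed. Your proposal correctly recognizes that the general statement is open and sketches precisely that assembly: Kato's divisibility under \ref{irr-f}, the Skinner--Urban divisibility under \ref{irr-f} and \ref{p-dist}, principality of characteristic ideals over $\mathcal{O}[[\Gamma]]$, and the observation that the paper only requires the congruence modulo $\pi$. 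This is consistent with what the paper itself says, so there is no conflict of approach — there is simply no proof in the paper to compare against.

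Two cautions so that your sketch is not mistaken for a complete argument. First, the reduction of the pair $(F,\varphi,j)$ to a single newform is not pure bookkeeping: twisting can change the level, the local behaviour at ramified primes, and whether the hypotheses of \cite{SU} (e.g.\ on the conductor and on ramification of $\bar\rho$) are satisfied, which is exactly why the paper verifies the conjecture case by case in \S\ref{section: examples} (via explicit $\mu$- and $\lambda$-invariant computations and the $p$-parity result of \cite{nekovar}) rather than citing a blanket theorem. Second, the period-matching issue you flag at the end is the genuinely delicate point: the equality \eqref{IMC-modular} is sensitive to the normalization of $\Omega_F^{\pm}$, and the paper's canonical periods from Theorem~\ref{choice of period and petterson innerproduct} must be compared with those implicit in \cite{kato} and \cite{SU} before the two divisibilities concatenate to an equality of ideals rather than an equality up to a non-unit. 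Your proposal names this obstacle but does not resolve it, which is appropriate for a conjecture but means the text should remain a justification of the hypothesis, not a proof.
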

			
			\begin{remark}
				Conjecture~\ref{IWC for modular form} is known for a large class of modular forms combining the results of Kato \cite{kato} and Skinner-Urban \cite{SU}. 
			\end{remark} 
			We next show that the Iwasawa main conjecture i.e. \eqref{IMC-modular}  for $f \otimes \xi_i$ implies the $\Sigma_0$-imprimitive Iwasawa main conjecture for $f \otimes \xi_i$.
			\begin{lemma}\label{IMC imprimitive for modular form}
				Let $ f \in S_{k}(\Gamma_{0}(N),\eta) $ be a $p$-ordinary eigenform with $p \nmid N$ and $f$ satisfies \ref{irr-f}. For every critical value $ 0 \leq j \leq k-1 $ and $i=1,2$, we have 
				\begin{align*}
					(\mu_{p,f,\xi_i, j}) = C_{\mathcal{O}[[\Gamma]]}\Big(S_{\mathrm{Gr}}( A_f(\xi_i\omega_p^{-j})/\Q_\cyc)^\vee\Big) \Rightarrow (\mu^{\Sigma_0}_{p,f,\xi_i, j}) = C_{\mathcal{O}[[\Gamma]]}\Big(S^{\Sigma_0}_{\mathrm{Gr}}( A_f(\xi_i\omega_p^{-j})/\Q_\cyc)^\vee\Big).
				\end{align*}
			\end{lemma}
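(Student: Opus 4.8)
The plan is to pass from the primitive Iwasawa main conjecture \eqref{IMC-modular} for $f \otimes \xi_i$ to its $\Sigma_0$-imprimitive counterpart by comparing the two sides of the equality ``locally at the primes in $\Sigma_0$''. On the analytic side, the $\Sigma_0$-imprimitive $p$-adic $L$-function $\mu^{\Sigma_0}_{p,f,\xi_i,j}$ is obtained from $\mu_{p,f,\xi_i,j}$ by removing (dividing out) the Euler factors at the primes $r \in \Sigma_0$; more precisely, from \eqref{def: p-adic L-function f} one checks that the ratio $\mu^{\Sigma_0}_{p,f,\xi_i,j}/\mu_{p,f,\xi_i,j}$ is, up to a unit, the product over $r \in \Sigma_0$ of the Euler factors $P_r(f\otimes \xi_i\omega_p^{-j}, \cdot)$ evaluated along the cyclotomic variable. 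These Euler factors are units times explicit elements of $\mathcal{O}[[\Gamma]]$, and the key point is to identify the ideal they generate.

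On the algebraic side, I would use the exact sequence \eqref{last-alg1}, applied with $B_j = A_f(\xi_i\omega_p^{-j})$:
\begin{align*}
0 \lra S_{\mathrm{Gr}}( A_f(\xi_i\omega_p^{-j})/\Q_\cyc) \lra S^{\Sigma_0}_{\mathrm{Gr}}( A_f(\xi_i\omega_p^{-j})/\Q_\cyc) \lra \underset{w \in \Sigma^\infty_0}{\prod}H^1(\Q_{\cyc,w}, A_f(\xi_i\omega_p^{-j})) \lra 0.
\end{align*}
Taking Pontryagin duals, this gives
\begin{align*}
C_{\mathcal{O}[[\Gamma]]}\Big(S^{\Sigma_0}_{\mathrm{Gr}}( A_f(\xi_i\omega_p^{-j})/\Q_\cyc)^\vee\Big) = C_{\mathcal{O}[[\Gamma]]}\Big(S_{\mathrm{Gr}}( A_f(\xi_i\omega_p^{-j})/\Q_\cyc)^\vee\Big) \cdot \prod_{w \in \Sigma^\infty_0} C_{\mathcal{O}[[\Gamma]]}\big(H^1(\Q_{\cyc,w}, A_f(\xi_i\omega_p^{-j}))^\vee\big),
\end{align*}
using multiplicativity of characteristic ideals in exact sequences (the outer terms are torsion: the left by \eqref{kato torsion}, the right because for $w \nmid p$ the local $H^1$ is cofinitely generated over $\mathcal{O}[[\Gamma]]$ with $\Gamma$ acting through a finite-index subgroup and $G_{\Q_{\cyc,w}}$ has cohomological dimension $1$). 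The remaining step is a local computation: for each $w \in \Sigma^\infty_0$ lying over a rational prime $r$, the characteristic ideal of $H^1(\Q_{\cyc,w}, A_f(\xi_i\omega_p^{-j}))^\vee$ is generated, up to a unit, by the inverse Euler factor $P_r(f\otimes\xi_i\omega_p^{-j}, r^{-1}\sigma_r^{-1})$ appearing in the interpolation formula, where $\sigma_r \in \Gamma$ is the image of $\mathrm{Frob}_r$. This is a standard identity (local Tate duality together with the fact that $H^1(\Q_{\cyc,w}, A_f(\xi_i\omega_p^{-j}))^\vee \cong (T_f(\xi_i\omega_p^{-j})^\ast)_{G_{\Q_{\cyc,w}}}$ up to finite error, whose characteristic ideal is computed from the characteristic polynomial of $\mathrm{Frob}_r$ on the inertia invariants); it can be extracted, e.g., from \cite{gv} or from the computations underlying \cite[\S3]{SU}.

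Combining the analytic and algebraic comparisons: both $\mu^{\Sigma_0}_{p,f,\xi_i,j}/\mu_{p,f,\xi_i,j}$ and $C_{\mathcal{O}[[\Gamma]]}(S^{\Sigma_0}_{\mathrm{Gr}})/C_{\mathcal{O}[[\Gamma]]}(S_{\mathrm{Gr}})$ equal, up to units, the same product $\prod_{r \in \Sigma_0} P_r(f\otimes\xi_i\omega_p^{-j}, \cdot)$ of local Euler factors. Hence the hypothesis $(\mu_{p,f,\xi_i,j}) = C_{\mathcal{O}[[\Gamma]]}(S_{\mathrm{Gr}}(A_f(\xi_i\omega_p^{-j})/\Q_\cyc)^\vee)$ immediately yields $(\mu^{\Sigma_0}_{p,f,\xi_i,j}) = C_{\mathcal{O}[[\Gamma]]}(S^{\Sigma_0}_{\mathrm{Gr}}(A_f(\xi_i\omega_p^{-j})/\Q_\cyc)^\vee)$. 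I expect the main obstacle to be the bookkeeping in the local computation at primes $r \in \Sigma_0$: one must handle both the case $r^2 \mid M_0$ (where the Euler factor of $f \otimes \xi_i\omega_p^{-j}$ at $r$ may be trivial and the inertia invariants of the residual twist need care) and the case $r \mid I_0/M_0$, and make sure that the ``up to a unit'' ambiguities on the two sides are genuinely units in $\mathcal{O}[[\Gamma]]$ rather than merely nonzero divisors — this is where $f$ being $p$-ordinary (so that $a(r,f)$ etc. behave well $p$-adically) and \ref{irr-f} (ensuring the finite error terms in the local duality isomorphism contribute only units) are used.
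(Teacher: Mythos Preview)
Your proposal is correct and follows essentially the same route as the paper: use the exact sequence \eqref{last-alg1} to relate the primitive and $\Sigma_0$-imprimitive characteristic ideals, identify the local factors $C_{\mathcal{O}[[\Gamma]]}\big(\prod_{w\mid\ell}H^1(\Q_{\cyc,w},A_f(\xi_i\omega_p^{-j}))^\vee\big)$ with the Euler factors $P_{i,\ell}(\ell^{-j-1}\gamma)$ via \cite[Proposition 2.4]{gv} (or \cite[Lemma 3.13]{SU}), observe from \eqref{def: p-adic L-function f} that the analytic ratio is the same product of Euler factors, and multiply through. The only difference is cosmetic: the paper simply invokes \cite[Proposition 2.4]{gv} for the local computation and does not need the case-by-case analysis of primes in $\Sigma_0$ that you anticipate in your final paragraph --- the cited result gives the characteristic ideal of the local $H^1$ directly as the Euler polynomial on inertia invariants, with no ``up to a unit'' ambiguity to track.
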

			\begin{proof}
				By \cite[Proposition 2.4]{gv} (see also \cite[Lemma 3.13]{SU}), for every prime $\ell \neq p$, the characteristic polynomial of $ \prod_{w \mid \ell} H^{1}(\mathbb{Q}_{\cyc,w}, A_{f}(\xi_i \omega_p^{-j}))$ equals $ P_{i,\ell}(\ell^{-j-1}\gamma) $, where $ P_{i,\ell}(X) = \det(I -X \mathrm{Frob}_{\ell}\big\vert{V_f(\xi_i)^{I_{\ell}}} ) $ and  $ \gamma $ is a topological generator of $ \Gamma = \mathrm{Gal}(\Q_{\cyc}/\Q) $. 
				
				Note that from the exact sequence \eqref{last-alg1}, $S_{\mathrm{Gr}}( A_f(\xi_i\omega_p^{-j})/\Q_\cyc)^\vee$ is torsion over $\mathcal{O}[[\Gamma]]$ implies the same for $S_{\mathrm{Gr}}^{\Sigma_0}( A_f(\xi_i\omega_p^{-j})/\Q_\cyc)^\vee$. Also it follows that
				\begin{small}
					\[ C_{\mathcal{O}[[\Gamma]]}\Big(S^{\Sigma_0}_{\mathrm{Gr}}( A_f(\xi_i\omega_p^{-j})/\Q_\cyc)^\vee\Big) = \prod_{\ell \in \Sigma_0}P_{i,\ell}(\ell^{-j-1}\gamma)  C_{\mathcal{O}[[\Gamma]]}\Big(S_{\mathrm{Gr}}( A_f(\xi_i \omega_p^{-j})/\Q_\cyc)^\vee\Big).\]
				\end{small}%
				Observe that $P_{i,\ell}(X) = \det(I -X \mathrm{Frob}_{\ell}\big\vert{V_{f}(\xi_i)^{I_{\ell}}} )$ gives the Euler factor of the $L$-function of $f \otimes \xi_i$ at $\ell$. By \eqref{def: p-adic L-function f}, we have $ \mu^{\Sigma_0}_{p,f, \xi,j} = \mu_{p,f, \xi,j}\prod_{\ell \in \Sigma_{0}} P_{i,\ell}(\ell^{-j-1}\gamma) $. Now multiplying both sides of  $(\mu_{p,f,\xi_i, j}) = C_{\mathcal{O}[[\Gamma]]}\Big(S_{\mathrm{Gr}}( A_f(\xi_i\omega_p^{-j})/\Q_\cyc)^\vee\Big)$ by $\prod_{\ell \in \Sigma_0} P_{i,\ell}(\ell^{-j-1}\gamma)$, the  lemma follows. 
			\end{proof}
			
			We now state our main result on the Iwasawa Main Conjecture for Rankin-Selberg $ L $-function  mod $ \pi $. By the assumption \ref{Stors}, we have  $S_{\mathrm{Gr}}( A_j/\Q_\cyc)^\vee$ is a torsion $\mathcal{O}[[\Gamma]]$-module (also see Remark~\ref{remark mu invariant and H2}) 
			\begin{theorem}\label{congruence main conjecture } 
				We assume that \eqref{IMC-modular} holds for  $ f \otimes \xi_1$ and $ f \otimes \xi_{2}$ with $ l-1 \leq  j \leq k-2$. Let $ f \in S_{k}(\Gamma_{0}(N),\eta), h \in S_{l}(\Gamma_{0}(I),\psi) $ be $ p $-ordinary newforms. We assume all the hypotheses of Theorem~\ref{analytic final} as well as Theorem~\ref{thm:congruence of ideals} hold. Then for every $ l-1 \leq j \leq k-2 $, we have 
				\begin{align}\label{IMC congruence Rankin}
					(\mu_{p,f\times h,j}) \equiv C_{\mathcal{O}[[\Gamma]]}\Big(S_{\mathrm{Gr}}( A_j/\Q_\cyc)^\vee\Big) \mod \pi.
				\end{align}
			\end{theorem}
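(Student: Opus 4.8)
The plan is to deduce \eqref{IMC congruence Rankin} by first proving its $\Sigma_0$-imprimitive version as a formal consequence of the results already established, and then removing the common local Euler factors at the primes of $\Sigma_0$ from both sides. The three inputs are: the analytic congruence of Theorem~\ref{analytic final}, $(\mu^{\Sigma_0}_{p,f\times h,j})\equiv(\mu^{\Sigma_0}_{p,f,\xi_1,j})(\mu^{\Sigma_0}_{p,f,\xi_2,j})\bmod\pi$; the algebraic congruence of Theorem~\ref{thm:congruence of ideals}, which expresses $C_{\mathcal{O}[[\Gamma]]}(S^{\Sigma_0}_{\mathrm{Gr}}(A_j/\Q_\cyc)^\vee)$ modulo $\pi$ as the product of the $\Sigma_0$-imprimitive characteristic ideals attached to $f\otimes\xi_1$ and $f\otimes\xi_2$; and the hypothesis that the Iwasawa Main Conjecture \eqref{IMC-modular} holds for $f\otimes\xi_1$ and $f\otimes\xi_2$ in the range $l-1\le j\le k-2$, which lies inside the critical range $0\le j\le k-1$ as $l\ge2$. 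The first step is to feed \eqref{IMC-modular} into Lemma~\ref{IMC imprimitive for modular form} to obtain the $\Sigma_0$-imprimitive main conjecture $(\mu^{\Sigma_0}_{p,f,\xi_i,j})=C_{\mathcal{O}[[\Gamma]]}(S^{\Sigma_0}_{\mathrm{Gr}}(A_f(\xi_i\omega_p^{-j})/\Q_\cyc)^\vee)$ for $i=1,2$.

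Chaining the three inputs then yields
\begin{small}
\begin{align*}
(\mu^{\Sigma_0}_{p,f\times h,j}) &\equiv (\mu^{\Sigma_0}_{p,f,\xi_1,j})(\mu^{\Sigma_0}_{p,f,\xi_2,j}) = C_{\mathcal{O}[[\Gamma]]}\big(S^{\Sigma_0}_{\mathrm{Gr}}(A_f(\xi_1\omega_p^{-j})/\Q_\cyc)^\vee\big)\, C_{\mathcal{O}[[\Gamma]]}\big(S^{\Sigma_0}_{\mathrm{Gr}}(A_f(\xi_2\omega_p^{-j})/\Q_\cyc)^\vee\big)\\
&\equiv C_{\mathcal{O}[[\Gamma]]}\big(S^{\Sigma_0}_{\mathrm{Gr}}(A_j/\Q_\cyc)^\vee\big) \pmod{\pi},
\end{align*}
\end{small}
i.e.\ the main conjecture modulo $\pi$ for $f\otimes h$ at the $\Sigma_0$-imprimitive level. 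Here $S^{\Sigma_0}_{\mathrm{Gr}}(A_j/\Q_\cyc)^\vee$ is a torsion $\mathcal{O}[[\Gamma]]$-module by \ref{Stors} together with the exact sequence \eqref{last-alg1}, so all the characteristic ideals occurring above are defined and nonzero.

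It then remains to deimprimitivize. By \eqref{def: p-adic L-function f x h}, $\mu^{\Sigma_0}_{p,f\times h,j}=\mu_{p,f\times h,j}\cdot\prod_{r\in\Sigma_0}L_r$, where $L_r$ is the relevant local Euler factor at $r$, a power series with unit constant term and hence a non-zero-divisor in the domain $\mathbb{F}[[\Gamma]]$. On the other side, dualizing the exact sequence \eqref{last-alg1} gives $C_{\mathcal{O}[[\Gamma]]}(S^{\Sigma_0}_{\mathrm{Gr}}(A_j/\Q_\cyc)^\vee)=C_{\mathcal{O}[[\Gamma]]}(S_{\mathrm{Gr}}(A_j/\Q_\cyc)^\vee)\cdot\prod_{w\in\Sigma_0^\infty}C_{\mathcal{O}[[\Gamma]]}(H^1(\Q_{\cyc,w},A_j)^\vee)$. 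The crucial point, and the step I expect to be the main obstacle, is to check that these two families of extra factors generate the same ideal modulo $\pi$: for $w\mid r$ with $r\in\Sigma_0$, the characteristic ideal of $H^1(\Q_{\cyc,w},A_j)^\vee$ is generated, through the standard Iwasawa-theoretic dictionary, by the characteristic polynomial of $\mathrm{Frob}_r$ acting on $V_j^{I_r}$, which is the local Euler factor of the Rankin-Selberg $L$-function of $f\otimes h$ at $r$ (the Rankin-Selberg analogue of \cite[Proposition 2.4]{gv}), and one must match it modulo $\pi$ with the factor $L_r$ put back into $\mu^{\Sigma_0}_{p,f\times h,j}$. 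This comparison is delicate precisely because at the bad primes $r\in\Sigma_0$ the residual representation $T_h/\pi$ is merely reducible and $\psi$ is ramified there (indeed $r\,\|\,\mathrm{cond}(\psi)$ by Lemma~\ref{lem: exactly divides}), so it proceeds by reading off the shape of $\rho_h|_{G_r}$ explicitly via Lemma~\ref{lem: exactly divides} and \cite[Theorem 3.26]{Hida3} and then computing. Granting this identification, $\prod_{r\in\Sigma_0}L_r$ and $\prod_{w\in\Sigma_0^\infty}C_{\mathcal{O}[[\Gamma]]}(H^1(\Q_{\cyc,w},A_j)^\vee)$ are non-zero-divisors in $\mathbb{F}[[\Gamma]]$ generating the same ideal, so canceling them from the $\Sigma_0$-imprimitive congruence displayed above leaves $(\mu_{p,f\times h,j})\equiv C_{\mathcal{O}[[\Gamma]]}(S_{\mathrm{Gr}}(A_j/\Q_\cyc)^\vee)\bmod\pi$, which is exactly \eqref{IMC congruence Rankin}.
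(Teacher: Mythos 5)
Your proposal is correct and follows essentially the same route as the paper: chain Theorem~\ref{analytic final}, Lemma~\ref{IMC imprimitive for modular form} applied to \eqref{IMC-modular}, and Theorem~\ref{thm:congruence of ideals} to get the $\Sigma_0$-imprimitive congruence, then cancel the Euler factors at $\Sigma_0$, using \eqref{last-alg1} and the Rankin--Selberg analogue of \cite[Proposition 2.4]{gv} to match the local terms and the fact that $P_\ell(\ell^{-j-1}\gamma)\not\equiv 0\bmod\pi$ to justify cancellation in the domain $\mathbb{F}[[\Gamma]]$. The only slight imprecision is your claim that the Euler factor has unit constant term --- what is actually needed (and what the paper proves) is just that it is nonzero modulo $\pi$, i.e.\ has $\mu$-invariant zero, which already makes it a non-zero-divisor.
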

			
			\begin{proof}
				By Theorem~\ref{analytic final}, we have 
				$ (\mu^{\Sigma_{0}}_{p,f \times  h,j})  \equiv  (\mu^{\Sigma_{0}}_{p,f,\xi_1,j}) 
				(\mu^{\Sigma_{0}}_{p,f, \xi_2,j}) \mod \pi$, for all $ l-1 \leq j \leq k-2 $.  By \eqref{IMC-modular} and Lemma~\ref{IMC imprimitive for modular form}, we have $(\mu^{\Sigma_{0}}_{p,f, \xi_i, j} ) = C_{\mathcal{O}[[\Gamma]]}(S^{\Sigma_0}_{\mathrm{Gr}}( A_f(\xi_i \omega_p^{-j})/\Q_\cyc)^\vee)$, for $i=1,2$.  Thus we have $(\mu^{\Sigma_0}_{p,f \times h,j}) \equiv C_{\mathcal{O}[[\Gamma]]}(S^{\Sigma_0}_{\mathrm{Gr}}( A_f(\xi_1 \omega_p^{-j})/\Q_\cyc)^\vee) C_{\mathcal{O}[[\Gamma]]}(S^{\Sigma_0}_{\mathrm{Gr}}( A_f(\xi_2 \omega_p^{-j})/\Q_\cyc)^\vee) \mod \pi$. Now it follows from Theorem~\ref{thm:congruence of ideals} that
				\begin{equation}\label{Sigma congruence}
					(\mu^{\Sigma_0}_{p,f \times h,j}) 
					\equiv C_{\mathcal{O}[[\Gamma]]}\Big(S^{\Sigma_0}_{\mathrm{Gr}}( A_j/\Q_\cyc)^\vee\Big) \mod \pi.
				\end{equation}
				By a similar argument as in Lemma~\ref{IMC imprimitive for modular form}, using \cite[Proposition 2.4]{gv} and \eqref{last-alg1}, we obtain 
				\begin{small}	
					\begin{align*}
						C_{\mathcal{O}[[\Gamma]]}\Big(S^{\Sigma_0}_{\mathrm{Gr}}( A_j/\Q_\cyc)^\vee\Big) 
						=C_{\mathcal{O}[[\Gamma]]}\Big(S_{\mathrm{Gr}}( A_j/\Q_\cyc)^\vee\Big)  \prod_{\ell \in \Sigma_{0}} P_{\ell}(\ell^{-j-1}\gamma) , \quad 
						\forall ~~ l-1 \leq j \leq k-2.
					\end{align*}
				\end{small}%
				where $ P_{\ell}(X) = \det(I -X \mathrm{Frob}_{\ell}\big\vert{V^{I_{\ell}}} ) $ and  $ \gamma $ is a topological generator of $ \Gamma = \mathrm{Gal}(\Q_{\cyc}/\Q) $.  Note that the Euler factor  for the Rankin-Selberg $ L $-function of $f\otimes h$ at $ \ell $ is given by $ P_{\ell}(X)$. Also from \eqref{p-adic Rankin Sigma and p-adic Rankin}, we have $ \mu^{\Sigma_0}_{p,f \times h,j} = \mu_{p,f \times h,j} \prod_{\ell \in \Sigma_{0}} P_{\ell}(\ell^{-j-1}\gamma) $.  Following the proof of  \cite[Proposition 2.4]{gv}, we deduce the polynomial $P_{\ell}(X)$ has $ \mu$-invariant  zero, that is, $ P_{\ell}(X) \not \equiv 0 \mod \pi$. Cancelling  $P_{\ell}(\ell^{-j-1}\gamma)$  on either side of the congruence \eqref{Sigma congruence}, the desired congruence modulo $ \pi $ follows.
			\end{proof}
			
			In a special case when either $ \mu_{p,f\times h,j}  $ or $ C_{\mathcal{O}[[\Gamma]]}(S_{\mathrm{Gr}}( A_j/\Q_\cyc)^\vee) $ is a unit, the  congruence in \eqref{IMC congruence Rankin} leads to the following: 
			\begin{corollary}\label{cor: unit implies unit}
				Let the setting be as in Theorem~\ref{congruence main conjecture }. Then $ \mu_{p,f\times h,j}  $  is a unit in the Iwasawa algebra $ \mathcal{O}[[\Gamma]] $ if and only if $ C_{\mathcal{O}[[\Gamma]]}\Big(S_{\mathrm{Gr}}( A_j/\Q_\cyc)^\vee\Big) $ is a unit in the Iwasawa algebra $ \mathcal{O}[[\Gamma]] $.
			\end{corollary}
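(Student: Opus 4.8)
\textbf{Proof proposal for Corollary~\ref{cor: unit implies unit}.} The plan is to deduce this statement directly from the congruence of ideals established in Theorem~\ref{congruence main conjecture }, namely
\begin{equation*}
(\mu_{p,f\times h,j}) \equiv C_{\mathcal{O}[[\Gamma]]}\Big(S_{\mathrm{Gr}}( A_j/\Q_\cyc)^\vee\Big) \mod \pi,
\end{equation*}
together with the elementary observation that an element of $\mathcal{O}[[\Gamma]]$ is a unit if and only if its image in $(\mathcal{O}/\pi)[[\Gamma]] = \mathbb{F}[[\Gamma]]$ is a unit. Since $\mathcal{O}[[\Gamma]] \cong \mathcal{O}[[T]]$ is a complete local ring with maximal ideal $(\pi, T)$, an element is a unit exactly when it does not lie in this maximal ideal, which is precisely the condition that its reduction modulo $\pi$ is a nonzero constant in $\mathbb{F}$, i.e.\ a unit in $\mathbb{F}[[\Gamma]] \cong \mathbb{F}[[T]]$.

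First I would note that both $\mu_{p,f\times h,j}$ and a generator of $C_{\mathcal{O}[[\Gamma]]}\big(S_{\mathrm{Gr}}( A_j/\Q_\cyc)^\vee\big)$ are well-defined elements of $\mathcal{O}[[\Gamma]]$ (the latter up to a unit, which does not affect the unit property), using that $S_{\mathrm{Gr}}( A_j/\Q_\cyc)^\vee$ is a finitely generated torsion $\mathcal{O}[[\Gamma]]$-module under hypothesis \ref{Stors}. The congruence \eqref{IMC congruence Rankin} asserts that these two elements generate the same ideal in $\mathcal{O}[[\Gamma]]/\pi\mathcal{O}[[\Gamma]] = \mathbb{F}[[\Gamma]]$; equivalently, their images in $\mathbb{F}[[\Gamma]]$ differ by a unit of $\mathbb{F}[[\Gamma]]$. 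In particular, the image of $\mu_{p,f\times h,j}$ in $\mathbb{F}[[\Gamma]]$ is a unit if and only if the image of a generator of $C_{\mathcal{O}[[\Gamma]]}\big(S_{\mathrm{Gr}}( A_j/\Q_\cyc)^\vee\big)$ in $\mathbb{F}[[\Gamma]]$ is a unit.

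Finally I would invoke the local-ring criterion: for any $x \in \mathcal{O}[[\Gamma]]$, $x$ is a unit in $\mathcal{O}[[\Gamma]]$ if and only if $x \bmod \pi$ is a unit in $\mathbb{F}[[\Gamma]]$. Applying this to $x = \mu_{p,f\times h,j}$ and to $x$ a generator of $C_{\mathcal{O}[[\Gamma]]}\big(S_{\mathrm{Gr}}( A_j/\Q_\cyc)^\vee\big)$, and chaining with the equivalence from the previous paragraph, gives the desired biconditional. I do not anticipate a genuine obstacle here: the only points requiring care are (i) confirming that the congruence \eqref{IMC congruence Rankin} really is an equality of ideals modulo $\pi$ rather than a weaker statement (it is, by the statement of Theorem~\ref{congruence main conjecture }), and (ii) noting that the characteristic ideal is only defined up to a unit of $\mathcal{O}[[\Gamma]]$, so "being a unit ideal'' is the intrinsically meaningful assertion — and reduction modulo $\pi$ of a unit of $\mathcal{O}[[\Gamma]]$ is a unit of $\mathbb{F}[[\Gamma]]$, so this ambiguity is harmless throughout.
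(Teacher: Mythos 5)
Your proposal is correct and is exactly the argument the paper intends: the paper states this corollary without proof as an immediate consequence of the congruence \eqref{IMC congruence Rankin}, and the standard fact that an element of the complete local ring $\mathcal{O}[[\Gamma]]\cong\mathcal{O}[[T]]$ is a unit precisely when its reduction in $\mathbb{F}[[\Gamma]]$ is a unit does the rest. Your two points of care (the congruence being a genuine equality of ideals mod $\pi$, and the characteristic ideal being defined only up to a unit) are both handled correctly.
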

			
			\section{Examples}\label{section: examples}
			
			In this section, we illustrate Theorem~\ref{congruence main conjecture } and Corollary~\ref{cor: unit implies unit}. 
			\begin{example}\label{example 1}
				We take $p=11$. Let  $  \Delta = \sum_{n=1}^{\infty} \tau(n) q^n  $ be the Ramanujan Delta function. We have $ \Delta \in S_{12}(\mathrm{SL}_{2}(\mathbb{Z})) $ and 
				$ \tau(11) $ is $ 11 $-adic unit. Consider the quadratic character $ \chi_{K} = (\frac{-23}{\cdot})$ associated to the field $ K = \mathbb{Q}(\sqrt{-23}) $. Let $ f := \Delta \otimes \chi_{K} $. Then $ f \in S_{12}(\Gamma_{0}(23^2), \chi_{K}^2)$ is a primitive $p$-ordinary form. 
				Moreover, the residual representation $ \bar{\rho}_{\Delta}$ is an irreducible $ G_{\mathbb{Q}} $-module. 
				
				Having chosen $f$, we  next choose the candidate for the normalised cusp form $h$. 
				
				Consider the Eisenstein series $g'(z) = E_2(z) - 23E_2(23z)$, where $ E_{2}(z) = \frac{1}{24} + \sum_{n=1}^{\infty} \sigma(n) q^n $. Then by a result of Mazur  \cite[Proposition 5.12(ii)]{Mazur}, there exists a normalised Hecke  eigenform $h \in S_2(\Gamma_{0}(23))$ with LMFDB label 23.2.a such that
				$
				a(n,h) \equiv a(n,g') \mod \mathfrak{p},
				$
				where $ \mathfrak{p} = (11, 4 - \sqrt{5}) \subset \mathbb{Z}[(1 + \sqrt{5})/2]$. By [DS, Theorem 9.6.6], we have $ \bar{\rho}_{h} \simeq \bar{\rho}_{g'} \simeq \bar{\omega}_{p} \oplus 1 $. Thus $\bar{\xi}_{1}=\bar{\omega}_{p}$ and $\bar{\xi}_{2}=1$. By our construction, $\xi_{1}= \omega_{p} $ and $\xi_{2} = 1$. Thus by \eqref{definition of Eis g}, we have $g(z)=E (1_p ,1)(z)= E_2(z) - pE_2(pz)$, where $1_p$ is the trivial character modulo $p = 11$. As $ \mathrm{cond}(\rho_{h}) =23 $ and $ \mathrm{cond}(\bar{\rho}_{h}) = 11  $, we obtain $ m = 23 $ by \eqref{definition m}. Thus $\Sigma=\{ 11, 23, \infty\} $ and $\Sigma_0 = \{ 23 \}$.
				
				As $\bar{\rho}_{\Delta}$ is irreducible and $p$-distinguished it follows that $ \bar{\rho}_{f_0}  (= \bar{\rho}_{\Delta} \otimes \bar{\chi} )$ is also irreducible and $ p $-distinguished. 
				Hence the hypotheses of Theorem~\ref{analytic final} are satisfied.

				Next we check  the hypotheses of Theorem~\ref{thm:congruence of ideals}. We have $ N = N_f = 23^2 $, $ M_0 =1 $, $ (N,M_0) =1 $ and the nebentypus of $ h $ i.e. $\psi$ is trivial. Also $ A_{f}[\pi] = A_{\Delta}[\pi] \otimes \chi_K $ is an irreducible $ G_\mathbb{Q} $-module. Note that $ \mathrm{Frob}_{p} $ acts on $ A_{f}^{-}(\xi_{2}) $ by the scalar $ a(p,f) \xi_{2}(p) = \chi_{K}(p) a(p,\Delta) = - \tau(p) = -\tau(11) \neq 1 \mod \pi $. Thus $H^0(I_{\cyc,v}, A_f^{-}(\xi_2)) = 0$, so we need not check \eqref{assumption}. 
				
				It remains to check that \ref{Stors} and \eqref{H2van} hold to conclude all the conditions in Theorem~\ref{thm:congruence of ideals}  are met. For this, by Remark~\ref{remark mu invariant and H2}, it suffices to  show that $\mu(S^{\Sigma_0}_{\mathrm{Gr}}(A_{f}(\xi_{i}\omega_{p}^{-j})/\mathbb{Q}_{\cyc})^\vee) = 0 $ for $i=1,2$.  Moreover, as $\xi_1 = \omega_p$ and $\xi_2 =1$, it further reduces to show $ \mu(S^{\Sigma_0}_{\mathrm{Gr}}(A_{f}(\omega_{p}^{-j})/\mathbb{Q}_{\cyc})^\vee) = 0$ for $1 \leq j \leq 10$. 
				
				Consider the elliptic curve $E := X_0(11)$ of conductor $11$ over $\mathbb{Q}$ (LMFDB label 11.2.a.a) defined by
				$$
				y^2 + y = x^3 - x^2 -10x - 20.
				$$
				Let $ f_E $ be the modular form corresponding to $ E $ under modularity theorem. For $p=11$, there is a $ \Lambda $-adic form  whose specialisation  at $ k=2 $ is  $ f_E $  and at $ k=12 $ is $ \Delta $ \cite[Page 409]{gs}.  Hence the residual Galois representation of $ E $ and $ \Delta $ at $ p=11 $ are isomorphic. Since  $ f $ and $ f_{E} \otimes \chi_K $ lie in the same branch of Hida family, 
				it is enough to know that $ \mu (S_{\mathrm{Gr}}(f_{E} \otimes \chi_K \omega_{p}^{-j}/\mathbb{Q}_{\cyc})^\vee) =0$ (cf. \cite[Theorem 4.3.3]{epw}). 
				
				By a deep result of  Kato \cite[Theorem 17.4]{kato} and \cite[Theorem 4.3.3]{epw} it follows
				\begin{align}\label{comparison mu invariants}
					\mu (S_{\mathrm{Gr}}(f_{E} \otimes \chi_K \omega_{p}^{-j}/\mathbb{Q}_{\cyc})^\vee) = \mu(S_{\mathrm{Gr}}(A_{f}(\omega_{p}^{-j})/\mathbb{Q}_{\cyc})^\vee) \leq \mu^{\mathrm{an}}(f \otimes \omega_{p}^{-j})
					= \mu^{\mathrm{an}}(f_{E} \otimes \chi_K \omega_{p}^{-j}).
				\end{align}
				Thus it suffices to show that  the analytic $ \mu $-invariant vanishes i.e. the $ p $-adic $ L $-function $ \mu_{p, f_{E},  \chi_K, j}(T)$ is not divisible in the Iwasawa algebra $\mathcal{O}[[T]]$ by  $ p $ for all $0 \leq j \leq 9$. We now compute  $ \mu_{p,f_{E}, \chi_K,j} ( 0) $ for $0 \leq j \leq 9$. 
				
				Let the periods $\Omega^{\pm}_{f_E \otimes \chi_{K}}$ be as in Theorem~\ref{choice of period and petterson innerproduct}. From \cite{MTT} (cf. \cite[Section 5.1]{C}), we get
				\begin{small}
					\begin{align}\label{eq: modular symbol and L-value}
						\mu_{p, f_{E}, \chi_{K}, j}(0) = \begin{cases} \frac{1}{2\alpha}
							\sum_{b=1}^{p-1} \bar{\omega}_{p}^{j}(b) \text{x}^{sign(\omega_{p}^j)}(b/p) &\text{if} ~
							\omega_{p}^{j} \neq 1, \\
							(1-\alpha^{-1})^2 \frac{L(1,f_{E}, \chi_{K})}{\Omega^{+}_{f_E \otimes \chi_{K}}}
							&\text{otherwise},
						\end{cases}
					\end{align}
				\end{small}%
				where $ \text{x}^{\pm} $ are the $ \pm $ modular symbols associated to $ f_{E} \otimes \chi_K  $, as defined by Manin and $ \alpha $ is a root of $ p^{\mathrm{th}} $-Hecke polynomial of $ f_{E}\otimes \chi_{K} $ with $ |\alpha|_p =1$. Using SAGE, we compute $ \text{x}^{\pm} $. The values are 
				\begin{small}
					\begin{center}
						\begin{tabular}{| c | c |  c |  c  | c | c | c | c | c | c | c |}
							\hline
							$\frac{b}{11}$  & $1/11$ & $ 2/11 $ & $ 3/11 $ & $ 4/11 $ & $ 5/11 $ & $ 6/11 $ & $ 7/11 $ & $ 8/11 $ & $ 9/11 $ & $ 10/11 $ \\ 
							\hline
							$ \text{x}^{+} $ &  2 &  0 & 5 & 5 & 0 & 0 & 5 & 5 & 0 & 2 \\  
							\hline
							$ \text{x}^{-} $  & 0 &  0 & -5 & 5 & 0 & 0 & -5 & 5 & 0 & 0 \\ 
							\hline  
						\end{tabular}
					\end{center}
				\end{small}%
				Let  $\zeta_{10}$ be a primitive root of unity and $ \omega_{p}(2) = \zeta_{10} $ and $ \omega_{p}(-1) = -1 $. Then it follows $ \omega_{p}(3)  = \omega_{p}(2)^3 \omega_{p}(-1) = -  \zeta_{10}^3 $,  $\omega_{p}(4) =   \zeta_{10}^2 $ and $\omega_{p}(5) = \omega_{p}(2)^4 = \zeta_{10}^4 =  \zeta_{10}^3 - \zeta_{10}^2 +\zeta_{10} -1 $. Thus from \eqref{eq: modular symbol and L-value}, 
				\begin{small}
					\begin{align*}
						\mu_{p,f_{E}, \chi_{K}, j} (0) \sim \begin{cases}
							(2 +5 \omega_p^{j}(3)  +5  \omega_p^{j}(4) ), &\text{if } j ~ \text{is even and } 10 \nmid j, \\
							5(- \omega^{j}(3)  +  \omega_p^{j}(4)) , &\text{if } j ~ \text{is odd},
						\end{cases}
					\end{align*}
				\end{small}%
				here $ \sim $ denotes up to multiplication by a $p$-adic unit. Taking $ j = 0,1, 2,\ldots, 9 $ we obtain 
				\begin{small}
					\begin{alignat*}{3}
						\mu_{p, f_{E}, \chi_{K}, 1}(0)& \sim  5 \zeta_{10}^3 + 5 \zeta_{10}^2 
						~~& \mu_{p, f_{E}, \chi_{K}, 2}(0) & \sim 5 \zeta_{10}^3 - 5 \zeta_{10}^2 - 3 \\
						\mu_{p, f_{E}, \chi_{K}, 3}(0)&\sim -5 \zeta_{10}^3 +5  \zeta_{10}^2 -10\zeta_{10} +5
						~~~&\mu_{p, f_{E}, \chi_{K}, 4}(0) &\sim -5 \zeta_{10}^3 + 5 \zeta_{10}^2 + 2 \\
						\mu_{p, f_{E}, \chi_{K}, 5}(0) & = 0 \qquad \qquad
						~~& \mu_{p, f_{E}, \chi_{K}, 6}(0) &\sim -5 \zeta_{10}^3 + 5 \zeta_{10}^2 + 2 \\ 
						\mu_{p, f_{E}, \chi_{K}, 7}(0) & \sim 5 \zeta_{10}^3 -5 \zeta_{10}^2 +10 \zeta_{10} -5   \quad
						~~&  \mu_{p, f_{E}, \chi_{K}, 8}(0) &\sim 5 \zeta_{10}^3 - 5 \zeta_{10}^2  - 3 \\
						\mu_{p, f_{E}, \chi_{K}, 9}(0) &\sim -5 \zeta_{10}^3 - 5  \zeta_{10}^2   
						~~& \mu_{p, f_{E}, \chi_{K}, 0}(0) & \sim 1.
					\end{alignat*}
				\end{small}%
				For $ j=5 $, we have $E \otimes \chi_K \omega_p^j$ is an elliptic curve with LMFDB label 64009.d2. Further $ \mu_{p, f_{E}, \chi_{K}, 5}(T) = T u(T) $, for some $ u(T) \in \mathcal{O}[[T]]^{\times} $ using  SAGE.  Thus $\mu^{\text{an}}(f_E \otimes \chi_K \omega_p^j) =0$ for $1 \leq j \leq 10$. Thus we have completed checking all the hypotheses of Theorem~\ref{thm:congruence of ideals}.
				
				Finally, we need to check the Iwasawa main conjecture for $f \otimes \xi_i \omega_p^j$ for $i=1,2$ to verify Theorem~\ref{congruence main conjecture }.  
				We have $ \prod_{j \neq 5} \mu_{p, f_{E}, \chi_{K}, j}(0) \sim 3003125$, which is a $ 11 $-adic unit. 
				This shows that  $\prod_{j=0}^{9}\mu_{p, f_{E}, \chi_{K}, j}(T)= Tu(T)$ for some unit  $ u(T) \in \mathcal{O}[[T]] $.  By \eqref{comparison mu invariants}, we get  $\mu(S_{\mathrm{Gr}}(A_{f_E \otimes \chi_K}(\omega_{p}^{-j})/\mathbb{Q}_{\cyc})^\vee) = \mu^{\text{an}}(f_E \otimes \chi_K \omega_p^{-j}) = 0 $. We now compute the $\lambda^{\mathrm{an}}(f_E \otimes \chi_K \omega_p^{-j})$ and $\lambda^{\mathrm{alg}}(f_E \otimes \chi_K \omega_p^{-j})$ namely the analytic and algebraic $\lambda$-invariants of $f_E \otimes \chi_K \omega_p^{-j}$.  Once again, it follows from the above computation that $\lambda^{\mathrm{an}}(f_E \otimes \chi_K \omega_p^{-j}) = 0$ for $j \neq 5$ and $\lambda^{\mathrm{an}}(f_E \otimes \chi_K \omega_p^{-j}) = 1$ for $j=5$. By a similar argument as in \eqref{comparison mu invariants}, we get $\lambda^{\mathrm{alg}}(f_E \otimes \chi_K \omega_p^{-j}) = \lambda^{\mathrm{an}}(f_E \otimes \chi_K \omega_p^{-j}) = 0$ for $1 \leq j \leq 10$ and $j \neq 5$. 
				From the above calculations, we have $\lambda^{\mathrm{an}}(f_E \otimes \chi_K \omega_p^{-5}) =1$. Since the $j$-invariant of the elliptic curve attached to elliptic curve associated to $f_E \otimes \chi_K \omega_{p}^5$ is not an integer, it follows from \cite[Theorem 1]{nekovar}(applied for $k,k_0, k'$ there equal to $\Q$) that $\lambda^{\mathrm{alg}}(f_E \otimes \chi_K \omega_p^{-5}) \equiv 1 \mod 2 $. By the result of Kato $\lambda^{\mathrm{alg}}(f_E \otimes \chi_K \omega_p^{-5}) \leq \lambda^{\mathrm{an}}(f_E \otimes \chi_K \omega_p^{-5})=1$. Hence  $\lambda^{\mathrm{alg}}(f_E \otimes \chi_K \omega_p^{-5}) = \lambda^{\mathrm{an}}(f_E \otimes \chi_K \omega_p^{-5})=1$ and the Greenberg Iwasawa main conjecture holds for $ f_{E} \otimes \chi_{K}\omega_p^{-5}$. 
				Thus the Greenberg Iwasawa main conjecture holds for $ f_{E} \otimes \chi_{K} $ (cf. \cite[Theorem 5.1.2]{epw}). By \cite[Corollary 5.1.4]{epw}, it follows that Iwasawa main conjecture also holds for $f \otimes \omega_p^j$. 
				
				Hence for $ f = \Delta \otimes \chi_{K} $ and $h \in S_{2}(\Gamma_0(23)) $ of LMFDB label 23.2.a, the hypotheses of Theorem~\ref{congruence main conjecture } are met.  Therefore by  same theorem, we conclude that the Iwasawa main conjecture holds $ f \otimes h $ modulo $ \pi $, that is,
				\begin{equation}\label{IMC mod p for example 1}
					(\mu_{p, f \times h, j}) = (C_{\mathcal{O}[[\Gamma]]}(S_{\mathrm{Gr}}(A_j)/\mathbb{Q}_{\cyc})^\vee) \mod \pi \quad \mathrm{for} ~ 1 \leq j \leq 10.
				\end{equation}
				
				Further $ \mu^{\mathrm{an}}(f\otimes \omega_{p}^{j}) = 0 $ and $\lambda^{\mathrm{an}}(f \otimes  \omega_p^j) = 0$ for $ j \neq 5 $, it follows that  $\mu_{p, f, \xi_i,j}(T)  $ is unit whenever $ j \neq 5$. Applying Theorem~\ref{analytic final},  we obtain 
				$ \mu_{p, f \otimes h, j} $ is a unit whenever $ j \neq 4, 5 $. Thus, it follows from \eqref{IMC mod p for example 1} that $ C_{\mathcal{O}[[\Gamma]]}(S_{\mathrm{Gr}} (A_j/\mathbb{Q}_{\cyc})^\vee) $ is a unit in $ \mathcal{O}[[\Gamma]] $ for $ j \neq 4,5 $ and 
				\begin{equation}\label{IMC example 1}
					(\mu_{p, f \times h, j}) = (C_{\mathcal{O}[[\Gamma]]}(S_{\mathrm{Gr}}(A_j)/\mathbb{Q}_{\cyc})^\vee)  = \mathcal{O}[[T]] \quad \mathrm{for} ~ 1 \leq j \leq 10 \text{ and } j \neq 4,5.
				\end{equation}
				Further, if $ j =5 $ we have $ \mu^{\mathrm{an}}(f\otimes \omega_{p}^{i}) = 0 $ and $\lambda^{\mathrm{an}}(f \otimes  \omega_p^i) = 1$. Thus  $ (\mu_{p, f \times h, j}) \equiv C_{\mathcal{O}[[\Gamma]]}(S_{\mathrm{Gr}} (A_j/\mathbb{Q}_{\cyc})^\vee)  \equiv (T) \mod \pi$, for some unit $ u(T) \in \mathcal{O}[[T]] $ for $j=4,5$.
			\end{example}
			
			\begin{example}\label{example 2}
				Let $p =5$.  Let $f(z) = \sum a(n,f) q^n  \in S_{6}(\Gamma_0(52))$ be the cusp form given by LMFDB label 52.6.a.a. We have $f(z) = q -5 q^3 -3 q^5 + 53 q^7 -218 q^9 -702 q^{11}+ \cdots$. Thus $f$ is $p$-ordinary and $p \nmid N_{f} $.

				Consider the Eisenstein series $g'(z) = E_2(z) - 11 E_2(11z)$, where $ E_{2}(z) = \frac{1}{24} + \sum_{n=1}^{\infty} \sigma(n) q^n $. Then by a result of Mazur  \cite[Proposition 5.12 (ii)]{Mazur}, there exists a normalised Hecke  eigenform $h \in S_2(\Gamma_{0}(11))$ (LMFDB label 11.2.a.a) such that 
				$
				a(n,h) \equiv a(n,g') \mod 5.
				$
				
				We verify Theorem~\ref{congruence main conjecture } for $f\otimes h$.
				
				By [DS, Theorem 9.6.6], we have $ \bar{\rho}_{h} \simeq \bar{\rho}_{g'} \simeq \bar{\omega}_{p} \oplus 1 $. Thus $\bar{\xi}_{1}=\bar{\omega}_{p}$ and $\bar{\xi}_{2}=1$. By our construction, $\xi_{1}= \omega_{p} $ and $\xi_{2} = 1$. Thus $g(z)=E (1_p ,1)(z)= E_2(z) - pE_2(pz)$, where $1_p$ is the trivial character modulo $p = 5$. As $ \mathrm{cond}(\rho_{h}) = 11 $ and $ \mathrm{cond}(\bar{\rho}_{h}) = 5  $, we obtain $ m = 11 $ by \eqref{definition m}. Thus $K =\Q$, $\Sigma=\{ 5, 11, \infty \} $ and $\Sigma_0 = \{ 11 \}$.
				
				Let  $  \mathcal{F}  $ be the $\Lambda$-adic newform passing through $f$. Specializing $\mathcal{F}$ at $(2, \zeta_{5^{0}})$ gives a  weight $2$ cusp form, say, $f_2$.
				Then it known that $f_2 \in S_{2}(\Gamma_0(260), \iota_{260})$ is congruent to $f$ modulo $5$ and $f_2$ is either a newform or $5$-stabilisation of a newform in $S_2(\Gamma_0(52),\iota_{52})$. From the LMFDB data, the  space of newforms of level $260$, weight $2$ and trivial nebentypus $S^{\mathrm{new}}_2(\Gamma_0(260),\iota_{260})$  has dimension $4$. Also, every newform in $S^{\mathrm{new}}_2(\Gamma_0(260),\iota_{260})$ has $5^{\mathrm{th}}$ Fourier coefficient equal to $1$. However $  a(5,f) = -3 \not\equiv 1 \mod 5$.  Thus $f_2$ must be the $5$-stabilization of a newform in $S_2(\Gamma_0(52),\iota_{52})$. From LMFDB, $S^{\mathrm{new}}_2(\Gamma_0(52),\iota_{52})$  has dimension $1$ and is spanned by cusp form given by LMFDB label $52.2.a.a$. Further this cusp form corresponds to the elliptic curve $E$ given by $y^2 = x^3 + x - 10$ and we denote it by $f_E$. Note  that the residual representation $ \bar{\rho}_{E} $ is an irreducible $ G_{\mathbb{Q}} $-module and so is $\bar{\rho}_{f}$.  Thus  $ f  $ satisfies \ref{irr-f}. It follows from Theorem~\ref{rhof} (iii) that $f$ is  $p$-distinguished. Hence the hypotheses of  Theorem~\ref{analytic final} hold for $f$.
				
				Next we check  the hypotheses of Theorem~\ref{thm:congruence of ideals}. We have $ N = N_f = 52 $, $ M_0 =1 $, so $ (N,M_0) =1 $ and the nebentypus of $ h $ i.e. $\psi$ is trivial.  Note that $ \mathrm{Frob}_{p} $ acts on $ A_{f}^{-}(\xi_{2}) $ by the scalar $ a(p,f) \xi_{2}(p) =  a(p,f) = -3 $. As $ a(p,f) \xi_{2} \not \equiv 1 \mod 5$, it follows that $H^0(G_{\Q_{\cyc,w}}, A_f^{-}(\xi_2)) = 0$ and  we need not check \eqref{assumption}. 
				
				As explained in Example~\ref{example 1}, it suffices to check that $ \mu^{\text{an}}(f_{E} \otimes \xi_{i}\omega_{p}^{-j}) = 0 $ for $i=1,2$ and $1 \leq j \leq 4$ to conclude all the conditions in Theorem~\ref{thm:congruence of ideals}  are met. Since $\xi_1 = \omega_p$ and $\xi_2 =1$, it is enough to show $ \mu^{\text{an}}(f_{E} \otimes \omega_{p}^{-j}) = 0$ for $1 \leq j \leq 4$.  
				To do this we proceed as in Example~\ref{example 1}. Using computations on SAGE, we get
				\begin{small}
					\begin{center}
						\begin{tabular}{| c | c |  c |  c  | c | c | c | c | c | c | c |}
							\hline
							$\frac{b}{5}$  & $1/5$ & $ 2/5 $ & $ 3/5 $ & $4/5$  \\ 
							\hline
							$ \text{x}^{+} $ &  $1$ &  $1$ & $1$ & $1$  \\  
							\hline
							$ \text{x}^{-} $  & $1$ &  $1$ & $-1$ & $-1$, \\ 
							\hline  
						\end{tabular}
					\end{center}
				\end{small}%
				where $ \text{x}^{\pm} $ are the $ \pm $ modular symbols associated to $ f_{E}  $, as defined by Manin.
				
				Let  $\zeta_{4}$ be a primitive root of unity and $ \omega_{p}(2) = \zeta_{4} $ and $ \omega_{p}(-1) = -1 $. Then it follows $ \omega_{p}(3)  =  -  \zeta_{4} $. Using the analogue of \eqref{eq: modular symbol and L-value} in this case,  we have 
				\begin{small}
					\begin{align*}
						\mu_{p,f_{E}, 1, j} (0) \sim \begin{cases}
							(2 + \omega_p^{j}(2)  +  \omega_p^{j}(3) ), &\text{if } j =2, \\
							(2 + \omega_p^{j}(2)  -  \omega_p^{j}(3)) , &\text{if } j = 1,3,
						\end{cases}
					\end{align*}
				\end{small}%
				here $ \sim $ denotes equality up to multiplication by a $p$-adic unit. Taking $ j = 0,1, 2,3 $ we obtain 
				\begin{small}
					\begin{alignat*}{3}
						\mu_{p, f_{E}, 1, 1}(0)& \sim  2(1 +  \zeta_{4}) 
						\qquad \qquad& \mu_{p, f_{E}, 1, 2}(0) &= 0 \\
						\mu_{p, f_{E}, 1, 3}(0)&\sim  2(1- \zeta_{4}) 
						\qquad \qquad & \mu_{p, f_{E}, 1, 0}(0) & \sim 1.
					\end{alignat*}
				\end{small}%
				For $ j=2 $,  $ E \otimes \omega_p^2$ is the elliptic curve given by LMFDB label 1300.d1 with additive reduction at $p=5$. From the LMFDB, we observe that $ E \otimes \omega_p^2$ has analytic rank $1$ i.e. $L(s,  E \otimes \omega_p^2)$ has a simple zero at $s=1$. Using SAGE, we get  $\mu^{\text{an}}(f_E \otimes  \omega_p^2) =0$ and $\lambda^{\text{an}}(f_E \otimes  \omega_p^j) =1$. 
				Thus $\mu^{\text{an}}(f_E \otimes  \omega_p^j) =0$ for $1 \leq j \leq 4$ and all the hypotheses of Theorem~\ref{thm:congruence of ideals} are satisfies.

				Again by \cite[Theorem 17.4]{kato}, we get $\mu^{\text{alg}}(f_E \otimes  \omega_p^j) =\mu^{\text{an}}(f_E \otimes  \omega_p^j) =0$ and $\lambda^{\text{alg}}(f_E \otimes  \omega_p^j) =\lambda^{\text{an}}(f_E \otimes  \omega_p^j) =0$ for $1 \leq j \leq 4$ and $j \neq 2$. As explained in Example~\ref{example 1}, it follows from \cite[Theorem 17.4]{kato} and $p$-parity conjecture (See \cite[Theorem 1]{nekovar})  that  $\lambda^{\text{alg}}(f_E \otimes  \omega_p^2) =\lambda^{\text{an}}(f_E \otimes  \omega_p^2) =1$. Hence Iwasawa main conjecture holds for $f_E \otimes \omega_p^j$ for $1 \leq j \leq 4$. By \cite[Corollary 5.1.4]{epw} it follows that Iwasawa main conjecture also holds for $f \otimes \omega_p^j$. 
				
				Hence the hypotheses of Theorem~\ref{congruence main conjecture } are met and the Iwasawa main conjecture holds $ f \otimes h $ modulo $ 5 $. Further $ \mu^{\mathrm{an}}(f\otimes \omega_{p}^{j}) = 0 $ and $\lambda^{\mathrm{an}}(f \otimes  \omega_p^j) = 0$ for $ j \neq 2 $, it follows that  $\mu_{p, f, \xi_i,j}(T)  $ is unit whenever $ j \neq 2$. Applying Theorem~\ref{analytic final},  we obtain 
				$ \mu_{p, f \times h, j} $ is a unit whenever $ j \neq 1,2 $. Thus, it follows from \eqref{IMC mod p for example 1} that $ C_{\mathcal{O}[[\Gamma]]}(S_{\mathrm{Gr}} (A_j/\mathbb{Q}_{\cyc})^\vee) $ is a unit in $ \mathcal{O}[[\Gamma]] $ for $ j \neq 1,2 $ and 
				\begin{equation}\label{IMC example 2}
					(\mu_{p, f \times h, j}) = (C_{\mathcal{O}[[\Gamma]]}(S_{\mathrm{Gr}}(A_j)/\mathbb{Q}_{\cyc})^\vee)  = \mathcal{O}[[T]] \quad \mathrm{for} ~  j = 3,4.
				\end{equation}
				
				Also  $ (\mu_{p, f \times h, j}) \equiv C_{\mathcal{O}[[\Gamma]]}(S_{\mathrm{Gr}} (A_j/\mathbb{Q}_{\cyc})^\vee)   \equiv (T) \mod \pi $ for  $j = 1,2$. 
			\end{example}
			
			\begin{example}\label{example 3}
				Let $p =5$.  Let $E$ be the elliptic curve given by $y^2+y=x^3+x^2-9x-15$. By modularity theorem, $E$ corresponds to a weight $2$ cusp form $f_E = q - 2q^3 - 2q^4 + 3q^5 - q^7 + q^9 + 3q^{11} + \cdots$ (LMFDB label 19.2.a.a). Then $f_E$ is $5$-ordinary and $\bar{\rho}_{f_E}$ is irreducible. Let  $  \mathcal{F}  $ be the $\Lambda$-adic cusp form passing through $f_E$. Specializing $\mathcal{F}$ at $(6, \zeta_{5^{0}})$ gives a  weight $6$ cusp form of the family, say $f_6$. Then it is known that $f_6 \in S_{6}(\Gamma_0(85), \iota_{85})$ is congruent to $f_E$ modulo $5$. From the LMFDB data,  $S^{\mathrm{new}}_2(\Gamma_0(85),\iota_{85})$   doesn't contain any $5$-ordinary cusp form. Hence $f_6$ is  $5$-stabilisation of a newform $f$ in $S_6(\Gamma_0(19),\iota_{19})$. This newform $f$ belongs to newform orbit given by  LMFDB label 19.6.a.d. 
				
				Let $f$ be as above and take $h \in S_2(\Gamma_{0}(11))$ (LMFDB label 11.2.a.a)  as described in Example~\ref{example 2}.  We verify Theorem~\ref{congruence main conjecture } for $f\otimes h$.
				
				Since $f \equiv f_E$ it follows that $f$ satisfies \ref{irr-f} and \ref{p-dist}. Again in this case, the hypotheses of Theorem~\ref{analytic final} hold for $f$.
				
				Next we check  the hypotheses of Theorem~\ref{thm:congruence of ideals}. In this case $ N = N_f = 19$, $ M_0 =1 $, $ (N,M_0) =1 $ and the nebentypus of $ h $ i.e. $\psi$ is trivial.   Again we compute  $a(p,f_E) \not \equiv 1 \mod 5$ and as explained in the previous examples, it follows that $H^0(I_{\cyc,v}, A_f^{-}(\xi_2)) = 0$, so we need not check \eqref{assumption}.
				
				Also, as in Example~\ref{example 1} and \ref{example 2}, it suffices to check that $ \mu^{\mathrm{an}}(f_E \otimes \xi_{i}\omega_{p}^{-j}) = 0 $ for $i=1,2$ to conclude all the conditions in Theorem~\ref{thm:congruence of ideals}  are met. Since $\xi_1 = \omega_p$ and $\xi_2 =1$, it is enough to show $ \mu(S^{\Sigma_0}_{\mathrm{Gr}}(A_{f_{E}}(\omega_{p}^{-j})/\mathbb{Q}_{\cyc})^\vee) = 0$ for $1 \leq j \leq 4$. Again take $\Omega^{\pm}_{f_E}$ be as in Theorem~\ref{choice of period and petterson innerproduct}. Proceeding as  in the  previous two examples, we compute via SAGE  the values of the  modular symbols $\text{x}^{\pm}$:  
				\begin{small}
					\begin{center}
						\begin{tabular}{| c | c |  c |  c  | c | c | c | c | c | c | c |}
							\hline
							$\frac{b}{5}$  & $1/5$ & $ 2/5 $ & $ 3/5 $ & $4/5$  \\ 
							\hline
							$ \text{x}^{+} $ &  $-1/2$ &  $1$ & $1$ & $-1/2$  \\  
							\hline
							$ \text{x}^{-} $  & $1/2$ &  $0$ & $0$ & $-1/2$ \\ 
							\hline  
						\end{tabular}
					\end{center}
				\end{small}%
				Let  $\zeta_{4}$ be a primitive root of unity such that $ \omega_{p}(2) = \zeta_{4} $. Then it follows $ \omega_{p}(3)  =  -  \zeta_{4} $. Thus 
				\begin{small}
					\begin{align*}
						\mu_{p,f_{E}, 1, j} (0) \sim \begin{cases}
							(-1 + \omega_p^{j}(2)  +  \omega_p^{j}(3) ), &\text{if } j ~ \text{is even and } 4 \nmid j, \\
							1 , &\text{if } j ~ \text{is odd},
						\end{cases}
					\end{align*}
				\end{small}%
				here $ \sim $ denotes equality up to multiplication by a $p$-adic unit. Taking $ j = 0,1, 2,3 $ we obtain 
				\begin{small}
					\begin{alignat*}{3}
						\mu_{p, f_{E}, 1, 1}(0)& \sim  1 
						\qquad \qquad& \mu_{p, f_{E}, 1, 2}(0) &\sim  -3 \\
						\mu_{p, f_{E}, 1, 3}(0)&\sim  1 
						\qquad \qquad & \mu_{p, f_{E}, 1, 0}(0) & \sim 1/3.
					\end{alignat*}
				\end{small}%
				Hence $\mu^{\text{an}}(f_E \otimes  \omega_p^j) =0$ and  $\lambda^{\text{an}}(f_E \otimes \omega_p^j) =0$ for $1 \leq j \leq 4$. Thus we have verified all the hypotheses of Theorem~\ref{thm:congruence of ideals}.

				Once again by \cite[Theorem 17.4]{kato},  it follows that $\mu^{\text{alg}}(f_E \otimes  \omega_p^j) =\mu^{\text{an}}(f_E \otimes  \omega_p^j) =0$ and $\lambda^{\text{alg}}(f_E \otimes  \omega_p^j) =\lambda^{\text{an}}(f_E \otimes  \omega_p^j) =0$ for $1 \leq j \leq 4$. It follows that the Iwasawa main conjecture holds for $f \otimes \omega_p^j$ and $\mu_{p,f, 1,j}$ is a $p$-adic unit for all $1 \leq j \leq 4$. 
				
				Hence the hypotheses of Theorem~\ref{congruence main conjecture } are met and the Iwasawa main conjecture holds $ f \otimes h $ modulo $\pi$. Further,  applying Theorem~\ref{analytic final},  we obtain 
				$ \mu_{p, f \times h, j} $ is a unit for  all $ j $. Thus, it follows from \eqref{IMC mod p for example 1} that $ C_{\mathcal{O}[[\Gamma]]}(S_{\mathrm{Gr}} (A_j/\mathbb{Q}_{\cyc})^\vee) $ is a unit in $ \mathcal{O}[[\Gamma]] $ for all $ j $ and
				\begin{equation}\label{IMC example 3}
					(\mu_{p, f \times h, j}) = (C_{\mathcal{O}[[\Gamma]]}(S_{\mathrm{Gr}}(A_j)/\mathbb{Q}_{\cyc})^\vee)  = \mathcal{O}[[T]] \quad \mathrm{for} ~ 1 \leq j \leq 4.
				\end{equation}
			\end{example}

\begin{thebibliography}{99999} 
				\bibitem[AL]{Atkin}	A. Atkin and W. Li, \textit{Twists of Newforms and Pseudo-Eigenvalues of $W$-Operators}, Invent. Math. \textbf{48} (1978), 221--243.
				
				
				
				
				
				\bibitem[Bo]{Bouganis} T. Bouganis, \textit{Special values of $L$-functions and false Tate curve extension}, J. London Math. Soc.  \textbf{82}(2) (2010), 596--620.
				
				\bibitem[Ch]{C} A. Chandrakant,  \textit{Iwasawa invariants for the false-Tate extension and congruences between modular forms}. J. Number Theory \textbf{129}(8) (2009), 1893--1911.
				
				\bibitem[CS]{CS}  J. Coates and R. Sujatha, \textit{Fine Selmer groups of elliptic curves over $p$-adic Lie extensions}, Math. Ann. \textbf{331}(4) (2005), 809--839.
				
				\bibitem[CE]{CE} R. F. Coleman and B. Edixhoven, \textit{On the semi-simplicity of the $U_p$-operator on modular forms}, Math. Ann. \textbf{310}(1) (1998), 119--127.
				
				\bibitem[DDT]{DDT} H. Darmon, F. Diamond and R. Taylor, \textit{Fermat's last theorem, Elliptic curves, modular forms \& Fermat's last theorem}, Int. Press (1997) 2--140.
				
				\bibitem[DFG]{DFG} F. Diamond,  M. Flach and L. Guo, \textit{The Tamagawa number conjecture of adjoint motives of modular forms}, Ann. Sci. École Norm. Sup. \textbf{37}(4) (2004), 663--727. 
				
				\bibitem[DG]{Delbergo} D. Delbourgo and  H. Gilmore, \textit{Controlling $\lambda$-invariants for the double and triple product $p$-adic $L$-functions}, J. Th\'eor. Nombres Bordeaux \textbf{33}(3) (2021), 733--778.
				
				\bibitem[DS]{DS05} F. Diamond and J. Shurman, \textit{A first course in modular forms}, GTM, \textbf{228}, Springer-Verlag, (2005).
				
				
				\bibitem [EPW]{epw} M. Emerton, R. Pollack and T. Weston, \textit{Variation of Iwasawa invariants in Hida families},  Invent. Math. {\bf 163}(3) (2006), 523--580.
				
				
				
				
				\bibitem[G1]{gr1} R. Greenberg, \textit{Iwasawa Theory and $p$-adic deformations of motives}, Motives (Seattle, WA, 1991), Proc. Sympos. Pure Math.,  Amer. Math. Soc. \textbf{55} (1994), 193--223.
				
				\bibitem[G2]{gr} R. Greenberg, \textit{Iwasawa Theory for Elliptic Curves}, In: Arithmetic theory of elliptic curves, Lecture Notes in Math., {\bf 1716} (1997), 51--144.
				
				\bibitem[G3]{gr3} R. Greenberg, \textit{Surjectivity of the global-to-local map defining a {S}elmer group}, Kyoto J. Math. \textbf{50}(4)(2010), 853--888.
				
				
				\bibitem[GS]{gs} R. Greenberg and G. Stevens, \textit{$p$-adic $L$-functions and $p$-adic periods of modular forms}, Invent. Math. \textbf{111}(2) (1993), 407--447. 
				
				\bibitem[GV]{gv} R. Greenberg and V. Vatsal, \textit{On the Iwasawa invariants of elliptic curves}, Invent. Math. \textbf{142} (2000), 17--63. 
				
				
				\bibitem[H1]{Hidarankin1}H. Hida, \textit{A $p$-adic measure attached to the zeta functions associated with two elliptic modular forms I}, Invent. Math. \textbf{79} (1985), 159--197.
				
				\bibitem[H2]{Hidarankin2} H. Hida, \textit{A $p$-adic measure attached to zeta functions associated with two elliptic modular 
					forms II}, Ann. Inst. Fourier (Grenoble) \textbf{38} (1988), 1--83.
				
				\bibitem[H3]{Hida3} H. Hida, \textit{Modular forms and Galois cohomology}, Cambridge studies in advanced Mathematics, \textbf{69}, 2010.
				
				\bibitem[H4]{hida_93} H. Hida, \textit{Elementary theory of $L$-functions and Eisenstein series}, LMS Student Texts, \textbf{26},  1993.
				
				\bibitem[H5]{Hidaadjoint} H. Hida, \textit{Arithmetic of adjoint {$L$}-values}, {$p$}-adic aspects of modular forms, World Sci. Publ., 2016, 185--236. 
				
				\bibitem[H6]{H6} H. Hida, \textit{Galois representations into {$\mathrm{GL}_2(\Z_p[[X]])$} attached to ordinary cusp forms}, Invent. Math. \textbf{85} (1986), 545--613.
				
				\bibitem[JMS]{p-selmer} S. Jha, D. Majumdar and S. Shekhar, \textit{{$p^r$}-{S}elmer companion modular forms}, Ann. Inst. Fourier (Grenoble) {\bf 71}(1) (2021), 53--87.
				
				\bibitem[JS]{js} S. Jha and R. Sujatha, \textit{On the Hida deformations of fine Selmer groups}, J. Algebra  \textbf{338} (2011), 180--196.
				
				\bibitem[Ka]{kato} K. Kato, \textit{$p$-adic Hodge theory and values of zeta functions of modular forms}, In: Cohomologies $p$-adiques et applications arithm\'etiques. III, Ast\'erisque \textbf{295}(ix) (2004), 117--290.
				
				
				\bibitem[KLZ]{klz} G. Kings, D. Loeffler, S. Zerbes, \textit{Rankin-Eisenstein classes and explicit reciprocity laws}, Camb. J. Math. \textbf{5}(2) (2017) 1--122.
				
				\bibitem[Ki]{Kitagawa} K. Kitagawa, \textit{On standard $p$-adic $L$-functions of families of elliptic cusp forms}, In $ p $-adic monodromy and the Birch and Swinnerton-Dyer conjecture (Boston, MA, 1991), Contemp. Math. \textbf{165} (1994),  81--110. 
				
				\bibitem[LLZ]{llz} A. Lei, D. Loeffler,  S. Zerbes, \textit{ Euler systems for Rankin-Selberg convolutions of modular forms}, Ann. of Math.  \textbf{180}(2) (2014),  653--771.
				
				
				
				\bibitem[LS]{ms}  M. F. Lim   and R. Sujatha, \textit{Fine {S}elmer groups of congruent {G}alois representations}, J. Number Theory {\bf 187} (2018), 66--91. 
				
				\bibitem[Ma]{Mazur}   B. Mazur, \textit{Modular curves and the Eisenstein ideal}, Publ. Math. Inst. Hautes Etudes Sci. 47 (1977), 33--186.
				
				\bibitem[MTT]{MTT}  B. Mazur, J. Tate and J. Teitelbaum, \textit{On p-adic analogues of the conjectures of Birch and Swinnerton-Dyer},  Invent. Math. \textbf{84} (1986), 1--48.
				
				\bibitem[Mi]{Miyake} T. Miyake, \textit{Modular forms},  Translated from the Japanese  by Yoshitaka Maeda, Springer-Verlag, Berlin, 1989.
				
				\bibitem[Ne]{nekovar} J. Nekov\'{a}\v{r}, \textit{On the parity of ranks of Selmer groups. IV}, Compos. Math. \textbf{146}(6), 1351--1359,
				{With an appendix by Jean-Pierre Wintenberger}.
				
				
				\bibitem [Oc]{o1}T. Ochiai, \textit{Control theorem for Greenberg's Selmer groups of Galois deformations},  J. Number Theory \textbf{88}(1) (2001), 59--85. 
				
				
				
				
				
				\bibitem[S1]{Shimura1} G. Shimura, \textit{The special values of the zeta functions associated with cusp forms}, Comm. Pure Appl. Math. \textbf{29}(6)  (1976), 783--804.
				
				
				\bibitem[S2]{Shimura book} G. Shimura, \textit{Introduction to the arithmetical theory of automorphic functions}, Publ. Math. Soc. Japan, No. 11.
				
				
				\bibitem[SU]{SU}C. Skinner and E. Urban, \textit{The Iwasawa main conjectures for $\mathrm{GL_2}$}, Invent. Math. \textbf{195}(1)
				(2014), 1--277.
				
				\bibitem[SZ]{SZ} C. Skinner and W. Zhang, \textit{Indivisibility of Heegner points in the multiplicative case}, preprint 
				arXiv:1407.1099.
				
				\bibitem[Va]{Vatsal} V. Vatsal, \textit{Canonical periods and congruence formulae}, Duke J.  Math. \textbf{98} (1999), 397--419.
				
				\bibitem[We]{we} T. Weston, \textit{Iwasawa invariants of Galois deformations},  Manuscripta Math. {\bf118}(2) (2005), 161--180.
				
				\bibitem[Wa]{Wan} X. Wan, \textit{Iwasawa main conjecture for Rankin-Selberg $p$-adic $L$-functions}, Algebra and Number Theory \textbf{14}(2) (2020), 383--483.
			\end{thebibliography}
		\end{document}